\newtheorem{theorem}{Theorem}
\newtheorem{lemma}[theorem]{Lemma}
\newtheorem{proposition}{Proposition}
\theoremstyle{definition}
\newcommand{\eqdef}{\overset{\mbox{\tiny{def}}}{=}}
\newcommand{\R}{{\mathbb R}}
\newcommand{\ktot}{K(n)}
\newcommand{\NgE}{{K \ge \ktot}}
\newcommand{\ang}[1]{ \left< {#1} \right> }
\newcommand{\ext}[1]{ \underline{ {#1} } }
\newcommand{\ind}{ {\mathbf 1}}
\newcommand{\sph}{{\mathbb S}^{n-1}}
\newcommand{\nsm}{|}
\newcommand{\nl}{\left|}
\newcommand{\threed}{{\mathbb R}^n}
\newcommand{\last}{{n+1}}
\renewcommand{\dim}{n}
\newcommand{\nspace}{N^{s,\gamma}_{\ell,K}}
\newcommand{\spaceh}{H^{K}_{\ell}}
\newcommand{\ksob}{K^*_n}
\newcommand{\nr}{\right|}
\newcommand{\set}[2]{ \left\{ #1 \ \left| \ #2 \right. \right\} }
\newcommand{\ip}[2]{ \left< #1 , #2 \right>}
\begin{document}
\title[Global Solutions of the Boltzmann Equation with Soft Potentials]{Global Classical Solutions of the Boltzmann Equation with Long-Range Interactions and Soft Potentials}

\author[P. T. Gressman]{Philip T. Gressman}
\address{University of Pennsylvania, Department of Mathematics, David Rittenhouse Lab, 209 South 33rd Street, Philadelphia, PA 19104-6395, USA} 
\email{gressman at math.upenn.edu \& strain at math.upenn.edu}
\urladdr{http://www.math.upenn.edu/~gressman/ \& http://www.math.upenn.edu/~strain/}
\thanks{P.T.G. was partially supported by the NSF grant DMS-0850791.}

\author[R. M. Strain]{Robert M. Strain}
\thanks{R.M.S. was partially supported by the NSF grant DMS-0901463.}

\dedicatory{}

\keywords{Kinetic Theory, Boltzmann equation, long-range interaction, non cut-off, soft potentials, hard potentials, fractional derivatives, anisotropy, Harmonic analysis. \\
\indent 2010 {\it Mathematics Subject Classification.}  35Q20, 35R11, 76P05, 82C40, 35H20, 35B65, 26A33}

\date{February 15, 2010} 

\begin{abstract}
In this work we prove global stability for the Boltzmann equation (1872) with the physical collision kernels derived by Maxwell in 1866 for the full range of inverse power intermolecular potentials, $r^{-(p-1)}$ with $p>2$.
This completes the work which we began in
\cite{gsNonCut1}
(and announced in 
\cite{gsNonCutA}).  
We  more generally cover 
collision kernels with parameters $s\in (0,1)$ and $\gamma$ satisfying 
$\gamma  > -(n-2)-2s$ in arbitrary dimensions $\mathbb{T}^n \times \mathbb{R}^n$ with $n\ge 2$.  
Moreover, we prove rapid convergence as predicted by the Boltzmann H-Theorem.
When $\gamma + 2s \ge 0$, we have  exponential time decay to the Maxwellian equilibrium states.  When $\gamma + 2s < 0$, our solutions decay polynomially fast in time with any rate.  
These results are constructive.  
Additionally, we prove  sharp constructive upper and lower bounds for the linearized collision operator in terms of a geometric fractional Sobolev norm; we thus observe that a spectral gap exists only when $\gamma + 2s \ge 0$, as conjectured in  Mouhot-Strain 
\cite{MR2322149}.
\end{abstract}

\maketitle

\thispagestyle{empty}

\tableofcontents  

\section{Introduction, main results, and overall strategy}
\label{nrstat}

The present article 
considers the Boltzmann equation \cite{MR0158708} from 1872 with the collision kernels derived by Maxwell \cite{Maxwell1867} in 1866 for the intermolecular repulsive potential $\phi(r) = r^{-(p-1)}$ over the full range  $p\in (2,\infty)$. 
It will be observed that
 this  fundamental equation, derived by both Boltzmann  and Maxwell, grants a basic example where a range of geometric fractional derivatives occur in a physical model of the natural world.

This work contributes to the understanding of global in time, close to Maxwellian equilibrium solutions of the Boltzmann equation without angular cut-off, that is, for long-range interactions. This problem has been the subject of intense investigations for some time now. 
We develop  a mathematical framework 
for these solutions  for all of the collision kernels derived from the intermolecular potentials 
 and more generally.  For the hard-sphere and cut-off collision kernels, such a framework has been well established for a long time 
\cite{MR2095473,MR882376,MR0363332,MR2013332,MR2043729,MR2000470}.  The hard-sphere kernel applies in the limit when $p\to\infty$.
A framework is also known for the Landau collision operator \cite{MR1946444}, which can be thought of as  the limiting model for $p=2$. 
The important case of the Boltzmann equation with the collision kernels derived by Maxwell is then the last case for the intermolecular potentials in which this framework has remained open.  Our concurrent article  \cite{gsNonCut1}  established such a framework for the easier ``hard potential'' cases when $p>3$.  The current article provides this framework for all of the intermolecular potentials including some of the most interesting and singular ``soft potential'' cases. 
We will discuss more of the  historical background for the Boltzmann equation for long-range interactions in Section \ref{sec:HD}.

The model which is the focus of this research is  the {\em Boltzmann equation}
  \begin{equation}
  \frac{\partial F}{\partial t} + v \cdot \nabla_x F = {\mathcal Q}(F,F),
  \label{BoltzFULL}
  \end{equation}
where the unknown $F(t,x,v)$ is a nonnegative function.  For each time $t\ge 0$, $F(t, \cdot, \cdot)$ represents the density  of particles in phase space, and is often 
called the empirical measure. The spatial coordinates  are $x\in\mathbb{T}^n$, and the velocities are $v\in\mathbb{R}^n$ with $n\ge 2$.
The  {\em Boltzmann collision operator} ${\mathcal Q}$ 
is a bilinear operator which acts only on the velocity variables $v$ and is local in $(t,x)$ as 
  \begin{equation*}
  {\mathcal Q} (F,F)(v) \eqdef 
  \int_{\mathbb{R}^n}  dv_* 
  \int_{\mathbb{S}^{n-1}}  d\sigma~ 
  B(v-v_*, \sigma) \, 
  \big[ F'_* F' - F_* F \big].
  \end{equation*} 
Here we are using the standard shorthand $F = F(v)$, $F_* = F(v_*)$, $F' = F(v')$, 
$F_*^{\prime} = F(v'_*)$. 
In this expression,  $v$, $v_*$ and $v'$, $v' _*$  are 
the velocities of a pair of particles before and after collision.  They are connected through the formulas
  \begin{equation}
  v' = \frac{v+v_*}{2} + \frac{|v-v_*|}{2} \sigma, \qquad
  v'_* = \frac{v+v_*}{2} - \frac{|v-v_*|}{2} \sigma,
  \qquad \sigma \in \mathbb{S}^{n-1}.
  \label{sigma}
  \end{equation}
This representation of ${\mathcal Q}$ results from making a choice for the parameterization of the set of solutions of the physical law of elastic collisions:
\begin{equation}
\begin{array}{rcl}
v+v_* &=& v^{\prime } + v^{\prime }_*,
\\
|v|^2+|v_*|^2 &=& |v^\prime|^2+|v_*^\prime|^2.
\end{array}
\notag
\end{equation}
This choice is not unique, and we also use
 Carleman-type representations later on.

 The {\em Boltzmann collision kernel} $B(v-v_*, \sigma)$ for a monatomic gas is a non-negative function which 
only depends on the {\em relative velocity} $|v-v_*|$ and on
the {\em deviation angle}  $\theta$ through 
$\cos \theta = \ip{ k }{ \sigma}$ where $k = (v-v_*)/|v-v_*|$ and $\langle \cdot, \cdot \rangle$ is the usual scalar product in $\mathbb{R}^n$. 
Without loss of generality, we may assume that  $B(v-v_*, \sigma)$
is supported on $\ip{ k }{ \sigma} \ge 0$, i.e., $0 \le \theta \le \frac{\pi}{2}$.
Otherwise we can  reduce to this situation 
with the following ``symmetrization'': 
$$
\overline{B}(v-v_*, \sigma)
=
\left[ 
B(v-v_*, \sigma)
+
B(v-v_*, -\sigma)
\right]
{\bf 1}_{\ip{ k }{ \sigma} \ge 0}.
$$ 
Above and generally, ${\bf 1}_{A}$ is the usual indicator function of the set $A$.

\subsection*{The Collision Kernel}

Our assumptions are as follows:  
 \begin{itemize}
 \item 
We suppose that $B(v-v_*, \sigma)$ takes a product form in its arguments as
\begin{equation}
B(v-v_*, \sigma) =\Phi( |v-v_*| ) \, b(\cos \theta).
\notag
\end{equation}
In general, both $b$ and $\Phi$ are non-negative functions. 
 
  \item The angular function $t \mapsto b(t)$ is not locally integrable; for $c_b >0$ it satisfies
\begin{equation}
\frac{c_b}{\theta^{1+2s}} 
\le 
\sin^{n-2} \theta \  b(\cos \theta) 
\le 
\frac{1}{c_b\theta^{1+2s}},
\quad
s \in (0,1),
\quad
   \forall \, \theta \in \left(0,\frac{\pi}{2} \right].
   \label{kernelQ}
\end{equation}
   Some authors use the notation 
$\nu = 2s \in (0,2)$, which is equivalent. 
 
  \item The kinetic factor $z \mapsto \Phi(|z|)$ satisfies for some $C_\Phi >0$
\begin{equation}
\Phi( |v-v_*| ) =  C_\Phi  |v-v_*|^\gamma , \quad \gamma  > -(n-2)-2s.
\label{kernelP}
\end{equation}
 \end{itemize}

Our main physical motivation is derived from  particles interacting according to a spherical intermolecular repulsive potential of the form
$$
\phi(r)=r^{-(p-1)} , \quad p \in (2,+\infty).
$$
For these potentials, Maxwell \cite{Maxwell1867} in 1866 showed that 
the kernel $B$ can be computed.   In dimension $n=3$, $B$ satisfies the conditions above with 
$\gamma = (p-5)/(p-1)$ 
and $s = 1/(p-1)$; 
see for instance \cite{MR1313028,MR1307620,MR1942465}.
Thus the conditions in  \eqref{kernelQ} and \eqref{kernelP} include all of the potentials $p > 2$ in the physical dimension $n=3$.
Note further that the Boltzmann collision operator is not well defined for $p=2$, see \cite{MR1942465}.

We will study the linearization of \eqref{BoltzFULL} around the Maxwellian 
equilibrium state 
\begin{equation}
F(t,x,v) = \mu(v)+\sqrt{\mu(v)} f(t,x,v),
\label{maxLIN}
\end{equation}
where without loss of generality for $n\ge 2$ we have
$$
\mu(v) = (2\pi)^{-n/2}e^{-|v|^2/2}.
$$
We will also suppose without restriction that the mass, momentum, and energy conservation laws for the perturbation $f(t,x,v)$ hold for all $t\ge0$ as
\begin{equation}
\int_{\mathbb{T}^n} ~ dx ~ 
\int_{\mathbb{R}^n} ~ dv ~
 \begin{pmatrix}
      1   \\      v  \\ |v|^2
\end{pmatrix}
~ \sqrt{\mu(v)} ~ f(t,x,v) 
=
0.
\label{conservation}
\end{equation}
This condition should be satisfied initially, and then will continue to be satisfied for a suitable classical solution.  Our main interest is the  global stability of unique classical solutions to the Boltzmann equation \eqref{BoltzFULL} for the long-range collision kernels  \eqref{kernelQ} and \eqref{kernelP}  when the initial data are perturbations of a global Maxwellian  \eqref{maxLIN}.  

Our solution to this problem rests heavily on our introduction into the Boltzmann theory of the 
following weighted geometric fractional  Sobolev space:
$$
N^{s,\gamma}
\eqdef 
\left\{ f \in \mathcal{S}'(\mathbb{R}^n): 
 \nsm f\nsm_{N^{s,\gamma}} <\infty
\right\},
$$
where  we specify the  norm 
by
\begin{equation} 
\nsm f\nsm_{N^{s,\gamma}}^2 
\eqdef 
\nsm f\nsm_{L^2_{\gamma+2s}}^2 + \int_{\mathbb{R}^n} dv ~ \int_{\mathbb{R}^n} dv' ~ 
(\ang{v}\ang{v'})^{\frac{\gamma+2s+1}{2}}
~
 \frac{(f(v') - f(v))^2}{d(v,v')^{n+2s}} 
\ind_{d(v,v') \leq 1}. \label{normdef} 
\end{equation}
This space includes the weighted $L^2_\ell$ space, for $\ell\in\mathbb{R}$, with norm given by
$$
\nsm f\nsm_{L^2_{\ell}}^2 
\eqdef
\int_{\mathbb{R}^n} dv ~ 
\ang{v}^{\ell}
~
|f(v)|^2.
$$
The weight is
$
\ang{v}
= \sqrt{1+|v|^2}.
$
The fractional differentiation effects are measured 
using the following non-isotropic  metric $d(v,v')$ on the ``lifted'' paraboloid:
$$
d(v,v') \eqdef \sqrt{ |v-v'|^2 + \frac{1}{4}\left( |v|^2 -  |v'|^2\right)^2}.
$$
The inclusion of the quadratic difference $|v|^2 - |v'|^2$ is essential; it is not a lower order term.
Heuristically, this metric encodes the non-isotropic changes in the power of the weight, which are 
entangled with the non-local fractional differentiation effects.

The space $N^{s,\gamma}$ is essentially a weighted non-isotropic Sobolev norm; this particular feature was conjectured in \cite{MR2322149}.  
We see precisely that if $\mathbb{R}^n$ is identified with a paraboloid in $\mathbb{R}^{n+1}$ by means of the mapping $v \mapsto (v, \frac{1}{2}|v|^2)$ and $\Delta_{P}$ is defined to be the Laplacian on the paraboloid induced by the Euclidean metric on $\mathbb{R}^{n+1}$ then
\[ 
\nsm f \nsm_{N^{s,\gamma}}^2 \approx \int_{\mathbb{R}^n} dv \ang{v}^{\gamma+2s} \left| (I- \Delta_{P})^{\frac{s}{2}} f (v) \right|^2. 
\]
This holds
in the sense that the sharp geometric Littlewood-Paley characterization of this space is the same as that of $N^{s,\gamma}$.
The rest of our Sobolev spaces are defined in Section \ref{sec:FuncSp} just below. 
We may now to state our main result as follows:

\begin{theorem}
\label{mainGLOBAL}  
(Main Theorem)
Fix $\NgE$, the total number of derivatives, and $\ell\ge 0$ the order of the velocity weight in our Sobolev spaces.
Choose initially $f_0(x,v) \in \spaceh(\mathbb{T}^n \times \mathbb{R}^n)$ in \eqref{maxLIN} which satisfies \eqref{conservation}.  
There is an $\eta_0>0$ such that if $\| f_0 \|_{\spaceh} \le \eta_0$, then there exists a unique global classical solution to the Boltzmann equation \eqref{BoltzFULL}, in the form 
\eqref{maxLIN}, which satisfies
$$
f(t,x,v) \in 
L^\infty_t \spaceh  ( [0,\infty)\times\mathbb{T}^n\times \mathbb{R}^n )
\cap
L^2_t \nspace( (0,\infty)\times\mathbb{T}^n\times \mathbb{R}^n ).
$$
When $\gamma + 2s \ge 0$,
for a fixed $\lambda >0$, 
we have exponential decay as
$$
\| f (t)  \|_{\spaceh (\mathbb{T}^n\times \mathbb{R}^n)}
\lesssim e^{-\lambda t} 
\| f_0 \|_{\spaceh (\mathbb{T}^n\times \mathbb{R}^n)}.
$$
When $\gamma + 2s < 0$, if 
$f_0  \in H^K_{\ell+m}(\mathbb{T}^n \times \mathbb{R}^n)$ for weights $\ell,m \ge 0$ then
 we have 
$$
\| f(t) \|_{\spaceh(\mathbb{T}^n \times \mathbb{R}^n)} \le C_m (1+t)^{-m}
\| f_0 \|_{H^K_{\ell+m}(\mathbb{T}^n \times \mathbb{R}^n)}.
$$
 We also have positivity, i.e. $F= \mu + \sqrt{\mu} f \ge 0$ if $F_0= \mu + \sqrt{\mu} f_0 \ge 0$.
\end{theorem}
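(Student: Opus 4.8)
The plan is to carry out a nonlinear energy method in the spirit of Guo, adapted to the non-isotropic fractional geometry encoded by $\spacen$. Substituting the perturbation ansatz \eqref{maxLIN} into the Boltzmann equation \eqref{BoltzFULL} produces the equation
\[
\partial_t f + v\cdot\nabla_x f + L f = \Gamma(f,f),
\]
where $L$ is the linearized collision operator and $\Gamma$ the (symmetric) bilinear remainder coming from the standard splitting of $\mathcal{Q}$ about $\mu$. The two analytic engines are: (i) the sharp coercivity of $L$ advertised in the abstract, $\langle L g,g\rangle \gtrsim \| (\mathbf{I}-\mathbf{P})g \|_{\spacen}^2$ together with the matching continuity bound, where $\mathbf{P}$ is the $L^2_v$-projection onto the $(n+2)$-dimensional hydrodynamic null space spanned by $\sqrt{\mu},\, v_j\sqrt{\mu},\, |v|^2\sqrt{\mu}$; and (ii) a trilinear estimate of the schematic shape $| \langle \Gamma(f,g),h\rangle | \lesssim \| f \|\, \| g \|_{\spacen}\, \| h \|_{\spacen}$, with the velocity weight $\ang{v}^{\gamma+2s}$ distributed so that the full $\spaceh$ scale can absorb it. Granting these, I would first establish local-in-time existence and uniqueness in $L^\infty_t \spaceh$ by a standard iteration: freeze the first slot of $\Gamma$, solve the resulting linear equation using the energy estimate furnished by (i)--(ii), and verify that the iterates contract on a short interval; the non-cutoff singularity is never regularized, as it is entirely absorbed by the $\spacen$-dissipation.

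The crux is the global a priori estimate. Applying $\partial^{\alpha}_{\beta}=\partial_x^{\alpha}\partial_v^{\beta}$ with $|\alpha|+|\beta|\le K$, pairing with $\partial^{\alpha}_{\beta} f$ against a family of velocity weights whose exponent decreases as $|\beta|$ increases (needed to tame the commutator of $v\cdot\nabla_x$ with $\partial_v^{\beta}$, exactly as in the Landau and non-cutoff literature), and summing over $\alpha,\beta$, one builds an instant energy $\mathcal{E}(t)\approx\|f(t)\|_{\spaceh}^2$ and a dissipation rate $\mathcal{D}(t)\approx\|f(t)\|_{\nspace}^2$ satisfying
\[
\frac{d}{dt}\mathcal{E}(t)+\mathcal{D}(t)\lesssim \sqrt{\mathcal{E}(t)}\;\mathcal{D}(t).
\]
The linear term is bounded below by $\mathcal{D}$ up to the hydrodynamic component $\mathbf{P}f$, which is recovered by the usual macroscopic argument: differentiating the local conservation laws \eqref{conservation} in $x$ and feeding in the equation, one controls the spatial derivatives of the hydrodynamic field by $\mathcal{D}$ after subtracting a bounded lower-order interaction functional from $\mathcal{E}$. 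The nonlinear term is $\lesssim\sqrt{\mathcal{E}}\,\mathcal{D}$ by the trilinear estimate combined with Sobolev embedding in $x$ (the assumption $\NgE$ guarantees $H^K(\mathbb{T}^n)\hookrightarrow L^\infty$). A continuity/bootstrap argument then shows that if $\|f_0\|_{\spaceh}\le\eta_0$ is small, $\mathcal{E}(t)$ stays $\lesssim\eta_0^2$ and $\mathcal{D}\in L^1_t$; hence the local solution extends globally, and uniqueness follows from the same energy inequality applied to the difference of two solutions.

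For the long-time behavior I would split into the two regimes of Theorem \ref{mainGLOBAL}. If $\gamma+2s\ge0$ the velocity weight $\ang{v}^{\gamma+2s}$ built into \eqref{normdef} is bounded below, so $\mathcal{D}\gtrsim\mathcal{E}$ (again after the macroscopic estimates dispose of the $\mathbf{P}f$ gap), and the energy inequality becomes $\frac{d}{dt}\mathcal{E}+\lambda\mathcal{E}\le0$; Gronwall gives the stated exponential decay. If $\gamma+2s<0$ there is no spectral gap, since $\mathcal{D}$ only controls $\|f\|_{\spacen}$, which is weaker than $\mathcal{E}^{1/2}$ by a factor $\ang{v}^{(\gamma+2s)/2}$. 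Here I would exploit the extra weight in the hypothesis: running the a priori estimate at the higher weight $\ell+m$ yields a uniform-in-time bound $\mathcal{E}_{\ell+m}(t)\lesssim\|f_0\|_{H^K_{\ell+m}}^2$, and the weighted interpolation $\mathcal{E}_{\ell}\lesssim \mathcal{D}_{\ell}^{\,1-\vartheta}\,\mathcal{E}_{\ell+m}^{\,\vartheta}$ with a suitable $\vartheta=\vartheta(m)\in(0,1)$ converts $\frac{d}{dt}\mathcal{E}_{\ell}+\mathcal{D}_{\ell}\le0$ into $\frac{d}{dt}\mathcal{E}_{\ell}+c\,\mathcal{E}_{\ell}^{1+\delta}\le0$ with $\delta>0$; integrating this ODE (and, if needed, iterating over successively smaller weight gaps) yields the polynomial rate $(1+t)^{-m}$ claimed in the theorem. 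Finally, positivity is obtained by realizing $F$ as a limit of solutions $F^{\varepsilon}$ of a regularized (e.g.\ angularly truncated) problem, for which the classical mild/exponential Duhamel representation makes $F^{\varepsilon}\ge0$ whenever $F_0\ge0$; passing to the limit with the uniform $\spaceh\cap\nspace$ bounds preserves nonnegativity.

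The step I expect to be hardest is the soft-potential range $\gamma+2s<0$. Both the trilinear estimate --- checking that the loss of $\ang{v}^{\gamma+2s}$ in the dissipation can genuinely be compensated on the $\spaceh$ scale --- and the decay mechanism --- the weighted interpolation and the bootstrap that delivers an arbitrary rate $(1+t)^{-m}$ --- are substantially more delicate than in the hard-potential case of \cite{gsNonCut1}. A second, more technical, difficulty is purely harmonic-analytic: controlling $L$ and $\Gamma$ in the norm \eqref{normdef}, where the paraboloid metric $d(v,v')$ entangles the fractional differentiation with the change in the power of the weight, which is precisely the point at which the geometric Littlewood--Paley theory built for $\spacen$ must be invoked.
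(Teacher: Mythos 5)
Your proposal tracks the paper's own proof essentially step for step: the sharp $N^{s,\gamma}$ coercivity of $L$ and the trilinear bound, local existence via a Guo-type iteration, the macroscopic (hydrodynamic) recovery of $\mathbf{P}f$ through an interaction functional, the differential inequality $\frac{d}{dt}\mathcal{E}+\mathcal{D}\lesssim\sqrt{\mathcal{E}}\,\mathcal{D}$ closed by a continuity argument, exponential decay for $\gamma+2s\ge0$ from the spectral gap, and the weighted interpolation $\mathcal{E}_\ell\lesssim\mathcal{D}_\ell^{m/(m+1)}\mathcal{E}_{\ell+m}^{1/(m+1)}$ giving the algebraic rate for soft potentials. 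This is exactly the architecture of Section \ref{sec:deBEest}, including the double induction on weight order and velocity-derivative count needed because $\partial_v^\beta$ does not commute with $v\cdot\nabla_x$.
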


For the derivatives used above, 
in dimension $n=3$,  we use $K(3) = 5$.  Generally $\ktot$ is the number of derivatives  required by our use of the Sobolev embedding theorems in $n$ dimensions.  Here $\ktot = 3\ksob - 1$ 
 is sufficient, where $\ksob = \lfloor \frac{n}{2} +1 \rfloor$ is the smallest integer which is strictly greater than $\frac{n}{2}$.
 This regularity is not needed for the hard potentials as the theorem below shows:

\begin{theorem}
\label{SG1mainGLOBAL}  
(Main Theorem from \cite{gsNonCut1})
Fix $K_x \ge 3$, the total number of spatial derivatives, and $K_v\ge 0$ the total number of velocity derivatives.  Suppose $\gamma + 2s \ge 0$.
Choose initially $f_0(x,v) \in H^{K_v}(\mathbb{R}^3; H^{K_x}(\mathbb{T}^3))$ in \eqref{maxLIN} which satisfies \eqref{conservation}.  
There is an $\eta_0>0$ such that if $\| f_0 \|_{H^{K_v}(\mathbb{R}^3; H^{K_x}(\mathbb{T}^3))} \le \eta_0$, then there exists a unique global strong solution to the Boltzmann equation \eqref{BoltzFULL}, in the form 
\eqref{maxLIN}, which satisfies
$$
f(t,x,v) \in 
L^\infty_t( [0,\infty); H^{K_v} H^{K_x} (\mathbb{T}^3\times \mathbb{R}^3 ))
\cap
L^2_t( (0,\infty); N^{s,\gamma}_{\emptyset,K_v}  H^{K_x}(\mathbb{T}^3\times \mathbb{R}^3 )).
$$
Moreover, we have exponential decay 
to equilibrium.  For some fixed $\lambda >0$,
$$
\| f(t) \|_{H^{K_v}(\mathbb{R}^3; H^{K_x}(\mathbb{T}^3))} \lesssim e^{-\lambda t} 
\| f_0 \|_{H^{K_v}(\mathbb{R}^3; H^{K_x}(\mathbb{T}^3))}.
$$
 We also have positivity, i.e. $F= \mu + \sqrt{\mu} f \ge 0$ if $F_0= \mu + \sqrt{\mu} f_0 \ge 0$.
\end{theorem}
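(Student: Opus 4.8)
The plan is to run the nonlinear energy method adapted to the non-cutoff regime; this is exactly what is carried out in \cite{gsNonCut1}. Writing $F=\mu+\sqrt\mu\, f$ and inserting this into \eqref{BoltzFULL}, the perturbation solves $\partial_t f+v\cdot\nabla_x f+\mathcal{L}f=\Gamma(f,f)$, where $\mathcal{L}f=-\mu^{-1/2}\big[\mathcal{Q}(\mu,\sqrt\mu\, f)+\mathcal{Q}(\sqrt\mu\, f,\mu)\big]$ and $\Gamma(f,g)=\mu^{-1/2}\mathcal{Q}(\sqrt\mu\, f,\sqrt\mu\, g)$. The first task is a sharp coercivity estimate for the linearized operator,
\[
\langle \mathcal{L}f,f\rangle \gtrsim \big|\{\mathbf{I}-\mathbf{P}\}f\big|_{N^{s,\gamma}}^2 ,
\]
where $\mathbf{P}$ is the $L^2_v$-orthogonal projection onto the five-dimensional kernel of $\mathcal{L}$ spanned by $\sqrt\mu$, $v_i\sqrt\mu$ ($1\le i\le 3$) and $|v|^2\sqrt\mu$. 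Because $\gamma+2s\ge 0$ the space $N^{s,\gamma}$ embeds into $L^2$, so this is a genuine spectral gap. One proves it from a Carleman-type representation of $\mathcal{Q}$ together with a dyadic decomposition adapted to the paraboloid metric $d(v,v')$, i.e.\ the geometric Littlewood--Paley theory suggested by \eqref{normdef}; matching upper bounds for $\mathcal{L}$ are obtained the same way.

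Next I would establish a trilinear estimate for the nonlinearity, of the schematic form $|\langle\Gamma(f,g),h\rangle|\lesssim |f|_{L^2}\,|g|_{N^{s,\gamma}}\,|h|_{N^{s,\gamma}}$ (with harmless lower-order velocity weights, and symmetric variants in which the roles of the three factors permute), the non-cutoff analogue of Grad's classical bilinear estimate; the key point is that the Gaussian factors hidden inside $\Gamma$ absorb all velocity weights, which is precisely why the hard-potential case $\gamma+2s\ge 0$ needs no polynomial velocity weight in the energy. Combining this with the Sobolev embedding $H^{K_x}(\mathbb{T}^3)\hookrightarrow L^\infty_x$ (valid already for $K_x\ge 2$, so $K_x\ge 3$ is comfortable) and the Leibniz rule, then applying $\partial^\alpha_\beta$ with $|\alpha|\le K_x$, $|\beta|\le K_v$ to the equation, testing against $\partial^\alpha_\beta f$ and summing, one reaches the differential inequality
\[
\frac{d}{dt}\mathcal{E}(t) + \mathcal{D}(t) \lesssim \sqrt{\mathcal{E}(t)}\,\mathcal{D}(t) ,
\]
where $\mathcal{E}(t)\approx \|f(t)\|_{H^{K_v}H^{K_x}}^2$ and the dissipation $\mathcal{D}(t)$ controls $\sum_{|\alpha|\le K_x,\,|\beta|\le K_v}\|\{\mathbf{I}-\mathbf{P}\}\partial^\alpha_\beta f\|_{N^{s,\gamma}}^2$ plus the spatial derivatives of the macroscopic part $\mathbf{P}f$.

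Since $\mathcal{L}$ gives no control of $\mathbf{P}f$ itself, the structural step that closes the argument is a Guo-type macroscopic estimate: writing $\mathbf{P}f=\{a(t,x)+v\cdot b(t,x)+|v|^2 c(t,x)\}\sqrt\mu$, one uses the local conservation laws together with well-chosen velocity moments of the equation to obtain elliptic-type bounds recovering $\|\nabla_x(a,b,c)\|_{H^{K_x-1}}^2$ inside $\mathcal{D}$; then the Poincar\'e inequality on $\mathbb{T}^3$ and the global conservation laws \eqref{conservation} (which force $\int_{\mathbb{T}^3}(a,b,c)=0$) upgrade $\mathcal{D}$ to control a full copy of the energy, $\mathcal{E}(t)\lesssim\mathcal{D}(t)$. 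For $\mathcal{E}(0)\le\eta_0$ small, a continuity argument --- started from a local-in-time solution built by regularizing the angular kernel, running a fixed-point iteration, and passing to the limit with the uniform bounds --- keeps $\mathcal{E}(t)$ small for all $t$, yields $\frac{d}{dt}\mathcal{E}(t)+c\,\mathcal{E}(t)\le 0$ and hence the exponential decay, and also gives $\int_0^\infty\mathcal{D}(t)\,dt\lesssim\mathcal{E}(0)$, which is the asserted $L^2_t N^{s,\gamma}_{\emptyset,K_v}H^{K_x}$ membership. Uniqueness follows from the same energy estimate applied to the difference of two solutions, and positivity of $F$ is propagated along the approximation scheme.

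The main obstacle is the pair of harmonic-analytic estimates in the first two paragraphs: forcing the anisotropic metric $d(v,v')$ --- in which the quadratic difference $|v|^2-|v'|^2$ is genuinely leading order rather than perturbative --- to interact correctly with the non-integrable angular singularity \eqref{kernelQ} requires a dyadic/Littlewood--Paley analysis on the lifted paraboloid rather than the compactness and change-of-variables tricks available in the cutoff theory. Once the coercivity of $\mathcal{L}$ and the trilinear bound are in hand in the correct (here, unweighted) $N^{s,\gamma}$ norm, the macroscopic estimate, the continuity argument, and the decay are structurally parallel to the established hard-sphere and cutoff theory.
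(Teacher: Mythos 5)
Your proposal correctly reproduces the strategy of the companion paper \cite{gsNonCut1}, to which the present paper delegates the bulk of the proof: linearize around the Maxwellian, prove coercivity of $L$ and trilinear estimates for $\Gamma$ in the anisotropic space $N^{s,\gamma}$ via a Carleman-type representation and geometric Littlewood--Paley theory on the lifted paraboloid, run a Guo-type macroscopic estimate to recover the hydrodynamic part, and close with the standard continuity argument and Gr\"onwall to get global existence and exponential decay. This matches the paper's own route; the only thing the paper adds here is a short modification argument (replacing one cancellation estimate and the coercivity lemma of \cite{gsNonCut1} with \eqref{lpsobolev0} and Theorem \ref{lowerN}, and extending the cancellation estimates to handle the velocity derivatives $\partial_\beta$ via \eqref{DerivEst} and the five-term decompositions of Propositions \ref{cancelFprop}--\ref{cancelHprop}), which your sketch implicitly subsumes.
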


   We do not attempt  to optimize the amount of regularity needed in the above theorems, although  
   a  trick used just after Lemma \ref{NonLinEstA} allows us to reduce the standard regularity conditions (as required for instance by Guo \cite{MR1946444,MR2013332} for the cut-off soft potential and the Landau equation) from $K(3) = 8$ down to $K(3) = 5$, etc. 
   
Before we discuss  these results we will now define the relevant Sobolev spaces.

\subsection{Function spaces}\label{sec:FuncSp}
We first define a unified weight function
$$
w(v) 
\eqdef 
\left\{
\begin{array}{ccc}
\ang{v}, &\gamma + 2s \ge 0, & \text{``hard potentials''}\\
\ang{v}^{-\gamma - 2s}, & \gamma + 2s < 0, & \text{``soft potentials''}.\\
\end{array}
\right.
$$
This somewhat non-standard terminology distinguishes exactly when a spectral gap exists for the linearized collision operator.
In both cases  the weight goes to infinity as $|v|$ goes to infinity.  The scaling chosen above will be convenient for the soft potentials in our analysis below.

We will use the multi-indices $\alpha = (\alpha^1, \alpha^2, \ldots, \alpha^n)$ and $\beta = (\beta^1, \beta^2, \ldots, \beta^n)$. 
Inequalities for multi-indices are defined as follows: if each component of $\alpha$ is not greater than that of $\alpha_1$,
we write $\alpha \le \alpha_1$.  Also $\alpha <\alpha_1$ means 
$\alpha \le \alpha_1$ and $|\alpha |<|\alpha_1|$, 
where
$|\alpha | = \alpha^1 + \alpha^2 + \cdots + \alpha^n$ as usual.
We will use the spatial and velocity derivatives
$$
\partial^{\alpha}_{\beta}
=
\partial^{\alpha^1}_{x_1} 
\partial^{\alpha^2}_{x_2} \cdots
\partial^{\alpha^n}_{x_n}
\partial^{\beta^1}_{v_1} 
\partial^{\beta^2}_{v_2} \cdots 
\partial^{\beta^n}_{v_n}. 
$$
For any $\ell \in  \mathbb{R}$, the space
$ H^K_\ell(\mathbb{R}^n) $ with
 $K \ge 0$ velocity derivatives is defined by
 \begin{gather*}
|h|^2_{H^K_\ell}
=
|h|^2_{H^K_{\ell} (\mathbb{R}^n)}
\eqdef 
\sum_{|\beta| \le K}
|w^{{\ell - |\beta|}}\partial_{\beta}h |^2_{L^2 (\mathbb{R}^n)}.
\end{gather*}
The norm notation 
$
|h|^2_{H^K_{\ell,\gamma+2s}}
\eqdef 
\sum_{|\beta| \le K}
|w^{{\ell - |\beta|}}\partial_{\beta}h |^2_{L^2_{\gamma+2s} (\mathbb{R}^n)}
$
will also find utility. 
It will be convenient to define the total $H = H^K_\ell(\mathbb{R}^n \times \mathbb{T}^n)$ space by
 \begin{gather*}
\|h\|^2_{H}
=
\|h\|^2_{H^K_{\ell} (\mathbb{T}^n \times \mathbb{R}^n)}
\eqdef \sum_{|\alpha | + |\beta| \le K}
\|w^{\ell - |\beta|}\partial^{\alpha}_{\beta}h\|^2_{L^2 (\mathbb{T}^n \times \mathbb{R}^n)}.
\end{gather*}
We also consider the general weighted non-isotropic derivative space as in \eqref{normdef} by 
 \begin{gather*}
\nsm h \nsm^2_{{N^{s,\gamma}_{\ell}}}
\eqdef 
\nsm w^{\ell} f\nsm_{L^2_{\gamma+2s}}^2 + \int_{\mathbb{R}^n} dv ~ 
\ang{v}^{\gamma+2s+1} w^{2\ell}(v)
\int_{\mathbb{R}^n} dv' 
~
 \frac{(f(v') - f(v))^2}{d(v,v')^{n+2s}} 
\ind_{d(v,v') \leq 1}.
\end{gather*}
The $L^2(\mathbb{R}^n)$-based norms  exclusively in the velocity variables are always denoted as $| \cdot |$ with the space indicated by the subscript.  The usual $L^2(\mathbb{R}^n)$ inner product is then $\ang{\cdot, \cdot }$, which because of the context should not be confused with our notation for the scalar product.  We will define $L^2(\mathbb{T}^n \times \mathbb{R}^n)$ norms in both space and velocity variables by replacing $| \cdot |$ with $\| \cdot \|$ and keeping the same norm subscript.  We also use $(\cdot, \cdot )$ to be the standard $L^2(\mathbb{T}^n \times \mathbb{R}^n)$ inner product.
In particular,
$
\| h\|_{N^{s,\gamma}}^2
\eqdef
\left\|~ \nsm h\nsm_{N^{s,\gamma}} ~ \right\|_{L^2(\mathbb{T}^n)}^2
$
and
$
\| h \|_{{N^{s,\gamma}_{\ell}}}
\eqdef
\| \nsm h \nsm_{{N^{s,\gamma}_{\ell}}} \|_{L^2(\mathbb{T}^n)}.
$
We will also use the high derivative, non-isotropic space $N=\nspace (\mathbb{T}^n\times\mathbb{R}^n)$, given with $K \ge 0$ by
 \begin{gather*}
\|h\|^2_{N}
=
\|h\|^2_{{N^{s,\gamma}_{\ell, K}}(\mathbb{T}^n\times\mathbb{R}^n)}\eqdef \sum_{|\alpha | + |\beta| \le K}
\|\partial^{\alpha}_{\beta}h\|^2_{{N^{s,\gamma}_{\ell-|\beta|}}(\mathbb{T}^n\times\mathbb{R}^n)}.
\end{gather*}
Furthermore 
$
\nsm h \nsm_{{N^{s,\gamma}_{\ell, K}}}
=
\nsm h \nsm_{{N^{s,\gamma}_{\ell, K}}(\mathbb{R}^n)}
$
is the same norm as above, but only in the velocity variables.
We remark that the decaying weight $w^{-|\beta|}$ on the velocity derivatives is only needed for the soft potentials.  However we may use it for free for the hard potentials and it does not reduce the size of the space.  
Additionally, in Theorem \ref{SG1mainGLOBAL} above
the space $N^{s,\gamma}_{\emptyset,K_v}$ denotes specifically the regular $N^{s,\gamma}$ space with $K_v$ isotropic velocity derivatives and no weights.  
Also in Theorem \ref{SG1mainGLOBAL} 
the space $H^K$ denotes a the usual Sobolev space with $K$ derivatives in either the $x$ variables or the $v$ variables.

\subsection{Historical Discussion}
\label{sec:HD}
For early developments in the Boltzmann equation for long-range interactions, between 1952-1988,
we mention  the work of
Arkeryd, 
Bobylev,
Pao,
Ukai,  
and
Wang Chang-Uhlenbeck-de Boer
in 
\cite{WCUh52,MR1128328,MR0636407,MR630119,MR679196,MR839310}.

Otherwise, 
many mathematical works have required the Grad \cite{MR0156656}  angular cutoff
assumption from 1963--$b(\cos \theta) \in L^\infty(\mathbb{S}^2)$ or sometimes $b(\cos \theta) \in L^1(\mathbb{S}^2)$--which is designed to avoid the non-integrable angular singularity
from \eqref{kernelQ}.  We refer the reader to a brief few breakthrough works of \cite{MR1379589,MR2043729,MR1313028,MR1307620,MR1014927,MR0156656,MR2259206,
MR2095473,MR0475532,MR1284432,MR882376,MR0363332,MR2000470,MR760333,MR2209761,MR2366140,MR2013332,MR2116276,MR1057534}; further references can be found in the review article \cite{MR1942465} and \cite{gsNonCut1}.
Notice that many of these works develop ideas and methods that are fundamental and important even without angular cut-off.  In particular the space-time estimates and general non-linear energy method developed by Guo \cite{MR1946444,MR2013332,MR2000470} and
Strain-Guo \cite{MR2209761,MR2366140} is an important element in the last section of our proof.

Yet with the exception of the hard sphere model, which is derived in the limit when $p\to\infty$, the rest of the inverse
power-law potentials dictate that
 the cross section  $B(v-v_*,\sigma)$ is a non-integrable function in the
angular variable
and the collision operator, $\mathcal{Q}(f,f)$, is then a non-linear diffusive operator.

In particular, in the presence of the physical effects due to the angular singularity, the Boltzmann equation is well-known to experience regularizing effects.
These results  go back to Lions \cite{MR1278244} and Desvillettes \cite{MR1324404}
and have seen substantial developments \cite{MR1407542,
MR1475459,MR1750040,MR1737547,MR1715411,MR2149928,MR2038147}.  
Recently 
Chen-Desvillettes-He \cite{MR2506070}
and also
Alexandre-Morimoto-Ukai-Xu-Yang \cite{MR2462585,arXiv:0909.1229v1} have developed independent  machinery to study these general smoothing effects for kinetic equations.

Contrast this with the case of an angular cut-off, where, as a result of works by Boudin-Desvillettes \cite{MR1798557} in 2000, and additional progress in \cite{MR2435186,MR2476678},
we know that under
the angular cut-off assumption small-data solutions
have the same
Sobolev space regularity as the initial data.  
The results described in these last two paragraphs illustrate that the Boltzmann equation with angular cut-off can be in some respects a very different model from the one without any angular cut-off.

We also mention several works which 
developed and utilized the entropy production (or dissipation)
estimates for the
collision operator
(which grants a non-linear smoothing effect) and also the spatially homogeneous theories.  
We point out 
the work of
 Alexandre, Bouchut, Desvillettes, El Safadi, Fournier, Golse,
Goudon, Gu{\'e}rin, Lions, Mouhot,
Villani and Wennberg
as in
\cite{MR1649477,MR1765272,MR2149928,MR1857879,MR2038147,MR1650006,MR1484062,MR2525118,MR2398952,MR1750040}.
Further references can be found in our concurrent article
\cite{gsNonCut1} and the surveys \cite{MR1942465,krmReview2009}.

Lastly, for the most physically interesting and mathematically challenging case of the
spatially inhomogeneous Boltzmann equation there are much fewer results.
 Here we have two results on local existence  
\cite{MR1851391,arXiv:0909.1229v1}.  We also refer to the paper by Alexandre-Villani \cite{MR1857879}, 
which proves global existence of
 DiPerna-Lions renormalized weak solutions \cite{MR1014927}
 if one includes in the equation a non-negative defect measure.

Furthermore, after the completion of this present work and \cite{gsNonCut1,gsNonCutA},
we were kindly informed by R. Alexandre of 
the recent paper
 \cite{newNonCutAMUXY} by Alexandre-Morimoto-Ukai-Xu-Yang.  They 
prove 
global existence and smoothness for  perturbations of the Maxwellian equilibrium states \eqref{maxLIN} in the whole space $\R^3$ for the  Maxwell molecules collision kernel (meaning that the kinetic factor in \eqref{kernelP} is constant) and moderate angular singularities (meaning that $0< s< 1/2$ in \eqref{kernelQ}); these assumptions  apply to the inverse power intermolecular potentials when $p=5$.
The methods of proof in \cite{newNonCutAMUXY} are substantially different from our own.   

For the Boltzmann collision operator, its essential behavior  has been widely conjectured be the same as a fractional flat diffusion $-(-\Delta_v)^s$.  Precisely 
$$
F \mapsto Q(g,F) \sim -(-\Delta_v)^s F + \mbox{l.o.t.}
$$
Above ``l.o.t.'' indicates that the remaining terms will be lower order.
The original mathematical intuition for this conjecture has been credited to Carlo Cercignani \cite{MR0255199} in 1969, now more than forty years ago (see for instance  Villani \cite[p.91]{MR1942465}).  This conjecture is certainly correct in terms of the smoothing effects, as has been proven for instance in \cite{MR1278244,MR1324404,MR1407542,
MR1475459,MR1750040,MR1737547,MR1715411,MR2506070,MR2149928,MR2462585,arXiv:0909.1229v1}.

Even so, our research proves at the linearized level that the essential behavior of the collision operator is not exactly a flat fractional diffusion.  Instead, the fractional diffusive effects are fundamentally intertwined with non-isotropic velocity weights which go to infinity at infinity.  We prove that this interconnection can be characterized geometrically;  the sharp linear behavior is in fact that of a fundamentally non-isotropic fractional geometric Laplacian, the geometry being given by that of a ``lifted'' paraboloid in $\mathbb{R}^{n+1}$.  
The intuition for this behavior is derived from the original physics representations for the collision operator in terms of $\delta$-functions.

Our next estimate below further establishes that this fundamental non-isotropy, with the same geometry, carries over to the fully non-linear situation in many cases.

\subsection{Entropy Production}
Among Boltzmann's most important contributions to statistical physics was his celebrated $H$-Theorem. 
We define the H-functional by
$$
{H}(t)=-\int_{\mathbb{T}^n}dx~ \int_{\R^n}dv~ F\log F.
$$
Then the Boltzmann H-Theorem predicts that the entropy is increasing over time
$$
\frac{d{H}(t)}{dt} =  \int_{\mathbb{T}^n}dx~ D(F) \ge 0,
$$
which is a manifestation of the second law of thermodynamics.  Here the entropy production functional, which is non-negative, is defined by
\begin{multline}
\notag
D(F)= -\int_{\R^n} ~ dv ~ Q(F,F)\log F
\\
=
\frac{1}{4}
\int_{\R^n}dv \int_{\R^n} dv_* \int_{\sph} d\sigma B(v-v_*, \sigma) \left(F' F'_* - F F_* \right)\log \frac{F' F'_*}{F F_*}.
\end{multline}
Moreover, the entropy production functional is zero if and only if it is operating on a Maxwellian equilibrium.
These formulas formally demonstrate that Boltzmann's equation defines an irreversible dynamics and predicts convergence to Maxwellian in large time.  Of course these predictions are usually non-rigorous  because the regularity  required to perform the above formal calculations is unknown at the moment to be propagated by solutions of the Boltzmann equation in general.

In particular, even though there are many important breakthroughs in this direction, none, so far as we are aware, can be said to completely and rigorously justify the H-Theorem for the inverse power-law intermolecular potentials with 
$p\in(2,\infty)$.
We refer to \cite{MR0158708,MR1014927,MR2116276,MR1942465,MR2209761,MR1857879,MR2366140}, and the references therein, in this regard, to mention only a brief few works.
Our results in this paper
prove rapid convergence to equilibrium for all of the inverse power collision kernels derived by Maxwell when one considers
 initial data which is close to some global Maxwellian. 
 This convergence is 
the essential prediction of Boltzmann's $H$-Theorem.

Many works study entropy production estimates in the non cut-off regime, as in \cite{MR1649477,MR1765272}. 
These estimates have found widespread utility.  
Our results and non-isotropic Sobolev space have the following implications in the fully non-linear context.  Combining estimates in \cite{MR1715411,gsNonCut1} we can see the following lower bound
$$
D(F) \gtrsim
\int_{\mathbb{R}^n} dv ~ \int_{\mathbb{R}^n} dv' ~ 
(\ang{v}\ang{v'})^{\frac{\gamma+2s+1}{2}}
~
 \frac{(\sqrt{F'} - \sqrt{F})^2}{d(v,v')^{n+2s}} 
\ind_{d(v,v') \leq 1} 
- \mbox{l.o.t.}
$$
Precisely, this estimate follows straightforwardly from
our new estimates
\cite[Section 6]{gsNonCut1}
when combined for instance with the decomposition \cite[(29)]{MR1715411}   in the particular case when the unknown functions in the term $A(v,v')$ from \cite[(29)]{MR1715411}
satisfy $F(v) \ge \varepsilon ~ e^{-c |v|^2}$ for any given $\varepsilon, c >0$.  
Remarkably, this is the same semi-norm as in the linearized context, and it is a stronger non-isotropic and non-local version of the local smoothing estimate from \cite{MR1765272} (stronger, that is, in terms of the weight power multiplied on the order of differentiation). 
We have not as of now  attempted to prove this estimate away from the regime where $F$ is bounded below by a Maxwellian.
This estimate was derived as a result of our effort to find explicit equivalence between the non-isotropic norm coming out of the linearized collision operator and the non-isotropic Sobolev space $N^{s,\gamma}$.

Our methods introduce a new tool into the study of the Boltzmann equation with non-integrable angular cross-sections \eqref{kernelQ}
which allows to us to obtain sharp control over geometric fractional differentiation effects with respect to both  the order of differentiation and  the non-isotropic geometrically dependent
weight. We expect this approach to be useful for future studies in the Boltzmann theory.

\subsection*{Acknowledgments} 
Recently Carlo Cercignani has passed away 
in January of 2010,
we wish to take this opportunity to express our deep gratitude and respect for all that he has done to advance the field of kinetic theory.

\subsection{Reformulation}
Throughout this sub-section we will define the relevant notation.  We also reformulate the problem in terms of the equation \eqref{Boltz} for the perturbation \eqref{maxLIN}.  In the next sub-section, we will state and explain our main estimates.

Throughout this paper, the notation $A \lesssim B$ will mean that a positive constant $C$ exists such that $A \leq C B$ holds uniformly over the range of parameters which are present in the inequality (and that the precise magnitude of the constant is irrelevant).  In particular, whenever either $A$ or $B$ involves a function space norm, it will be implicit that the constant is uniform over all the relevant elements of the space unless explicitly stated otherwise.  The notation $B \gtrsim A$ is equivalent to $A \lesssim B$, and $A \approx B$ means that both $A \lesssim B$ and $B \lesssim A$.
Furthermore, for any $R>0$, we will use $B_{R}\subset \mathbb{R}^n$ to be the ball centered at the origin of radius $R$.

We linearize the Boltzmann equation \eqref{BoltzFULL} around the perturbation \eqref{maxLIN}.  This grants an equation for the perturbation $f(t,x,v)$ as
\begin{gather}
 \partial_t f + v\cdot \nabla_x f + L (f)
=
\Gamma (f,f),
\quad
f(0, x, v) = f_0(x,v),
\label{Boltz}
\end{gather}
where the {\it linearized Boltzmann operator} $L$ is given by
\begin{align*}
 L(g)
 \eqdef & 
- \mu^{-1/2}\mathcal{Q}(\mu ,\sqrt{\mu} g)- \mu^{-1/2}\mathcal{Q}(\sqrt{\mu} g,\mu) \\
  = &
  \int_{\mathbb{R}^n}dv_{*}
  \int_{\sph} d\sigma~
  B(|v-v_*|,\cos \theta) \, 
   \left[g_{*} M + g M_{*}- g^{\prime}_{*} M^{\prime} - g^{\prime} M^{\prime}_{*}  \right] M_{*},
\end{align*}
and the bilinear operator $\Gamma$ is given by
\begin{gather}
\Gamma (g,h)
\eqdef
 \mu^{-1/2}\mathcal{Q}(\sqrt{\mu} g,\sqrt{\mu} h)
 =
 \int_{\mathbb{R}^n} dv_* \int_{\sph} d \sigma B M_* (g_*' h' - g_* h). 
\label{gamma0}
\end{gather}
In both definitions, we take
$$
M(v) \eqdef \sqrt{\mu(v)} = (2 \pi)^{-n/4} e^{- |v|^2/4}.
$$
Finally, we note that 
\begin{equation}
L(g) \eqdef - \Gamma(M,g) - \Gamma(g, M).
\label{LinGam} 
\end{equation}
This formulation shows that it is crucial to obtain sharp estimates for  $\Gamma$.

We expand the main term of the linearized Boltzmann operator as
  \begin{equation*}
\Gamma(M,g)
  =
  \int_{\mathbb{R}^n}dv_{*}
  \int_{\sph} d\sigma~
  B(|v-v_*|,\cos \theta) \, 
   \left[M^{\prime}_{*} g^{\prime}  -  M_{*} g \right] M_{*}.
  \end{equation*}
 We will now split this in parts whilst preserving the cancellations as follows:
   \begin{gather*}
\Gamma(M,g)  =
 \int_{\mathbb{R}^n}dv_{*}
  \int_{\sph} d\sigma~
  B \, 
   \left(g^{\prime}- g \right) M^{\prime}_{*}M_{*}
   -
 \tilde{\nu}(v)  ~ g(v),
  \end{gather*}
where
  $$
 \tilde{\nu}(v) 
   =
  \int_{\mathbb{R}^n}dv_{*}
  \int_{\sph} d\sigma~
  B(|v-v_*|,\cos \theta) \, 
 ( M_{*} - M^{\prime}_{*} ) M_{*}.
 $$ 
 The first piece above contains a crucial Hilbert space structure.  This
 can be seen from the pre-post collisional change of variables as
   \begin{multline*}
 - \int_{\mathbb{R}^n}dv
  \int_{\mathbb{R}^n}dv_{*}
  \int_{\sph} d\sigma~
  B(|v-v_*|,\cos \theta) \, 
 (g^{\prime}-g) h M^{\prime}_{*}  M_{*}
 \\
     =
     -
     \frac{1}{2}
  \int_{\mathbb{R}^n}dv
  \int_{\mathbb{R}^n}dv_{*}
  \int_{\sph} d\sigma~
  B \,  
 (g^{\prime}-g) h M^{\prime}_{*}  M_{*}
 \\
 -
      \frac{1}{2}
  \int_{\mathbb{R}^n}dv
  \int_{\mathbb{R}^n}dv_{*}
  \int_{\sph} d\sigma~
  B \, 
 (g-g^{\prime}) h^{\prime} M_{*}  M^{\prime}_{*}
  \\
     =
     \frac{1}{2}
  \int_{\mathbb{R}^n}dv
  \int_{\mathbb{R}^n}dv_{*}
  \int_{\sph} d\sigma~
  B \, 
 (g^{\prime}-g) (h^{\prime}-h) M^{\prime}_{*}  M_{*}.
  \end{multline*}
For the weight, we will use Pao's splitting as
$$
 \tilde{\nu}(v)  =  \nu(v) + \nu_K(v),
$$
where under only   \eqref{kernelQ} and \eqref{kernelP} the following asymptotics are known:
\begin{equation}
 \nu (v)\approx
\ang{v}^{\gamma+2s}
\quad
\text{and}
\quad
\left|  \nu_K(v) \right|
\lesssim
\ang{v}^{\gamma}.
\notag
\end{equation}
These  estimates
were established by
 Pao in \cite[p.568 eq. (65), (66)]{MR0636407} by reducing to the known asymptotic behavior of confluent hypergeometric functions.  

We further decompose  $L=N  + K $.  Here $N$ is the ``norm part'' and $K$ will be seen as the ``compact part.''
The norm part is then written as
\begin{gather}
   \label{normpiece}
 Ng
   \eqdef
   - \Gamma(M,g) - \nu_K(v) g
   =
  -\int_{\mathbb{R}^n}dv_{*}
  \int_{\sph} d\sigma~
  B 
 (g^{\prime}-g) M^{\prime}_{*}  M_{*}
   + \nu(v) g(v).
\end{gather}
From previous calculations, this norm piece satisfies the following identity: 
\begin{gather*}
  \ang{Ng,g} =  \frac{1}{2} \int_{\mathbb{R}^n} dv \int_{\mathbb{R}^n} dv_* \int_{\sph} d \sigma B (g'-g)^2 M_*' M_* 
  + 
  \int_{\mathbb{R}^n} dv ~ \nu(v) ~ |g(v)|^2. 
\end{gather*}
As a result, 
in the following we will use the non-isotropic fractional semi-norm
 \begin{equation}
| g |_{B_\ell}^2 \eqdef 
\frac{1}{2} \int_{\mathbb{R}^n} dv~ w^{\ell}(v)\int_{\mathbb{R}^n} dv_*~ \int_{\sph} d \sigma~ B~ (g'-g)^2 M_*' M_*,
\quad 
\ell \in \mathbb{R}. 
\label{normexpr}
\end{equation}
For the second part of $\ang{Ng,g}$ we recall the norm 
$
\nsm f\nsm_{L^2_{\gamma+2s}}
$
defined below equation \eqref{normdef}. 
These two quantities define our norm, which characterizes precisely
 the linearized Boltzmann H-Theorem.
 
We also record here the definition of the ``compact piece'' $K$:
 \begin{equation}
  K g \eqdef  \nu_K(v) g 
- \Gamma(g, M)
=
  \nu_K(v) g 
-
 \int_{\mathbb{R}^n} dv_* \int_{\sph} d \sigma B M_* (g_*' M' - g_* M).
 \label{compactpiece}
\end{equation}
This is our main splitting of the linearized operator.  We will now state our main estimates for each of these decomposed operators.

\subsection{Main estimates}\label{mainESTsec}
In this sub-section we will 
state most of the crucial long-range estimates to be used in our main results.  
In the next sub-section, we discuss how these estimates in particular resolve a conjecture from Mouhot-Strain \cite{MR2322149}.

We will prove all of our estimates for functions in the Schwartz space, $\mathcal{S}(\mathbb{R}^n)$, which is the well-known space of real valued $C^{\infty}(\mathbb{R}^n)$ functions all of whose derivatives decay at infinity faster than the reciprocal of any polynomial.   
Note that the Schwartz functions are dense in the non-isotropic space $\nspace $, and the proof of this fact is easily reduced to the analogous one for Euclidean Sobolev spaces by means of the 
partition of unity as constructed, for example, in our first paper \cite{gsNonCut1}.
Moreover, in all of our estimates, none of the constants that come up will depend on the 
regularity of the functions that we are estimating.  Thus using routine density arguments, our estimates will apply to any function in $\nspace$ or whatever the appropriate function space happens to be for a particular estimate.

 The first important inequality that we explain is for the linear operator:

\begin{lemma}
\label{sharpLINEAR}
(Linear estimates)
Consider the linearized operator $L = N + K$ where
$N$ is defined in \eqref{normpiece} and $K$ is defined in \eqref{compactpiece}.
We have the uniform inequalities
\begin{align}
\left| \ang{w^{2\ell} N g, g} \right| & \lesssim |g|_{N_{\ell}^{s,\gamma}}^2, \label{normupper} \\
\left| \ang{w^{2\ell} K g, g} \right| & \le \eta |w^\ell g|_{L^2_{\gamma + 2s}}^2 + C_\eta 
|g|_{L^2(B_{C_\eta})}^2, \label{compactupper}
\end{align}
where $\ell \in \mathbb{R}$, $\eta>0$ is any small number, and $C_\eta>0$.
\end{lemma}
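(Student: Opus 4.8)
The plan is to treat the two inequalities separately, since they are genuinely different in character: \eqref{normupper} is an upper bound matching the norm $N^{s,\gamma}_\ell$, while \eqref{compactupper} is a smallness-plus-compactness statement. For \eqref{normupper}, I would start from the identity for $\ang{Ng,g}$ recorded just above \eqref{normexpr}, but now with the weight $w^{2\ell}$ inserted. Writing $\ang{w^{2\ell}Ng, g}$ and carrying the weight through the pre-post collisional symmetrization (following exactly the computation displayed before the definition of $\tilde\nu$, but keeping track of the commutator between $w^{2\ell}$ and the collision), one obtains a main term $\tfrac12\int w^{2\ell}(v)\int\int B\,(g'-g)^2 M_*'M_*$, which is precisely $|g|_{B_\ell}^2$, plus the weighted $L^2_{\gamma+2s}$ term coming from $\nu(v)\approx\ang{v}^{\gamma+2s}$, plus error terms generated by the weight mismatch $w^{2\ell}(v)-w^{2\ell}(v')$. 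So the task reduces to two things: (i) controlling $|g|_{B_\ell}^2$ by $|g|_{N^{s,\gamma}_\ell}^2$, and (ii) absorbing the weight-commutator errors. Step (i) should follow from the Carleman-type representation of the collision operator together with the cancellation lemma / change-of-variables machinery developed in \cite{gsNonCut1}, which converts $|g|_{B_\ell}^2$ into the non-isotropic fractional integral with metric $d(v,v')$ — this is the heart of the equivalence alluded to in the Entropy Production subsection. For step (ii), the difference $w^{2\ell}(v)-w^{2\ell}(v')$ is $O(\ang{v}^{2\ell-1}|v-v'|)$ on the relevant region, and $|v-v'|^2\le d(v,v')^2$, so these terms are lower order relative to the fractional seminorm and can be bounded by $|w^\ell g|_{L^2_{\gamma+2s}}^2\le |g|_{N^{s,\gamma}_\ell}^2$; one must be slightly careful near $|v|$ small, but there the weight is bounded and there is nothing to prove.

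For \eqref{compactupper}, recall from \eqref{compactpiece} that $Kg = \nu_K(v)g - \Gamma(g,M)$, with $|\nu_K(v)|\lesssim \ang{v}^{\gamma}$. The term $\ang{w^{2\ell}\nu_K g, g}$ is immediately bounded by $\int \ang{v}^{\gamma} w^{2\ell}(v)|g(v)|^2$; since $\gamma<\gamma+2s$, on the region $|v|\ge R$ this is $\le \epsilon(R)|w^\ell g|^2_{L^2_{\gamma+2s}}$ with $\epsilon(R)\to 0$, and on $|v|\le R$ it is $\lesssim_R |g|_{L^2(B_R)}^2$ — choosing $R=C_\eta$ large handles this piece. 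The genuinely ``compact'' contribution is $\ang{w^{2\ell}\Gamma(g,M), g}$. Here I would use the fact that $M=\sqrt\mu$ decays like a Gaussian: the kernel of $g\mapsto \Gamma(g,M)$, obtained after a Carleman or Glassey-type change of variables, is a smoothing kernel with Gaussian decay in the output variable (and integrable singularity inherited from $b$), so $\Gamma(\cdot,M)$ maps $L^2$-type spaces into spaces with arbitrarily high polynomial weight and a gain of localization. One then splits $v$ into $|v|\le R$ and $|v|> R$: on the far region the Gaussian factor beats any power weight $w^{2\ell}\ang{v}^{\gamma+2s}$, giving the $\eta|w^\ell g|^2_{L^2_{\gamma+2s}}$ term (after a Cauchy–Schwarz that splits one Gaussian for smallness and keeps one for the kernel bound); on $|v|\le R$ the operator is essentially Hilbert–Schmidt into $L^2(B_R)$, yielding $C_\eta|g|_{L^2(B_{C_\eta})}^2$. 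This is the standard Grad-type splitting adapted to the non-cutoff kernel.

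The main obstacle I anticipate is not \eqref{compactupper} but the sharp upper bound in \eqref{normupper}, specifically step (i): showing that the collisional seminorm $|g|_{B_\ell}^2$ is controlled by the \emph{geometric} seminorm defined through $d(v,v')^{n+2s}$ with weights $(\ang v\ang{v'})^{(\gamma+2s+1)/2}$. The difficulty is that $B_\ell$ is written in the $\sigma$-representation, where the weight $M_*'M_*$ and the angular singularity conspire, whereas $N^{s,\gamma}_\ell$ lives on the lifted paraboloid; bridging the two requires the Carleman representation and a careful analysis of the Jacobian and of how the Maxwellian weight $M_*'M_*$ integrates out to reproduce the $\ang{v}^{\gamma+2s}$ weight and the paraboloid metric. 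Relatedly, one has to be vigilant that the $\ind_{d(v,v')\le 1}$ truncation in \eqref{normdef} is harmless — the ``far'' part $d(v,v')>1$ only contributes, after using $(g'-g)^2\le 2g'^2+2g^2$, an extra $L^2_{\gamma+2s}$-type term, which is already present on the right-hand side. Once the equivalence $|g|_{B_\ell}\approx |g|_{N^{s,\gamma}_\ell}$ up to lower-order $L^2_{\gamma+2s}$ terms is in hand — which is essentially the content transported from \cite[Section 6]{gsNonCut1} — both displayed inequalities follow by assembling the pieces above.
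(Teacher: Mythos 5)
Your overall shape — Grad-type splitting, Cauchy–Schwarz, Gaussian decay of $M$, and a near/far-in-$v$ decomposition to trade the weight gain for a small parameter — is the right intuition, but there are two genuine gaps in how you propose to carry it out.

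The first and more serious one is in your treatment of \eqref{compactupper}. You describe the kernel of $g\mapsto\Gamma(g,M)$ as having ``integrable singularity inherited from $b$,'' but in the non-cutoff regime $b$ is \emph{not} locally integrable — that is the whole point of \eqref{kernelQ}. The paper's actual mechanism is to first decompose the angular singularity dyadically (the $T^{k,\ell}_\pm$ and $T^{k,\ell}_*$ operators built from $B_k$), then expand $M = \sum_j\phi_j$ with the paraboloid Littlewood--Paley projections, and exploit that each $\phi_j$ inherits a smallness $C_{\phi_j}\lesssim 2^{-2j}$ from the moment cancellations \eqref{lpsmooth}. The double sum $\sum_j\sum_k\min\{2^{(2s-2)k},2^{2sk}\}2^{-2j}$ then converges (this is the proof of \eqref{coerc1ineqPREP}), with the Grad-type weighted kernel estimate Lemma \ref{prop:Grad} supplying the $\ang{v}^{-(n-1)}$ gain on the kernel side. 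Your proposal skips this entirely, and the Littlewood--Paley decomposition of the \emph{Maxwellian}, not just of $g$, is the step that makes the angular singularity summable. Relatedly, your claim that $\Gamma(\cdot,M)$ gains ``arbitrarily high polynomial weight'' is too strong: Lemma \ref{prop:Grad} gives a fixed gain of $\ang{v}^{-(n-1)}$ (and Pao's estimate on $\nu_K$ gives only $\ang{v}^{-2s}$), which is why Lemma \ref{CompactEst} has $\delta=\min\{2s,n-1\}$ and the remaining work is the $|v|\lessgtr R$ split you describe.

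The second gap is in your route to \eqref{normupper}. You want to reduce to showing $|g|_{B_\ell}^2\lesssim|g|_{N^{s,\gamma}_\ell}^2$ ``via the Carleman-type representation and cancellation machinery.'' But the paper never proves this upper bound of the equivalence independently — it \emph{derives} it from the trilinear estimate \eqref{coerc1ineqNORM} (applied with $\phi=M$), which in turn is proven by the same dyadic-in-$b$ plus Littlewood--Paley-in-$f,h$ machinery as Lemma \ref{NonLinEstA}. What \cite[Section 6]{gsNonCut1} establishes is the \emph{opposite} inequality, $|g|^2_{N^{s,\gamma}_\ell}\lesssim|g|_{B_\ell}^2+|w^\ell g|^2_{L^2_{\gamma+2s}}$, used for coercivity in Lemma \ref{estNORM3}. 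So your step (i) is the actual content of Lemma \ref{sharpLINEAR}, not a lemma you can cite, and the sketch you give for it is circular. The paper's proof of \eqref{normupper} is simply: write $Ng=-\Gamma(M,g)-\nu_K g$, bound $|\ang{w^{2\ell}\Gamma(M,g),g}|$ by \eqref{coerc1ineqNORM}, and bound the $\nu_K$ piece by Pao's asymptotics $|\nu_K|\lesssim\ang{v}^\gamma\le\ang{v}^{\gamma+2s}$, hence by $|w^\ell g|^2_{L^2_{\gamma+2s}}\le|g|^2_{N^{s,\gamma}_\ell}$. Your weight-commutator analysis of $J$ is relevant to Lemma \ref{estNORM3}, not to \eqref{normupper}.
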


In these estimates there are several things to observe.  
First of all, there are no derivatives in the ``compact estimate'' from \eqref{compactupper}, which can be contrasted with the corresponding estimate in the Landau case \cite[Lemma 5]{MR1946444}, in which the upper bound  requires the inclusion of derivatives. Further  \eqref{normupper}  tells us that  the coercive inequality for the ``norm piece'' of the linear term in  Lemma \ref{estNORM3} is essentially sharp.

\begin{lemma}
\label{estNORM3}
(Main coercive inequality)
For the sharp space defined in \eqref{normdef} with \eqref{normpiece}, \eqref{kernelQ} and \eqref{kernelP} we have
  the uniform coercive lower bound estimate:
\begin{equation*}
\ang{ w^{2\ell} N f, f}
 \gtrsim
 \nsm f \nsm_{N^{s,\gamma}_\ell}^2
 -
  C \nsm f \nsm_{L^2(B_C)}^2,
\quad
\exists C \ge 0.
\end{equation*}
This holds   for any $\ell \in\mathbb{R}$; 
if $\ell = 0$ we may take $C = 0$.
\end{lemma}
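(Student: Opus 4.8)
The plan is to combine the exact identity for $\langle Nf,f\rangle$ with a Carleman-type representation of the collision semi-norm, and then to match, modulo lower order terms, the resulting singular integral with the non-isotropic kernel in \eqref{normdef}; the lifted-paraboloid metric $d(v,v')$ will emerge from the Gaussian factors $M'_*M_*$. First I would reduce to $\ell=0$. From \eqref{normpiece} and the pre-post collisional symmetrization used to derive \eqref{normexpr},
\[
\langle w^{2\ell}Nf,f\rangle=\frac12\int_{\mathbb{R}^n}dv\int_{\mathbb{R}^n}dv_*\int_{\sph}d\sigma\ B\,(f'-f)\big(w^{2\ell}(v')f(v')-w^{2\ell}(v)f(v)\big)M'_*M_*+\int_{\mathbb{R}^n}dv\,\nu(v)\,w^{2\ell}(v)|f(v)|^2.
\]
Writing $w^{2\ell}(v')f(v')-w^{2\ell}(v)f(v)=w^{\ell}(v')\big((w^\ell f)(v')-(w^\ell f)(v)\big)+(w^\ell f)(v)\big(w^\ell(v')-w^\ell(v)\big)$, and splitting $f(v')-f(v)$ in the parallel way, one isolates a leading term comparable to $|w^\ell f|_{B_0}^2$ together with commutator remainders. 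Using $|w^\ell(v')-w^\ell(v)|\lesssim\langle v\rangle^{\ell-1}|v-v'|$ when $|v-v'|\le1$, and $M'_*M_*\lesssim e^{-c(|v|^2+|v'|^2+|v_*|^2)}$ to dispose of the range $|v-v'|\ge1$, these remainders are at most $\eta|w^\ell f|_{B_0}^2+C_\eta|f|_{L^2(B_{C_\eta})}^2$ and are absorbed; since $\nu\approx\langle v\rangle^{\gamma+2s}$ this gives $\langle w^{2\ell}Nf,f\rangle\ge c\,|w^\ell f|_{B_0}^2+\tfrac12|w^\ell f|_{L^2_{\gamma+2s}}^2-C|f|_{L^2(B_C)}^2$, while the same manipulations yield $|f|_{N^{s,\gamma}_\ell}^2\lesssim|w^\ell f|_{N^{s,\gamma}}^2+|w^\ell f|_{L^2_{\gamma+2s}}^2$. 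When $\ell=0$ no commutator enters, so it suffices to prove, for Schwartz $h$, the clean estimate $|h|_{B_0}^2+|h|_{L^2_{\gamma+2s}}^2\gtrsim|h|_{N^{s,\gamma}}^2$; since $\langle Nh,h\rangle=|h|_{B_0}^2+\int\nu|h|^2$ and $|h|_{N^{s,\gamma}}^2$ already contains $|h|_{L^2_{\gamma+2s}}^2$, this amounts to showing that $|h|_{B_0}^2$, together with $|h|_{L^2_{\gamma+2s}}^2$, dominates the non-isotropic fractional part of $|h|_{N^{s,\gamma}}^2$.

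To this end I would pass to a Carleman-type representation of $|h|_{B_0}^2=\tfrac12\int dv\,dv_*\,d\sigma\,B(h'-h)^2M'_*M_*$: for fixed $v$, parametrize the collision by $v'$ together with an auxiliary variable $w$ in the linear hyperplane $(v-v')^\perp$, with $v'_*=v-w$ (so $v_*=v'-w$, using the orthogonality $(v'-v)\perp(v'-v_*)$). The singular factor of \eqref{kernelQ} turns into a kernel $\sim|v-v'|^{-(n-1)-2s}$, and \eqref{kernelP} contributes $C_\Phi(|v-v'|^2+|w|^2)^{\gamma/2}$ (the case $\gamma<0$, $2s\ge1$ requiring that one keep the angular singularity together with $\Phi$). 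Using $|v'_*|^2=|v|^2+|v_*|^2-|v'|^2$ one finds $|v_*|^2+|v'_*|^2=|v|^2+|v'|^2+2\big|w-\tfrac12\Pi(v+v')\big|^2-\tfrac12|\Pi(v+v')|^2$, with $\Pi$ the orthogonal projection onto $(v-v')^\perp$, and $|v|^2+|v'|^2-\tfrac12|\Pi(v+v')|^2=\tfrac12|v-v'|^2+\tfrac{(|v|^2-|v'|^2)^2}{2|v-v'|^2}$. Carrying out the (bounded below) Gaussian integration in $w$, the semi-norm becomes, up to constants,
\[
|h|_{B_0}^2\approx\int_{\mathbb{R}^n}dv\int_{\mathbb{R}^n}dv'\,\frac{\mathcal K(v,v')}{|v-v'|^{n+2s}}\,e^{-c|v-v'|^2}\,e^{-c(|v|^2-|v'|^2)^2/|v-v'|^2}\,(h(v')-h(v))^2,
\]
with $\mathcal K$ a product of Gaussian and polynomial weights in $v,v'$. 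Since $4|v-v'|^2+(|v|^2-|v'|^2)^2=4\,d(v,v')^2$, the last exponential is precisely the mechanism that replaces the flat kernel by one governed by the paraboloid metric: it is harmless where $|v-v'|\approx d(v,v')$ and strongly suppresses the configurations with $|v-v'|\ll d(v,v')$.

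The last and most delicate step is to compare this kernel with $(\langle v\rangle\langle v'\rangle)^{(\gamma+2s+1)/2}\,d(v,v')^{-n-2s}\,\ind_{d(v,v')\le1}$, up to an error absorbable into $\varepsilon|h|_{B_0}^2+C|h|_{L^2_{\gamma+2s}}^2$. For bounded velocities $d(v,v')\approx|v-v'|$ and the Gaussians are $\approx1$, so the estimate reduces to the standard non-cutoff fractional coercivity on compact sets. For large velocities I would decompose $v'-v$ into its component along $v$ and its components transverse to $v$, refining this dyadically in $|v-v'|$ and in the angle with $v$: in the transverse directions $|v|^2-|v'|^2=O(|v-v'|^2)$, so $d\approx|v-v'|$, both Gaussians are $\approx1$ on $d\le1$, and the weight powers from $\Phi$ and the Maxwellians assemble into $(\langle v\rangle\langle v'\rangle)^{(\gamma+2s+1)/2}$; in the near-radial directions $|v|^2-|v'|^2\approx2|v|\,|v-v'|$, so $d\approx\langle v\rangle|v-v'|$ and the $d$-metric contribution lives only on the scales $|v-v'|\lesssim\langle v\rangle^{-1}$, so that the strong exponential suppression there costs nothing — after using the cancellation in $h(v')-h(v)$, that piece is bounded by $\varepsilon|h|_{B_0}^2+C|h|_{L^2_{\gamma+2s}}^2$.

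The main obstacle is exactly this comparison — reconciling the collisional Gaussians $M'_*M_*$ with the lifted-paraboloid geometry: one must verify that the anisotropic configurations carry no more than an $L^2_{\gamma+2s}$-worth of the norm while, on the complementary set, the weight powers combine into precisely $(\langle v\rangle\langle v'\rangle)^{(\gamma+2s+1)/2}$; this is where both the choice of the metric $d(v,v')$ and of the exponent $\tfrac{\gamma+2s+1}{2}$ in \eqref{normdef} are forced, and where a dyadic (geometric Littlewood--Paley) decomposition adapted to the paraboloid is the natural organizing device. Assembling the comparison with the $\nu$-term — which already produces the $|w^\ell f|_{L^2_{\gamma+2s}}^2$ summand of $|f|_{N^{s,\gamma}_\ell}^2$ — and with the weight reduction of the first paragraph (which is what contributes the $C|f|_{L^2(B_C)}^2$ when $\ell\ne0$) completes the lower bound; the matching upper bound \eqref{normupper} shows it is sharp.
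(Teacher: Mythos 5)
Your overall strategy is sound and parallels the paper: isolate the weight commutator to reduce to the unweighted case, then establish the coercivity of $|h|_{B_0}^2 + |h|_{L^2_{\gamma+2s}}^2$ over $|h|_{N^{s,\gamma}}^2$ via a Carleman representation of the semi-norm. (The paper's own reduction for $\ell\neq 0$ is slightly different in detail: it keeps $w^{2\ell}(v)$ as a weight on the unsplit difference, defining $|g|_{B_\ell}$, and shows the weight-commutator remainder $J$ satisfies $|J| \lesssim |g|_{B_\ell}\,|w^\ell g|_{L^2_{\gamma+2s-\epsilon}}$ by Taylor-expanding the weight, splitting $|v'-v|^2 = |v'-v|^{2s+\delta}|v'_*-v_*|^{2-2s-\delta}$, and using the Maxwellian decay in $v_*, v_*'$ to control $|v_*'-v_*|$; your version pushes $w^\ell$ into the test function, which works equally well in principle.) However, two things in your write-up are genuine gaps rather than routine omissions.

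First, the inequality $M_*'M_* \lesssim e^{-c(|v|^2+|v'|^2+|v_*|^2)}$, which you invoke to dispose of $|v-v'|\ge 1$, is simply false: $M_*'M_* = e^{-(|v_*|^2+|v_*'|^2)/4}$ decays only in the starred variables. Energy conservation gives $|v_*'|^2 = |v|^2+|v_*|^2-|v'|^2$, so with $v_*$ bounded and $|v|\approx |v'|$ both large, $M_*'M_*$ is $O(1)$ while your claimed bound is astronomically small. The correct way to handle the far region is to use $|v'-v|=|v_*'-v_*|$ and absorb powers of this by a small power $(M_*'M_*)^{\epsilon'}$, together with $\ang{v}/\ang{v'}\lesssim\ang{v-v'}$, exactly as the paper does.

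Second, and more importantly, the heart of the lemma — the $\ell=0$ coercivity $|h|_{B_0}^2 + |h|_{L^2_{\gamma+2s}}^2 \gtrsim |h|_{N^{s,\gamma}}^2$ — is not established. Your Carleman reduction and the computation $|v_*|^2+|v_*'|^2 = \tfrac12|v-v'|^2 + \tfrac{(|v|^2-|v'|^2)^2}{2|v-v'|^2} + 2|w-\tfrac12\Pi(v+v')|^2$ are correct, and you correctly identify that the resulting exponential $e^{-c(|v|^2-|v'|^2)^2/|v-v'|^2}$ strongly suppresses configurations with $|v-v'|\ll d(v,v')$ (note this exponent is $\ang{v}^2\cos^2\psi$ in the large-velocity near-radial sector, far stronger than what $d(v,v')\leq 1$ would suggest). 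Precisely because of this suppression, a pointwise kernel comparison cannot yield the lower bound: on $d(v,v')\le 1$ with $\ang{v}^{-1}\ll|\cos\psi|$, the $N^{s,\gamma}$ kernel is $\approx d(v,v')^{-n-2s}$ while the Carleman kernel has already collapsed to essentially zero. Your treatment of this region — ``after using the cancellation in $h(v')-h(v)$, that piece is bounded by $\varepsilon|h|_{B_0}^2 + C|h|_{L^2_{\gamma+2s}}^2$'' — restates the goal rather than proving it; this step is the crux and is precisely where the paper instead invokes the anisotropic Littlewood--Paley machinery developed in the companion paper \cite{gsNonCut1} (and extended here in Section \ref{sec:aniLP}), comparing both semi-norms to a common square function. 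Without either carrying that out or supplying an alternative quantitative argument for the near-radial sector, the proof is not complete.
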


We turn to our first non-linear estimate:

\begin{lemma}
\label{NonLinEst}
(Non-linear estimate)
Consider the non-linear term \eqref{gamma0}.
For any $|\alpha | + |\beta |\le K$, with $\NgE$ and $\ell \ge 0$,  we have
the following  estimate
\begin{equation*}
 \left| \left(w^{2\ell-2|\beta|}\partial^{\alpha}_{\beta}\Gamma(g, h),\partial^{\alpha}_{\beta}f\right) \right| 
 \lesssim 
 \left(
  \| g\|_{N}    \| h\|_{H}
  +
  \| g\|_{H} \| h\|_{N} \right)
  \| \partial^{\alpha}_{\beta}f\|_{N^{s,\gamma}_{\ell - |\beta|}}.
\end{equation*}
\end{lemma}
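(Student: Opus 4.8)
The plan is to reduce the estimate on $\partial^\alpha_\beta \Gamma(g,h)$ to a ``zeroth order'' trilinear estimate of the form
$$
\left| \left( w^{2\ell-2|\beta|} \Gamma(\bar g, \bar h), \bar f \right) \right| \lesssim \left( |\bar g|_{\text{(weighted }L^2\text{)}} \, \|\bar h\|_{N^{s,\gamma}_{\cdot}} + |\bar g|_{N^{s,\gamma}_{\cdot}} \, \|\bar h\|_{\text{(weighted }L^2\text{)}} \right) \, |\bar f|_{N^{s,\gamma}_{\ell-|\beta|}},
$$
where $\bar g, \bar h, \bar f$ range over the functions $\partial^{\alpha_1}_{\beta_1} g$, $\partial^{\alpha_2}_{\beta_2} h$, $\partial^\alpha_\beta f$ produced by distributing derivatives. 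First I would apply the Leibniz rule for $\partial^\alpha_\beta$ to the integral defining $\Gamma(g,h)$ in \eqref{gamma0}. The $x$-derivatives $\partial^\alpha$ all fall on $g$ and $h$ (the kernel and $M_*$ are $x$-independent), so they split as $\alpha = \alpha_1 + \alpha_2$. The velocity derivatives $\partial^\beta$ are more delicate: some fall on $g_*'$ and $h'$, but some must differentiate the kernel $B(v-v_*,\sigma)$ and the Gaussian weight $M_*$; here I would invoke the standard fact (used already in \cite{gsNonCut1} and going back to Guo) that $\partial_{v}$ acting on $B M_*$ in the pre/post-collisional variables produces, after suitable change of variables, terms of exactly the same structural type as $\Gamma$ itself, possibly with extra Gaussian decay factors and with $B$ replaced by a kernel satisfying the same bounds \eqref{kernelQ}, \eqref{kernelP}. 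Thus modulo harmless Gaussian factors $M_*^{\delta}$ one reduces to controlling $(w^{2\ell-2|\beta|}\, \widetilde\Gamma(\partial^{\alpha_1}_{\beta_1}g, \partial^{\alpha_2}_{\beta_2}h), \partial^\alpha_\beta f)$ for all splittings $\alpha_1+\alpha_2 \le \alpha$, $\beta_1 + \beta_2 \le \beta$.

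Next, for the core trilinear estimate I would use the cancellation-preserving decomposition already exhibited in the excerpt: writing $\Gamma(g,h) = \int B M_* (g_*' h' - g_* h)$ and symmetrizing via the pre-post collisional change of variables, one can extract the non-isotropic semi-norm \eqref{normexpr}/\eqref{normdef} on whichever of $h$ or $f$ carries the ``difference'' $h' - h$ or $f' - f$. Concretely, split $g_*' h' - g_* h = g_*'(h'-h) + (g_*' - g_*)h$ is not quite right in the presence of $f$; instead, pair against $\bar f$, symmetrize in $v \leftrightarrow v'$ as in the displayed computation for the linearized operator, and produce a factor $(\bar f' - \bar f)$ together with a factor that is either $(\bar h' - \bar h)$ or a pointwise term. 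The first family of terms is then estimated by Cauchy--Schwarz directly against two copies of the $N^{s,\gamma}$ semi-norm (one on $\bar h$, one on $\bar f$), with $\bar g$ entering only through $\int M_* |\bar g_*| \lesssim |\bar g|_{L^2}$ by Cauchy--Schwarz in $v_*$ against the Gaussian; the weights $w^{2\ell - 2|\beta|}$ and the various $\ang{v}$ factors are reconciled using that $d(v,v') \le 1$ forces $\ang{v} \approx \ang{v'}$, so the weight may be freely moved between $v$ and $v'$, and the remaining mismatched powers of $\ang v$ are absorbed into the $\Phi(|v-v_*|) = |v-v_*|^\gamma$ factor and the Gaussian in $v_*$. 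Terms where all derivatives and the difference land so that no semi-norm on $\bar g$ survives are estimated placing $\bar g$ in $\|\cdot\|_H$ (via Sobolev embedding in $x$, using $\NgE$) and $\bar h$ in $\|\cdot\|_N$; this is the source of the two terms in the final bound.

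The main obstacle, and the step I would allocate the most care to, is the passage through the singular kernel near $\theta = 0$: because $b(\cos\theta)$ is not integrable, one cannot simply bound $\int_{\sph} B$, and the $(g_*'h' - g_*h)$ cancellation must be exploited \emph{before} integrating in $\sigma$. The standard device is a Taylor expansion / triangle-inequality split of $h' - h$ into a ``main'' part controlled by the geometric fractional semi-norm and a ``commutator'' part where the change of variables $v \mapsto v'$ moves the singularity onto a Gaussian-type factor; the key quantitative input is that the Jacobian and the metric $d(v,v')$ interact so that $\int_{\sph} b(\cos\theta)\, \ind_{d(v,v')\le 1}\, d\sigma$ reproduces exactly the $d(v,v')^{-(n+2s)}$ weight in \eqref{normdef} — this is precisely the content of the sharp upper bounds encoded in Lemma \ref{sharpLINEAR} and the geometric analysis of \cite{gsNonCut1}, which I would cite rather than redo. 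A secondary bookkeeping obstacle is tracking the weight exponents through every Leibniz term so that each factor lands in $N^{s,\gamma}_{\ell-|\beta|}$, $N$, or $H$ with matching total weight; here the inequality $\ell - |\beta| \le (\ell - |\beta_1|) + \ldots$ style arithmetic, combined with $\ell \ge 0$ and the decaying weight convention $w^{-|\beta|}$, is what makes the accounting close, and I would verify it once in the worst case ($|\beta_1| + |\beta_2| = |\beta|$) and note the other cases only gain decay.
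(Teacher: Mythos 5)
Your reduction to a ``zeroth order'' trilinear estimate is correct in spirit and matches the paper's first step (\eqref{DerivEst}), and your identification of the Sobolev-embedding mechanism as the source of the two summands $\|g\|_N\|h\|_H+\|g\|_H\|h\|_N$ and of the regularity threshold $K\ge\ktot$ is also right (the paper makes a careful choice of which factor absorbs the extra $\ksob$ velocity derivatives, yielding $\ktot=3\ksob-1$). Two small points of precision: in the paper's reduction via the change of variables $u=v_*-v$, the $\beta$-derivatives never hit the kernel $B$ at all --- they land only on $g$, $h$, and the Maxwellian, producing the $\Gamma_{\beta_2}$ with $M_{\beta_2}$ --- so the need to ``invoke a standard fact about $\partial_v B$'' does not actually arise.

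The genuine gap is in the core trilinear estimate. The symmetrization trick you invoke from the linear computation does not carry over: in $\langle Ng,g\rangle$ the pre--post change of variables produces $(g'-g)^2$ because the \emph{same} function appears at $v$ and $v'$, whereas in the trilinear pairing $\ang{\Gamma(g,h),f}$ the third function $g$ lives at $v_*,v_*'$ and the Gaussian is $M_*$ (not $M_*'M_*$), so the analogous manipulation produces neither a clean $(h'-h)(f'-f)$ product nor two separable fractional semi-norms to Cauchy--Schwarz against. The paper avoids this by never attempting a global symmetrization; instead it dyadically decomposes the \emph{angular singularity} into pieces $B_k$ supported on $|v-v'|\approx 2^{-k}$, giving size estimates $O(2^{2sk})$ (Propositions~\ref{prop11}--\ref{referLATERpropK}) and cancellation estimates $O(2^{(2s-2)k})$ (Propositions~\ref{cancelFprop}--\ref{cancelHprop}), and simultaneously expands $h$ and $f$ in the geometric Littlewood--Paley pieces $Q_j$ of Section~\ref{sec:aniLP}. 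The $N^{s,\gamma}$ norm only emerges after balancing $k$ against $j$ and applying the square-function comparison \eqref{lpsobolev0} to sum the pieces. Your sketch acknowledges a ``Taylor / triangle-inequality split'' and that ``the Jacobian reproduces $d(v,v')^{-(n+2s)}$'', which is the content of Proposition~\ref{compareprop}, but citing Lemma~\ref{sharpLINEAR} does not supply the missing machinery: that lemma is itself a consequence of the same two decompositions, and being a quadratic-form estimate for $\langle w^{2\ell}Ng,g\rangle$, it does not by itself control the trilinear form $\ang{w^{2\ell}\Gamma_\beta(g,h),f}$. Without the $B_k$/$Q_j$ interplay and \eqref{lpsobolev0}, the step ``Cauchy--Schwarz directly against two copies of the $N^{s,\gamma}$ semi-norm'' is not a single estimate but the entire Section~\ref{sec:upTRI}, and the weight bookkeeping ($w^{\ell^{+}-\ell'}$, $w^{\ell+\ell'}$ in Lemma~\ref{NonLinEstA}) would also not close without the post-collisional weight splitting of Proposition~\ref{referLATERprop}.
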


Notice that this estimate
  does not require a velocity weight which goes to infinity at infinity.
This feature is made possible by our non-isotropic Littlewood-Paley adapted to the paraboloid, 
which characterizes exactly the geometric fractional differentiation effects that are induced by the linearized Boltzmann collision operator. 

Lastly, we have two coercive interpolation inequalities for the linearized operator:

\begin{lemma}
\label{DerCoerIneq}
(Coercive interpolation inequalities)
For any multi-indices $\alpha,\beta$, any $\ell \ge 0$, and any small $\eta >0$ there is a positive constant $C_\eta$,  such that we have the following coercive lower bound for the linearized collision operator
\begin{equation}
\label{coerc1ineq}
\langle w^{2\ell-2|\beta|} \partial^{\alpha}_{\beta} Lg, \partial^{\alpha}_{\beta} g \rangle
\gtrsim
\nsm \partial^{\alpha}_{\beta}g\nsm_{N^{s,\gamma}_{\ell - |\beta|}}^2     
-
\eta
\sum_{|\beta_1| \le |\beta|}\nsm \partial^{\alpha}_{\beta_1} g\nsm_{N^{s,\gamma}_{\ell - |\beta_1|}}^2  
-
C_\eta     \nsm \partial^\alpha g\nsm_{L^2(B_{C_\eta})}^2.
\end{equation}
Furthermore, when no derivatives are present, for some $C>0$ we have 
\begin{equation}
\label{coerc2ineq}
\langle w^{2\ell}Lg,  g \rangle
\gtrsim
\nsm g\nsm_{N^{s,\gamma}_\ell}^2     
-
C     \nsm  g\nsm_{L^2(B_C)}^2.
\end{equation}
\end{lemma}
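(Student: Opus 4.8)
The plan is to read off \eqref{coerc2ineq} directly from the two preceding lemmas and then to bootstrap it to \eqref{coerc1ineq} by differentiating and carefully absorbing every error term. For \eqref{coerc2ineq}, decompose $L=N+K$ as in Lemma~\ref{sharpLINEAR} and write $\ang{w^{2\ell}Lg,g}=\ang{w^{2\ell}Ng,g}+\ang{w^{2\ell}Kg,g}$. The norm part is bounded below by Lemma~\ref{estNORM3}, $\ang{w^{2\ell}Ng,g}\gtrsim\nsm g\nsm_{N^{s,\gamma}_\ell}^2-C\nsm g\nsm_{L^2(B_C)}^2$, while \eqref{compactupper} controls the compact part by $\eta\,|w^\ell g|_{L^2_{\gamma+2s}}^2+C_\eta|g|_{L^2(B_{C_\eta})}^2$. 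Since $|w^\ell g|_{L^2_{\gamma+2s}}^2$ is exactly one of the two summands defining $\nsm g\nsm_{N^{s,\gamma}_\ell}^2$, choosing $\eta$ small enough absorbs it into the main term, which gives \eqref{coerc2ineq}; the $L^2$-on-a-ball error produced by $K$ is what prevents taking $C=0$ here even at $\ell=0$.

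For \eqref{coerc1ineq}, note first that the inner product and the norms involved are purely in the velocity variable, so the spatial derivative $\partial^\alpha$ may be applied at the outset: setting $G\eqdef\partial^\alpha g$, it suffices to bound $\ang{w^{2\ell-2|\beta|}\partial_\beta LG,\partial_\beta G}$ from below for a pure velocity multi-index $\beta$, with the error expressed through the $N^{s,\gamma}_{\ell-|\beta_1|}$ norms of $\partial_{\beta_1}G$ for $|\beta_1|\le|\beta|$ and the local term $\nsm G\nsm_{L^2(B_{C_\eta})}$. The key step is to expand $\partial_\beta LG=-\partial_\beta\Gamma(M,G)-\partial_\beta\Gamma(G,M)$ using a Leibniz-type formula for velocity derivatives of $\Gamma$: such a formula expresses $\partial_\beta\Gamma(g,h)$ as a finite combination of bilinear operators $\Gamma_{\beta_0}(\partial_{\beta_1}g,\partial_{\beta_2}h)$ with $\beta_0+\beta_1+\beta_2=\beta$, each $\Gamma_{\beta_0}$ having the same collisional structure as $\Gamma$ — in particular it retains the cancelling combination $g'_*h'-g_*h$ — but with $B$ replaced by a derived kernel still obeying bounds of type \eqref{kernelQ}--\eqref{kernelP} up to a trade of powers of $\ang{v}$ against the weight, which is precisely what the decaying factor $w^{-|\beta|}$ built into the space $N^{s,\gamma}_{\ell,K}$ is designed to pay for. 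The $\beta_0=\beta_2=0$ terms reassemble into $L(\partial_\beta G)$; every remaining term has either strictly fewer velocity derivatives on $G$, or a differentiated Maxwellian $\partial_{\beta_2}M$ (still Gaussian, hence of ``compact'' type), or a derived kernel $\Gamma_{\beta_0}$ with $|\beta_0|\ge1$.

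One then applies \eqref{coerc2ineq}, with weight parameter $\ell-|\beta|$ and function $\partial_\beta G$, to the principal term $\ang{w^{2\ell-2|\beta|}L(\partial_\beta G),\partial_\beta G}$, obtaining $\gtrsim\nsm\partial_\beta G\nsm_{N^{s,\gamma}_{\ell-|\beta|}}^2-C\nsm\partial_\beta G\nsm_{L^2(B_C)}^2$; the local error, carrying $|\beta|\ge1$ velocity derivatives, is brought to the stated form by a compact-set interpolation inequality — interpolating the $H^{|\beta|}$ norm on a fixed ball between $H^{|\beta|+s}$ and $L^2$ and using that $\nsm\cdot\nsm_{N^{s,\gamma}}$ dominates the $H^s$ norm on bounded sets — so that $\nsm\partial_\beta G\nsm_{L^2(B_C)}^2\le\eta\sum_{|\beta_1|\le|\beta|}\nsm\partial_{\beta_1}G\nsm_{N^{s,\gamma}_{\ell-|\beta_1|}}^2+C_\eta\nsm G\nsm_{L^2(B_{C_\eta})}^2$. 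All the remaining (non-principal) terms are bilinear forms estimated exactly by Lemma~\ref{NonLinEst} and Lemma~\ref{sharpLINEAR}, now with one argument a fixed rapidly decreasing function or with a derived kernel; the exponent $2\ell-2|\beta|$ in the pairing is arranged so that the velocity decay lost in differentiating $B$ is recovered, and a Cauchy--Schwarz in the non-isotropic norm followed by the same interpolation/localization bounds each such term by $\eta\sum_{|\beta_1|\le|\beta|}\nsm\partial_{\beta_1}G\nsm_{N^{s,\gamma}_{\ell-|\beta_1|}}^2+C_\eta\nsm G\nsm_{L^2(B_{C_\eta})}^2$. Collecting the principal and non-principal estimates and absorbing the $\beta_1=\beta$ contribution of the $\eta$-sum into the left-hand side yields \eqref{coerc1ineq}.

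I expect the main obstacle to be the Leibniz-type formula and, with it, the verification that the derived operators $\Gamma_{\beta_0}$ retain both the coercive lower bound (for the $\beta_0=0$ piece) and the compact/non-linear upper bounds (for the rest). One cannot simply differentiate under the collision integral, because a $v$-derivative hitting $b(\cos\theta)$ worsens the non-integrable angular singularity and the post-collisional map does not commute with $\partial_v$ (the Jacobian $\partial v'/\partial v$ is the nontrivial matrix $\tfrac12(I+k\otimes k)$ as $\theta\to0$, not the identity). The cancellations in $g'_*h'-g_*h$ must therefore be kept intact throughout: as in the proof of \eqref{normupper} one Taylor-expands around $\sigma=k$, the leading correction contributing a factor $\sin\theta$ that tames one power of the singularity, with the second-order cancellation — measured, as always, through the paraboloid metric $d(v,v')$ — absorbing the rest when $s\ge\tfrac12$. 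Once this differentiated collisional calculus is in place, the error bookkeeping described above goes through and the lemma follows.
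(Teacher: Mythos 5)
Your argument for \eqref{coerc2ineq} is the paper's argument verbatim, and your skeleton for \eqref{coerc1ineq} --- peel off the principal term $L(\partial^\alpha_\beta g)$, apply \eqref{coerc2ineq} with weight $\ell-|\beta|$, treat the remaining commutator terms as lower order, and convert the resulting $L^2(B_R)$ error by compact interpolation into an $\eta$ times $N^{s,\gamma}$ norms plus a local $L^2$ term --- is also the paper's plan. The genuine gap is in the Leibniz-type formula. You claim the derived operators carry kernels ``obtained by differentiating $B$,'' and that one ``cannot simply differentiate under the collision integral'' because $\partial_v$ hitting $b(\cos\theta)$ worsens the singularity and because $\partial v'/\partial v\to\tfrac{1}{2}(I+k\otimes k)$ is not the identity. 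Both apparent obstructions vanish once you make the change of variables the paper uses to obtain \eqref{DerivEst}: write $u=v_*-v$, $u^\pm=(u\pm|u|\sigma)/2$, so that \eqref{gamma0} becomes $\int du\,d\sigma\,B(|u|,\sigma)\,M(u+v)\bigl(g(v+u^+)h(v+u^-)-g(v+u)h(v)\bigr)$. In these coordinates every appearance of $v$ is of the form $v+(\text{independent of }v)$, so $\partial_v$ commutes with the integral, $B$ is never touched, the Jacobian in $v$ at fixed $(u,\sigma)$ is the identity, and the only new object is $\partial_\beta M=M_\beta=p_\beta M$, a polynomial times the Gaussian. Thus $\Gamma_\beta$ in \eqref{DerivEst} has the \emph{same} kernel $B$, not a derived one; the decaying weight $w^{-2|\beta|}$ pays for the polynomial growth of $p_\beta(v_*)$, not for a worsened singularity. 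Your proposed patch --- Taylor expansion around $\sigma=k$ with a second-order cancellation ``when $s\ge\tfrac{1}{2}$'' --- is unnecessary and would impose a restriction the lemma does not have (it holds for all $s\in(0,1)$).

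A secondary deviation is in the treatment of the non-principal commutator terms $\Gamma_{\beta_2}(\partial_{\beta-\beta_1}M,\partial^\alpha_{\beta_1}g)$ and $\Gamma_{\beta_2}(\partial^\alpha_{\beta_1}g,\partial_{\beta-\beta_1}M)$ with $|\beta_1|<|\beta|$. You estimate them with Lemma \ref{NonLinEst} and Lemma \ref{sharpLINEAR}, but the paper instead uses \eqref{coerc1ineqNORM} and \eqref{coerc1ineqPREP} from Proposition \ref{upperBds}, which are tailored to the case where one argument is a rapidly decaying function. Those estimates leave surplus velocity decay --- the explicit $(n-1)$ weight loss in \eqref{coerc1ineqPREP}, plus the extra power $w^{|\beta_1|-|\beta|}$ that comes from pairing against $w^{2\ell-2|\beta|}$ rather than $w^{2\ell-2|\beta_1|}$ --- and it is exactly this surplus that is spent to split between a small constant at large $|v|$ and a compact $L^2(B_{C_\eta})$ error at bounded $|v|$, producing the $\eta$ in \eqref{coerc1ineq}. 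Lemma \ref{NonLinEst} is a full space-time estimate in the $\|\cdot\|_N$, $\|\cdot\|_H$ norms and does not supply the decay needed for that split.
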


This concludes our statements of the main estimates that will be used in  Section \ref{sec:deBEest} to establish our main Theorem \ref{mainGLOBAL}.   Next, we deduce some consequences of these estimates for the spectral properties of the linearized Boltzmann collision operator.

\subsection{Conjecture from Mouhot-Strain \cite{MR2322149}}
We will now discuss  sharp constructive coercivity estimates of the linearized collision operator, $L$, away from its null space.  
More generally, from the H-Theorem  $L$ is non-negative and for every fixed $(t,x)$ the null
space of $L$ is given by the $(n+2)$-dimensional space 
\begin{equation}
{\mathcal N}\eqdef {\rm span}\left\{
\sqrt{\mu},
v_1 \sqrt{\mu}, \ldots, v_n \sqrt{\mu}, 
|v|^2 \sqrt{\mu}\right\}.
 \label{null}
\end{equation}
We define the orthogonal projection from $L^2(\mathbb{R}^n)$ onto the null space ${\mathcal N}$ by ${\bf P}$. 
Further expand ${\bf P} h$ as a linear combination of the basis in \eqref{null}: 
\begin{equation}
{\bf P} h
\eqdef
 \left\{a^h(t,x)+\sum_{j=1}^n b_j^h(t,x)v_j+c^h(t,x)|v|^2\right\}\sqrt{\mu}.
\label{hydro}
\end{equation}
It is well-known for the linearized collision operator \eqref{LinGam} that 
$Lh = 0$ if and only if $h = {\bf P} h$; see the discussion in \cite{gsNonCut1} and the references therein.  
By combining the constructive upper bound estimates in Lemma \ref{sharpLINEAR} with the constructive coercivity estimate from Theorem \ref{lowerN} in Section \ref{sec:deBEest}, we obtain
$$
\frac{1}{C} | \{{\bf I- P} \} g |_{N^{s,\gamma}}^2 \le \langle Lg, g \rangle \le C | \{{\bf I- P} \} g |_{N^{s,\gamma}}^2,
$$
with a constant $C>0$ that can be tracked from the proof.
Thus a spectral gap exists if and only 
if $\gamma + 2 s \ge 0$, as it was 
 conjectured in  \cite{MR2322149}.
 The main tools in our proof of these statements are the new constructive estimates from Lemma \ref{sharpLINEAR} and Lemma \ref{estNORM3} combined with the constructive but non-sharp coercive lower bound from \cite{MR2254617}  for the non-derivative part of the norm.
This may be of independent interest. 
 
Notice that for the linearized Landau collision operator, this statement already had been shown several years earlier in \cite{MR2254617,MR2322149, MR1946444} for any $\gamma \ge -3$.  For Landau, there is a spectral gap if and only if $\gamma +2\ge 0$; the Landau operator can be thought of as the limit case when $s=1$ and  regular (rather than fractional) derivatives are present.

\subsection{The proof of Theorem \ref{SG1mainGLOBAL}.}  We will now explain  how to extend the results of \cite{gsNonCut1} to include the more general assumptions stated in Theorem \ref{SG1mainGLOBAL}.
The main theorem of \cite{gsNonCut1} gives our Theorem \ref{SG1mainGLOBAL} for
$
\gamma > -\min \{ 2s, 3/2\},
$
 $s \in (0,1)$,
and $K_v = 0$
in three dimensions.  Notice that all of the estimates in \cite{gsNonCut1} hold for any $\gamma + 2s \ge 0$ save for the second cancellation estimate in \cite[Proposition 6]{gsNonCut1}.  However, following the remark in the proof of this proposition, we can easily use \eqref{lpsobolev0} with $\ell = K = 0$ to extend this lone proposition to all $\gamma + 2s \ge 0$.  The only additional obstruction is that the proof of the coercive estimate in \cite[Lemma 6]{gsNonCut1} does not directly generalize to $\gamma + 2s =0$.  However notice that Theorem \ref{lowerN} herein is both constructive and holds in full generality of 
$
\gamma + 2s \ge -(n-2),
$
we can thus use this new estimate
in place of \cite[Lemma 6]{gsNonCut1}.  Lastly, notice that one can further include velocity derivatives $K_v \ge 0$ in the proof of the main theorem of \cite{gsNonCut1} and our Theorem \ref{SG1mainGLOBAL} using
\eqref{DerivEst}. All of the proofs in  \cite{gsNonCut1} remain exactly the same when velocity derivatives are included, except for the cancellation estimates.  However the cancellation estimates from \cite{gsNonCut1} can be directly extended to cases with derivatives using the decompositions at the beginning of the proofs of Proposition \ref{cancelFprop} and Proposition \ref{cancelHprop} herein.  The rest of the proof of the the cancellation estimates in \cite{gsNonCut1} with velocity derivatives  are exactly the same.

\subsection{Outline of the article, and overview of our proof}
The methods from our first paper \cite{gsNonCut1} 
are the starting point for our proof.  
In particular, several important concepts, such as the space $N^{s,\gamma}$, the dual formulation for $\ang{\Gamma(g,h),f}$, the coercive inequality for the semi-norm \eqref{normexpr}, and the non-isotropic Littlewood-Paley theory adapted to the paraboloid, will be crucial to our analysis.  However, the full range of kinetic singularities from \eqref{kernelP} dealt with in this paper present several additional difficulties not present in \cite{gsNonCut1} and the resolution of these difficulties 
requires numerous widespread essential changes.
For example, the $L^2(\mathbb{R}^3_v)$ approach from \cite{gsNonCut1}  fails because the kinetic factors from \eqref{kernelP} are too singular.  
Instead,  in this work we include velocity derivatives to control the strongly singular kinetic factors via the Sobolev embedding theorems. 
      The addition of velocity derivatives acting on the Boltzmann operator requires several new developments, particularly in our cancellation estimates, where we provide a new five-term decomposition of the the relevant differences.

Another new theme, in contrast to \cite{gsNonCut1}, involves the inclusion of velocity weights.  Positive velocity weights are used to obtain the rapid decay as in \cite{MR2209761}.  Negative  weights are used to close our estimates, when  $\gamma + 2s <0$,  as in \cite{MR2013332}.  Both of these uses were known in the cut-off regime; however, 
the inclusion of these weights require several new developments 
when angular singularities are present.  A crucial theme in this respect 
will be to extract a sufficiently large amount of velocity decay from the Boltzmann operator, which should be embedded in the functional spaces that form the upper bounds in all of our  estimates.
This is, of course, related to the fact that the norm $N^{s,\gamma}$ from \eqref{normdef}  contains velocity weights which go to zero at infinity when $\gamma + 2s <0$.

The rest of the paper is organized as follows.
In Section \ref{physicalDECrel}, we decompose the angular singularity and prove the main estimates for each of the decomposed pieces; these include size and support estimates, cancellation estimates, and also a useful ``compact estimate'' in the spirit of Grad which is new to the non cut-off theory.  This compact estimate is the key new element which 
 yields our resolution of the conjecture from \cite{MR2322149}.
Then in Section \ref{sec:aniLP} we develop the non-isotropic Littlewood-Paley theory adapted to the paraboloid in arbitrary dimensions $n\ge 2$.  
As before, we have drawn inspiration from the earlier works of Stein \cite{MR0252961} and Klainerman-Rodnianski \cite{MR2221254}, though we are able to use a more concrete formulation of our Littlewood-Paley projections than is possible in either of these cases because of the explicit nature of the underlying geometry.
A key point here, new in comparison to our first paper \cite{gsNonCut1}, is to 
develop Littlewood-Paley projections that have higher regularity than was needed in \cite{gsNonCut1}, which requires a more explicit analysis of the necessary moment conditions imposed on the kernels. 
This is crucial for our high-order isotropic velocity derivative estimates of the non-isotropic Littlewood-Paley projections.
After that, in Section \ref{sec:upTRI} we use the estimates from the previous two sections to sum all of the decomposed pieces of the Boltzmann collision operator from the previous subsections.  We sum over the both decomposed singularity and the unknown functions expanded via the geometric Littlewood-Paley.  In particular, we finish the proof of all of the estimates which are stated in Section \ref{mainESTsec}.
Finally, in Section \ref{sec:deBEest}, we use methods from the non-linear energy method as in 
Guo 
\cite{MR2000470}
and Strain-Guo
\cite{MR2209761} to establish global existence and rapid decay.

\section{Physical decomposition and cancellation estimates}
\label{physicalDECrel}

In this section, we introduce the first major decomposition of the collision operator and prove several estimates which will play a central role in establishing many of the main inequalities stated in Section \ref{mainESTsec}.  This  is a decomposition of the singularity of the collision kernel.  For several reasons, it turns out to be useful to decompose $b ( \cos \theta)$ from \eqref{kernelQ} to regions where $\theta \approx 2^{-j} |v-v_*|^{-1}$, rather than a simpler dyadic decomposition not involving $|v-v_*|$.  The primary benefit for doing so is that this extra factor makes it easier to prove estimates on the space $L^2_{\gamma+2s}(\mathbb{R}^n)$  because the weight $\Phi(|v-v_*|)$ from  \eqref{kernelP}  is already present in the kernel and the extra weight $|v-v_*|^{2s}$ falls out automatically from our decomposition.

The estimates to be proved fall into two main categories:  the first are various $L^2$- and weighted $L^2$-inequalities which will follow from the size and support conditions on our decomposed pieces.   The second type of estimate will assume some sort of smoothness and obtain better estimates than the earlier estimates by exploiting the cancellation structure of the non-linear term $\Gamma$ from \eqref{gamma0}.  It is already worth stating at this point that the particular smoothness assumptions we make are dictated by the problem and will  specifically be somewhat unusual; in particular, they will not correspond to the usual Euclidean Sobolev spaces on $\mathbb{R}^n$.  

In the rest of the paper, we will carefully explain various arguments the first time they are used and subsequently use them again and again with less detail, but with references to their earlier appearances. 
This will save having rewrite similar arguments numerous times. 

\subsection{Dyadic decomposition of the singularity}
Let $\{ \chi_k \}_{k=-\infty}^\infty$ be a partition of unity on $(0,\infty)$ such that $\nsm \chi_k\nsm_{L^\infty} \leq 1$ and 
$\mbox{supp}\left( \chi_k \right)\subset [2^{-k-1},2^{-k}]$.  
For each $k$,
\[
B_k = B_k(v-v_*,\sigma) \eqdef \Phi(|v-v_*|) b \left( \left< \frac{v-v_*}{|v-v_*|}, \sigma \right> \right) \chi_k (|v - v'|). \]
Notice that we 
have the expansion
\[ 
|v-v'|^2 = \frac{|v-v_*|^2}{2} \left( 1 - \left< \frac{v-v_*}{|v-v_*|}, \sigma \right>  \right)
= |v-v_*|^2 \sin^2 \frac{\theta}{2}.
\]
With this partition, 
for any $\ell \in \mathbb{R}$ and any multi-index $\beta$ we define
\begin{equation}
\begin{split}
T^{k,\ell}_{+}(g,h,f)  & \eqdef \int_{\mathbb{R}^n} dv \int_{\mathbb{R}^n} dv_* \int_{\sph} d \sigma  ~B_k(v-v_*, \sigma) ~ g_* h ~ M_\beta(v_*') ~ f'  ~ w^{2\ell}(v'),  \\ 
T^{k,\ell}_{-}(g,h,f)  & \eqdef \int_{\mathbb{R}^n} dv \int_{\mathbb{R}^n} dv_* \int_{\sph} d \sigma ~B_k(v-v_*, \sigma) ~ g_* h ~ M_\beta(v_*) ~ f ~ w^{2\ell}(v). 
\end{split}
\label{defTKL}
\end{equation}
We use the notation
$
M_\beta (v)
=
\partial_{\beta} M = p_\beta(v) M(v)$ where $p_\beta(v)$ is the appropriate polynomial of degree $|\beta|$.
The dependence of these operators $T^{k,\ell}_{\pm}$ on $\beta$ is left implicit for notational simplicity; in particular, all of the estimates below hold for any multi-index $\beta$. 

It will also be necessary to express the collision operator \eqref{gamma0} using a ``dual formulation'' which is derived via a Carleman-type representation in our first paper \cite[Appendix]{gsNonCut1}.  
We record here the following alternative representation for $T^{k,\ell}_{+}$ as well 
as the definition of a third trilinear operator $T^{k,\ell}_{*}$
 as follows:
\begin{gather}
\label{defTKLcarl}
T^{k,\ell}_{+} 
= 
\int_{\mathbb{R}^n} dv' w^{2\ell}(v')  \int_{\mathbb{R}^n} dv_* \int_{E_{v_*}^{v'}} d \pi_{v}  
 \frac{B_k }{2^{1-n}} 
\frac{g_*  f' M_\beta(v_*') h}{  |v'-v_*| ~ |v-v_*|^{n-2}}, 
\\ 
T^{k,\ell}_{*}
=
 \int_{\mathbb{R}^n} dv' w^{2\ell}(v')  \int_{\mathbb{R}^n} dv_* \int_{E_{v_*}^{v'}} d \pi_{v} 
 \frac{B_k }{2^{1-n}} 
 \frac{ \Phi(v'-v_*) |v'-v_*|^{n-1} }{\Phi(v-v_*) |v-v_*|^{2n-2}}   g_* f'  M_\beta(v_*) h'.
\notag
\end{gather}
Above 
$T^{k,\ell}_{*}= T^{k,\ell}_{*}(g,h,f) $, $T^{k,\ell}_{+} = T^{k,\ell}_{+}(g,h,f) $,
and we use the notation 
$$
B_k
=
B_k(v-v_*, 2v' - v- v_*) 
=
\Phi(|v-v_*|)b \left( \ang{\frac{v-v_*}{|v-v_*|}, \frac{2v' - v - v_*}{|2v' - v - v_*|} } \right)\chi_k (|v - v'|).
$$
In these integrals $d\pi_{v}$ is Lebesgue measure on the $(n-1)$-dimensional plane $E_{v_*}^{v'}$ passing through $v'$ with normal $v' - v_*$, and $v$ is the variable of integration.  Moreover, above $v_*' = v+v_* - v'$ and 
$
M_*' 
=
\frac{M M_*}{M'}
$
on $E_{v_*}^{v'}$.  Notice that the insertion of the partition of unity guarantees that the kernel is locally integrable for any $k$.

Recall the bilinear collision operator \eqref{gamma0} and the collisional variables \eqref{sigma}.  With the change of variables $u = v_* - v$, and $u^\pm = (u \pm |u| \sigma)/2$, we may write \eqref{gamma0} as
\begin{gather*}
\Gamma (g,h)
 =
 \int_{\mathbb{R}^n} du \int_{\sph} d \sigma  B(|u|, \sigma)  M(u+v)  \left\{g(v+u^+) h(v+u^-) - g(v+u) h(v)\right\}. 
\end{gather*}
Differentiating this formula and  applying the inverse coordinate change allows us to express derivatives of the bilinear collision operator $\Gamma$ as
\begin{gather}
\label{DerivEst}
\partial^{\alpha}_{\beta}\Gamma (g,h)
 =
 \sum_{\beta_1 + \beta_2  = \beta} 
\sum_{ \alpha_1 \le \alpha} ~ C^{\beta, \beta_1, \beta_2}_{\alpha, \alpha_1} ~ 
\Gamma_{\beta_2} (\partial^{\alpha - \alpha_1}_{\beta - \beta_1}g,\partial^{\alpha_1}_{\beta_1}h).
\end{gather}
Here $C^{\beta, \beta_1, \beta_2}_{\alpha, \alpha_1}$ is a non-negative constant which is derived from the Leibniz rule.
Also, $\Gamma_{\beta}$ is the bilinear operator with derivatives on the Maxwellian $M$ given by
$$
\Gamma_{\beta} (g,h)
=
 \int_{\mathbb{R}^n} dv_* \int_{\sph} d \sigma ~ B(|v-v_*|,\sigma) M_\beta (v_*) (g_*' h' - g_* h).
$$
Then
for $f, g, h \in \mathcal{S}(\mathbb{R}^n)$, the pre-post collisional change of variables, the dual representation \cite[Appendix]{gsNonCut1}, and the previous calculations guarantee that 
\begin{align*}
 \left< w^{2\ell} \Gamma_\beta (g,h), f \right> & = 
\sum_{k=-\infty}^\infty  \left\{ T^{k,\ell}_{+}(g,h,f) - T^{k,\ell}_{-}(g,h,f) \right\}
\\
& = \sum_{k=-\infty}^\infty  \left\{ T^{k,\ell}_{+}(g,h,f) - T^{k,\ell}_{*}(g,h,f) \right\} . 
\end{align*}
These will be the general quantities that we estimate in the following sections.
The first step is to estimate each of  $T^{k,\ell}_{+}$, $T^{k,\ell}_{-}$, and $T^{k,\ell}_{*}$ using only the constraints on the size and support of $B_k$ in Section \ref{sec:SSE}.  Then in Section \ref{sec:SSC} we estimate the collision operator again, this time also exploiting the cancellation properties of $\Gamma_\beta$.
Finally, in Section \ref{sec:SSCE} we prove the ``compact estimates'' which are important in proving the constructive lower bound for the linearized collision operator 
in Theorem \ref{lowerN}.

\subsection{Size and support estimates of the decomposed pieces}
\label{sec:SSE}
We will now prove several size and support estimates for the decomposed pieces of the Boltzmann collision operator.
It will be useful to let $\phi(v)$ denote an arbitrary smooth function which satisfies for some positive constants $C_{\phi}$ and $c$ that
\begin{equation}
\left| 
\phi(v)
\right|
\le C_\phi e^{- c |v|^2}.
\label{rapidDECAYfcn}
\end{equation}
We use generic functions satisfying  \eqref{rapidDECAYfcn} often in what follows.  

\begin{proposition}\label{prop11}
For any integer $k$, any $m \ge 0$ and $\ell \in \mathbb{R}$, we have
\begin{gather}
 \left| T^{k,\ell}_{-}(g,h,f) \right|   \lesssim 2^{2sk} \nsm g\nsm_{H^{\ksob}_{-m}} \nsm w^\ell h\nsm_{L^2_{\gamma + 2s}} 
 \nsm w^\ell f\nsm_{L^2_{\gamma + 2s}},
 \label{tminusg} 
 \\
 \left| T^{k,\ell}_{-}(g,h,f) \right|   \lesssim 2^{2sk} 
 \nsm g\nsm_{L^2_{-m}}  
| h |_{H^{\ksob}_{\ell,\gamma + 2s}} 
| w^\ell f |_{L^2_{\gamma + 2s}}.
 \label{tminush}
\end{gather}
Furthermore, for $\phi$ 
defined as in \eqref{rapidDECAYfcn}, we have
\begin{align}
 \left| T^{k,\ell}_{-}(g,\phi,f) \right| 
 +
  \left| T^{k,\ell}_{-}(g,f,\phi) \right| 
 & 
 \lesssim 
  C_\phi ~ 
 2^{2sk} ~ 
 \nsm g\nsm_{L^2_{-m}}  \nsm f\nsm_{L^2_{-m}}.
 \label{tminushRAP}
\end{align}
These estimates hold uniformly.  
\end{proposition}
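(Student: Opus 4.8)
The plan is to prove \eqref{tminusg}--\eqref{tminushRAP} using only the size and support of $B_k$, with no appeal to cancellation. The first step is to integrate out the angular variable $\sigma$, on which nothing else in $T^{k,\ell}_{-}$ depends. Since $|v-v'| = |v-v_*|\sin(\theta/2)$, the cut-off $\chi_k(|v-v'|)$ confines $\theta$ to an interval of length $\approx 2^{-k}/|v-v_*|$ about $\theta_k \approx 2^{-k}/|v-v_*|$ (and forces $|v-v_*|\gtrsim 2^{-k}$); together with $\sin^{n-2}\theta\, b(\cos\theta)\lesssim\theta^{-1-2s}$ from \eqref{kernelQ} and $\Phi(|v-v_*|)=C_\Phi|v-v_*|^\gamma$ from \eqref{kernelP} this yields the pointwise bound
\[
\int_{\sph} d\sigma~ B_k(v-v_*,\sigma)~\lesssim~ 2^{2sk}\,|v-v_*|^{\gamma+2s}.
\]
Hence, with the gain $2^{2sk}$ already extracted, all three estimates reduce to bounding
\[
I ~\eqdef~ \int_{\threed} dv\int_{\threed} dv_*~ |v-v_*|^{\gamma+2s}~|g(v_*)|~|M_\beta(v_*)|~|h(v)|~|f(v)|~w^{2\ell}(v).
\]

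The only real work lies in the kinetic factor $|v-v_*|^{\gamma+2s}$. When $\gamma+2s\ge 0$ one uses $|v-v_*|^{\gamma+2s}\lesssim\ang{v}^{\gamma+2s}\ang{v_*}^{\gamma+2s}$ and the fact that $\ang{v_*}^{\gamma+2s}|M_\beta(v_*)|$, absorbing any polynomial factor, is dominated by a Gaussian $e^{-c|v_*|^2}$; then $I$ splits into $\bigl(\int\ang{v}^{\gamma+2s}w^{2\ell}(v)|h||f|\,dv\bigr)\bigl(\int e^{-c|v_*|^2}|g(v_*)|\,dv_*\bigr)$. When $\gamma+2s<0$ --- which forces $n\ge 3$ --- one has $\gamma+2s>-(n-2)>-n$, so the singularity of $|v-v_*|^{\gamma+2s}$ at $v_*=v$ is locally integrable; we split the $v_*$-integral into $\{|v-v_*|>1\}$, where $|v-v_*|^{\gamma+2s}\le 1$ and one proceeds exactly as in the case $\gamma+2s\ge 0$, and the near-diagonal region $\{|v-v_*|\le 1\}$. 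On the latter region $\ang{v}\approx\ang{v_*}$, and because $|v_*|\ge|v|-1$ the Maxwellian factor $M_\beta(v_*)$ decays in $v$ as well as in $v_*$; its $v$-decay reproduces the missing factor $\lesssim\ang{v}^{\gamma+2s}$, while one integrates the integrable singularity, $\int_{|v-v_*|\le1}|v-v_*|^{\gamma+2s}\,dv_*\lesssim 1$, against an $L^\infty$ bound (times a residual $v_*$-Gaussian) on whichever of $g$ or $h$ carries the extra regularity in the estimate at hand. This is the sole place regularity is spent: the Sobolev embedding $H^{\ksob}(\threed)\hookrightarrow L^\infty(\threed)$, valid since $\ksob=\lfloor\tfrac{n}{2}+1\rfloor>\tfrac{n}{2}$, gives for \eqref{tminusg} the factor $\nsm g\nsm_{H^{\ksob}_{-m}}$ --- the residual $v_*$-Gaussian beats any polynomial weight, which is why the arbitrarily weak weight $-m$ is permitted --- and for \eqref{tminush} it gives its weighted counterpart attached to $|h|_{H^{\ksob}_{\ell,\gamma+2s}}$, whose internal $\ang{v}^{\gamma+2s}$-weight is exactly what absorbs the near-diagonal kinetic weight.

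After the singular factor is disposed of, $I$ is controlled by $\ang{v}^{\gamma+2s}w^{2\ell}(v)$ times the surviving $v$-functions and a Gaussian in $v_*$; Cauchy--Schwarz in $v$ produces $\nsm w^\ell h\nsm_{L^2_{\gamma+2s}}\nsm w^\ell f\nsm_{L^2_{\gamma+2s}}$ (respectively $|h|_{H^{\ksob}_{\ell,\gamma+2s}}|w^\ell f|_{L^2_{\gamma+2s}}$), and pairing the leftover Gaussian in $v_*$ with $g$ by Cauchy--Schwarz gives $\nsm g\nsm_{L^2_{-m}}$ for every $m\ge 0$, since $e^{-c|v_*|^2}\ang{v_*}^m\in L^2(\threed)$. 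This proves \eqref{tminusg} and \eqref{tminush}. For \eqref{tminushRAP} the function $\phi$ already satisfies the pointwise bound \eqref{rapidDECAYfcn}, so it serves as the $L^\infty$/Sobolev factor with no derivatives needed, and its Gaussian dominates both $\ang{v}^{\gamma+2s}$ and $w^{2\ell}$; the two remaining one-variable integrals each reduce to a Gaussian paired with $g$, respectively $f$, giving $C_\phi\,\nsm g\nsm_{L^2_{-m}}\,\nsm f\nsm_{L^2_{-m}}$.

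I expect the main obstacle to be the soft-potential regime $\gamma+2s<0$: one must simultaneously tame the near-diagonal singularity $|v-v_*|^{\gamma+2s}$, recover the decaying weight $\ang{v}^{\gamma+2s}$, keep the weights $w^{2\ell}(v)$ bookkept so that the right-hand sides are the stated weighted $L^2_{\gamma+2s}$-norms, and spend only $\ksob$ velocity derivatives. The observation that $M_\beta(v_*)$ decays in $v$ as well as in $v_*$ on $\{|v-v_*|\le1\}$ is what meets all of these demands at once.
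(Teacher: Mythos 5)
Your overall strategy --- integrate out $\sigma$ using \eqref{bjEST}, then control the remaining $(v,v_*)$-integral by distributing the kinetic weight, absorbing powers of $\ang{v_*}$ into the Maxwellian, and spending one Sobolev embedding on whichever of $g$ or $h$ carries the $H^{\ksob}$ regularity --- is the same as the paper's, and your near-diagonal argument (on $\{|v-v_*|\le 1\}$, the $v$-decay of $M_\beta(v_*)$ reproduces the missing $\ang{v}^{\gamma+2s}$) is correct and matches the paper's. The gap is in the region $\{|v-v_*|>1\}$ when $\gamma+2s<0$, which you dispatch with the sentence ``one proceeds exactly as in the case $\gamma+2s\ge 0$.'' That case rested on the inequality $|v-v_*|^{\gamma+2s}\lesssim\ang{v}^{\gamma+2s}\ang{v_*}^{\gamma+2s}$, which is \emph{false} for negative exponents. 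If instead you merely replace $|v-v_*|^{\gamma+2s}$ by $1$ on this region and run the remaining steps, the $v$-integral has no $\ang{v}^{\gamma+2s}$ in it, and you end up with $\nsm w^\ell h\nsm_{L^2}\nsm w^\ell f\nsm_{L^2}$ rather than $\nsm w^\ell h\nsm_{L^2_{\gamma+2s}}\nsm w^\ell f\nsm_{L^2_{\gamma+2s}}$. Since $\gamma+2s<0$, the former norms dominate the latter, so this is a strictly weaker bound than the one claimed in \eqref{tminusg}--\eqref{tminush}; the estimate does not follow.

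The missing ingredient on $\{|v-v_*|>1\}$ is precisely the same mechanism you already used near the diagonal, applied after a further split on $|v_*|\lessgtr\tfrac{1}{2}|v|$. If $|v_*|\le\tfrac{1}{2}|v|$ then $|v-v_*|\gtrsim\ang{v}$, whence $|v-v_*|^{\gamma+2s}\lesssim\ang{v}^{\gamma+2s}$ directly (here the \emph{negativity} of the exponent is what makes large $|v-v_*|$ helpful); if $|v_*|>\tfrac{1}{2}|v|$, part of the Maxwellian decay in $v_*$ converts to $e^{-c|v|^2}$ decay in $v$, which beats $\ang{v}^{\gamma+2s}$. Packaged as a pointwise inequality, this is
\[
M_*^{1/4}\,|v-v_*|^{\gamma+2s}\;\lesssim\;M_*^{1/8}\,\ang{v_*}^{-m}\,\ang{v}^{\gamma+2s}\qquad\text{on }\{|v-v_*|\ge 1\},
\]
which is exactly what the paper inserts before applying Cauchy--Schwarz in that region, and which is also what is hiding inside the paper's ``elementary inequality'' $\int dv_*\,M_*^{1/4}|v-v_*|^{\gamma+2s}\lesssim\ang{v}^{\gamma+2s}$ used for \eqref{tminusg}. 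Once you add that step, your argument closes.
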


We record here the following 
the Sobolev embedding theorem
\begin{equation}
|w^{-m-\ksob} f|_{L^\infty}  \lesssim |f|_{H^{\ksob}_{-m}},
\label{sobolevE}
\end{equation}
which holds for any $m\in \mathbb{R}$.
We will also use the following immediate implications
\begin{equation}
|w^{-m} f|_{L^\infty} + |w^\ell f|_{L^2_{\gamma+2s}} \lesssim |f|_{H^{\ksob}_{\ell,\gamma+2s}},
\quad
|w^{-m} f|_{L^\infty} + |w^\ell f|_{L^2} \lesssim |f|_{H^{\ksob}_{\ell}},
\notag
\end{equation}
where $m$ is sufficiently large, depending on $\ell\in\mathbb{R}$, $\gamma+2s$, and $n\ge 2$.  
Furthermore, we are using the abbreviated notation $L^\infty = L^\infty(\mathbb{R}^n)$.

\begin{proof}
Given the size estimates for $b(\cos \theta)$ in \eqref{kernelQ} and the support of $\chi_k$, clearly
\begin{equation}
\int_{\sph} d \sigma ~ B_k \lesssim
\Phi(|v-v_*|)
\int_{2^{-k-1} |v - v_*|^{-1}}^{2^{-k} |v - v_*|^{-1}}  d \theta ~ \theta^{-1-2s} 
\lesssim 2^{2sk} |v-v_*|^{\gamma+2s}.
\label{bjEST}
\end{equation}
This also uses \eqref{kernelP}.
 Thus
\[ 
\left| T^{k,\ell}_{-}(g,h,f) \right| \lesssim 2^{2sk} \int_{\mathbb{R}^n} dv \int_{\mathbb{R}^n} dv_* ~ \sqrt{M_*} |v-v_*|^{\gamma+2s} |g_* h f|~ w^{2\ell}(v).
 \]
 To obtain 
 \eqref{tminusg}, we
first take the $L^\infty$ norm of $g_* M_*^{1/4}$ and use \eqref{sobolevE}, then apply the elementary inequality
 $
\int_{\mathbb{R}^n} dv_* ~ M_*^{1/4} ~  |v-v_*|^{\gamma+2s}
\lesssim
\langle v \rangle^{\gamma + 2s}
$, and lastly use Cauchy-Schwartz 
putting $h$ in one term, $f$ in the other, and half of the rest in both.  

For \eqref{tminushRAP} notice that if either of the second two functions in $T^{k,\ell}_{-}$  has rapid decay such as \eqref{rapidDECAYfcn}, then we have rapid decay in both $v$ and $v_*$ simultaneously.  Thus we can use Cauchy-Schwartz, putting $g$ in one term, $f$ in the other term, and half of all the rest in both terms.  Thus \eqref{tminushRAP} follows with the elementary inequality that we just used in the previous estimate.

To obtain \eqref{tminush}, on the other hand, we will bound above the r.h.s. of the last displayed inequality by 
splitting into the regions 
$
|v-v_*| \le 1
$
and
$
|v-v_*| \ge 1.
$
On the former region 
$
M_*^{1/4} \lesssim \ang{v_*}^{-m} \ang{v}^{-2m-\ksob},
$
$
\forall m >0$
so we may take the $L^\infty$ norm of $|h| \ang{v}^{-m-\ksob}$, use \eqref{sobolevE}, and apply Cauchy-Schwartz as above to obtain
 \[ 
\lesssim 2^{2sk} | h |_{H^{\ksob}_{-m}} | g |_{L^2_{-m}} | f |_{L^2_{-m}},
\quad
\forall m >0.
 \]
On the latter region,
$
|v-v_*| \ge 1,
$
we have $M_*^{1/4} |v-v_*|^{\gamma+2s} \lesssim M_*^{1/8} \ang{v_*}^{-m} \ang{v}^{\gamma+2s}$.  
Applying Cauchy-Schwartz as above in both the $v$ and $v_*$ integrals gives
 \[ 
\lesssim 2^{2sk} | w^\ell h |_{L^2_{\gamma + 2s}} | g |_{L^2_{-m}} | w^\ell f |_{L^2_{\gamma + 2s}},
\quad
\forall m >0.
 \]
Summing these two inequalities  gives \eqref{tminush}.
\end{proof}

\begin{proposition}\label{starPROP}
For all $\ell\in\mathbb{R}$, $m\ge 0$ and integers $k$, the inequalities are uniform:
\begin{gather}
  \left| T^{k,\ell}_{*}(g,h,f) \right|   \lesssim 
  2^{2sk}    \nsm g\nsm_{H^{\ksob}_{-m}} 
  \nsm w^\ell h\nsm_{L^2_{\gamma + 2s}} 
 \nsm w^\ell f\nsm_{L^2_{\gamma + 2s}},
 \label{tstarg} 
 \\
  \left| T^{k,\ell}_{*}(g,h,f) \right|   \lesssim 2^{2sk} 
 \nsm g\nsm_{L^2_{-m}} 
| w^\ell h |_{H^{\ksob}_{\ell,\gamma + 2s}} 
| w^\ell f |_{L^2_{\gamma + 2s}}.
 \label{tstarh}
\end{gather}
Moreover,
\begin{align}
 \left| T^{k,\ell}_{*}(g,\phi,f) \right| 
 +
  \left| T^{k,\ell}_{*}(g,f,\phi) \right| 
 &  \lesssim 2^{2sk} 
 \nsm g\nsm_{L^2_{-m}}  \nsm f\nsm_{L^2_{-m}},
 \label{tstarhRAP}
\end{align}
where as usual $\phi$ is defined as in \eqref{rapidDECAYfcn}.  
\end{proposition}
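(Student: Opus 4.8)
The plan is to follow the proof of Proposition~\ref{prop11} essentially line by line; the one genuinely new ingredient is the Carleman-representation analogue of the elementary bound~\eqref{bjEST}.

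\emph{Step 1: a size and support estimate for the Carleman kernel.} I first establish that, with $B_k$ and the plane $E_{v_*}^{v'}$ as in~\eqref{defTKLcarl},
\[
\int_{E_{v_*}^{v'}} d\pi_v \ \frac{B_k}{2^{1-n}}\cdot\frac{\Phi(v'-v_*)\,|v'-v_*|^{n-1}}{\Phi(v-v_*)\,|v-v_*|^{2n-2}}
\ \lesssim\ 2^{2sk}\,|v'-v_*|^{\gamma+2s}.
\]
To prove this, parametrize $E_{v_*}^{v'}$ as $v=v'+y$ with $y\perp(v'-v_*)$, and write $d=|v'-v_*|$, $\rho=|y|$, so that $|v-v'|=\rho$ and $|v-v_*|^2=d^2+\rho^2$. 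A short computation shows that the deviation angle obeys $\cos\theta=\langle\tfrac{v-v_*}{|v-v_*|},\tfrac{2v'-v-v_*}{|2v'-v-v_*|}\rangle=\tfrac{d^2-\rho^2}{d^2+\rho^2}$, so the support condition on $B$ forces $\rho\le d$; this in turn yields $|v-v_*|\sim d$ and $\theta\sim\rho/d\le1$. Plugging in the upper bound $b(\cos\theta)\lesssim\theta^{-1-2s}\sin^{2-n}\theta\sim(\rho/d)^{-(n-1+2s)}$ from~\eqref{kernelQ} together with $\Phi(z)=C_\Phi|z|^\gamma$ from~\eqref{kernelP}, and using $|v-v_*|\sim d$ so that the ratio of $\Phi$'s is $\sim d^{1-n}$, one finds that the integrand is at most a constant times $d^{\gamma+2s}\,\rho^{-(n-1+2s)}\chi_k(\rho)$. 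Since $d\pi_v$ is $(n-1)$-dimensional Lebesgue measure on the plane, integration in $\rho$ produces $\int_0^\infty\rho^{-1-2s}\chi_k(\rho)\,d\rho\sim2^{2sk}$, which proves the claim. (If $d<2^{-k-1}$ the domain of integration is empty and the estimate is vacuous, exactly as for~\eqref{bjEST}.)

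\emph{Step 2: reduction to Proposition~\ref{prop11}.} The factors $g_*$, $f'=f(v')$, $M_\beta(v_*)$, $h'=h(v')$, and $w^{2\ell}(v')$ appearing in $T^{k,\ell}_*$ do not depend on the plane variable $v$, so applying Step~1 and $|M_\beta(v_*)|\lesssim\sqrt{M_*}$ gives
\[
\bigl|T^{k,\ell}_*(g,h,f)\bigr|\ \lesssim\ 2^{2sk}\int_{\mathbb{R}^n} dv'\int_{\mathbb{R}^n} dv_*\ \sqrt{M_*}\ |v'-v_*|^{\gamma+2s}\ |g(v_*)|\,|h(v')|\,|f(v')|\ w^{2\ell}(v').
\]
This right-hand side has exactly the form of the quantity estimated just after~\eqref{bjEST} in the proof of Proposition~\ref{prop11}, with the outer variable $v'$ in place of $v$ there (and $h,f$ evaluated at $v'$, $g$ at $v_*$). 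Hence \eqref{tstarg} follows by taking the $L^\infty$-norm of $g_*\,M_*^{1/4}$ via~\eqref{sobolevE}, then using $\int dv_*\,M_*^{1/4}|v'-v_*|^{\gamma+2s}\lesssim\langle v'\rangle^{\gamma+2s}$, and finally Cauchy--Schwarz distributing $w^\ell(v')\langle v'\rangle^{(\gamma+2s)/2}$ between $h$ and $f$; \eqref{tstarh} follows by splitting the $v_*$-integral into $\{|v'-v_*|\le1\}$ and $\{|v'-v_*|\ge1\}$ and running the same pair of estimates as in Proposition~\ref{prop11}; and \eqref{tstarhRAP} follows because, when $h$ or $f$ is replaced by a $\phi$ satisfying~\eqref{rapidDECAYfcn}, the product $\sqrt{M_*}\,\phi$ decays rapidly in $v'$ and $v_*$ simultaneously, so Cauchy--Schwarz with $g$ in one $L^2_{-m}$ factor and the surviving function in the other (the Gaussian decay absorbing $|v'-v_*|^{\gamma+2s}$ and all the weights) closes the estimate.

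\emph{Main obstacle.} The only non-routine step is Step~1, and within it the observation that the support condition on $B$ forces $|v-v'|\le|v'-v_*|$, hence $|v-v_*|\sim|v'-v_*|$ on the entire region of integration: this is precisely what lets the (possibly negative) powers of $\Phi$ in the Carleman kernel cancel cleanly and keeps the identification $\theta\sim|v-v'|/|v'-v_*|$ sharp. Once Step~1 is in hand, the remainder is a verbatim repetition of the proof of Proposition~\ref{prop11}, and in particular requires no velocity weight growing at infinity beyond those already present in the statement.
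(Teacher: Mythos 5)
Your argument is correct and is essentially the paper's own proof: both estimate the Carleman $d\pi_v$-integral first, using the support condition on $b$ to conclude $|v-v'|\le|v'-v_*|$ and hence $|v-v_*|\approx|v'-v_*|$, which bounds the $v$-integral by $2^{2sk}|v'-v_*|^{\gamma+2s}$ and reduces the three inequalities to the corresponding computations in Proposition~\ref{prop11} with $v'$ in the role of $v$. The only difference is presentational — you parametrize the plane explicitly and fold $\Phi(v'-v_*)$ into the $d\pi_v$-bound from the start, whereas the paper treats the angular-kernel integral separately and attaches $\Phi$ afterward — and this changes nothing substantive.
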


\begin{proof}
As in the previous Proposition \ref{prop11}, the key to these inequalities is the symmetry between $h$ and $f$ coupled with two applications of Cauchy-Schwartz.  
The difference is that, this time, the Carleman representation \eqref{defTKLcarl} will be used and the main integrals will be over $v_*$ and $v'$.  In this case, the quantity of interest is
\[ 
\int_{E_{v_*}^{v'}}  d \pi_{v} ~ b \left( \frac{|v'-v_*|^2 - |v - v'|^2}{|v' - v_*|^2 + |v - v'|^2} \right) \frac{|v'-v_*|^{n-1}}{|v-v_*|^{2n-2}} \chi_k(|v-v'|). 
\]
The support condition yields $|v-v'| \approx 2^{-k}$.  Moreover, since $b(\cos \theta)$ vanishes for $\theta\in [\pi/2,\pi]$, we have
$|v' - v_*| \ge |v' - v|$. 
Consequently, the condition \eqref{kernelQ} gives 
\begin{equation}
b \left( \frac{|v'-v_*|^2 - |v - v'|^2}{|v' - v_*|^2 + |v - v'|^2} \right)
\lesssim
\left(\frac{ |v - v'|^2}{|v' - v_*|^2} \right)^{-\frac{n-1}{2}-s}.
\notag
\end{equation}
Thus, the integral is bounded by a uniform constant times
\[ 
\int_{E_{v_*}^{v'}}  d \pi_{v} ~\frac{|v'-v_*|^{n-1+2s}}{|v-v'|^{n-1+2s}} |v'-v_*|^{-n+1} \chi_k(|v-v'|) \lesssim 2^{2sk} |v'-v_*|^{2s}. 
\]
As a result,
\[ 
\left| T^{k,\ell}_*(g,h,f) \right| \lesssim 2^{2sk} \int_{\mathbb{R}^n} dv' \int_{\mathbb{R}^n} dv_*  ~ \sqrt{M_*}  ~ |v' - v_*|^{\gamma+2s} |g_* h' f'| ~ w^{2\ell}(v'). 
\]
Now the relevant estimates can be established as in Proposition \ref{prop11}.
\end{proof}

We will now estimate the operator $T^{k,\ell}_{+}$, which is more difficult and more technical because it contains the post-collisional velocities \eqref{sigma}.
A key problem here is to be able to distribute negative decaying velocity weights among the functions $g$, $h$, and $f$.  In the previous propositions this distribution could be accomplished more easily. 
Because the weight is in the $v$ variable and the unknown functions are of the variables $v'$ and $v'_*$ we have to work harder to distribute the negative weights.
These decaying weights are used to close our estimates for the soft potentials.    In the following we generalize methods from \cite{MR2013332,MR1946444} to overcome this particular difficulty.

\begin{proposition}
\label{referLATERprop}
Fix an integer $k$ and $\ell^+, \ell^- \ge 0$,
with $\ell = \ell^+ - \ell^-$.  For any  $0\le \ell' \le \ell^-$,
with $\ell + \ell' = \ell^+ - (\ell^- - \ell')$,
  we   have the uniform estimates 
\begin{equation} 
\begin{split}
\left|  T^{k,\ell}_{+}(g,h,f)  \right| \lesssim   & ~
2^{2sk} 
 \nsm  g\nsm_{H^{\ksob}_{{\ell^+} - \ell'}} 
\nsm w^{\ell + \ell'} h\nsm_{L^2_{\gamma + 2s}} 
\nsm w^{\ell} f\nsm_{L^2_{\gamma + 2s}}
\\ & ~
  +
  2^{2sk}\nsm w^{\ell^+ - \ell'} g\nsm_{L^2_{\gamma + 2s}} \nsm w^{\ell + \ell'} h\nsm_{L^2} 
  \nsm w^{\ell} f\nsm_{L^2_{\gamma + 2s}}.
\end{split}
 \label{tplussmall}
\end{equation}
We also have a similar 
estimate with the roles of $g$ and $h$ reversed
\begin{equation} 
\begin{split}
\left|  T^{k,\ell}_{+}(g,h,f)  \right| \lesssim   & ~
2^{2sk} 
\nsm w^{\ell^+ - \ell'} g\nsm_{L^2} 
 \nsm  h\nsm_{H^{\ksob}_{\ell + \ell', \gamma + 2s} }
\nsm w^{\ell} f\nsm_{L^2_{\gamma + 2s}}
\\ & ~
  +
  2^{2sk}\nsm w^{\ell^+ - \ell'} g\nsm_{L^2_{\gamma + 2s}} \nsm w^{\ell + \ell'} h\nsm_{L^2} 
  \nsm w^{\ell} f\nsm_{L^2_{\gamma + 2s}}.
\end{split}
 \label{tplussmall2}
\end{equation}
\end{proposition}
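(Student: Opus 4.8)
The plan is to prove \eqref{tplussmall} and \eqref{tplussmall2} by a single scheme, the two estimates differing only in which of $g,h$ is placed in $L^\infty$: for \eqref{tplussmall} we put $g$ in $L^\infty$ (hence the $H^{\ksob}$ norm) and keep $h,f$ in weighted $L^2_{\gamma+2s}$, and for \eqref{tplussmall2} the roles of $g$ and $h$ are interchanged. The three ingredients are the size bound \eqref{bjEST} for $\int_{\sph}B_k\,d\sigma$, the Sobolev embedding \eqref{sobolevE} to dispose of the $L^\infty$ factor, and a change of variables arranging that the two surviving factors each appear squared and integrated over a full copy of $\mathbb{R}^n$ — which is what produces the uniform prefactor $2^{2sk}$. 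First I would use the pre--post collisional change of variables together with the Carleman-type representation \eqref{defTKLcarl} to put $T^{k,\ell}_+$ in a form where the Maxwellian factor $M_\beta$ and the weight $w^{2\ell}$ are as convenient as possible; the point is that $M_\beta$ must sit at an integration variable so that its Gaussian part furnishes genuine decay, while its polynomial part $p_\beta$ is simply absorbed into that Gaussian (this is why, as in Proposition~\ref{prop11}, the bound holds for every $\beta$ with a $\beta$-independent constant).

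The heart of the matter is the redistribution of $w^{2\ell}$, which is entangled with the post-collisional velocities. Here I would use $v+v_*=v'+v'_*$ and $|v|^2+|v_*|^2=|v'|^2+|v'_*|^2$, equivalently $\ang{v}^2+\ang{v_*}^2=\ang{v'}^2+\ang{v'_*}^2$, together with the elementary consequences $\ang{v'}\lesssim\ang{v}\ang{v_*}$, $\ang{v'_*}\lesssim\ang{v}\ang{v_*}$ and $\ang{v}\lesssim\ang{v'}\ang{v'_*}$. Since $w$ is a fixed positive power of $\ang{\cdot}$ with $w\ge 1$, and since $\ell^\pm\ge 0$ and $0\le\ell'\le\ell^-$ force both $\ell^+-\ell'\ge\ell$ and $\ell+\ell'\ge\ell$, these inequalities let me assign one copy of $w^\ell$ to $f$ and bound the leftover copy of $w^\ell$ by $w^{\ell^+-\ell'}$ on the $g$-slot times $w^{\ell+\ell'}$ on the $h$-slot, at the cost of a polynomial factor in the integration variable carrying $M_\beta$, which is harmless. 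The case distinction $\gamma+2s\gtrless 0$ enters only when splitting the kinetic factor $|v-v_*|^{\gamma+2s}$ from \eqref{bjEST} between the slot that will carry the $L^2_{\gamma+2s}$ norm and the Gaussian; for $\gamma+2s<0$ one invokes $\gamma+2s>-(n-2)$ to keep $|v-v_*|^{\gamma+2s}$ locally integrable, exactly as in Propositions~\ref{prop11} and \ref{starPROP}. Following the region decomposition of \cite{MR2013332,MR1946444}, it is cleanest to split $\mathbb{R}^n_v\times\mathbb{R}^n_{v_*}$ by which of $\ang{v},\ang{v_*}$ is dominant and, within that, by $|v-v_*|\lessgtr 1$: the region $|v-v_*|\le 1$, where $M_\beta$ is doubly small, yields the first terms of \eqref{tplussmall}/\eqref{tplussmall2} with $g$ (resp. $h$) in $H^{\ksob}$, and the region $|v-v_*|\ge 1$ yields the second terms.

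With the weights routed, I would apply \eqref{sobolevE} to the designated $L^\infty$ factor and Cauchy--Schwarz to the remaining two, using the Carleman representation \eqref{defTKLcarl} for a factor evaluated at $v'$ and the $\sigma$-representation \eqref{defTKL} for a factor evaluated at $v$, so that each surviving function appears as $\int|\cdot|^2\ang{\cdot}^{\gamma+2s}w^{2(\cdot)}$ over $\mathbb{R}^n$; the leftover kernel is then integrated out precisely as in \eqref{bjEST}, producing $2^{2sk}$ uniformly in $k$.

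The main obstacle is exactly the weight bookkeeping: because $w^{2\ell}$ lives at the post-collisional velocity $v'$ and the decaying (negative) powers of $\ang{\cdot}$ needed for the soft potentials cannot be manufactured from other decaying factors, each such factor must be routed either onto the correct function slot through $\ang{v}^2+\ang{v_*}^2=\ang{v'}^2+\ang{v'_*}^2$ or into the Gaussian, and the constraints $\ell^\pm\ge 0$, $0\le\ell'\le\ell^-$ are precisely what make that routing consistent. Everything else — the $\sigma$-integration, the Sobolev step, and the two applications of Cauchy--Schwarz — is routine and parallels Propositions~\ref{prop11} and \ref{starPROP}.
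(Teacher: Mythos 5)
Your general scheme — the size bound \eqref{bjEST}, Sobolev embedding, and Cauchy--Schwarz, with the weight routed through the collisional conservation laws using the constraint $0\le\ell'\le\ell^-$ — is the right skeleton, and you correctly observe that $\ell^+-\ell'$ and $\ell+\ell'$ are the exponents that result. However, there are two genuine gaps that would prevent your sketch from yielding the stated estimate.

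First, your account of where the two terms on the right-hand side come from is not correct, and fixing it requires an idea you have not mentioned. The paper's primary split is $|v_*|\gtrless\frac{1}{2}|v|$, not $|v-v_*|\lessgtr 1$. On $|v_*|\ge\frac{1}{2}|v|$, the Gaussian $M_*$ dominates everything (via $w^{2\ell}(v)M_*^{1/2}\lesssim(M_*M)^\delta$) and one easily gets a term absorbed into the first line. On $|v_*|\le\frac{1}{2}|v|$ with $|v|\ge 1$ the crucial geometric fact is $|v-v_*|\ge\frac{1}{2}|v|\gtrsim(1+|v'|^2+|v'_*|^2)^{1/2}$; this is what makes the routing of the \emph{decaying} part $w^{-2\ell^-}(v)$ legitimate, since without a lower bound $\ang{v}\gtrsim\max\{\ang{v'},\ang{v'_*}\}$ one cannot bound a negative power of $\ang{v}$ above by negative powers of $\ang{v'},\ang{v'_*}$. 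Your split $|v-v_*|\lessgtr 1$ does not by itself supply this; an additional comparison to $|v|$ is essential.

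Second, and more seriously: when $\gamma+2s\ge 0$, a naive Cauchy--Schwarz on the region $|v_*|\le\frac{1}{2}|v|$, $|v|\ge 1$ only produces $2^{2sk}\,\nsm w^{\ell^+-\ell'}g\nsm_{L^2_{\gamma+2s}}\nsm w^{\ell+\ell'}h\nsm_{L^2_{\gamma+2s}}\nsm w^\ell f\nsm_{L^2_{\gamma+2s}}$, which is \emph{not} dominated by the right-hand side of \eqref{tplussmall} for hard potentials (one cannot drop the $\gamma+2s$-weight from $h$ nor trade $L^2_{\gamma+2s}$ on $g$ for $H^{\ksob}$). The paper instead uses a sequence of Cauchy--Schwarz inequalities \`a la Guo \cite{MR2000470}, integrating over $\sigma$ first so that a factor $\ang{v}^{\gamma+2s}$ appears on the outside, and then splits it via $\ang{v}^{\gamma+2s}\lesssim\ang{v'}^{\gamma+2s}+\ang{v'_*}^{\gamma+2s}$. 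That split is exactly what generates the two terms — the $\ang{v'}$ piece attaches to $h$ and gives the first term, and the $\ang{v'_*}$ piece attaches to $g$ and gives the second term. Your claim that the two terms arise respectively from $|v-v_*|\le 1$ and $|v-v_*|\ge 1$ does not match this mechanism, and without the Guo step the hard-potential case does not close. A minor additional point: the paper's proof of this proposition uses only the $\sigma$-representation \eqref{defTKL} together with pre/post changes of variables; the Carleman form \eqref{defTKLcarl} you invoke is needed for $T^{k,\ell}_*$ in Proposition~\ref{starPROP}, not here.
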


\begin{proof}
We plan to estimate  $T^{k,\ell}_{+}(g,h,f)$ in \eqref{defTKL} from above as
$$
\left| T^{k,\ell}_{+}(g,h,f)  \right| \lesssim \int_{\mathbb{R}^n} dv \int_{\mathbb{R}^n} dv_* \int_{\sph} d \sigma  ~B_k(v-v_*, \sigma) ~ w^{2\ell}(v) ~  \left| g_*' h' \right|     \left| f \right| \sqrt{M_*}.
$$
We will utilize a number of splittings of this upper bound.  
Firstly, when 
$
|v_*| \ge \frac{1}{2}|v|
$
we have
$
w^{2\ell}(v) M_*^{1/2} \lesssim  ( M_* M )^{\delta}
$
for some $\delta >0$.
Then $\left|  T^{k,\ell}_{+}(g,h,f) \right|$ on this region is bounded above by a uniform constant times
\begin{gather*} 
\int_{\mathbb{R}^n} dv \int_{\mathbb{R}^n} dv_* \int_{\sph} d \sigma ~ B_k(v-v_*, \sigma) ~ |g_*'| |h'|  |f|  ~ 
M_*^\delta M^\delta.
\end{gather*}
Then, with Cauchy-Schwartz, this quantity is again bounded by a uniform constant times
\begin{multline*} 
\left( \int_{\mathbb{R}^n} dv \int_{\mathbb{R}^n} dv_* \int_{\sph} d \sigma ~ B_k(v-v_*, \sigma) ~ |g_*'|^2 |h'|^2 
\left(M_*' M' \right)^{\delta/2}    \right)^{1/2}
\\
\times \left(\int_{\mathbb{R}^n} dv \int_{\mathbb{R}^n} dv_* \int_{\sph} d \sigma ~ B_k(v-v_*, \sigma) ~ \left(M_*M \right)^{\delta/2}    |f|^2 \right)^{1/2}.
\end{multline*}
For the second term, with \eqref{bjEST}, we clearly have for any $m\ge 0$
$$
\int_{\mathbb{R}^n} dv \int_{\mathbb{R}^n} dv_* \int_{\sph} d \sigma ~ B_k(v-v_*, \sigma) ~ \left(M_*M \right)^{\delta/2}  |f|^2
\le 
2^{2sk} \nsm f\nsm_{L^2_{-m}}.
$$
For the first term, do another pre-post and take the $L^\infty$ norm
of $M_*^{\delta/4} g_*$ to obtain that $\left|  T^{k,\ell}_{+}(g,h,f) \right| $, when restricted to this region, with \eqref{sobolevE} is bounded by
\begin{equation*} 
2^{2sk} 
\nsm g\nsm_{H^{\ksob}_{-m}} \nsm h\nsm_{L^2_{-m}} 
\nsm f\nsm_{L^2_{-m}},
\quad \forall m >0
\end{equation*}
(times some uniform constant for any fixed $m$).  If we instead took the $L^\infty$ norm
of $M^{\delta/4} h$ and applied the same procedure, we would have the same inequality except with the roles of $g$ and $h$ reversed.

Our next splitting is onto the region
$
|v_*| \le \frac{1}{2} |v|.
$
Notice that if $|v-v_*| < \frac{1}{2}|v|$ then 
$
|v_*| \ge |v| - |v-v_*| > \frac{1}{2}|v|.
$
Thus $|v-v_*| \ge \frac{1}{2}|v|$ when 
$
|v_*| \le \frac{1}{2} |v|.
$
Further suppose that $|v| \ge 1$. 
On this region,  from \eqref{sigma}, we have, in particular,
$$
|v -v_*| \ge \frac{1}{2} |v| \gtrsim (1+  |v_*|^2 + |v|^2)^{1/2} \gtrsim (1+  |v_*'|^2 + |v'|^2)^{1/2}.
$$ 
This lower bound will be used several times just below.
Then
$\left|  T^{k,\ell}_{+}(g,h,f) \right|$ restricted to this domain is bounded 
 with Cauchy-Schwartz by a uniform constant times
\begin{equation} 
\begin{split} 
\left( \int_{\mathbb{R}^n} dv \int_{\mathbb{R}^n} dv_* \int_{\sph} d \sigma ~ B_k(v-v_*, \sigma) ~ 
w^{2\ell}(v)~
|g_*'|^2 |h'|^2 \sqrt{M_*}  \right)^{1/2}
\\
\times \left(\int_{\mathbb{R}^n} dv \int_{\mathbb{R}^n} dv_* \int_{\sph} d \sigma ~ B_k(v-v_*, \sigma) ~    |f|^2 ~
w^{2\ell}(v)\sqrt{M_*}\right)^{1/2}
\\
 \lesssim 
  2^{2sk} \nsm w^{\ell^+ - \ell'}  g\nsm_{L^2_{\gamma + 2s}} 
  \nsm w^{\ell + \ell'}  h\nsm_{L^2_{\gamma + 2s}} 
  \nsm w^\ell f\nsm_{L^2_{\gamma + 2s}}.
  \end{split} 
  \label{useLaterKest}
\end{equation}
The estimate for the second term involving $f$ follows as in \eqref{bjEST}.
The estimate for the first term involving $g_*' h'$ requires more explanation.  
First of all, on the region of interest using also the collisional geometry \eqref{sigma},
$
w^{2\ell}
=
w^{2\ell^+} w^{-2\ell^-}
$
which is
$$
w^{2\ell}(v)
 \lesssim 
w^{2\ell^+} (v')
  w^{2\ell^+} (v'_*)
\left( 1+ |v'|+|v'_*|\right)^{-2\ell^-}
 \lesssim 
 w^{2(\ell^+-\ell')} (v'_*)
  w^{2(\ell + \ell' )}(v'). 
$$
With that estimate, we take a pre-post collisional change of variables, estimate $M_*'  \lesssim  1$
and 
$
|v -v_*|^{\gamma + 2s} \le \ang{v_*}^{\gamma + 2s}
$
when
 $\gamma + 2s \le 0$
 on the region of interest.
This combined with 
\eqref{bjEST}  yields the estimate above.

Finally when $|v| \le 1$ and 
$
|v_*| \le \frac{1}{2} |v|
$
we have that
$
|v_*|^2 + |v|^2 \le 2.
$
This region is invariant under the pre-post collisional change of variables.  We can thus generate (artificial) Maxwellian type decay in both variables $v_*$ and $v$ since all the variables are trivially bounded.  In this last region, with these preparations, the estimates follow exactly as in the first splitting which was
$
|v_*| > \frac{1}{2} |v|.
$
Thus when $\gamma + 2s \leq 0$, we obtain stronger results than \eqref{tplussmall} and \eqref{tplussmall2} (and may, in fact, omit the second line of terms on the right-hand side for both inequalities).

In the case $\gamma + 2s \ge 0$, we use a particular sequence of Cauchy-Schwartz inequalities which has already found utility in the cut-off context by Guo \cite{MR2000470}:
\begin{multline*}
\int_{\mathbb{R}^n} dv 
\left(\int_{\mathbb{R}^n} dv_* \int_{\sph} d \sigma  ~B_k(v-v_*, \sigma) ~   \left| g_*' h' \right| \sqrt{ M_* }
\right)
   w^{2\ell}(v) ~  \left| f \right|
     \\
\lesssim
     \int dv 
\left(\int dv_* d \sigma  ~B_k^2 \sqrt{ M_* } \right)^{1/2}  
\left(\int dv_*  d \sigma  ~  \left| g_*' h' \right|^2 \sqrt{ M_* } \right)^{1/2}  
  w^{2\ell}(v)  ~ \left| f \right|
        \\
\lesssim
    2^{2sk} \int dv 
    \ang{v}^{\gamma+2s}
\left(\int dv_*  d \sigma  ~  \left| g_*' h' \right|^2 \sqrt{ M_* } \right)^{1/2}  
   w^{2\ell}(v)  ~ \left| f \right|
           \\
\lesssim
    2^{2sk} |w^{\ell}f|_{L^2_{\gamma+2s}} 
    \left(
    \int ~ dv dv_*  d \sigma ~ 
    \ang{v}^{\gamma+2s}
    w^{2(\ell^+-\ell')} (v'_*)
  w^{2(\ell + \ell' )} (v')
    \left| g_*' h' \right|^2 
    \right)^{1/2}. 
\end{multline*}
We have also used
$
w^{2\ell}(v) \sqrt{ M_* }
\lesssim
    w^{2(\ell^+-\ell')} (v'_*)
  w^{2(\ell + \ell' )} (v'),
$
which follows from the  splitting as in the previous case.
Since 
$
\ang{v}^{\gamma+2s}
\lesssim
 \ang{v'}^{\gamma+2s}
  +
  \ang{v'_*}^{\gamma+2s},
$
we have
established the estimates \eqref{tplussmall} and \eqref{tplussmall2} for $\gamma + 2s \ge 0$ (the additional terms on the right-hand sides of \eqref{tplussmall} and \eqref{tplussmall2} arising from the inequality just observed).
\end{proof}

\begin{proposition}
\label{referLATERpropK}
Suppose that $k\ge 0$.  Also consider
  $\ell^+, \ell^- \ge 0$,
with $\ell = \ell^+ - \ell^-$ and $0\le \ell' \le \ell^-$
so that $\ell + \ell' = \ell^+ - (\ell^- - \ell')$.
We   have the uniform estimates 
\begin{equation} 
\begin{split}
\left|  T^{k,\ell}_{+}(g,h,f)  \right| \lesssim   & ~
2^{2sk} 
 \nsm  g\nsm_{H^{\ksob}_{{\ell^+} - \ell'}} 
\nsm w^{\ell + \ell'} h\nsm_{L^2_{\gamma + 2s}} 
\nsm w^{\ell} f\nsm_{L^2_{\gamma + 2s}}.
\end{split}
 \label{tplussmallK}
\end{equation}
We also have
\begin{equation} 
\begin{split}
\left|  T^{k,\ell}_{+} (g,h,f) \right| \lesssim & ~
2^{2sk} 
\nsm w^{\ell^+ - \ell'} g\nsm_{L^2} 
 \nsm  h\nsm_{H^{\ksob}_{\ell + \ell', \gamma + 2s} }
\nsm w^{\ell} f\nsm_{L^2_{\gamma + 2s}}.
\end{split}
 \label{tplussmall2K}
\end{equation}
\end{proposition}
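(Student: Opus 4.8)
The plan is to rerun the proof of Proposition~\ref{referLATERprop}, tracking exactly where the second group of terms on the right-hand sides of \eqref{tplussmall}, \eqref{tplussmall2} is produced, and to show it disappears once $k\ge 0$. In the range $\gamma+2s\le 0$ there is nothing to do: the argument of Proposition~\ref{referLATERprop} already establishes, on each region of the $(v,v_*)$-splitting, the bound \eqref{useLaterKest} (and the analogue with $g,h$ interchanged, obtained by taking the $L^\infty$ norm on $h$ rather than on $g$), and since $\ang{v}^{\gamma+2s}\le 1$ here one has $\nsm w^{a}g\nsm_{L^2_{\gamma+2s}}\le\nsm w^{a}g\nsm_{L^2}\le\nsm g\nsm_{H^{\ksob}_{a}}$, so these bounds are already dominated by the first lines of \eqref{tplussmallK} and \eqref{tplussmall2K} after the auxiliary weight $m$ is chosen large enough in terms of $\ell^{\pm},\gamma+2s,n$; this part needs no restriction on $k$. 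So the substance is the hard-potential range $\gamma+2s\ge 0$.

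For $\gamma+2s\ge 0$ I would split $(v,v_*)$ exactly as in Proposition~\ref{referLATERprop}. On $|v_*|>\tfrac12|v|$ and on $|v|\le 1$ the contributions were bounded there by $2^{2sk}\nsm g\nsm_{H^{\ksob}_{-m}}\nsm h\nsm_{L^2_{-m}}\nsm f\nsm_{L^2_{-m}}$ (and its $g\leftrightarrow h$ analogue), which for $m$ large is absorbed into the first lines of \eqref{tplussmallK}, \eqref{tplussmall2K}. On the remaining region $|v_*|\le\tfrac12|v|$, $|v|\ge 1$ I would reproduce the final Cauchy--Schwartz chain of Proposition~\ref{referLATERprop}, in particular the weight splitting $w^{2\ell}(v)\sqrt{M_*}\lesssim w^{2(\ell^+-\ell')}(v'_*)\,w^{2(\ell+\ell')}(v')$ justified there via $|v-v_*|\ge\tfrac12|v|\gtrsim(1+|v'|^2+|v'_*|^2)^{1/2}$. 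The only step I would change is the very last one, where Proposition~\ref{referLATERprop} uses $\ang{v}^{\gamma+2s}\lesssim\ang{v'}^{\gamma+2s}+\ang{v'_*}^{\gamma+2s}$: it is precisely the $\ang{v'_*}^{\gamma+2s}$ summand that generates the unwanted second line.

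This is where $k\ge 0$ does all the work. Since $\mathrm{supp}(\chi_k)\subset[2^{-k-1},2^{-k}]$ and $|v-v'|=|v-v_*|\sin(\theta/2)$ lies in that support, on the support of $B_k$ we have $|v-v'|\le 2^{-k}\le 1$, hence $\ang{v}\approx\ang{v'}$, and more generally $w^{a}(v)\approx w^{a}(v')$, with constants uniform in $v,v',k$. Thus I can simply bound $\ang{v}^{\gamma+2s}\lesssim\ang{v'}^{\gamma+2s}$, with no $\ang{v'_*}$ term. Feeding this into the chain, applying the pre-post collisional change of variables and discarding $\sqrt{M_*}\le 1$ and $M'_*\lesssim 1$ as in Proposition~\ref{referLATERprop}, the region $|v_*|\le\tfrac12|v|$, $|v|\ge 1$ contributes a term bounded by $2^{2sk}\nsm w^{\ell^+-\ell'}g\nsm_{L^2}\,\nsm w^{\ell+\ell'}h\nsm_{L^2_{\gamma+2s}}\,\nsm w^{\ell}f\nsm_{L^2_{\gamma+2s}}$, which is dominated by the first line of both \eqref{tplussmallK} and \eqref{tplussmall2K}. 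Summing the three region contributions then gives the claim (using \eqref{sobolevE} on the $g$-factor for \eqref{tplussmallK}, and on the $h$-factor for \eqref{tplussmall2K}, only in the two easy regions, exactly as in Proposition~\ref{referLATERprop}).

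I do not expect a serious obstacle here; the content is essentially the observation that $k\ge 0$ localizes $\chi_k$ to $|v-v'|\le 1$. The one point that needs care is the weight bookkeeping: after $\sqrt{M_*}$ is used up, one must still split the decaying factor $w^{-2\ell^-}(v)=w^{-2\ell'}(v)\,w^{-2(\ell^--\ell')}(v)$ of $w^{2\ell}(v)=w^{2\ell^+}(v)\,w^{-2\ell^-}(v)$ across $v'$ and $v'_*$, but this is precisely the splitting already carried out in Proposition~\ref{referLATERprop}, and $k\ge 0$ only simplifies it, since $\ang{v}\approx\ang{v'}$ makes distributing the kinetic weight onto $v'_*$ unnecessary.
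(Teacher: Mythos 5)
Your proof is correct and follows the paper's overall strategy: both arguments rerun Proposition~\ref{referLATERprop}, note that $k\ge 0$ forces $|v-v'|\le 2^{-k}\le 1$ on the support of $B_k$, confirm that the regions $|v_*|>\tfrac12|v|$ and $|v|\le 1$ (and the whole soft range $\gamma+2s\le 0$) already yield bounds dominated by the first lines, and then modify only the remaining region $|v_*|\le\tfrac12|v|$, $|v|\ge 1$ with $\gamma+2s\ge 0$. The one step where you diverge is in how the localization is used there. The paper writes the equivalent constraint $|v_*-v'_*|\le 1$, deduces $M_*\lesssim(M_*M'_*)^{\delta}$, and threads the polynomial growth through Gaussian decay via $\ang{v-v_*}^{\gamma+2s}(M_*M'_*)^{\delta/2}\lesssim\ang{v}^{\gamma+2s}(M_*M'_*)^{\delta/4}\lesssim\ang{v'}^{\gamma+2s}$. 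You instead use $|v-v'|\le 1$ directly to get $\ang{v}\approx\ang{v'}$, hence $\ang{v}^{\gamma+2s}\lesssim\ang{v'}^{\gamma+2s}$, with no detour through the Gaussian weights. Both achieve the same end — putting the $\gamma+2s$ growth entirely onto $h'$, which is exactly what removes the $\ang{v'_*}^{\gamma+2s}$ contribution responsible for the second lines of \eqref{tplussmall} and \eqref{tplussmall2} — and your version is marginally more direct.
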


\begin{proof}
Notice that both inequalities are proved exactly in the proof of Proposition \ref{referLATERprop} in the case when $\gamma + 2s \le 0$.  For $\gamma + 2s \ge 0$, the only estimate which requires modification in the 
proof of Proposition \ref{referLATERprop} is the one in the splitting surrounding the estimate \eqref{useLaterKest}.  This estimate can be modified as follows.
When  $k\ge 0$, we have $|v_* - v_*'| \le 1$ which implies that for some $\delta >0$ there is exponential decay as 
$M_* \lesssim (M_* M_*')^{\delta}$.  This decay further implies 
$$
\ang{v - v_*}^{\gamma + 2s} (M_* M_*')^{\delta/2}
\lesssim
\ang{v}^{\gamma + 2s} (M_* M_*')^{\delta/4}
\lesssim
\ang{v'}^{\gamma + 2s}. 
$$
Now in the estimate for \eqref{useLaterKest}, when $\gamma + 2s \ge 0$, this polynomial growth 
 can be put entirely on $h'$, which finishes the argument.  
\end{proof}

This concludes our  size and support estimates.  In the next sub-section we will prove estimates which incorporate the essential cancellation properties of the Boltzmann collision operator in the appropriate geometric framework.

\subsection{Cancellations}
\label{sec:SSC}
We now establish estimates for the differences $T^{k,\ell}_{+}  - T^{k,\ell}_{-}$ and 
$T^{k,\ell}_{+} - T^{k,\ell}_{*}$ from \eqref{defTKL} and \eqref{defTKLcarl}. We wish the estimates to have good dependence on $k$ (the estimate should contain a negative power of $2^k$), but this improved norm will be paid for by assuming differentiability of one of the functions $h$ or $f$.  The key obstacle to overcome in making these estimates is that the magnitude of the gradients of $h$ and $f$ must be measured in some non-isotropic way; this is a fundamental point, as the scaling is imposed upon us by the structure of the ``norm piece'' $\ang{N f,f}$.

The scaling dictated by the problem is that of the paraboloid: namely, that the function $f(v)$ should be thought of as the restriction of some ``lifted'' function $F$ of $n+1$ variables to the paraboloid $(v, \frac{1}{2} |v|^2)$.  Consequently, the correct metric to use in measuring the length of vectors in $\mathbb{R}^n$ will be the metric on the paraboloid in $\mathbb{R}^{n+1}$ induced by the $(n+1)$-dimensional Euclidean metric.  To simplify calculations, we  work directly with the function $F$ rather than $f$ and take its $(n+1)$-dimensional derivatives in the usual Euclidean metric.  This will be sufficient since our Littlewood-Paley-type decomposition in Section \ref{sec:aniLP} will grant a natural way to extend the projections $Q_j f$ into $n+1$ dimensions while preserving the relevant differentiability properties of the $n$-dimensional restriction to the paraboloid.

To begin, it is necessary to find a suitable formula relating differences of $F$ at nearby points on the paraboloid to the various derivatives of $F$ as a function of $n+1$ variables.  To this end, fix any two $v,v' \in \mathbb{R}^n$, and consider $\gamma : [0,1] \rightarrow \mathbb{R}^n$ and $\ext{\gamma} : [0,1] \rightarrow \mathbb{R}^{n+1}$ given by
\[ 
\gamma(\theta) \eqdef \theta v' + (1-\theta) v,
\quad 
\mbox{ and } 
\quad 
\ext{\gamma}(\theta) \eqdef  \left(\theta v' + (1-\theta)v, \frac{1}{2} \left| \theta v' + (1-\theta) v \right|^2 \right). \]
Note that $\ext{\gamma}$ lies in the paraboloid $\set{(v_1,\ldots,v_{n+1}) \in \mathbb{R}^{n+1}}{ v_{n+1} = \frac{1}{2} (v_1^2 + \cdots v_n^2)}$, and that $\gamma(0) = v$ and $\gamma(1) = v'$.  
Also consider the starred analogs defined by
\[ 
\gamma_*(\theta) \eqdef \theta v'_* + (1-\theta) v_*,
\quad 
\mbox{ and } 
\quad 
\ext{\gamma_*}(\theta) \eqdef  \left(\theta v'_* + (1-\theta)v_*, \frac{1}{2} \left| \theta v'_* + (1-\theta) v_* \right|^2 \right). 
\]
Elementary calculations show that $\gamma(\theta) + \gamma_*(\theta) = v + v_*$ and 
\begin{align*}
\frac{d \ext{\gamma}}{d \theta} = \left( v' - v, \ang{\gamma(\theta), v' - v} \right) 
\quad 
& \mbox{ and }
\quad 
 \frac{d^2 \ext{\gamma}}{d \theta^2} = (0, |v'-v|^2), \\
\frac{d \ext{\gamma_*}}{d \theta} = - \left( v' - v, \ang{\gamma_*(\theta), v' - v} \right)
\quad 
& \mbox{ and }
\quad 
 \frac{d^2 \ext{\gamma_*}}{d \theta^2} = (0, |v'-v|^2).
 \end{align*}
Now we use the standard trick of writing the difference of $F$ at two different points in terms of an integral of a derivative (in this case the integral is along the path $\gamma$):
\begin{align} 
F\left(v',\frac{|v'|^2}{2} \right) - F\left(v, \frac{|v|^2}{2} \right) & = \int_0^1 d \theta ~ \frac{d}{d \theta} F(\ext{\gamma}(\theta)) \nonumber \\
& = \int_0^1 d \theta  \left( \frac{d \ext{\gamma}}{d \theta} \cdot  (\tilde{\nabla} F) (\ext{\gamma}(\theta)) \right), 
\label{paraboladiff}
\end{align}
where the dot product on the right-hand side is the usual Euclidean inner-product on $\mathbb{R}^{n+1}$ and $\tilde{\nabla}$ is the $(n+1)$-dimensional gradient of $F$. 
For convenience we define
\[ 
|\tilde{\nabla}|^i F(v_1,\ldots,v_{n+1}) \eqdef 
\max_{0\le j \leq i}\sup_{|\xi| \leq 1} \left| \left(\xi \cdot \tilde{\nabla} \right)^j F(v_1,\ldots,v_{n+1}) \right|, 
\quad
  i=0,1,2,
\]
where $\xi \in \mathbb{R}^{n+1}$ and $|\xi|$ is the usual Euclidean length. In particular, note that we have defined $|\tilde{\nabla}|^0 F = |F|$.

If $v$ and $v'$ are related by the collision geometry \eqref{sigma}, then $\ang{v-v',v'-v_*} = 0$, which yields that 
\begin{align*}
\ang{\gamma(\theta),v'-v} & = \ang{v_*,v'-v} - (1-\theta) |v-v'|^2, \\
\ang{\gamma_*(\theta),v'-v} & = \ang{v_*,v'-v} + \theta |v-v'|^2.
\end{align*}
Thus, whenever $|v-v'| \leq 1$, which holds near the singularity, we have
$$
\left| \frac{d \ext{\gamma_*}}{d \theta} \right|
+
\left| \frac{d \ext{\gamma}}{d \theta} \right| \lesssim |v-v'| \ang{v_*}. 
$$
 Throughout this section we suppose that $|v-v'| \leq 1$ since this is the situation where our cancellation inequalities will be used.
 In particular, we have the following inequality for differences related by the collisional geometry:
\begin{align}
 \left| F\left(v',\frac{|v'|^2}{2}\right) - F\left(v, \frac{|v|^2}{2}\right)\right| 
 &
  \lesssim 
  \ang{v_*} |v-v'|  \int_0^1  d \theta ~ |\tilde{\nabla}| F (\ext{\gamma}(\theta)). 
 \label{paradiff1} 
\end{align}
Furthermore,
by subtracting the linear term from both sides of \eqref{paraboladiff} and using the integration trick iteratively on the integrand of the integral on the right-hand side of \eqref{paraboladiff}, we obtain
\begin{multline}
\left| F\left(v',\frac{|v'|^2}{2} \right) -  F\left(v, \frac{|v|^2}{2}\right) -  \frac{d \ext{\gamma}}{d \theta}(0) \cdot \tilde{\nabla} F(v) \right| 
\\
 \lesssim
\ang{v_*}^2 |v-v'|^2 \int_0^1 d \theta  ~ | \tilde{\nabla}|^2 F (\ext{\gamma}(\theta)). 
\label{paradiff2}
\end{multline}
We note that, by symmetry, the same result holds when the roles of $v$ and $v'$ are reversed (which only changes the curve $\ext{\gamma}$ by reversing the parametrization: $\ext{\gamma}(\theta)$ becomes $\ext{\gamma}(1-\theta)$).  
It is also trivially true that the corresponding starred version of \eqref{paradiff2} holds as well.
We will use these two basic cancellation inequalities to prove the  cancellation estimates for the trilinear form in the following propositions.

\begin{proposition}
\label{cancelFprop}
Suppose $f$ is a Schwartz function on $\mathbb{R}^n$ given by the restriction of some Schwartz function $F$ 
on $\mathbb{R}^{n+1}$ to the paraboloid $(v, \frac{1}{2} |v|^2)$.  Let $|\tilde{\nabla}|^2 f$ be the restriction of $|\tilde{\nabla}|^2 F$ to the same paraboloid.  Then, for any $k \geq 0$,
\begin{gather}
\label{cancelf21}
 \left| (T^{k,\ell}_{+}- T^{k,\ell}_{-})(g,h,f) \right|   
 \lesssim 2^{(2s-2)k} 
\nsm g\nsm_{L^2_{-m}}
\nsm  h\nsm_{H^{\ksob}_{\ell,\gamma+2s}}
\nl w^\ell |\tilde{\nabla}|^2 f \nr_{L^2_{\gamma+2s}},
\\
 \left| (T^{k,\ell}_{+}- T^{k,\ell}_{-})(g,h,f) \right|   
 \lesssim 2^{(2s-2)k} 
\nsm  g\nsm_{H^{\ksob}_{-m}}
  \nsm w^\ell h\nsm_{L^2_{\gamma+2s}}
 \nl w^\ell |\tilde{\nabla}|^2 f \nr_{L^2_{\gamma+2s}}. 
\label{cancelf2}
\end{gather}
Each of these inequalities hold for any $m \ge 0$ and any $\ell \in \mathbb{R}$.
\end{proposition}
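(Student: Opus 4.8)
The plan is to isolate all the cancellation between $T^{k,\ell}_{+}$ and $T^{k,\ell}_{-}$ in a single ``output'' factor and expand it to second order along the paraboloid, trading the two derivatives on $f$ for a gain of $2^{-2k}$ over the naive size estimate \eqref{bjEST}. The first point is that in $(T^{k,\ell}_{+}-T^{k,\ell}_{-})(g,h,f)$ the factors $g_*=g(v_*)$ and $h=h(v)$ are \emph{identical} in the two terms, so, writing the difference as one integral against $B_k\,g_*\,h$, the cancellation is carried entirely by $\mathcal F(v',v_*')-\mathcal F(v,v_*)$ where $\mathcal F(v,v_*)\eqdef M_\beta(v_*)\,w^{2\ell}(v)\,f(v)$. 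Since $w^{2\ell}$ is a power of $\ang{v}^2=1+2v_{n+1}$ it extends smoothly to the paraboloid, so $\mathcal F$ is the restriction to the product of two paraboloids in $\mathbb{R}^{n+1}\times\mathbb{R}^{n+1}$ of a lifted function, and because $k\ge 0$ forces $|v-v'|=|v_*-v_*'|\le 2^{-k}\le 1$, the cancellation inequalities \eqref{paradiff1}--\eqref{paradiff2} apply along the product curve $\theta\mapsto(\ext{\gamma}(\theta),\ext{\gamma_*}(\theta))$.

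Next I would use the second-order expansion \eqref{paradiff2} in product form: $\mathcal F(v',v_*')-\mathcal F(v,v_*)$ equals a first-order term $\frac{d\ext{\gamma}}{d\theta}(0)\cdot\tilde{\nabla}_V\mathcal F+\frac{d\ext{\gamma_*}}{d\theta}(0)\cdot\tilde{\nabla}_{V_*}\mathcal F$ (both gradients evaluated at the lift of $(v,v_*)$, both curve-derivatives linear in $v'-v=\frac{|v-v_*|}{2}(\sigma-k)$ with $k=\frac{v-v_*}{|v-v_*|}$) plus a remainder bounded by $\lesssim |v-v'|^2\ang{v_*}^2\,|\tilde{\nabla}|^2\mathcal F$. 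By the Leibniz rule $|\tilde{\nabla}|^2\mathcal F\lesssim(\text{rapid decay in }v_*)\,w^{2\ell}(v)\,|\tilde{\nabla}|^2 f(v)$: derivatives landing on $M_\beta$ only improve the decay, derivatives on the smooth weight cost at most a bounded power, and the $w^{2\ell}$ is distributed as $w^\ell$ on $h$ and $w^\ell$ on $f$ using $w(v)\approx w(v')$, $w(v_*)\approx w(v_*')$ on $|v-v'|\le 1$, exactly as in the bookkeeping of Proposition~\ref{prop11}. Inserting the remainder, $|v-v'|^2\approx 2^{-2k}$ combined with $\int_{\sph}d\sigma\,B_k\lesssim 2^{2sk}|v-v_*|^{\gamma+2s}$ from \eqref{bjEST} yields the factor $2^{(2s-2)k}$; the $\ang{v_*}^2$ is absorbed by the Gaussian; and the remaining $v,v_*$ integral is closed by the two Cauchy--Schwarz arrangements of Proposition~\ref{prop11}: placing $g_*M_*^{1/4}$ in $L^\infty$ via \eqref{sobolevE} and using $\int dv_*\,M_*^{1/4}|v-v_*|^{\gamma+2s}\lesssim\ang{v}^{\gamma+2s}$ gives \eqref{cancelf2}, while placing $h$ in $L^\infty$ via \eqref{sobolevE} gives \eqref{cancelf21}.

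The main obstacle is the first-order term, for which a size estimate alone gains only one power $|v-v'|\approx 2^{-k}$, hence only $2^{(2s-1)k}$. Here I would exploit the reflection symmetry of $B_k$ in $\sigma$: since $B_k$ depends on $\sigma$ only through $\ang{k,\sigma}=\cos\theta$ (as $|v-v'|^2=\frac{|v-v_*|^2}{2}(1-\ang{k,\sigma})$), the map $\sigma\mapsto 2\ang{\sigma,k}k-\sigma$ fixes $B_k\,d\sigma$, so $\int_{\sph}d\sigma\,B_k(\sigma-k)=k\int_{\sph}d\sigma\,B_k(\cos\theta-1)$; since $|\cos\theta-1|=2|v-v'|^2/|v-v_*|^2\approx 2^{-2k}|v-v_*|^{-2}$, this recovers the missing $2^{-k}$ at the cost of an extra $|v-v_*|^{-2}$, leaving a kernel $\lesssim 2^{(2s-2)k}|v-v_*|^{\gamma+2s-2}$, which is locally integrable in $v_*$ \emph{precisely} because the standing hypothesis $\gamma>-(n-2)-2s$ is exactly $\gamma+2s-2>-n$. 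The factor $\ang{v}$ produced by the last ($v_{n+1}$-) component of $\frac{d\ext{\gamma}}{d\theta}(0)$ — which, after the reflection, is forced through $\ang{v,k}\le|v|$, whereas the analogous factor for $\frac{d\ext{\gamma_*}}{d\theta}(0)$ is $\ang{v_*,k}\le|v_*|$ and is absorbed outright by the Gaussian in $v_*$ — is handled by splitting on $|v-v_*|<\frac12|v|$ versus $|v-v_*|\ge\frac12|v|$: where $|v-v_*|\gtrsim|v|$ it is swallowed by $|v-v_*|$, and where $|v_*|\gtrsim|v|$ it is swallowed by $M_*$, the residual $|v-v_*|^{\gamma+2s-1}$ being locally integrable again by the hypothesis; exactly one velocity derivative of $f$ survives, dominated by $|\tilde{\nabla}|^2 f$. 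The weight-redistribution errors from $w^\ell(v')-w^\ell(v)$ (and from $M_\beta(v_*')-M_\beta(v_*)$) are, to first order, of the same ``linear in $v'-v$'' type and fall under the same cancellation, their second-order remainders being $O(|v-v'|^2)$ and hence harmless. With this, the two Cauchy--Schwarz arrangements above complete both \eqref{cancelf21} and \eqref{cancelf2}, uniformly in $k$, $m$, $\ell$; I would close with a remark (to be re-used later) that the companion cancellation with $T^{k,\ell}_{*}$ in place of $T^{k,\ell}_{-}$ is identical via the Carleman representation \eqref{defTKLcarl}, just as Proposition~\ref{starPROP} parallels Proposition~\ref{prop11}.
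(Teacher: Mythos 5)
Your overall strategy is essentially the paper's, compressed into a cleaner form: where the paper writes out a five-term splitting $\mathbb{I}+\dots+\mathbb{V}$ of $M_\beta(v_*')w^{2\ell}(v')f'-M_\beta(v_*)w^{2\ell}(v)f$, you perform a single second-order Taylor expansion of the lifted product $\mathcal F(V,V_*) = M_\beta(V_*)\,w^{2\ell}(V)f(V)$ along the curve $\theta\mapsto(\ext\gamma(\theta),\ext{\gamma_*}(\theta))$. Your linear term is precisely $\mathbb{I}+\mathbb{V}$, and your Taylor remainder bundles the paper's $\mathbb{II}$, $\mathbb{III}$, $\mathbb{IV}$ (the cross-derivative block of the Hessian of $\mathcal F$ is $\mathbb{II}$; the two diagonal blocks give $\mathbb{III}$ and $\mathbb{IV}$). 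The treatment of the linear term by reflection symmetry of $B_k$ in $\sigma$ about $(v-v_*)/|v-v_*|$, the resulting $|\cos\theta-1|\approx |v-v'|^2/|v-v_*|^2$ gain, the splitting on $|v-v_*|\lessgtr\frac12|v|$ to absorb the factor $\ang{v}$ from the $(n+1)$-st component, and the local integrability of $|v-v_*|^{\gamma+2s-2}$ under $\gamma+2s>-(n-2)$ all match the paper. The two-term repackaging is a genuine simplification of the bookkeeping.

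There is, however, a gap in the remainder estimate. You write ``$|\tilde\nabla|^2\mathcal F\lesssim(\text{rapid decay in }v_*)\,w^{2\ell}(v)\,|\tilde\nabla|^2 f(v)$'', and then close by the Cauchy--Schwarz arrangements of Proposition~\ref{prop11}. But the Taylor remainder (see \eqref{paradiff2}) contains $\int_0^1 d\theta\, |\tilde\nabla|^2 F(\ext\gamma(\theta))$: the Hessian is evaluated at the interpolation point $\gamma(\theta)=\theta v'+(1-\theta)v$, \emph{not} at $v$. For the Gaussian factors this is harmless because $|v_*-\gamma_*(\theta)|\le 1$ gives a pointwise comparison, but there is no pointwise comparison between $|\tilde\nabla|^2 f(\gamma(\theta))$ and $|\tilde\nabla|^2 f(v)$; the argument must be changed from $v$ to $u=\gamma(\theta)$. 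This is exactly the step the paper devotes a paragraph to in the proof of \eqref{covterm}: one checks that the Jacobian $|du/dv|=(1-\theta/2)^2\{(1-\theta/2)+\frac{\theta}{2}\ang{k,\sigma}\}$ is bounded below on $\mbox{supp}\,b$, and that the shifted pole $\tilde k=(u-v_*)/|u-v_*|$ satisfies $1-\ang{k,\sigma}\approx 1-\ang{\tilde k,\sigma}$, so that \eqref{bjEST} survives the substitution. Proposition~\ref{prop11} does not contain this step and cannot substitute for it: its Cauchy--Schwarz arrangements all have $h$ and $f$ evaluated at $v$ and $g$ at $v_*$, with no shift of arguments. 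Without the change of variables (or some equivalent), your proposal does not produce the $\nsm w^\ell|\tilde\nabla|^2 f\nsm_{L^2_{\gamma+2s}}$ factor on the right-hand sides of \eqref{cancelf21} and \eqref{cancelf2}.
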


\begin{proof}
We write out the relevant difference into several terms
\begin{multline}
M_\beta(v_*') w^{2\ell}(v')  f' -  M_\beta(v_*) w^{2\ell}(v) f 
=
w^{2\ell}(v) ~ f
~ \left( \frac{d \ext{\gamma_*}}{d \theta}(0) \cdot \tilde{\nabla} M_\beta(v_*)  \right)
\\
+
\left( w^{2\ell}(v') ~ f' - w^{2\ell}(v) ~  f   \right)
~ \left( \frac{d \ext{\gamma_*}}{d \theta}(0) \cdot \tilde{\nabla} M_\beta(v_*)  \right)
\\
+
w^{2\ell}(v') ~ f' ~ 
 \left( M_\beta(v_*')  - M_\beta(v_*) -  \frac{d \ext{\gamma_*}}{d \theta}(0) \cdot \tilde{\nabla} M_\beta(v_*)    \right)
 \\
+
M_\beta(v_*) 
\left( w^{2\ell}(v') ~ f' - w^{2\ell}(v) ~  f  -  \frac{d \ext{\gamma}}{d \theta}(0) \cdot \tilde{\nabla} (w^{2\ell}  f )(v) \right)
 \\
 +
M_\beta(v_*) ~ \left(  \frac{d \ext{\gamma}}{d \theta}(0) \cdot \tilde{\nabla} (w^{2\ell}  f )(v)   \right)
 = \mathbb{I} + \mathbb{II}
 +
  \mathbb{III} + \mathbb{IV}+ \mathbb{V}.
\notag
\end{multline}
We further split 
$
(T^{k,\ell}_{+}- T^{k,\ell}_{-})(g,h,f)
=
T^{\mathbb{I}}+ T^{\mathbb{II}}
+
T^{\mathbb{III}}+ T^{\mathbb{IV}}+ T^{\mathbb{V}},
$
where $T^{\mathbb{I}}$ corresponds to the first term in the splitting above, etc.

We begin by considering the first and last terms, that is
$$
T^{\mathbb{I}}
=
\int_{\mathbb{R}^n} dv \int_{\mathbb{R}^n} dv_* \int_{\sph} d \sigma ~B_k(v-v_*, \sigma) ~ g_* h 
~ \left( \frac{d \ext{\gamma_*}}{d \theta}(0) \cdot \tilde{\nabla} M_\beta(v_*)  \right)
~ w^{2\ell}(v) ~ f,
$$
and
$$
T^{\mathbb{V}}
=
\int_{\mathbb{R}^n} dv \int_{\mathbb{R}^n} dv_* \int_{\sph} d \sigma ~B_k(v-v_*, \sigma) ~ g_* h 
~ M_\beta(v_*) ~ \left(  \frac{d \ext{\gamma}}{d \theta}(0) \cdot \tilde{\nabla} (w^{2\ell}  f )(v)   \right).
$$
We may estimate both of these terms in exactly the same way.  

First we define the extension.  If $f$ extends to $\mathbb{R}^{n+1}$, then 
$$
(w^{2\ell} f) (v_1,\ldots,v_{n+1}) = w^{2\ell}(v) f(v_1,\ldots,v_{n+1}).
$$  
Here we think of $w^{2\ell}(v)$ as being constant in the $(n+1)$-st coordinate
With this extension it follows that for $i=0,1,2$ we have
\begin{equation}
\label{nablaD}
|\tilde{\nabla}|^i (w^{2\ell}  f) 
\lesssim 
 w^{2\ell} |\tilde{\nabla}|^i f.
\end{equation}
We will use this basic estimate several times below.

For $T^{\mathbb{V}}$, notice that $\frac{d \ext{\gamma}}{d \theta}(0)$
is linear in $v'-v$ and has no other dependence on $v'$.  Thus the symmetry of $B_k$ with respect to $\sigma$ around the direction $\frac{v-v_*}{|v-v_*|}$ forces all components of $v'-v$ to vanish except the component in the symmetry direction.
Thus, one may replace $v-v'$ with $\frac{v-v_*}{|v-v_*|} \ang{v-v', \frac{v-v_*}{|v-v_*|}}$ in the expression for 
$\frac{d \ext{\gamma}}{d \theta}(0)$.  Since $\ang{v-v',v'-v_*} = 0$, the vector further reduces to $\frac{v-v_*}{|v-v_*|}\frac{|v-v'|^2}{|v-v_*|}$.  Hence
\[ 
\left| \frac{v-v_*}{|v-v_*|}\frac{|v-v'|^2}{|v-v_*|} \right| \leq 2^{-2k} |v-v_*|^{-1}.
\]
The last coordinate direction of $\frac{d \ext{\gamma}}{d \theta}(0)$ is given by $\ang{v,v'-v}$ which
 reduces to 
\[ 
\left| \ang{v, \frac{v-v_*}{|v-v_*|}\frac{|v-v'|^2}{|v-v_*|}} \right| \lesssim 
\left( 1 + |v-v_*|^{-1} \right)
|v-v'|^2  \ang{v_*}. 
\]
These bounds allow us to employ the same methods as used previously.  
To be precise, we must control the following integral
\begin{equation} 
2^{-2k} \int_{\mathbb{R}^n} dv \int_{\mathbb{R}^n} dv_* \int_{S^2} d \sigma ~  B_k ~ M_*^{1-\epsilon} ~ w^{2\ell}(v)~ |g_*| |h| ( |\tilde{\nabla}| f)
\left( 1 + |v-v_*|^{-1} \right).
\notag
\end{equation}
Here we absorb any powers of $\ang{v_*}$  by $M_*^{-\epsilon}$ for any small $\epsilon >0$.
The estimates 
\eqref{cancelf21}
and
\eqref{cancelf2} for $T^{\mathbb{V}}$ then follow as in the proofs of
\eqref{tminusg} 
and
 \eqref{tminush},
the only difference being the extra factor $|v-v_*|^{-1}$; however, since $\gamma+2s-1 \ge -(n-1)$ one may replace $\gamma$ with $\gamma-1$ in the earlier proofs without fear of destroying the local integrability of any of the singularities arising therein.

The estimation of $T^{\mathbb{I}}$ can be handled in exactly the same way because $\frac{d \ext{\gamma_*}}{d \theta}(0)$ also depends linearly on $v-v'$ and has no other $v'$ dependence.  
Here we have exactly the same estimates for $\frac{d \ext{\gamma_*}}{d \theta}(0)$ as just previously obtained for $\frac{d \ext{\gamma}}{d \theta}(0)$ by symmetry.  Then 
\eqref{cancelf21}
and
\eqref{cancelf2} for $T^{\mathbb{I}}$ follow again as in the proof of
\eqref{tminusg} 
and
 \eqref{tminush}.  As in \eqref{nablaD}, we use
 \begin{equation}
\notag
|\tilde{\nabla}|^i M_\beta
\lesssim 
\sqrt{M},
\quad
i = 0,1,2,
\end{equation}
where the extension is defined as
$
M_\beta(v) = (2 \pi)^{-n/2}~p_\beta (v_1, \ldots, v_n) ~ e^{-v_{n+1}/2}.
$

We now turn to the estimation of the term $T^{\mathbb{III}}$, which can be written as
\begin{multline}
T^{\mathbb{III}}
=
\int_{\mathbb{R}^n} dv \int_{\mathbb{R}^n} dv_* \int_{\sph} d \sigma ~B_k(v-v_*, \sigma) ~ g_* h
~ w^{2\ell}(v') ~ f' 
\\
\times
 \left( M_\beta(v_*')  - M_\beta(v_*) -  \frac{d \ext{\gamma_*}}{d \theta}(0) \cdot \tilde{\nabla} M_\beta(v_*)    \right). 
 \notag
\end{multline}
We apply the starred analog of \eqref{paradiff2} to obtain
\begin{equation}
\left| T^{\mathbb{III}}  \right|
\lesssim
2^{-2k}
\int_{\mathbb{R}^n} dv \int_{\mathbb{R}^n} dv_* \int_{\sph} d \sigma ~ B_k  \left| g_* h \right|
 w^{2\ell}(v')  \left| f'  \right|
(M_* M'_*)^{\epsilon},
 \notag
\end{equation}
where we have just used the estimate
$
\sqrt{M(\ext{\gamma_*}(\theta))} \le (M_* M_*')^{\epsilon},
$
(valid for all sufficiently small $\epsilon > 0$) which follows directly from $\ang{v_*} \lesssim \ang{\gamma_*(\theta)}\lesssim \ang{v_*}$
since
$
v_* = \gamma_*(\theta) + \theta(v_*-v_*')
$
and $|v_* - v_*'| \le 1$.
At this point, after applying  Cauchy-Schwartz, it suffices to prove two estimates. The first is 
that for any $m \ge 0$ 
\begin{multline}
 \label{easyterm}
 \left( 
 \int_{\mathbb{R}^n} dv \int_{\mathbb{R}^n} dv_* \int_{\sph} d \sigma B_k M_*^{\epsilon} 
  ~
w^{2\ell}(v)
 ~
 |g_*|^2 |h|^2 \right)^{\frac{1}{2}} 
 \\
 \lesssim 
 2^{sk} \left( \min\left\{ \nsm g\nsm_{L^2_{-m}} \nsm h\nsm_{H^{\ksob}_{-m}}, \nsm g\nsm_{H^{\ksob}_{-m}} \nsm  h\nsm_{L^2_{-m}} \right\}
 +
 \nsm g\nsm_{L^2_{-m}} \nsm w^\ell h\nsm_{L^2_{\gamma + 2s}}
 \right), 
\end{multline}
 where $w^{2\ell}(v)$ replaces $w^{2\ell}(v')$ because $\ang{v} \approx \ang{v'}$ when $|v-v'| \leq 1$. The second is
\begin{equation}
\notag
 \left( \int_{\mathbb{R}^n} dv \int_{\mathbb{R}^n} dv_* \int_{\sph} d \sigma  B_k (M_* M'_*)^{\epsilon} 
w^{2\ell}(v')
 | f'|^2 \right)^\frac{1}{2}  
 \lesssim 
 2^{sk} \nsm  w^{\ell} f\nsm_{L^2_{\gamma+2s}}. 
\end{equation}
Both of these inequalities follow exactly as in the proof of \eqref{tminusg} and \eqref{tminush}.

It remains to only prove \eqref{cancelf21}
and
\eqref{cancelf2} for $T^{\mathbb{II}}$ and $T^{\mathbb{IV}}$, both of which involve differences in $w^{2\ell} f$.  We use the difference estimates
\eqref{paradiff1} for $T^{\mathbb{II}}$
and
\eqref{paradiff2} for $T^{\mathbb{IV}}$ to obtain that
 for any fixed $\epsilon > 0$ both  $|T^{\mathbb{II}}|$ and $|T^{\mathbb{IV}}|$
 are controlled by
$$
  \lesssim 2^{-2k} \int_0^1 d \theta \int_{\mathbb{R}^n} dv \int_{\mathbb{R}^n} dv_* \int_{\sph} d \sigma ~ B_k ~ M_*^{1-\epsilon} ~ 
  w^{2\ell}(v)~ |g_* h| 
  ~ |\tilde{\nabla}|^2 f (\gamma(\theta) ).
$$
 The loss of $\epsilon$ comes from the factor $\ang{v_*}$ in \eqref{paradiff1} and  \eqref{paradiff2}
 in addition to the bound  $| \tilde{\nabla} | M_\beta(v_*)  \lesssim M_*^{1-\epsilon/2}$ and the estimate for 
 $
 \left| \frac{d \ext{\gamma_*}}{d \theta} \right|.
 $
These also account for the $2^{-2k}$. Note, though, that the factor $2^{-2k}$ comes directly from \eqref{paradiff2}, but \eqref{paradiff1} only furnishes a factor of $2^{-k}$. In this case, there is an additional factor of $2^{-k}$ available in the estimate for $T^{\mathbb{II}}$ arising exactly from the derivative of $\ext{\gamma_*}$.  
Also,
$\ang{v} \approx \ang{\gamma(\theta)}$ (which accounts for the replacement of $w^{2 \ell}(\gamma(\theta))$ by $w^{2 \ell}(v)$).

With that last estimate and Cauchy-Schwartz, it suffices to use \eqref{easyterm}
and show
\begin{multline}
 \left( \int_0^1 d \theta \int_{\mathbb{R}^n} dv \int_{\mathbb{R}^n} dv_* \int_{\sph} d \sigma  B_k M_*^{1-\epsilon} 
 w^{2\ell}(\gamma(\theta))
 \left|  |\tilde{\nabla}|^2 f(\gamma(\theta))\right|^2 \right)^\frac{1}{2}  
 \\
 \lesssim 
 2^{sk} \nsm w^\ell  |\tilde{\nabla}|^2 f \nsm_{L^2_{\gamma+2s}}. 
 \label{covterm}
\end{multline}
This uniform bound
 follows from the  change of variables $u = \theta v' + (1-\theta)v$, which is a transformation from  $v$ to $u$.  In view of the collisional variables \eqref{sigma}, we see  (with $\delta_{ij}$  the usual Kronecker delta) that 
$$
\frac{d u_i}{ dv_j} = (1-\theta) \delta_{ij} + \theta \frac{d v'_i}{ dv_j}
 = \left(1-\frac{\theta}{2}\right)  \delta_{ij} + \frac{\theta}{2} k_{j} \sigma_{i},
$$
with the unit vector $k = (v-v_*)/|v-v_*|$.  Thus the Jacobian is 
$$
\left| \frac{d u_i}{ dv_j}  \right| 
= 
\left(1-\frac{\theta}{2}\right)^2\left\{
\left(1-\frac{\theta}{2}\right) + 
\frac{\theta}{2} \ang{k, \sigma}
\right\}.
$$
Since  $b(\ang{k, \sigma}) = 0$ when $\ang{k ,\sigma} \leq 0$ from \eqref{kernelQ}, and  $\theta \in [0,1]$, 
it follows that 
the Jacobian  is bounded from below on the support of the integral \eqref{covterm}. 
But after this change of variable the old pole $k=(v-v_*)/|v-v_*|$ moves with the angle $\sigma$.  However  when one takes $\tilde k = (u-v_*)/|u-v_*|$, 
then
$
1- \ang{ k,\sigma } \approx  1 - \left< \right. \! \tilde{k}, \sigma \! \left. \right>, 
$
meaning that the angle to the pole is comparable to the angle to $\tilde{k}$ (which does not vary with $\sigma$).
Thus the estimate analogous to \eqref{bjEST} will continue to hold after the change of variables, giving precisely the estimate in \eqref{covterm}.
\end{proof}

This completes our proof of the cancellations for the $\sigma$ representation as in \eqref{defTKL}.  In the following, we estimate the cancellations on $h$ instead of putting them on $f$, and for this we use the Carleman representation as in \eqref{defTKLcarl}.

\begin{proposition}
\label{cancelHprop}
As in the prior proposition, suppose $h$ is a Schwartz function on $\mathbb{R}^n$ which is given by the restriction of some Schwartz function in $\mathbb{R}^{n+1}$ to the paraboloid $(v, \frac{1}{2} |v|^2)$ and define $|\tilde{\nabla}|^i h$ analogously for $i=0,1,2$. 
We have 
\begin{gather}
\label{cancelh2g1}
 \left| (T^{k,\ell}_{+}- T^{k,\ell}_{*})(g,h,f) \right| 
   \lesssim 2^{(2s-2)k} 
    \nsm g\nsm_{L^2_{-m}} 
\nl  |\tilde{\nabla}|^2 h \nr_{H^{\ksob}_{\ell,\gamma+2s}} 
  \nsm w^\ell f\nsm_{L^2_{\gamma+2s}},
\\
 \left| (T^{k,\ell}_{+}- T^{k,\ell}_{*})(g,h,f) \right| 
   \lesssim 2^{(2s-2)k} 
\nsm  g\nsm_{H^{\ksob}_{-m}}
  \nl w^\ell |\tilde{\nabla}|^2 h \nr_{L^2_{\gamma+2s}}
  \nsm w^\ell f\nsm_{L^2_{\gamma+2s}}.
  \label{cancelh2g2}
\end{gather}
The above inequalities hold uniformly in $k \geq 0$,  $m\ge 0$, and $\ell \in \mathbb{R}$.
\end{proposition}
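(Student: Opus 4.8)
The plan is to mirror the structure of the proof of Proposition \ref{cancelFprop}, but now using the Carleman-type representation \eqref{defTKLcarl} so that the cancellation is exploited on the factor $h$ rather than on $f$. The difference we must analyze is
$$
\frac{\Phi(v'-v_*)|v'-v_*|^{n-1}}{\Phi(v-v_*)|v-v_*|^{2n-2}}\, M_\beta(v_*)\, h'
- M_\beta(v_*')\, h,
$$
together with the remaining weight and geometric factors appearing in $T^{k,\ell}_{+}$ versus $T^{k,\ell}_{*}$. (Here, as in \eqref{defTKLcarl}, $v_*' = v + v_* - v'$ and the collisional geometry gives $\ang{v-v',v'-v_*}=0$, so $|v-v'|\approx 2^{-k}$ and near the singularity $|v_*-v_*'|\le 1$.) Following the remark made at the start of this section, I would first peel off the velocity-derivative structure using the decomposition of $\partial^{\alpha}_{\beta}\Gamma$ in \eqref{DerivEst}, so it suffices to treat a single $\Gamma_\beta$; the weight $w^{2\ell}$ is handled exactly as in Proposition \ref{cancelFprop} using $\ang{v}\approx\ang{v'}$ when $|v-v'|\le 1$ and the extension convention \eqref{nablaD}. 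The main new point is that we now write $h$ as the restriction to the paraboloid of an $(n+1)$-dimensional Schwartz function $H$ and apply the two basic cancellation inequalities \eqref{paradiff1} and \eqref{paradiff2} to $H$ along the path $\ext\gamma$.

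Concretely, I would split the relevant difference into five pieces $\mathbb{I}$ through $\mathbb{V}$ in complete analogy with the splitting in the proof of Proposition \ref{cancelFprop}: a leading term where $h'-h$ is replaced by its first-order Taylor term $\frac{d\ext\gamma}{d\theta}(0)\cdot\tilde\nabla H(v)$, a term where that linear term is paired against the difference of the remaining (Maxwellian and ratio) factors, a second-order remainder term for $h$ via \eqref{paradiff2}, a term carrying the difference of the ratio-and-Maxwellian factor $\frac{\Phi(v'-v_*)|v'-v_*|^{n-1}}{\Phi(v-v_*)|v-v_*|^{2n-2}}M_\beta(v_*) - M_\beta(v_*')$ against $h$ (to be controlled by its own first-order Taylor expansion plus a second-order remainder, exactly as the $M_\beta$ term was handled before), and finally the harmless first-order term $\frac{d\ext\gamma}{d\theta}(0)\cdot\tilde\nabla(\text{ratio}\cdot M_\beta)\,h'$. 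As in the earlier proof, the first-order term in the $\sigma$-symmetry direction collapses: $v-v'$ may be replaced by $\frac{v-v_*}{|v-v_*|}\ang{v-v',\frac{v-v_*}{|v-v_*|}} = \frac{v-v_*}{|v-v_*|}\frac{|v-v'|^2}{|v-v_*|}$, which supplies an extra factor $2^{-2k}|v-v_*|^{-1}$, so that $\gamma$ is effectively replaced by $\gamma-1$; since $\gamma+2s-1 \ge -(n-1)$, local integrability is preserved and the size-and-support machinery of Propositions \ref{starPROP} and \ref{referLATERprop} still applies, now yielding $2^{sk}$ instead of $2^{2sk}$ in each of the two Cauchy--Schwarz halves. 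The second-order remainder terms give an honest $2^{-2k}$ from \eqref{paradiff2}, again combining with $2^{sk}$ per half. Absorbing powers of $\ang{v_*}$ (and $\ang{v_*'}$) into $M_*^{-\epsilon}$ exactly as before, one arrives at the factor $2^{(2s-2)k}$ claimed in \eqref{cancelh2g1}--\eqref{cancelh2g2}, with the min-type estimate \eqref{easyterm} distributing $g$ and the derivative-free part of $h$ and the $f$-estimate handled as in \eqref{tminusg}--\eqref{tminush} adapted to the Carleman variables $v_*,v'$.

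The step I expect to be the main obstacle is the change of variables needed to handle the second-order remainder term for $h$: just as \eqref{covterm} required passing to $u=\theta v'+(1-\theta)v$ and checking that the Jacobian is bounded below on the support (using $b(\ang{k,\sigma})=0$ for $\ang{k,\sigma}\le 0$) and that the pole moves so that $1-\ang{k,\sigma}\approx 1-\ang{\tilde k,\sigma}$ with $\tilde k=(u-v_*)/|u-v_*|$, here the path-integral in $\theta$ runs over the variable $v$ which is now the \emph{integration} variable on the plane $E_{v_*}^{v'}$ in the Carleman representation, so one must verify that the analog of this change of variables is compatible with $d\pi_v$ on $E_{v_*}^{v'}$ and that the ratio $\frac{\Phi(v'-v_*)|v'-v_*|^{n-1}}{\Phi(v-v_*)|v-v_*|^{2n-2}}$ together with its derivatives stays under control along $\ext\gamma$ (which it does, since on the support $|v-v_*|\gtrsim |v-v'|$ and $|v'-v_*|\approx |v-v_*|$ up to the harmless factor $|v-v'|^2/|v-v_*|$). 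Once that geometric bookkeeping is in place, the rest is a routine repetition of the $L^2_{\gamma+2s}$ estimates already established in Section \ref{sec:SSE} and in the proof of Proposition \ref{cancelFprop}.
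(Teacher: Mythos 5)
Your five-term decomposition is in the right spirit — a Carleman-based split with two first-order terms, two Taylor remainders treated via \eqref{paradiff1}/\eqref{paradiff2}, and the $M_\beta$-difference term — but the treatment of the first-order terms has a genuine gap. You invoke the $\sigma$-representation symmetry, replacing $v-v'$ by its projection onto $(v-v_*)/|v-v_*|$ to gain a factor $|v-v'|^2/|v-v_*|$. That replacement is valid only when averaging over $\sigma$ with the fixed axis $k=(v-v_*)/|v-v_*|$, as in the proof of Proposition \ref{cancelFprop}. In the Carleman representation the inner integration variable is $v$ running over the plane $E^{v'}_{v_*}$, and the rotational symmetry there is about $\omega = (v'-v_*)/|v'-v_*|$ — a different axis. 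The correct observation is that, since $v-v'$ is orthogonal to $v'-v_*$ by the very definition of $E^{v'}_{v_*}$, the first $n$ components of $\frac{d\ext{\gamma}}{d\theta}$ project to zero on $\omega$, and the last component $\ang{v',v'-v}$ integrates to zero over circles about $v'$. Consequently the paper's $T^{\mathbb{I}}_*$ and $T^{\mathbb{V}}_*$ vanish identically; they do not merely pick up $2^{-2k}|v-v_*|^{-1}$, and the estimate you write down for them rests on a symmetry that is not present in the Carleman integral.

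A second substantive issue is the base point of the Taylor expansion. You expand at $\theta=0$ (at $v$); the paper expands the combined function $F(\ext{\gamma}(\theta)) = h(\ext{\gamma}(\theta))\,\Phi(\gamma(\theta)-v_*)\,|\gamma(\theta)-v_*|^n$ at $\theta=1$ (at $v'$), and this choice is essential rather than cosmetic. Because $\ang{v'-v, v'-v_*}=0$, one has $\frac{d}{d\theta}|\gamma(\theta)-v_*|^2 = -2(1-\theta)|v'-v|^2$, which vanishes at $\theta=1$ but not at $\theta=0$. At $\theta=1$ the cancellation term therefore collapses to $\Phi(v'-v_*)|v'-v_*|^n\,\frac{d\ext{\gamma}}{d\theta}(1)\cdot\tilde{\nabla} h'$, a function of $v'$ alone apart from its linear dependence on $v-v'$, which is precisely what the vanishing argument above needs. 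At $\theta=0$ the $\Phi$-derivative survives, the remaining factors are no longer constant on circles about $v'$, and the symmetry cancellation breaks. Finally, as you correctly flag, the change of variables $u=\theta v'+(1-\theta)v$ for the second-order remainder is not immediately compatible with $d\pi_v$; the paper handles this by passing back from the Carleman form to the $\sigma$-representation and reusing the Jacobian computation of \eqref{covterm}, a step that needs to be written out rather than deferred.
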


\begin{proof}
This proof follows the pattern that is now established.  The new feature in \eqref{cancelh2g1}  and \eqref{cancelh2g2} is that, from \eqref{defTKLcarl}, the pointwise differences to examine are
\begin{multline}
M_\beta(v_*') h \Phi(v-v_*) |v-v_*|^n -  M_\beta(v_*) h' \Phi(v'-v_*) |v' - v_*|^n  
\\
=
 \left( \frac{d \ext{\gamma_*}}{d \theta}(0) \cdot \tilde{\nabla} M_\beta(v_*)    \right)
 h' \Phi(v'-v_*) |v'-v_*|^n
\\
+
 \left( \frac{d \ext{\gamma_*}}{d \theta}(0) \cdot \tilde{\nabla} M_\beta(v_*)    \right)
 \left( h \Phi(v-v_*) |v-v_*|^n -   h' \Phi(v'-v_*) |v' - v_*|^n  
 \right)
\\
+
 \left( M_\beta(v_*')  - M_\beta(v_*) -  \frac{d \ext{\gamma_*}}{d \theta}(0) \cdot \tilde{\nabla} M_\beta(v_*)    \right)
 h \Phi(v-v_*) |v-v_*|^n
 \\
 +
 M_\beta(v_*) 
 \left( h \Phi(v-v_*) |v-v_*|^n -   h' \Phi(v'-v_*) |v' - v_*|^n  
 -  \frac{d \ext{\gamma}}{d \theta}(1) \cdot \tilde{\nabla} F(\ext{\gamma}(1))
 \right)
 \\
  +
 M_\beta(v_*) 
 \left(  \frac{d \ext{\gamma}}{d \theta}(1) \cdot \tilde{\nabla} F(\ext{\gamma}(1))
 \right)
 = \mathbb{I} + \mathbb{II}
 +
  \mathbb{III} + \mathbb{IV}+ \mathbb{V}.
\notag
\end{multline}
Here
since
$
\frac{d}{d \theta} |\gamma(\theta) - v_*|^2 = 2 \ang{v' - v, \gamma(\theta) - v_*} = 0, 
$
the cancellation 
term is
\[  
\frac{d \ext{\gamma}}{d \theta}(1) \cdot \tilde{\nabla} F(\ext{\gamma}(1)) = \Phi(v' - v_*) |v'-v_*|^n \frac{d \ext{\gamma}}{d \theta}(1) \cdot \tilde{\nabla} h'.
\]
We again 
split 
$
(T^{k,\ell}_{+}- T^{k,\ell}_{*})(g,h,f)
=
T^{\mathbb{I}}_*+ T^{\mathbb{II}}_*
+
T^{\mathbb{III}}_*+ T^{\mathbb{IV}}_* + T^{\mathbb{V}}_*,
$
where $T^{\mathbb{I}}_*$ corresponds to the first term in the splitting above, etc.

For the last term $T^{\mathbb{V}}_*$, we have
\begin{gather*}
T^{\mathbb{V}}_*
\eqdef
 \int_{\mathbb{R}^n} dv' ~ w^{2\ell}(v') ~
 \int_{\mathbb{R}^n} dv_*   \int_{E_{v_*}^{v'}} d \pi_v    
 \frac{B_k }{2^{1-n}}  
 \frac{ M_\beta(v_*)  g_* f' }{|v'-v_*| | v-v_*|^{n-2}}
  \frac{\Phi(v'-v_*)}{\Phi(v-v_*) } 
  \\
   \times  \frac{|v'-v_*|^n}{ |v - v_*|^n} 
  \frac{d \ext{\gamma}}{d \theta}(1) \cdot \tilde{\nabla} h' = 0.
\end{gather*}
In this integral as $v$ varies on circles of constant distance to $v'$, the entire integrand is constant except for $\frac{d \ext{\gamma}}{d \theta}(1)$.  If we write $\frac{d \ext{\gamma}}{d \theta}(1)$ as a sum of two vectors, one lying in the span of the first $n$ directions and the second pointing in the last direction, it follows that we may replace the former vector by its projection onto the direction determined by $v' - v_*$.  But since the original vector points in the direction $v-v'$, the projection vanishes.  Since the last direction of $\frac{d \ext{\gamma}}{d \theta}(1)$  is exactly $\ang{v',v'-v}$, the corresponding integral of this over $v$ also vanishes by symmetry.  

Similarly, we consider
the first term $T^{\mathbb{I}}_*$, we have
\begin{gather*}
T^{\mathbb{I}}_*
\eqdef
 \int_{\mathbb{R}^n} dv' ~ w^{2\ell}(v') ~ \int_{\mathbb{R}^n} dv_*   \int_{E_{v_*}^{v'}} d \pi_v    
 \frac{B_k }{2^{1-n}}  
 \frac{ M_\beta(v_*)  g_* f' }{|v'-v_*| | v-v_*|^{n-2}}
  \frac{\Phi(v'-v_*)}{\Phi(v-v_*) } 
  \\
   \times  \frac{|v'-v_*|^n}{ |v - v_*|^n}  h' 
    \left( \frac{d \ext{\gamma_*}}{d \theta}(0) \cdot \tilde{\nabla} M_\beta(v_*)    \right)
   = 0.
\end{gather*}
Here the explanation is the same as in the previous case.

The remaining terms incorporate cancellations.  
To estimate $T^{\mathbb{II}}_*$ and $T^{\mathbb{IV}}_*$ we study the function of $\theta$ which measures differences given by
\[ 
 h(\ext{\gamma}(\theta)) \Phi(\gamma(\theta) - v_*) |\gamma(\theta)-v_*|^n.
\]
Notice that both terms $\mathbb{II}$ and $\mathbb{IV}$ include the difference in the values of this function at $\theta = 0$ and $\theta = 1$ (we also extend $h$ to $\mathbb{R}^{n+1}$ and make the de facto extension of  $\Phi(v-v_*) |v-v_*|^n$ to $\mathbb{R}^{n+1}$ by taking the new function to be constant in the last variable).
In terms of $\mathbb{II}$ and $\mathbb{IV}$, our operator  from \eqref{defTKLcarl} takes the form
\[ 
T^{\mathbb{II}}_*
+
T^{\mathbb{IV}}_* = \int_{\mathbb{R}^n} dv' ~ w^{2\ell}(v') ~\int_{\mathbb{R}^n} dv_* \int_{E_{v_*}^{v'}} d \pi_v  
\frac{B_k ~ g_*  f'   ~\left( \mathbb{II}~ + \mathbb{IV} \right)}{|v'-v_*| |v-v_*|^{2n-2} ~ \Phi(v-v_*)} . 
\]
We use \eqref{paradiff1} and \eqref{paradiff2} to estimate $\mathbb{II}$ and $\mathbb{IV}$.  The additional estimate needed is that
\[ 
\left|\tilde{\nabla}^i (\Phi(v-v_*) |v-v_*|^n) \right| \lesssim (1+|v-v_*|^{-i}) (\Phi(v-v_*) |v-v_*|^n), 
\]
which simply comes from differentiating with respect to $v$ (since the extension is taken to be constant in the last direction, the gradient $\tilde{\nabla}$ reduces to the usual $n$-dimensional gradient).
Applying \eqref{paradiff1} (with the estimates for $\left| \frac{d \ext{\gamma_*}}{d \theta} \right|$) and \eqref{paradiff2} gives the estimate
\begin{gather*}
 \frac{| \mathbb{II} | + | \mathbb{IV} |}{\Phi(v-v_*)|v-v_*|^n}  \lesssim   \frac{|v-v'|^2}{\min\{|v-v_*|^2, 1\}}
 M_*^{1/2} \int_0^1 d \theta ~  |\tilde{\nabla}|^2 h(\ext{\gamma}(\theta)).  
\end{gather*}
Again,  $|v-v'| \lesssim 2^{-k}$ and also 
$
|v-v_*| \approx |\gamma(\theta)-v_*|.
$
With all of that we may  estimate the terms $\left| T^{\mathbb{II}}_* \right|$
and
$\left| T^{\mathbb{IV}}_* \right|$
 above by the following single term:
\begin{multline*}
2^{-2k}\int_0^1 d \theta \int_{\mathbb{R}^n} dv' \int_{\mathbb{R}^n} dv_* \int_{E_{v_*}^{v'}}  d \pi_{v} ~  \frac{B_k(v-v_*, 2v' - v- v_*) }{ |v'-v_*| |v-v_*|^{n-2} \min\{ |v-v_*|^2 , 1\}}
\\
  \times   w^{2\ell}(v') ~
  M_*^{1/2} |g_* f'| ~ |\tilde{\nabla}|^2 h(\gamma(\theta)).
\end{multline*}
 The estimates required now for the term above are completely analogous to a corresponding estimate from a previous proposition.
In particular, we split into the regions $|v-v_*| \le 1$ and $|v-v_*| \ge 1$.  When $|v-v_*| \ge 1$ we apply Cauchy-Schwartz to the quantities above carrying $|g_* f'| $ into one of the integrals and 
$ |\tilde{\nabla}|^2 h(\gamma(\theta))$ into the other integral.  The rest of the terms above are carried half into each integral.
The integrals involving $ |\tilde{\nabla}|^2 h(\gamma(\theta))$ are estimated as in \eqref{covterm} after changing back from the Carleman representation to the $\sigma$ representation; see for instance \cite[Proposition 10]{gsNonCut1}.  
The integrals involving $|g_* f'| $ are estimated  as in the proof of \eqref{tminush}, \eqref{bjEST}, and Proposition \ref{starPROP}.  

When instead $|v-v_*| \le 1$, since also $|v-v'| \le 1$, we have strong exponential decay in all variables.  After taking the $L^\infty$ norm of either $\ang{\gamma(\theta)}^{-m} |\tilde{\nabla}|^2 h(\gamma(\theta))$
or $\ang{v_*}^{-m} |g_*|$, and using \eqref{sobolevE}, we can use the Schur test for integral operators to finish off our estimates, or simply Cauchy-Schwartz as in Proposition \ref{prop11}
and Proposition \ref{starPROP}.
The additional singularity does not cause a problem because $\gamma + 2s -2 > -n$ which implies local integrability.

Regarding the estimate for $T^{\mathbb{III}}_* $, we notice that the estimate of \eqref{paradiff2}
holds if $v, v'$ are replaced by $v_*, v_*'$ and we replace $\ext{\gamma}$ with $\ext{\gamma_*}$.
Using \eqref{paradiff2} in this case, we have
\begin{multline*}
\left| T^{\mathbb{III}}_* \right|
\lesssim
2^{-2k}\int_0^1 d \theta \int_{\mathbb{R}^n} dv' \int_{\mathbb{R}^n} dv_* \int_{E_{v_*}^{v'}}  d \pi_{v} ~  \frac{B_k(v-v_*, 2v' - v- v_*) }{ |v'-v_*| |v-v_*|^{n-2} }
\\
  \times   w^{2\ell}(v') ~ M_*^{1/2} |g_* f'| ~ | h |.
\end{multline*}
Now this estimate can be handled as in the previous case for $T^{\mathbb{II}}_* $. This case is easier because there is no $\gamma(\theta)$ here, which required another change of variable.
\end{proof}

This concludes our cancellation estimates for the differences involving three arbitrary smooth functions.  We will also need cancellation estimates when have a more specific smooth function satisfying
the following estimate
\begin{equation}
\label{derivESTa}
|\tilde{\nabla}|^2 \phi \le C_\phi e^{-c|v|^2},
\quad
C_\phi \ge 0, \quad 
c> 0.
\end{equation}
Above $\phi$ is any  Schwartz function on $\mathbb{R}^n$ which is given by the restriction of some Schwartz function in $\mathbb{R}^{n+1}$ to the paraboloid $(v, \frac{1}{2} |v|^2)$ and $|\tilde{\nabla}|^2 \phi$ is defined analogously as usual. 
With this in mind, we have the next estimates:

\begin{proposition}  
\label{compactCANCELe}
As in the prior propositions, 
with \eqref{derivESTa},
for any $k \geq 0$ we have
\begin{gather}
\notag
 \left| (T^{k,\ell}_{+}- T^{k,\ell}_{-} )(g,h,\phi) \right| 
 +
 \left| (T^{k,\ell}_{+}- T^{k,\ell}_{*})(g,\phi,h) \right| 
   \lesssim 
   C_\phi ~ 2^{(2s-2)k} 
    \nsm g\nsm_{L^2_{-m}}     \nsm h\nsm_{L^2_{-m}} .
\end{gather}
The above inequalities hold uniformly for any $m\ge 0$ and $\ell \in \mathbb{R}$.
\end{proposition}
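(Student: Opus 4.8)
The plan is to re-run, with the same structure, the five-term cancellation splittings constructed in the proofs of Proposition~\ref{cancelFprop} (for the $\sigma$-representation \eqref{defTKL}) and Proposition~\ref{cancelHprop} (for the Carleman representation \eqref{defTKLcarl}), specialized so that the function carrying the cancellation is the rapidly decaying $\phi$. For the first term $(T^{k,\ell}_{+}-T^{k,\ell}_{-})(g,h,\phi)$ we take $f=\phi$ in the decomposition of $M_\beta(v_*')w^{2\ell}(v')f'-M_\beta(v_*)w^{2\ell}(v)f$ into $\mathbb{I}+\mathbb{II}+\mathbb{III}+\mathbb{IV}+\mathbb{V}$; for the second term $(T^{k,\ell}_{+}-T^{k,\ell}_{*})(g,\phi,h)$ we put the cancellation on $\phi$ in the analogous starred decomposition. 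The decisive simplification over Propositions~\ref{cancelFprop} and~\ref{cancelHprop} is that $\phi$, $|\tilde{\nabla}|\phi$ and $|\tilde{\nabla}|^2\phi$ all obey the Gaussian bound \eqref{derivESTa}. Since $|v-v'|=|v_*-v_*'|\le 1$ on the support of $\chi_k(|v-v'|)$ whenever $k\ge 0$, we have $\ang{v}\approx\ang{v'}$ and $\ang{v_*}\approx\ang{v_*'}$ throughout, so each of the ten resulting pieces carries genuine Gaussian decay in all four velocities at once: the $\phi$-factor supplies it in $v$ (equivalently $v'$), while the factor $M_\beta(v_*)$ or $M_\beta(v_*')$ (dominated by $\sqrt{M_*}$ up to a polynomial) supplies it in $v_*$ (equivalently $v_*'$). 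In particular the polynomial weights $w^{2\ell}$ are absorbed harmlessly --- which is why no $\ell$ survives on the right-hand side --- and, crucially, one never needs the Sobolev embedding \eqref{sobolevE} or any weighted $L^2_{\gamma+2s}$ norm: a single Cauchy--Schwartz inequality placing $g_*$ in one factor and $h$ (evaluated at the appropriate velocity) in the other, with all remaining Gaussian and singular factors split evenly, reduces every piece to $\nsm g\nsm_{L^2_{-m}}\nsm h\nsm_{L^2_{-m}}$, exactly in the spirit of \eqref{tminushRAP} and \eqref{tstarhRAP}.

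For the first term the gain $2^{(2s-2)k}$ is produced precisely as in Proposition~\ref{cancelFprop}. The second-order inequality \eqref{paradiff2} supplies the factor $2^{-2k}$ in $T^{\mathbb{III}}$ and $T^{\mathbb{IV}}$; in $T^{\mathbb{II}}$ the factor $2^{-k}$ from \eqref{paradiff1} combines with a second $2^{-k}$ extracted from $\frac{d\ext{\gamma_*}}{d\theta}(0)$; and in $T^{\mathbb{I}}$ and $T^{\mathbb{V}}$ --- all of whose other factors are independent of $v'$ --- the $\sigma$-symmetry of $B_k$ about $\frac{v-v_*}{|v-v_*|}$ forces the component of $\frac{d\ext{\gamma_*}}{d\theta}(0)$, respectively $\frac{d\ext{\gamma}}{d\theta}(0)$, transverse to the pole to average out, leaving a contribution of size $2^{-2k}\ang{v_*}|v-v_*|^{-1}$. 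Pairing each of these with the angular bound $\int_{\sph}d\sigma\,B_k\lesssim 2^{2sk}|v-v_*|^{\gamma+2s}$ from \eqref{bjEST} produces $2^{(2s-2)k}$ in all five cases, with residual kinetic singularity no worse than $|v-v_*|^{\gamma+2s-1}$, which is locally integrable since $\gamma+2s>-(n-2)$ and is dominated by the Gaussian decay away from $v_*=v$. The change of variables $u=\theta v'+(1-\theta)v$ used in Proposition~\ref{cancelFprop} to handle $|\tilde{\nabla}|^2 f$ along $\ext{\gamma}$ as in \eqref{covterm} is now immediate, because $|\tilde{\nabla}|^2\phi$ is Gaussian and $\ang{\gamma(\theta)}\approx\ang{v}$ on the support, so the $\theta$-integral is uniformly bounded and no weighted estimate is needed. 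Cauchy--Schwartz as above then closes $(T^{k,\ell}_{+}-T^{k,\ell}_{-})(g,h,\phi)$.

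For the second term we use the Carleman splitting of Proposition~\ref{cancelHprop} with $\phi$ in the cancellation slot. The pieces $T^{\mathbb{I}}_*$ and $T^{\mathbb{V}}_*$ vanish identically by the very symmetry argument given there: as $v$ runs over spheres in $E_{v_*}^{v'}$ of fixed distance to $v'$, the entire integrand is constant apart from $\frac{d\ext{\gamma_*}}{d\theta}(0)$, respectively $\frac{d\ext{\gamma}}{d\theta}(1)$, whose first $n$ components project to zero along the normal $v'-v_*$ and whose last component integrates to zero by symmetry. For $T^{\mathbb{II}}_*$ and $T^{\mathbb{IV}}_*$ one measures differences of $\theta\mapsto \phi(\ext{\gamma}(\theta))\,\Phi(\gamma(\theta)-v_*)\,|\gamma(\theta)-v_*|^n$ via \eqref{paradiff1} and \eqref{paradiff2}, using $|\tilde{\nabla}|^i(\Phi(v-v_*)|v-v_*|^n)\lesssim (1+|v-v_*|^{-i})\Phi(v-v_*)|v-v_*|^n$; the Gaussian decay of $\phi$ and $|\tilde{\nabla}|^2\phi$ dominates this polynomial factor together with $w^{2\ell}$, so one recovers the bound of Proposition~\ref{cancelHprop} with a factor $2^{-2k}/\min\{|v-v_*|^2,1\}$ inside the operator and $\int_0^1 |\tilde{\nabla}|^2\phi(\ext{\gamma}(\theta))\,d\theta$ in place of the $h$-term, the worst residual singularity being $|v-v_*|^{\gamma+2s-2}$, again integrable since $\gamma+2s>-(n-2)$. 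The term $T^{\mathbb{III}}_*$ is handled identically and more easily, there being no $\gamma(\theta)$ present. In every case, since the worst residual singularity $|v-v_*|^{\gamma+2s-2}$ is locally integrable, a single Cauchy--Schwartz inequality --- $g_*$ in one factor, $h$ in the other, all Gaussian and singular factors split evenly, as in Proposition~\ref{prop11} and Proposition~\ref{starPROP} --- yields $C_\phi\,2^{(2s-2)k}\nsm g\nsm_{L^2_{-m}}\nsm h\nsm_{L^2_{-m}}$.

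The only real work is bookkeeping: one must verify, piece by piece, that the Gaussian tails genuinely reach all four velocities; that differentiating the products $w^{2\ell}\phi$ and $\phi\cdot(\Phi(v-v_*)|v-v_*|^n)$ never worsens the singularity beyond $|v-v_*|^{\gamma+2s-2}$; and that the curve-derivative factors $\frac{d\ext{\gamma}}{d\theta}$ and $\frac{d\ext{\gamma_*}}{d\theta}$ contribute their full $2^{-2k}$ after the angular symmetrization. All three points are already settled inside the proofs of Propositions~\ref{cancelFprop} and~\ref{cancelHprop}; the present statement simply inherits them, replacing every function-space norm on $\phi$ --- and every differentiability requirement on $g$ and $h$ --- by an $L^2_{-m}$ norm at the cost of consuming part of the Gaussian decay.
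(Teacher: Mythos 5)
Your proposal is correct and follows essentially the same route as the paper, which simply states that Proposition~\ref{compactCANCELe} follows by running the five-term decompositions from Propositions~\ref{cancelFprop} and~\ref{cancelHprop} with $\phi$ in the differentiated slot and using the Gaussian decay from \eqref{derivESTa} to replace all Sobolev-embedding and weighted-$L^2$ steps with plain Cauchy--Schwarz as in \eqref{tminushRAP}. You have supplied exactly the bookkeeping the paper omits, and your tracking of the $2^{(2s-2)k}$ gain and the worst singularity $|v-v_*|^{\gamma+2s-2}$ agrees with the source.
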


Notice that the proof of this Proposition \ref{compactCANCELe} follows exactly the proofs of the previous two Propositions \ref{cancelFprop} and \ref{cancelHprop}.  The main difference now is that when going through the last two proofs above, as a result of \eqref{derivESTa}, we will always in every estimate have strong exponential decay in both variables $v$ and $v_*$.  This strong decay allows us to easily obtain Proposition \ref{compactCANCELe}
using exactly the techniques developed in this section and the last one.  We omit repeating these details again.

\subsection{Compact estimates}  
\label{sec:SSCE}
In this sub-section we prove several useful estimates for the ``compact part'' of the linearized collision operator \eqref{compactpiece}.  Here we use the integer index $j$ instead of  $k$ to contrast with  the kernel $\kappa$ below.
Our first step is to notice that
the Carleman representation \eqref{defTKLcarl}
 of $T^{j,\ell}_{+}(g,M_{\beta_1},f)$ grants
$$
T^{j,\ell}_{+}(g,M_{\beta_1},f) = 
\int_{\mathbb{R}^n} dv' ~ w^{2\ell}(v')~ f(v') ~ 
\int_{\mathbb{R}^n} dv_*  ~ g(v_*) ~ \kappa_j^{\gamma+2s}(v',v_*),
$$
where
for any multi-indices $\beta$ and $\beta_1$ we have the kernel
$$
\kappa_j^{\gamma+2s}(v',v_*)
\eqdef
2^{n-1} 
\int_{E_{v_*}^{v'}} d \pi_{v}  ~
\frac{B_j(v-v_*, 2v' - v- v_*)}{|v'-v_*| ~ |v-v_*|^{n-2}} ~   M_{\beta}(v_*') M_{\beta_1}(v). 
$$
Recall the domain
  $
  E^{v'}_{v_*} = \left\{ v'\in \mathbb{R}^n : \ang{v_*-v', v - v'} =0 \right\},
  $
  and that $d\pi_{v} $ denotes the Lebesgue measure on this hyperplane.  More generally,  
  suppose that 
  $$
\kappa_{j}^{\psi, \phi}(v',v_*)
\eqdef
2^{n-1} 
\int_{E_{v_*}^{v'}} d \pi_{v}  ~
\frac{B_j(v-v_*, 2v' - v- v_*)}{|v'-v_*| ~ |v-v_*|^{n-2}} ~   \psi(v_*') \phi(v),
$$
where 
$\phi$ satisfies 
$\left| \phi(v) \right| \le C_\phi e^{-c |v|^2}$, and similarly
$\left| \psi(v) \right| \le C_\psi e^{-c |v|^2}$
 for any positive constants
$C_\phi$, $C_\psi$, and $c$ as in \eqref{rapidDECAYfcn}.
Then we have the main compact estimate:

 \begin{lemma}\label{prop:Grad}
For any $\ell \in \R$,  $\kappa_{j}^{\psi, \phi}$ satisfies the following uniform in $j \le 0$ estimate
 $$
  \forall \, v_* \in \R^n, \quad \int_{\R^n} dv'  ~ \left| \kappa_{j}^{\psi, \phi}(v',v_*) \right| \, 
  w^\ell(v')
   \lesssim
       C_\psi C_\phi ~  2^{2sj}\ang{v_*}^{\gamma + 2s-(n-1)}
         w^\ell(v_*).
 $$
Furthermore, 
the same estimate holds if the variables are reversed
 $$
  \forall \, v' \in \R^n, \quad \int_{\R^n} dv_*  ~ \left| \kappa_{j}^{\psi, \phi}(v',v_*) \right| \, w^\ell(v_*)
   \lesssim
       C_\psi C_\phi ~  2^{2sj}\ang{v'}^{\gamma + 2s-(n-1)}
       w^\ell(v').
 $$
 \end{lemma}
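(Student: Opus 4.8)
The plan is to reduce everything to a pointwise bound on $\kappa_j^{\psi,\phi}$ and then integrate. I would parametrize the hyperplane $E_{v_*}^{v'}$ by writing the integration variable as $v = v'+q$ with $q\perp(v'-v_*)$, so that $q$ runs over an $(n-1)$-dimensional subspace and $d\pi_v = dq$. Elementary computations in the collisional geometry \eqref{sigma} give
\[
v_*' = v_*+q, \qquad |v-v'|=|q|, \qquad |v-v_*|^2 = |v'-v_*|^2+|q|^2, \qquad
\cos\theta = \frac{|v'-v_*|^2-|q|^2}{|v'-v_*|^2+|q|^2},
\]
the last being $\big\langle \tfrac{v-v_*}{|v-v_*|},\tfrac{2v'-v-v_*}{|2v'-v-v_*|}\big\rangle$. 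In particular the support condition $\cos\theta\ge 0$ coming from the symmetrization of $b$ forces $|q|\le |v'-v_*|$; combined with the support of $\chi_j$, which localizes $|q|\approx 2^{-j}$, this gives $|v-v_*|\approx|v'-v_*|$ and $\theta\approx |q|/|v'-v_*|$ on the support of the integrand.

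Feeding in the bounds \eqref{kernelQ} and \eqref{kernelP} (and $\sin\theta\gtrsim\theta$ on $(0,\tfrac{\pi}{2}]$, so $b(\cos\theta)\lesssim\theta^{-(n-1+2s)}$) yields
\[
\frac{B_j}{|v'-v_*|\,|v-v_*|^{n-2}} \;\lesssim\; 2^{j(n-1+2s)}\,|v'-v_*|^{\gamma+2s}\,\chi_j(|q|),
\]
hence, using $|\psi(v_*')\phi(v)|\le C_\psi C_\phi\, e^{-c|v_*+q|^2}e^{-c|v'+q|^2}$,
\[
|\kappa_j^{\psi,\phi}(v',v_*)| \;\lesssim\; C_\psi C_\phi\, 2^{j(n-1+2s)}\, |v'-v_*|^{\gamma+2s}\int_{\substack{q\perp(v'-v_*)\\ |q|\approx 2^{-j}}} dq\; e^{-c|v_*+q|^2}e^{-c|v'+q|^2}.
\]
The decisive point is that $(v'+q)-(v_*+q)=v'-v_*$ and $|v'-v_*|\ge|q|\ge 2^{-j-1}$, so the exponent obeys $|v_*+q|^2+|v'+q|^2\ge\tfrac12|v'-v_*|^2\ge c\,4^{-j}$; thus the integrand carries a free factor $e^{-c'4^{-j}}$, and since for $j\le 0$ this double exponential dominates any polynomial in $2^{-j}$ one has $2^{j(n-1+2s)}e^{-c'4^{-j}/2}\lesssim 2^{2sj}$. (Heuristically: for $j\le 0$ the scale $2^{-j}$ at which $B_j$ localizes the collision is, up to exponentially small tails, incompatible with the Gaussian localization of $\psi$ and $\phi$.)

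It then remains to integrate in $v'$; the integration in $v_*$ is entirely parallel. Passing to polar coordinates $v'=v_*+r\hat p$ about $v_*$ and decomposing $v_*$ and $q$ into their components along $\hat p$ and in $\hat p^\perp$, one gets $|v_*+q|^2=a^2+|v_*^{\perp}+q|^2$ and $|v'+q|^2=(a+r)^2+|v_*^{\perp}+q|^2$ with $a=\langle v_*,\hat p\rangle$ and $v_*^\perp$ the part of $v_*$ orthogonal to $\hat p$. The $r$-integral converges — $e^{-c(a+r)^2}$ absorbs $r^{n-1+\gamma+2s}$ together with the polynomial growth of $w^{\ell}(v_*+r\hat p)$ relative to $w^{\ell}(v_*)$ — leaving an angular integral of $e^{-ca^2}$ (which forces $\langle v_*,\hat p\rangle$ bounded) against a bounded transverse $q$-integral; combined with the constraint $|q|\approx 2^{-j}$ inside $e^{-c|v_*^{\perp}+q|^2}$ this confines the effective support to $\langle v_*\rangle\lesssim 2^{-j}$, and off that set everything is again exponentially small. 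Collecting the factors $2^{j(n-1+2s)}e^{-c'4^{-j}}\lesssim 2^{2sj}$, pulling out $w^{\ell}(v_*)$, and noting $\langle v_*\rangle^{\gamma+2s}\ge 1$, one arrives at the stated bound — in fact at a stronger one with $\langle v_*\rangle^{\gamma+2s-(n-1)}$ replaced by a rapidly decreasing function of $\langle v_*\rangle$.

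I expect the main obstacle to be exactly this last bookkeeping: when $\gamma+2s<n-1$ the target weight is a \emph{decaying} power of $\langle v_*\rangle$, so crude $\lesssim 1$ bounds on the transverse and angular integrals are insufficient and one must genuinely exploit both the free factor $e^{-c'4^{-j}}$ and the localization $\langle v_*\rangle\lesssim 2^{-j}$ — i.e. carefully separate off the (harmless) exponentially small tails — while tracking which Gaussian pays for which algebraic growth ($|v'-v_*|^{\gamma+2s}$, the weight mismatch $w^{\ell}(v_*+r\hat p)/w^{\ell}(v_*)$, and the powers of $2^{-j}$).
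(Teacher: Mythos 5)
Your geometric reductions up to and including the pointwise bound
\[
|\kappa_j^{\psi,\phi}(v',v_*)| \lesssim C_\psi C_\phi\, 2^{j(n-1+2s)}\,|v'-v_*|^{\gamma+2s}\int_{\substack{q\perp(v'-v_*)\\ |q|\approx 2^{-j}}} dq\; e^{-c|v_*+q|^2}e^{-c|v'+q|^2}
\]
are sound (they coincide with the paper's change of variables $z=v-v'$ in the Carleman representation), but from there your route genuinely diverges from the paper's.  The paper completes the square in $|z+v_*|^2+|z+v'|^2$ into three Gaussian blocks, proves the transverse estimate \eqref{goodKJestimate} — which crucially yields the \emph{polynomial} factor $\ang{v_*-\ang{v_*,\omega}\omega}^{-(n-1)}$ rather than a mere $O(1)$ bound — and then reduces to the auxiliary integral $J'$, citing the pointwise bound $J'\lesssim\ang{v_*}^{\gamma+2s-(n-1)}$ from \cite{MR2322149}.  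You instead observe that $|q|\ge 2^{-j-1}\ge 1/2$ together with $\cos\theta\ge 0$ forces $|v'-v_*|\ge|q|\gtrsim 2^{-j}$, so the parallelogram inequality extracts a free double-exponential $e^{-c'4^{-j}}$ from the Gaussian decay of $\psi,\phi$, which (after converting to $e^{-c''\ang{v_*}^2}$ on the effective support $\ang{v_*}\approx 2^{-j}$) dominates every algebraic loss.  That idea is correct and does give the stated bound — indeed a stronger, rapidly decaying one, which the paper's argument does not attempt.  Two caveats in your write-up, however, need fixing before the argument closes.  First, the step ``noting $\ang{v_*}^{\gamma+2s}\ge 1$'' is simply false in the soft-potential regime $\gamma+2s<0$, which is exactly the case this paper treats; the needed inequality comes instead from trading part of $e^{-c'4^{-j}}\approx e^{-c''\ang{v_*}^2}$ for the polynomial $\ang{v_*}^{\gamma+2s-(n-1)}$, as your final paragraph gestures toward.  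Second, to make that trade you need $\ang{v_*}\approx 2^{-j}$ with comparability in \emph{both} directions, not just $\ang{v_*}\lesssim 2^{-j}$ as stated: the lower bound $\ang{v_*}\gtrsim 2^{-j}$ (up to the exponentially small tail) comes from $|q|\approx 2^{-j}$ together with $|v_*^\perp+q|\lesssim 1$, and it is what turns the free $4^{-j}$-exponential into a genuine $\ang{v_*}^2$-exponential.
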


The crucial gain of the weight $-(n-1)$ has been known in the cut-off regime;
see Grad \cite{MR0156656}, and also \cite{MR2322149}.  The essential new feature here is to gain this weighted estimate without angular cut-off using our decomposition of the singularity and various estimates involving \eqref{goodKJestimate} below.  

\begin{proof}[Proof of Lemma~\ref{prop:Grad}]
The second upper bound follows easily from the first after noticing that the roles of $v'$ and $v_*$ can generally be reversed in the first upper bound for 
$ \left| \kappa_{j}^{\psi, \phi}(v',v_*) \right|$ given below in \eqref{symmetricUPPER}.

We focus here on the first bound from above. We perform the change of variables $z = v - v'$ in $v$, with as usual $v_*' = v + v_* - v'$,  then the kernel can be written
$$
\kappa_{j}^{\psi, \phi}(v',v_*)
=
2^{n-1} 
\int_{\omega^\bot} d\pi_{z} 
\frac{B_j(v'-v_* + z, v'-v_* - z)}{|v'-v_*| ~ |v'-v_* + z|^{n-2}} ~   \psi(z+v_*) \phi(z+v'). 
$$
Above $\omega = \frac{v'-v_*}{|v' - v_*|}$ and $\omega^\bot$ denotes integration over the $n-1$ dimensional hyperplane orthogonal to $\omega$ with Lebesgue measure $d\pi_{z}$.  We expand the exponents as
  \begin{multline*}
 |z+v_*|^2 + |z+v'|^2 =  |z+v_*|^2 + |z+v_*+(v'-v_*)|^2  = \frac12 |v'-v_* + 2(v_*+z)|^2 + \frac12 |v'-v_*|^2 \\
     = \frac12 |v'-v_* + 2 \ang{v_*, \omega} \omega|^2
    + 2 |z + \left(v_* - \ang{v_*, \omega} \omega \right)|^2 + \frac12 |v'-v_*|^2.
 \end{multline*}
Furthermore, the kernel can be expanded as
$$
B_j(v'-v_* + z, v'-v_* - z)
=
b \left(\ang{ \frac{v'-v_* + z}{|v'-v_* + z|}, \frac{v'-v_* - z}{|v'-v_* - z|}} \right)  \Phi( |v'-v_* + z|) \chi_j (|z|). 
$$
The orthogonality condition implies
$
\ang{\frac{v'-v_* + z}{|v'-v_* + z|}, \frac{v'-v_* - z}{|v'-v_* - z|}}
=
\frac{|v'-v_*|^2 - |z|^2}{|v'-v_* + z|^2}.
$
Then, 
since $b(t) \lesssim (1-t)^{-\frac{n-1}{2}-s}$ from \eqref{kernelQ}, we have
$$
b \left(\ang{ \frac{v'-v_* + z}{|v'-v_* + z|}, \frac{v'-v_* - z}{|v'-v_* - z|}} \right)
\lesssim
\left(\frac{ |z|^2}{|v'-v_* + z|^2} \right)^{-\frac{n-1}{2}-s}.
$$
We thus conclude that 
   \begin{multline}
\left| \kappa_{j}^{\psi, \phi}(v',v_*) \right|
   \lesssim
    \frac{C_\psi C_\phi}{|v'-v_*| } \,
             \exp\left\{ -\frac{c}{2} |v'-v_*|^2 -\frac{c}{2} |v'-v_* + 2 \ang{v_*, \omega} \omega|^2 \right\} \\ \times
                 \left( \int_{\omega^\bot} ~ d\pi_{z} ~   | v'-v_* + z|^{\gamma +2s + 1}                 \frac{ \chi_j (|z|) }{|z|^{n-1+2s}}
                 e^{ - 2 c |z + \left(v_* - \ang{v_*, \omega} \omega \right)|^2 }  \right).
                 \label{symmetricUPPER}
   \end{multline}
We will estimate this upper bound. Without loss of generality, in the following for notational simplicity we set the unimportant constant to $c = 1/4$.

Suppose $\gamma + 2s  \ge -(n-2)$. 
For the integral over $z \in \omega^\bot$, with $j\le 0$, we have
  \begin{multline*}
\int_{\omega^\bot} ~ d\pi_{z} ~   | v'-v_* + z|^{\gamma +2s + 1}     ~            \frac{ \chi_j (|z|) }{|z|^{n-1+2s}}  ~
                 e^{ - \frac{|z + \left(v_* - \ang{v_*, \omega} \omega \right)|^2}2 } 
                 \\
                 =
      \int_{\omega^\bot} ~ d\pi_{z} ~  \left( | v'-v_*|^2 + |z|^2\right)^{(\gamma +2s + 1)/2}     ~            \frac{ \chi_j (|z|) }{|z|^{n-1+2s}}  ~
                 e^{ - \frac{|z + \left(v_* - \ang{v_*, \omega} \omega \right)|^2}2 }      
                                  \\
                 \lesssim
      \int_{\omega^\bot} ~ d\pi_{z} ~  \ang{  v'-v_* + z}^{\gamma +2s + 1}     ~            \frac{ \chi_j (|z|) }{|z|^{n-1+2s}}  ~
                 e^{ - \frac{|z + \left(v_* - \ang{v_*, \omega} \omega \right)|^2}2 }.            
 \end{multline*}
We have used that $| z| \ge 1$ when $j\le 0$.
The upper bound just above is
  \begin{multline*}
 = \int_{\omega^\bot}~ d\pi_{\bar{z}}~ \ang{v'-v_* + \bar z - \left(v_* - \ang{v_*, \omega} \omega \right)}^{\gamma + 2s +1}
~ e^{ -|\bar z|^2/2 } 
 \\
 \times
 \frac{ \chi_j (|\bar z - \left(v_* - \ang{v_*, \omega} \omega \right)|) }{|\bar z - \left(v_* - \ang{v_*, \omega} \omega \right)|^{n-1+2s}}.
 \end{multline*}
 Now we have the uniform, pointwise inequality (valid for any $\rho \in \R$)
 \[
\ang{v'-v_* + \bar z - \left(v_* - \ang{v_*, \omega} \omega \right)}^{\rho} \lesssim \ang{v_* - \ang{v_*, \omega} \omega}^{\rho} \ang{v'-v_*}^{|\rho|} \ang{\bar{z}}^{|\rho|}.
\]
With that, we observe that the previous displayed quantity is bounded above by a uniform constant times
 \begin{multline*}
\ang{v_* - \ang{v_*, \omega} \omega }^{\gamma + 2s +1} \, \ang{v'-v_*}^{\left| \gamma + 2s +1\right|}
   \\
   \times
 \int_{\omega^\bot}~ d\pi_{\bar{z}}
 ~ \frac{ \chi_j (|\bar z - \left(v_* - \ang{v_*, \omega} \omega \right)|) }{|\bar z - \left(v_* - \ang{v_*, \omega} \omega \right)|^{n-1+2s}}  
~  e^{ -|\bar z|^2/4 } .
 \end{multline*}
The essential new {\it claim} is the following  uniform estimate:
 \begin{multline}
 \int_{\omega^\bot}~d\pi_{\bar{z}} ~
  \frac{ \chi_j (|\bar z - \left(v_* - \ang{v_*, \omega} \omega \right)|)   }{|\bar z - \left(v_* - \ang{v_*, \omega} \omega \right)|^{n-1+2s}}  e^{ -|\bar z|^2/4 } 
\\
   \lesssim 2^{2sj} \ang{v_* - \ang{v_*, \omega} \omega }^{-(n-1)}.
\label{goodKJestimate}
 \end{multline}
Modulo \eqref{goodKJestimate}, the combination of the estimates above shows that
   \begin{multline*}
\left| \kappa_{j}^{\psi, \phi}(v',v_*) \right|
   \lesssim
    \frac{2^{2sj}C_\psi C_\phi}{|v'-v_*| } \,
             \exp\left\{ -\frac{|v'-v_*|^2}8 -\frac{|v'-v_* + 2 \ang{v_*, \omega} \omega|^2}8 \right\} 
             \\ 
             \times
\ang{v_* - \ang{v_*, \omega} \omega}^{\gamma + 2s-(n-2)} \ang{v'-v_*}^{\left| \gamma + 2s +1 \right|}.
   \end{multline*}
 We thus deduce that
 \begin{multline*}
 \int_{\R^n} dv' ~\left| \kappa_{j}^{\psi, \phi}(v',v_*) \right| w^{\ell}(v') dv' 
 \\
\lesssim
C_\psi C_\phi ~
2^{2sj} 
     \int_{\R^n} ~ dv' ~ \ang{v_* - \ang{v_*, \omega} \omega}^{\gamma + 2s -(n-2)} ~|v'-v_*|^{-1} \, w^{\ell}(v')~  
     \ang{v'-v_*}^{\left| \gamma + 2s +1 \right|}
     \\ 
            \times \exp\left\{ -\frac{|v'-v_*|^2}8 -\frac{|v'-v_* + 2 (v_* \cdot \omega) \omega|^2}8 \right\}.
 \end{multline*}
This leads directly to
 \[  
 \int_{\R^n} \left| \kappa_{j}^{\psi, \phi}(v',v_*) \right|  w^{\ell}(v') dv' \lesssim 2^{2sj} w^{\ell}(v_*) J',
 \]
with  
 \begin{multline*}
 J' \eqdef
\int_{\R^n} dv'
\ang{v_* - \ang{v_*, \omega} \omega }^{\gamma + 2s -(n-2)} \, \ang{v'-v_*}^{\left| \gamma + 2s +1 \right|}
\frac{w^{| \ell |}(v'-v_*)}{ |v'-v_*|} 
      \\
            \times  \exp\left\{ -\frac{|v'-v_*|^2}8 -\frac{|v'-v_* + 2 \ang{v_*, \omega} \omega|^2}8 \right\} \, .
 \end{multline*}
Notice  that up to a change in parameters this is the same $J'$ which appeared in Mouhot-Strain \cite[p.525]{MR2322149}.
Therein the following basic pointwise estimate is shown: $J' \lesssim \ang{v_*}^{\gamma + 2s -(n-1)}$.   
This completes the estimate, once we prove \eqref{goodKJestimate}. 

We now establish the key estimate \eqref{goodKJestimate}.
Since $j\le 0$ and $\chi_j$ is supported on $[2^{-j-1}, 2^{-j}]$, we see that 
$ 
|\bar z - \left(v_* - \ang{v_*, \omega} \omega \right)| \approx 2^{-j}  \gtrsim 1.
$  
Thus in particular, since also $| \chi_j |_{L^\infty} \lesssim 1$,  we have the following estimate
 \begin{multline}
 \int_{\omega^\bot}~d\pi_{\bar{z}} ~
  \frac{ \chi_j (|\bar z - \left(v_* - \ang{v_*, \omega} \omega \right)|)   }{|\bar z - \left(v_* - \ang{v_*, \omega} \omega \right)|^{n-1+2s}}  e^{ -|\bar z|^2/4 } 
  \\
   \lesssim 2^{2sj}  \int_{\omega^\bot}
  \frac{ d\pi_{\bar{z}} ~e^{ -|\bar z|^2/4 }  }{\ang{\bar z - \left(v_* - \ang{v_*, \omega} \omega \right)}^{n-1}}  
     \lesssim 2^{2sj}  \ang{v_* - \ang{v_*, \omega} \omega }^{-(n-1)}.
\notag
 \end{multline}
 This concludes our proof of the new  {\it claim} in \eqref{goodKJestimate}, and the proof.  
\end{proof}

With the compact estimate for the kernel from Lemma \ref{prop:Grad} in hand, we can automatically prove the main estimate for the compact term \eqref{tminushRAPplus} below.  Using similar methods, but without Lemma \ref{prop:Grad}, we will also prove 
\eqref{tminushRAPplus2} and \eqref{posUPPer3norm}.  

 \begin{proposition}\label{prop:GradRap}
 Consider $\phi$ satisfying \eqref{rapidDECAYfcn}.
We have the uniform estimates
\begin{equation}
 \left| T^{k,\ell}_{+}(g,\phi,f) \right| 
 \lesssim 
  C_\phi ~ 
 2^{2sk} ~ 
 \nsm w^\ell g\nsm_{L^2_{\gamma+2s-(n-1)}}  \nsm w^\ell f\nsm_{L^2_{\gamma+2s-(n-1)}}.
 \label{tminushRAPplus}
\end{equation}
These hold for any $k \leq 0$, $\ell \in \mathbb{R}$.  
Additionally for any $m\ge 0$ and any $k$ we obtain
\begin{align}
  \left| T^{k,\ell}_{+}(g,f,\phi) \right| 
 \lesssim 
  C_\phi ~ 
 2^{2sk} ~ 
 \nsm g\nsm_{L^2_{-m}}  \nsm f\nsm_{L^2_{-m}}.
 \label{tminushRAPplus2}
\end{align}
Furthermore 
\begin{equation} 
\left|  T^{k,\ell}_{+} (\phi,h,f)\right| 
\lesssim  
C_\phi ~   2^{2sk} 
 \nsm w^{\ell} h\nsm_{L^2_{\gamma + 2s}}   \nsm w^{\ell} f\nsm_{L^2_{\gamma + 2s}}.
 \label{posUPPer3norm}
\end{equation}
Each of these estimates hold for all parameters in \eqref{kernelQ} and \eqref{kernelP}. 
 \end{proposition}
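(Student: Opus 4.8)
The plan is to prove each of the three bounds using whichever representation of $T^{k,\ell}_{+}$ is most convenient for that estimate. The bound \eqref{tminushRAPplus} is, after one substitution, a direct consequence of Lemma~\ref{prop:Grad}. The bounds \eqref{tminushRAPplus2} and \eqref{posUPPer3norm}, by contrast, follow from the $\sigma$-representation \eqref{defTKL}, the size estimate \eqref{bjEST} for $\int_{\sph}B_k$, and Cauchy--Schwarz, the new ingredient being that one argument of the trilinear form is a rapidly-decaying function and that the collisional conservation laws $|v'|^2+|v_*'|^2=|v|^2+|v_*|^2$ and $\langle v'\rangle^2=\langle v\rangle^2+|v_*|^2-|v_*'|^2$ let us transfer Gaussian decay and velocity weights between pre- and post-collisional velocities.

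For \eqref{tminushRAPplus}, I would put $h=\phi$ into the Carleman representation \eqref{defTKLcarl}, which gives
\[
T^{k,\ell}_{+}(g,\phi,f)=\int_{\mathbb{R}^n}dv'\,w^{2\ell}(v')\,f(v')\int_{\mathbb{R}^n}dv_*\,g(v_*)\,\kappa_k^{M_\beta,\phi}(v',v_*),
\]
so this is an integral operator whose kernel is exactly one of the kernels $\kappa_j^{\psi,\phi}$ of Lemma~\ref{prop:Grad}, with $\psi=M_\beta$ (which obeys \eqref{rapidDECAYfcn} with a fixed constant). Writing $g=w^{-\ell}(w^{\ell}g)$ and $f=w^{-\ell}(w^{\ell}f)$, I would pass to the modified kernel $\widetilde\kappa_k(v',v_*)\eqdef|\kappa_k^{M_\beta,\phi}(v',v_*)|\,w^{\ell}(v')\,w^{-\ell}(v_*)$. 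Applying the first inequality of Lemma~\ref{prop:Grad} with exponent $\ell$ and its reversed counterpart with exponent $-\ell$ gives, uniformly for $k\le0$,
\[
\int_{\mathbb{R}^n}dv'\,\widetilde\kappa_k(v',v_*)\lesssim C_\phi 2^{2sk}\langle v_*\rangle^{\gamma+2s-(n-1)},\qquad
\int_{\mathbb{R}^n}dv_*\,\widetilde\kappa_k(v',v_*)\lesssim C_\phi 2^{2sk}\langle v'\rangle^{\gamma+2s-(n-1)}.
\]
The Schur test for integral operators applied with these two one-sided bounds, followed by the usual bilinear rescaling $F\mapsto tF$, $G\mapsto t^{-1}G$ in $F=w^{\ell}f$, $G=w^{\ell}g$, then yields exactly \eqref{tminushRAPplus}.

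For \eqref{tminushRAPplus2} I would use \eqref{defTKL}: here $\phi$ occupies the third slot as $\phi(v')$ while the Maxwellian factor $M_\beta(v_*')$ sits on $v_*'$, so $|M_\beta(v_*')\phi(v')|\lesssim C_\phi e^{-c(|v'|^2+|v_*'|^2)}=C_\phi e^{-c(|v|^2+|v_*|^2)}$ pointwise by energy conservation; this simultaneous Gaussian decay in $v$ and $v_*$ absorbs both $w^{2\ell}(v')$ and $p_\beta(v_*')$. Performing the $\sigma$-integral with \eqref{bjEST} leaves $\lesssim C_\phi 2^{2sk}\int dv\int dv_*\,|v-v_*|^{\gamma+2s}|g(v_*)||f(v)|e^{-c'(|v|^2+|v_*|^2)}$, and since $\gamma+2s>-(n-2)>-n$ the factor $|v-v_*|^{\gamma+2s}$ is locally integrable; splitting it into its large- and small-$|v-v_*|$ parts and applying Cauchy--Schwarz in $(v,v_*)$ exactly as in the proofs of \eqref{tminusg}--\eqref{tminush} finishes \eqref{tminushRAPplus2} for every $k$. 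For \eqref{posUPPer3norm}, with $\phi$ now in the first slot, I would again start from \eqref{defTKL}: here $|g_*|\,|M_\beta(v_*')|=|\phi(v_*)|\,|M_\beta(v_*')|\lesssim C_\phi e^{-c(|v_*|^2+|v_*'|^2)}$, and since $\langle v'\rangle^2=\langle v\rangle^2+|v_*|^2-|v_*'|^2$ one has $w^{2\ell}(v)\lesssim w^{2\ell}(v')$ up to a polynomial factor in $(|v_*|,|v_*'|)$ absorbed into part of this Gaussian. Cauchy--Schwarz separating $h(v)$ from $f(v')$ then reduces the $h$-factor to $2^{2sk}\int dv\,|h(v)|^2 w^{2\ell}(v)\int dv_*\,|v-v_*|^{\gamma+2s}e^{-c'|v_*|^2}$, and the elementary bound $\int dv_*\,|v-v_*|^{\gamma+2s}e^{-c'|v_*|^2}\lesssim\langle v\rangle^{\gamma+2s}$ — valid for every sign of $\gamma+2s$, using $|v-v_*|\approx\langle v\rangle$ once $|v_*|\le\tfrac12|v|$ — gives $\lesssim 2^{2sk}\nsm w^{\ell}h\nsm_{L^2_{\gamma+2s}}^2$; the $f$-factor is treated symmetrically after a pre-post change of variables, yielding \eqref{posUPPer3norm}, again with no restriction on $k$.

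The main obstacle is the weight bookkeeping in the regime $k\le0$. There $\chi_k$ forces $|v-v'|\approx2^{-k}\gtrsim1$, so $\langle v\rangle$ and $\langle v'\rangle$ are not comparable and $w^{2\ell}(v')$ cannot simply be moved onto $w^{2\ell}(v)$; the resolution is to use the rapid decay of the distinguished slot (together with the factor $M_\beta(v_*')$ in \eqref{posUPPer3norm}) and the collisional identities above to see that, on the support of the integrand, $v_*$ and $v_*'$ are effectively bounded, hence $\langle v\rangle\approx\langle v'\rangle$ modulo Gaussian-absorbable factors. A second delicate point, only for $\gamma+2s<0$, is that the negative power $\langle v\rangle^{\gamma+2s}$ in the target norm of \eqref{posUPPer3norm} must be \emph{extracted} from $|v-v_*|^{\gamma+2s}$ rather than discarded; this is precisely the content of the inequality $\int|v-v_*|^{\gamma+2s}e^{-c|v_*|^2}dv_*\lesssim\langle v\rangle^{\gamma+2s}$ quoted above, which sharpens the cruder $\lesssim\langle v\rangle^{(\gamma+2s)_+}$ one would obtain by simply dropping the Maxwellian localization in $v_*$.
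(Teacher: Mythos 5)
Your argument follows the paper's own proof in all essentials: for \eqref{tminushRAPplus} you pass to the Carleman representation so that the kernel becomes $\kappa_k^{M_\beta,\phi}$ and then apply Lemma~\ref{prop:Grad} in each variable (your Schur-test-with-rescaling phrasing and the paper's direct Cauchy--Schwartz with the pairing $w^{\ell}(v')|f(v')|\,\kappa^{1/2}$ against $w^{\ell}(v')|g(v_*)|\,\kappa^{1/2}$ are the same computation, just distributing the weight as $(\ell,-\ell)$ versus $(0,2\ell)$); for \eqref{tminushRAPplus2} and \eqref{posUPPer3norm} you use the $\sigma$-representation, the size estimate \eqref{bjEST}, and the Gaussian decay in two of the four velocities coupled with energy conservation and a pre-post change of variables, which is exactly what the paper invokes. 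The proposal is correct and takes the same route, merely filling in details the paper leaves implicit.
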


\begin{proof}
For \eqref{tminushRAPplus}, 
using the formula in \eqref{defTKLcarl} and
Cauchy-Schwartz we have directly
\begin{multline*}
 \left| T^{k,\ell}_{+}(g,\phi,f) \right| 
 \lesssim
 \int_{\mathbb{R}^n} dv' ~ w^{2\ell}(v')~ \left| f(v') \right| ~ 
\int_{\mathbb{R}^n} dv_*  ~ \left| g(v_*) \right| ~ \left| \kappa_{k}^{\psi, \phi}(v',v_*) \right|
\\
 \lesssim
  \left(
 \int_{\mathbb{R}^n} dv' ~ w^{2\ell}(v')~ |f(v')|^2 ~ 
\int_{\mathbb{R}^n} dv_*  ~ \left|  \kappa_{k}^{\psi, \phi}(v',v_*) \right|
\right)^{1/2}
\\
\times
 \left(\int_{\mathbb{R}^n} dv_* ~ |g(v_*)|^2 ~ 
\int_{\mathbb{R}^n} dv'  ~ w^{2\ell}(v')
~ \left|  \kappa_{k}^{\psi, \phi}(v',v_*) \right|
\right)^{1/2}.
\end{multline*}
Above we consider the kernel $\kappa_{k}^{\psi, \phi}(v',v_*)$ with $\psi = M_{\beta}$ and $\phi$ as in \eqref{rapidDECAYfcn}.  Now we observe that 
Lemma \ref{prop:Grad} immediately implies \eqref{tminushRAPplus}.

To prove
\eqref{tminushRAPplus2}, notice that the bound \eqref{rapidDECAYfcn} implies there is strong exponential decay in both variables $v$ and $v_*$.  The estimate \eqref{tminushRAPplus2} then follows from the Schur test for integral operators or simply using Cauchy-Schwartz as above.

The proof of \eqref{posUPPer3norm} follows from a similar application of Cauchy-Schwartz.    The main difference is that this time we have strong exponential decay in both $v'_*$ and $v_*$ which displays the appropriate symmetry under the needed pre-post collisional change of variable.
\end{proof}

This completes our basic compact estimates.  In the next section, we develop the geometric Littlewood-Paley theory adapted to the paraboloid in $n+1$ dimensions.

\section{The $n$-dimensional non-isotropic Littlewood-Paley decomposition}
\label{sec:aniLP}

In this section, we  expand the development of the non-isotropic Littlewood-Paley projections that we initiated in \cite{gsNonCut1}.  The principal new result that is needed in this direction is an understanding of the interaction (specifically, the commutator) between the geometric Littlewood-Paley projections and the {\it isotropic} velocity derivatives $\partial_{\beta}$.  These additional estimates are necessary because of the higher derivatives present in the norm
$\nspace$, which are ultimately necessary because the singularity of the kinetic factor is strong enough that many of the $L^2$ estimates in \cite{gsNonCut1} must be replaced with $L^\infty$ estimates.  These $L^\infty$ estimates, in turn, are related back to $L^2$ based spaces via Sobolev embedding.

Throughout this section, we will use the variables $v$ and $v^\prime$ to refer to independent points in $\R^n$, meaning that we will not assume in this section that they are related by the collisional geometry.  The reason we choose to use these variable names is that they give a hint about where the Littlewood-Paley projections will be later applied in situations which do involve the collisional geometry explicitly.

\subsection{Definitions and comparison to the non-isotropic norm}

Following the procedures and notation of \cite{gsNonCut1}, we first describe the lifting of vectors $v \in \threed$ to the paraboloid in $\R^\last$.  Specifically, for $v \in \threed$, let $\ext{v} \eqdef (v,\frac{1}{2} |v|^2) \in \R^\last$, and consider the mappings $\tau_v : \threed \rightarrow \threed$  and $\ext{\tau}_v : \threed \rightarrow \R^\last$ given by
\[ \tau_v u \eqdef u - (1- \ang{v}^{-1}) \ang{v,u} |v|^{-2} v,
\quad \mbox{ and } 
\quad \ext{\tau}_v u \eqdef (\tau_v u,  \ang{v}^{-1} \ang{v,u}). \]
These mappings should be thought of as sending the hyperplane $v_\last = 0$ 
to the hyperplane tangent to the paraboloid $(v, \frac{1}{2} |v|^2)$ at the point $\ext{v}$.  It's routine to check that $\ang{v, \tau_v u} = \ang{v}^{-1} \ang{v,u}$ and $|\tau_v u|^2 = |u|^2 - \ang{v}^{-2} \ang{v,u}^2$, which implies
$$
|\ext{\tau}_v u|^2
=
|\tau_v u|^2 + \ang{v, \tau_v u}^2 = |u|^2,
$$ 
meaning that $\ext{\tau}_v$ is an isometry from one hyperplane to the other. 
Moreover, it is not a hard calculation to check that
\[ \ext{v + \tau_v u} = \ext{v} + \ext{\tau}_v u + \frac{1}{2} |u|^2 e_\last, \]
where  $e_\last \eqdef (0,\ldots,0,1)$.  Next, fix any $C^\infty$ function supported on the unit ball of $\R^\last$ and consider the generalized Littlewood-Paley projections given by
\begin{align}
P_j f(v) & \eqdef \int_{\threed} dv' 2^{\dim j} \varphi(2^j(\ext{v} - \ext{v'})) \ang{v'} f(v') \label{lppj} \\
Q_j f(v) & \eqdef P_j f(v) - P_{j-1} f(v), \qquad j \geq 1 \nonumber \\
 & = \int_{\threed} dv' 2^{\dim j} \psi(2^j(\ext{v} - \ext{v'})) \ang{v'} f(v'), \label{lpqj}
\end{align}
where $\psi(w) \eqdef \varphi(w) - 2^{-\dim} \varphi(w/2)$. 
Here $P_j$ corresponds to the usual projection onto frequencies at most $2^{j}$ and $Q_j$ corresponds to the usual projection onto frequencies comparable to $2^{j}$ (recall that the frequency $2^{j}$ corresponds to the scale $2^{-j}$ in physical space).  We also define $Q_0 \eqdef P_0$ to simplify notation.  In order for these projections to be generally useful, the function $\varphi$ must be chosen to satisfy various cancellation conditions which we will discuss later.   For now, we note that, as long as the integral of $\varphi$ over any $\dim$-dimensional hyperplane through the origin equals $1$ (which will be the case for any suitably normalized radial function), we have that
$P_j f(v) \rightarrow f(v)$ as $j \rightarrow \infty$ for all sufficiently smooth $f$ and that
\begin{equation} \left( \int_{\threed} dv \left| P_j f(v) \right|^p \ang{v}^\rho \right)^{\frac{1}{p}}  
\lesssim 
\left( \int_{\threed} dv \left| f(v)\right|^p \ang{v}^{\rho} \right)^{\frac{1}{p}}, \label{boundedlp}
\end{equation}
uniformly in $j \geq 0$ for any fixed $\rho \in \mathbb{R}$ and any $p \in [1,\infty)$ (with suitable variants of this inequality also holding for $p = \infty$ as well as for the operators $Q_j$).  

The principal reason for defining our Littlewood-Paley projections in this way is that the particular choice of paraboloid geometry allows us to control the associated square functions by our non-isotropic norm.  This informal idea is made precise in the following proposition (the proof of which may be found in \cite{gsNonCut1}). 
\begin{proposition}
Suppose that $|Q_j (1)(v)| \lesssim 2^{-2j}$ \label{compareprop}
holds uniformly for all $v \in \threed$ and all $j \geq 0$. Then for any $s \in (0,1)$ and any real $\rho$, the following inequality holds:
\begin{equation}
\begin{split}
 \sum_{j=0}^\infty 2^{2sj} & \int_{\threed} dv ~ |Q_j f(v)|^2 \ang{v}^\rho \lesssim \\
& |f|^2_{L^2_\rho} + \int_{\threed} dv \int_{\threed} dv' 
\left( \ang{v}\ang{v'} \right)^{\frac{\rho+1}{2}}
\frac{(f(v) - f(v'))^2}{d(v,v')^{\dim+2s}} {\mathbf 1}_{d(v,v') \leq 1} . 
\end{split}
\label{compareton}
\end{equation}
This is true uniformly for all smooth  $f$.
\end{proposition}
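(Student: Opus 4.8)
The plan is to use the fact that, after the lift $v \mapsto \ext{v} = (v,\tfrac12|v|^2)$, one has $|\ext{v}-\ext{v'}|^2 = |v-v'|^2 + \tfrac14(|v|^2-|v'|^2)^2 = d(v,v')^2$, so the kernel of $Q_j$ in \eqref{lpqj} is supported exactly on $\set{v'}{d(v,v') \le 2^{1-j}}$, on which moreover $\ang{v} \approx \ang{v'}$ since $\big||v|^2-|v'|^2\big| \le 2^{2-j} \le 4$ there (and similarly for $P_0$ in \eqref{lppj}, with support $d(v,v')\le 1$). First I would split $Q_j f(v) = D_j f(v) + f(v)\, Q_j(1)(v)$, where $D_j f(v) \eqdef \int_{\threed} dv'~2^{\dim j}\psi(2^j(\ext{v}-\ext{v'}))\ang{v'}(f(v')-f(v))$ (for $j=0$ replace $\psi$ by $\varphi$). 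The term $f(v)Q_j(1)(v)$ is dispatched at once by the hypothesis $|Q_j(1)(v)| \lesssim 2^{-2j}$: $\sum_{j\ge0} 2^{2sj}\int_{\threed} dv~|f(v)|^2|Q_j(1)(v)|^2\ang{v}^\rho \lesssim \big(\sum_{j\ge0} 2^{(2s-4)j}\big)\,|f|^2_{L^2_\rho} \lesssim |f|^2_{L^2_\rho}$, the geometric series converging since $s<2$.

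The core of the argument is a pointwise bound on $\sum_j 2^{2sj}|D_j f(v)|^2$. Using $|\psi|,|\varphi|\lesssim1$, the support condition, $\ang{v'}\approx\ang{v}$, and Cauchy--Schwarz in $v'$ against the measure of the support, I obtain
\[ |D_j f(v)|^2 \lesssim \ang{v}^2 \left( 2^{\dim j} \left| \set{v'}{d(v,v') \le 2^{1-j}} \right| \right) 2^{\dim j}\int_{d(v,v')\le 2^{1-j}} dv'~(f(v')-f(v))^2. \]
At this point I invoke the elementary volume estimate $\left|\set{v'}{d(v,v')\le r}\right| \lesssim r^{\dim}\ang{v}^{-1}$ for $0<r\le1$ (verified by writing $v'=v+u$: $d(v,v')\le r$ forces $|u|\le r$ and $|\ang{v,u}|\lesssim r$, so $u$ lies in a slab of thickness $\lesssim r\ang{v}^{-1}$ inside a ball of radius $r$). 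This furnishes the crucial extra factor $\ang{v}^{-1}$ and leaves $|D_j f(v)|^2 \lesssim \ang{v}\, 2^{\dim j}\int_{d(v,v')\le 2^{1-j}}dv'~(f(v')-f(v))^2$. I then decompose the integration region into dyadic shells $A_k(v) \eqdef \set{v'}{2^{-k-1}<d(v,v')\le2^{-k}}$, $k\ge j-1$, on which $d(v,v')^{\dim+2s}\approx2^{-k(\dim+2s)}$; with $G_k(v)\eqdef\int_{A_k(v)}dv'~(f(v')-f(v))^2 d(v,v')^{-\dim-2s}$ this gives $2^{(\dim+2s)j}\int_{d(v,v')\le2^{1-j}}(f(v')-f(v))^2\,dv' \lesssim \sum_{k\ge j-1}2^{(\dim+2s)(j-k)}G_k(v)$. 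Summing over $j$ and exchanging the order of summation, the inner $j$-sum is a convergent geometric series (as $\dim+2s>0$), so $\sum_{j\ge0}2^{2sj}|D_j f(v)|^2 \lesssim \ang{v}\sum_{k\ge0}G_k(v) = \ang{v}\int_{\threed}dv'~(f(v')-f(v))^2 d(v,v')^{-\dim-2s}\ind_{d(v,v')\le1}$ (for $j=0$ one uses $d(v,v')^{\dim+2s}\le1$ on $d(v,v')\le1$ to reach the same conclusion directly). This is the step I expect to be the main obstacle: one must extract the sharp weight power $\ang{v}$ rather than $\ang{v}^2$ — which forces the use of the gain $\ang{v}^{-1}$ in the volume bound — while also avoiding the logarithmic loss that the naive estimate $\sum_j|\psi(2^j(\ext{v}-\ext{v'}))|\lesssim\log(1/d(v,v'))$ would produce; it is precisely the shell decomposition combined with the geometric-series exchange that removes this loss.

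Finally, I would multiply the pointwise bound by $\ang{v}^\rho$ and integrate in $v$, add the $f(v)Q_j(1)(v)$ contribution estimated above, and conclude
\[ \sum_{j\ge0}2^{2sj}\int_{\threed}dv~|Q_j f(v)|^2\ang{v}^\rho \lesssim |f|^2_{L^2_\rho} + \int_{\threed}dv\int_{\threed}dv'~\ang{v}^{\rho+1}\frac{(f(v')-f(v))^2}{d(v,v')^{\dim+2s}}\ind_{d(v,v')\le1}. \]
Since $\ang{v}\approx\ang{v'}$ whenever $d(v,v')\le1$, one has $\ang{v}^{\rho+1}\approx(\ang{v}\ang{v'})^{(\rho+1)/2}$ on the domain of the double integral, and the right-hand side becomes precisely \eqref{compareton}. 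All the manipulations above are justified for smooth $f$ once one assumes the right-hand side of \eqref{compareton} is finite, there being nothing to prove otherwise.
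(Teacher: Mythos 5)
Your argument is correct, and it proceeds by a genuinely different route than the paper's. The paper's proof rests on an algebraic identity: expanding $(f(v)-f(v'))^2$ inside the triple integral $\iiint (f(v)-f(v'))^2\,\psi_j(\ext z-\ext v)\psi_j(\ext z-\ext{v'})\ang v\ang{v'}\ang z^\rho\,dv\,dv'\,dz$ and using the $v\leftrightarrow v'$ symmetry exhibits $\int|Q_jf|^2\ang{\cdot}^\rho$ as minus that triple integral plus a cross term $\iint f(v)^2\,\psi_j(\ext z-\ext v)\,Q_j(1)(z)\ang v\ang z^\rho\,dv\,dz$; both pieces are then estimated using the change of variables $z\mapsto v+2^{-j}\tau_v u$, whose Jacobian $\ang v^{-1}$ supplies the crucial weight gain, giving $\int dz\,|\psi_j(\ext z-\ext v)||\psi_j(\ext z-\ext{v'})|\ang z^\rho\lesssim 2^{nj}(\ang v\ang{v'})^{(\rho+1)/2}$ and closing with the geometric sum $\sum_{j}2^{(2s+n)j}\ind_{d(v,v')\le 2^{-j+1}}\lesssim d(v,v')^{-n-2s}\ind_{d(v,v')\le1}$. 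You instead split $Q_jf=D_jf+f\,Q_j(1)$ pointwise and control $D_jf$ by Cauchy--Schwarz against the support measure, where the same geometric fact (the anisotropy of $d$ on the lifted paraboloid) enters through your direct volume bound $|\{v':d(v,v')\le r\}|\lesssim r^n\ang v^{-1}$ rather than through a Jacobian in the dummy variable $z$. The identity buys the paper the symmetric weight $(\ang v\ang{v'})^{(\rho+1)/2}$ automatically, whereas you produce $\ang v^{\rho+1}$ and then symmetrize using $\ang v\approx\ang{v'}$ on $\{d\le 1\}$; conversely, your decomposition is more elementary in that it never needs the integral identity, makes the role of the hypothesis $|Q_j(1)|\lesssim2^{-2j}$ transparent, and replaces the paper's geometric sum in $j$ with an equivalent dyadic-shell exchange of summation. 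Both are valid proofs of the proposition.
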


\subsection{Littlewood-Paley commutator estimates}
For convenience, let us abbreviate 
$2^{\dim j} \varphi(2^j w) \eqdef \varphi_j(w)$ and likewise for $\psi_j$.  We seek at this point to relate the corresponding Littlewood-Paley square function to the
 norm $\nspace$ in arbitrary dimensions $n\ge 2$.  This was done in three dimensions for $N^{s,\gamma}$  in our first paper \cite{gsNonCut1}.  For this extension, it will also be necessary to make estimates involving the commutators of the Littlewood-Paley projections with isotropic, $n$-dimensional derivatives (i.e., derivatives which {\it do not} respect the intrinsic geometry).  The commutators themselves, $ \left[ \frac{\partial}{\partial v_i}, Q_j \right] f(v)
 \eqdef  \frac{\partial}{\partial v_i} \left[ Q_jf \right] (v)-  Q_j \left[ \frac{\partial}{\partial v_i}  f\right](v)$, are easy to calculate:
\begin{align*}
 \left[ \frac{\partial}{\partial v_i}, Q_j \right] f(v) = & \int_{\threed} dv' 2^j \left( {\tilde \nabla}_i \psi + v_i {\tilde \nabla}_\last \psi \right)_j(\ext{v} - \ext{v'}) \ang{v'} f(v') \\
& - \int_{\threed} dv' \psi_j (\ext{v} - \ext{v'}) \ang{v'} \frac{\partial f}{\partial v_i'} (v').
\end{align*}
Above ${\tilde \nabla}_i$ is the $i$-th component of ${\tilde \nabla}$.
After an integration by parts, 
\begin{align*}
 \left[ \frac{\partial}{\partial v_i}, Q_j \right] f(v) 
 = & \int_{\threed} dv' 2^j \left({\tilde \nabla}_i \psi + v_i {\tilde \nabla}_\last \psi \right)_j(\ext{v} - \ext{v'}) \ang{v'} f(v') \\
& + \int_{\threed} d v' \frac{\partial}{\partial v_i'} \left[ \psi_j (\ext{v} - \ext{v'}) \ang{v'} \right] f(v') \\
= & \int_{\threed} dv' 2^j (v_i - v_i') \left( {\tilde \nabla}_\last \psi \right)_j(\ext{v} - \ext{v'}) \ang{v'} f(v') \\
& +  \int_{\threed} d v' \psi_j(\ext{v} - \ext{v'}) v_i' \ang{v'}^{-1} f(v').
\end{align*}
In particular, this commutator may be written as
\[ \left[ \frac{\partial}{\partial v_i}, Q_j \right] f(v) = \tilde Q_j f(v) + Q_j \tilde f(v), \]
where $\tilde Q_j$ is given by replacing $\psi(w)$ with $w_i {\tilde \nabla}_\last \psi(w)$ in the integral \eqref{lpqj} and $\tilde f(v') \eqdef v'_i \ang{v'}^{-2} f(v')$.  The end result is that, after taking $\beta$ derivatives and studying these commutators, we may always  write $\partial_{\beta} 2^{-|\alpha|j} {\tilde \nabla}^{\alpha} Q_j$ as a finite sum
$$ 
\partial_{\beta} 2^{-|\alpha|j} {\tilde \nabla}^{\alpha} Q_j f(v) 
= 
\sum_{|\beta_1| \leq |\beta|}
\sum_{k} c_{k,\beta_1}^{\alpha,\beta} Q^{k}_j ( \omega_k \partial_{\beta_1} f)(v), 
$$
where each $Q^{k}_j$ has a form the same as \eqref{lpqj} for some $\psi^k$, the derivatives $\beta_1$ satisfy 
$|\beta_1| \leq |\beta|$, and the weights $\omega_k(v')$ are polynomials times powers of $\ang{v'}$ which together tend to zero at infinity.  Above ${\tilde \nabla}$ represents the $n+1$ dimensional gradient which is restricted to the paraboloid, $(v, \frac{1}{2}|v|^2)$,  post differentiation; since this is the way in which that particular operation is used in the sequel.

We are able to compare these weighted non-isotropic Littlewood-Paley projections to the non-isotropic norm
using the same method which we used to prove  Proposition \ref{compareprop} in \cite{gsNonCut1},  that is,
 by completing the square.  We bound above the expression
\[ 
\int_{\threed} \! dv \int_{\threed} \! dv' \int_{\threed} dz \ \psi^k_j(\ext{v} - \ext{z}) \psi^{k}_j(\ext{v'} - \ext{z}) (f(v) - f(v'))^2 \ang{z}^{\rho} \ang{v} \ang{v'} \omega_k(v) \omega_k(v'), 
\]
by integrating over $z$ and comparing this to the semi-norm piece of $\nspace$, then we show that this term is also equal to
\[ - 2 \int_{\threed} dv ~ |Q_j^{k} (\omega_k f)(v)|^2 \ang{v}^{\rho} + 2 \int_{\threed} dv ~
Q_j^{k}(\omega_k)(v) Q_j^{k}(\omega_k f^2)(v) \ang{v}^{\rho},
 \]
(in both expressions, we suppressed the $\partial_{\beta_1}$ acting on $f$).  As long as one has the following uniform inequality
 for all $j \geq 0$ and all $v$,
 \begin{equation}
 |Q_j^{k}(\omega_k)(v)| \leq 2^{-2j},
 \label{uniformQineq}
\end{equation}
 these estimates may be multiplied by $2^{2sj}$ 
 and summed over $j$ to obtain
\begin{multline*}
 \sum_{j=0}^\infty 2^{2sj} \int_{\threed}  dv ~ |Q_j^{k} (\omega_k f)(v)|^2 \ang{v}^{\rho} \lesssim 
  |f|_{L^2_{\rho}}^2
 \\
 + \int_{\threed} dv \int_{\threed} dv' \frac{(f'-f)^2}{d(v,v')^{\dim+2s}} \left(\ang{v}\ang{v'} \right)^{\frac{\rho+1}{2}} {\mathbf 1}_{d(v,v') \leq 1},
 \quad 
 s \in (0,1).
\end{multline*}
The derivation of this inequality is similar to that of Proposition \ref{compareprop} (which has the advantage of simpler notation), so we remind the reader of its proof now:
\begin{proof}[Proof of Proposition \ref{compareprop}]
For any $j \geq 1$, one has the equality
\begin{align*}
 \frac{1}{2} \int_{\threed}  \! \! \! dv & \! \int_{\threed}  \! \! \! dv' \! \int_{\threed} \! \! \! dz ~  (f(v) - f(v'))^2 \psi_j(\ext{z} - \ext{v}) \psi_j(\ext{z} - \ext{v'}) \ang{v} \ang{v'} \ang{z}^\rho  \\ 
 & =   - \int_{\threed} \! \! dv  ~
 ([Q_j f] (v))^2 \ang{v}^\rho + \int_{\threed}  \! \! \! dv \!  \int_{\threed} \! \! \! dz   (f(v))^2 \psi_j(\ext{z} - \ext{v}) Q_j(1)(z) \ang{v} \ang{z}^\rho,
\end{align*}
simply by expanding the square $(f(v) - f(v'))^2$ and exploiting the symmetry of the integral in $v$ and $v'$ (note also that the corresponding statement holds true for $Q_0$ when $\psi$ is replaced by $\varphi$).
By our assumption on the projections $Q_j$, namely $|Q_j(1)(z)| \lesssim 2^{-2j}$, we may control the second term on the right-hand side by
\begin{align*}
2^{-2j} \int_{\threed} dv ~ (f(v))^2 \ang{v}^{\rho}.
\end{align*}
This  bound follows from the change of variables 
$z \mapsto v + 2^{-j} \tau_v u$, which 
is a change the variable from $z$ to $u$ and has Jacobian $\ang{v}^{-1}$,
so that
\begin{align*}
 \int_{\threed} dz |\psi_j(\ext{z} - \ext{v})| \ang{z}^{\rho} & =
\ang{v}^{-1} \int_{\threed} du ~ |\psi( \ext{\tau}_v u + 2^{-j-1} |u|^2 e_\last)| \ang{v + 2^{-j} \tau_v u}^\rho \\
& \lesssim \ang{v}^{\rho-1}.
\end{align*}
We have used that $\ang{v} \approx \ang{z}$ on the support of $|\psi_j(\ext{z} - \ext{v})|$ since $j$ is non-negative.
Moreover, the same change of variables can be used to show that
\[ 
\int_{\threed} dz ~ |\psi_j(\ext{z} - \ext{v})| |\psi_j(\ext{z} - \ext{v'})| \ang{z}^\rho \lesssim 2^{\dim j} (\ang{v} \ang{v'})^{\frac{\rho+1}{2}}. 
\]
Here we use the inequality $|\psi_j(\ext{z} - \ext{v'})| \lesssim 2^{\dim j}$ and observe that $\ang{v} \approx \ang{z} \approx \ang{v'}$ on the support of the original integral.
Moreover, the triangle inequality guarantees that the integral is only nonzero when $d(v,v') \leq 2^{-j+1}$, so that we have
\[ 
2^{2sj} \int_{\threed} dz |\psi_j(\ext{z} - \ext{v})| |\psi_j(\ext{z} - \ext{v'})| \ang{z}^\rho 
\lesssim 
(\ang{v} \ang{v'})^{\frac{\rho+1}{2}} 
~ 2^{2sj} 2^{nj} {\mathbf 1}_{d(v,v') \leq 2^{-j+1}}. 
\]
These estimates may  be summed over $j \geq 1$ (when $v \ne v'$) because the sum terminates after some index $j_0$ with $2^{-j_0} < d(v,v') \leq 2^{-j_0+1}$, 
 yielding \eqref{compareton}, since 
\[ 
\sum_{j=1}^\infty 2^{2sj} 2^{nj} {\mathbf 1}_{d(v,v') \leq 2^{-j+1}} 
=
\sum_{j=1}^{j_0} 2^{2sj} 2^{nj} {\mathbf 1}_{d(v,v') \leq 2^{-j+1}} 
\lesssim
d(v,v')^{-\dim-2s} {\mathbf 1}_{d(v,v') \leq 1}. 
\]
The last inequality follows from $2^{(2s+n){j_0}} \lesssim d(v,v')^{-\dim-2s}$.  
 The remaining term $j=0$ is already bounded above by $|f|_{L^2_{\rho}}^2$.
\end{proof}

Thus, following the proof of Proposition \ref{compareprop},
subject only to the establishment of the 
 decay condition \eqref{uniformQineq} similar to the hypothesis of Proposition \ref{compareprop}, it follows that
\begin{equation}
 \sum_{|\beta| \leq K} \sum_{j=0}^\infty 2^{2(s-|\alpha|)j} \int_{\threed} dv |\partial_{\beta} {\tilde \nabla}^\alpha Q_j f (v)|^2 
 \ang{v}^{\gamma + 2s} w^{2\ell - 2 |\beta|}(v)
\lesssim  |f|_{\nspace}^2,
\label{lpsobolev0}
\end{equation}
also holds for any multi-index of derivatives $\alpha$ on $\R^\last$ and any fixed $K$ and $\ell$.  
The catch, of course, is that the functions $\psi^k$ become increasingly difficult to control when either $|\alpha|$ or $K$ becomes large.  This means that the uniform estimate is increasingly difficult to obtain.  In the next section, we will establish the desired inequality contingent on the following cancellation condition; that $\psi^k$ satisfies
\begin{equation*}
 \int_{\threed} du ~ p(u) ({\tilde \nabla}^\alpha \psi^k)(\ext{\tau}_v u) = 0,
\end{equation*}
for all polynomials $p$ of degree $1$
 and all multi-indices $|\alpha| \leq 1$.  Since $\psi^k$ is itself related to the original $\psi$ by taking a sequence of ${\tilde \nabla}$-derivatives and multiplying by a polynomial, it is sufficient to choose the original $\varphi$  so that $\psi$ satisfies
\begin{equation}
 \int_{\threed} du ~ p(u) ({\tilde \nabla}^\alpha \psi)(\ext{\tau}_v u) = 0, \label{lpcancel}
\end{equation}
for all polynomials of degree at most $M$ and all $|\alpha| \leq M$, where $M$ is any fixed but arbitrary natural number.

\subsection{Selection of $\varphi$ and inequalities for smooth functions}

Let $D$ be the dilation on functions in $\R^\last$ given by $D \varphi(w) \eqdef \varphi \left(\frac{w}{2} \right)$. 
We choose a radial function $\varphi_0 \in C^\infty(\R^\last)$  which is supported on the ball $|w| \leq R$, with $R>0$, and satisfies
\begin{equation} 
\int_{\threed} du \ \varphi_0(\ext{\tau}_v u) = 1, \qquad \forall v \in \threed. 
\label{lpnormalize}
\end{equation}
Since $\varphi_0$ is radial this equality will be true for all $v$ if it is true for any single $v$.  If $p$ is any homogeneous polynomial on $\threed$, a simple scaling argument shows that
\[ 
\int_{\threed} du ~ p(u) ({\tilde \nabla}^\alpha \circ D \varphi_0) (\ext{\tau}_v u) = 2^{\dim + \deg p - |\alpha|} \int_{\threed} du ~ p(u) ({\tilde \nabla}^\alpha \varphi_0)( \ext{\tau}_v u). 
\]
Next, fix some large integer $M$ and consider 
the $C^\infty$ function $\varphi$ which is supported on the ball of radius $2^{2M} R$ and is given by 
\[ \varphi \eqdef \left( \prod_{|k| \leq M, k \neq 0} \frac{I - 2^{-\dim + k} D}{1-2^k} \right) \varphi_0.  \]
By induction on $M$ and the scaling argument just mentioned, it follows that $\varphi$ satisfies exactly the same normalization condition as $\varphi_0$, namely \eqref{lpnormalize}.  Assuming that $M$ is fixed, the radius $R$ may be chosen so that $2^{2M} R \leq 1$.  For this fixed $M$, we take the corresponding function $\varphi$ to be the basic building block of our Littlewood-Paley projections, i.e., we use this $\varphi$ in the definition \eqref{lppj}, and use $\psi \eqdef \varphi - 2^{-\dim} D \varphi$ in \eqref{lpqj}. By our particular choice of $\varphi$, this $\psi$ satisfies the cancellation conditions \eqref{lpcancel} for all polynomials $p$ of degree at most $M$ and all multi-indices $\alpha$ of order at most $M$.
Consequently, we have the following integral estimates:

\begin{lemma}
Choose a large integer $M$.  Suppose that \eqref{lpcancel} holds for all polynomials of degree at most $M$ and all multi-indices $\alpha$ of order at most $M$.  Fix any multi-index $\alpha$ with $\last$ components.  If $f$ is a smooth function on $\R^\dim$,
and $g$ is some non-negative function satisfying for all $v \in \threed$ that
\[ 
\sup_{d(v,v') \leq 1} \sum_{|\beta| \leq k+1} | (\partial_{\beta} f) (v')| \leq C_f ~ g(v), 
\]
Suppose 
$k$ is
 some integer
 satisfying $-1 \leq k \leq \frac{1}{2}(M - |\alpha|)$.  Then the  inequality
\begin{equation}
 \left| 2^{-|\alpha| j} {\tilde \nabla}^{\alpha} Q_j f(v) \right| \lesssim 2^{-(k+1)j} C_f ~ g(v),\label{lpsmooth}
\end{equation}
holds uniformly in $f$, $v$, and $j \geq 0$.
\end{lemma}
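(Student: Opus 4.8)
The plan is to change variables into the tangent‑plane coordinates adapted to the paraboloid and then run two nested Taylor expansions, using the cancellation conditions \eqref{lpcancel} to annihilate the main terms, and the smoothness of $\psi$ together with the pointwise hypothesis on $f$ to control the remainders. First I would extend $Q_j f$ to a function on $\R^\last$ by $w \mapsto \int_{\threed} dv'\, 2^{\dim j}\psi(2^j(w - \ext{v'}))\ang{v'}f(v')$; differentiating in $w$ (the factor $\ang{v'}f(v')$ not depending on $w$) and restricting to $w = \ext{v}$ produces a factor $2^{|\alpha|j}$, so that
\[
2^{-|\alpha|j}\tilde{\nabla}^\alpha Q_j f(v) = \int_{\threed} dv'\, 2^{\dim j}(\tilde{\nabla}^\alpha\psi)(2^j(\ext{v} - \ext{v'}))\ang{v'}f(v').
\]
Next I change variables $v' = v + 2^{-j}\tau_v u$, which by linearity of $\tau_v$ has Jacobian $2^{-\dim j}\ang{v}^{-1}$ and, by the identity $\ext{v + \tau_v u} = \ext{v} + \ext{\tau}_v u + \tfrac12|u|^2 e_\last$, satisfies $2^j(\ext{v} - \ext{v'}) = -\ext{\tau}_v u - 2^{-j-1}|u|^2 e_\last$. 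Setting $a_j(u) \eqdef \ang{v + 2^{-j}\tau_v u}\, f(v + 2^{-j}\tau_v u)$, the quantity to estimate becomes
\[
\ang{v}^{-1}\int_{\threed} du\, (\tilde{\nabla}^\alpha\psi)\!\left(-\ext{\tau}_v u - 2^{-j-1}|u|^2 e_\last\right) a_j(u).
\]
On the support of $\psi$ one has $|u| \lesssim 1$; since $d(v, v + t2^{-j}\tau_v u) \lesssim 2^{-j}|u| \le 1$ for $t \in [0,1]$ (using $\ang{v,\tau_v u} = \ang{v}^{-1}\ang{v,u}$ and the smallness of the support radius fixed in the construction of $\varphi$), the hypothesis on $f$ and the chain rule give $|\partial_u^\beta a_j(u)| \lesssim 2^{-j|\beta|}\ang{v}\, C_f g(v)$ for all $|\beta| \le k+1$.

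The core is a double Taylor expansion. First expand the kernel in the paraboloid‑correction direction,
\[
(\tilde{\nabla}^\alpha\psi)\!\left(-\ext{\tau}_v u - 2^{-j-1}|u|^2 e_\last\right) = \sum_{m=0}^{k}\frac{(-2^{-j-1}|u|^2)^m}{m!}(\partial_\last^m\tilde{\nabla}^\alpha\psi)(-\ext{\tau}_v u) + R_k(u),
\]
where, using only the smoothness and compact support of $\psi$, $|R_k(u)| \lesssim 2^{-(k+1)j}$ on the support; the contribution of $R_k$ is then $\lesssim 2^{-(k+1)j}\ang{v}^{-1}\int_{|u|\lesssim 1}|a_j(u)|\,du \lesssim 2^{-(k+1)j} C_f g(v)$. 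For each $m = 0,\dots,k$ I then expand the amplitude as $a_j(u) = P_j^{(m)}(u) + \rho_j^{(m)}(u)$, with $P_j^{(m)}$ the degree‑$(k-m)$ Taylor polynomial of $a_j$ at $u = 0$ and $|\rho_j^{(m)}(u)| \lesssim 2^{-(k-m+1)j}\ang{v}\,C_f g(v)$ on the support. The polynomial part vanishes: $|u|^{2m}P_j^{(m)}(u)$ is a polynomial in $u$ of degree $m + k \le 2k \le M - |\alpha| \le M$, and $\partial_\last^m\tilde{\nabla}^\alpha\psi = \tilde{\nabla}^{\alpha'}\psi$ with $|\alpha'| = |\alpha| + m \le \tfrac12(M + |\alpha|) \le M$, so \eqref{lpcancel} (after the substitution $u \mapsto -u$, legitimate since $\ext{\tau}_v$ is linear) forces $\int du\, |u|^{2m}(\partial_\last^m\tilde{\nabla}^\alpha\psi)(-\ext{\tau}_v u)\, P_j^{(m)}(u) = 0$. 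The remaining remainder contributes $\lesssim 2^{-mj}\cdot\ang{v}^{-1}\cdot 2^{-(k-m+1)j}\ang{v}\,C_f g(v) = 2^{-(k+1)j} C_f g(v)$. Summing the finitely many pieces yields \eqref{lpsmooth}. The case $k = -1$ is the base case: the sum is empty and the bound is merely the uniform boundedness of $\psi$ paired against $a_j$, requiring no cancellation.

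The only delicate point — and the only place where the hypothesis $-1 \le k \le \tfrac12(M - |\alpha|)$ is used — is the simultaneous bookkeeping in this double expansion: choosing the Taylor order of the amplitude to be exactly $k - m$ forces both the polynomial $|u|^{2m}P_j^{(m)}(u)$ that multiplies the kernel to have degree at most $M$, and the differentiated kernel $\partial_\last^m\tilde{\nabla}^\alpha\psi$ to be a derivative of $\psi$ of order at most $M$, for every $m \in \{0,\dots,k\}$, while still extracting the full decay $2^{-(k+1)j}$; both requirements collapse exactly to $k \le \tfrac12(M - |\alpha|)$. A secondary technical point is checking that all the sample points entering the Taylor remainders stay within $d(\cdot, v) \le 1$, which is precisely why the support radius of $\varphi$ was chosen small in the construction.
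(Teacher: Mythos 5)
Your proof is correct and follows essentially the same route as the paper's: change to the tangent‑plane parametrization $v' = v + 2^{-j}\tau_v u$, Taylor expand in powers of $2^{-j}$, and invoke the moment conditions \eqref{lpcancel} to annihilate the leading terms. The paper carries out a single Taylor expansion of the full product $\psi(\ext{\tau}_v u + \tfrac{\epsilon}{2}|u|^2 e_\last)\,\omega(v+\epsilon\tau_v u)$ in $\epsilon=2^{-j}$ and then regroups the coefficients via the Leibniz rule, whereas your nested two‑stage expansion (kernel first, then amplitude to order $k-m$) is an algebraically equivalent reorganization of the same computation.
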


\begin{proof}
We first consider the case $\alpha = 0$.
We study the integral $I_j(v)$ given by
\begin{align*} 
I_j(v) & \eqdef \int_{\threed} dv' \psi_j(\ext{v} - \ext{v'}) \omega(v') \\
 & = \ang{v}^{-1} \int_{\threed} du \ \psi( \ext{\tau}_v u + 2^{-j-1} |u|^2 e_ \last) \omega(v + 2^{-j} \tau_v u),
\end{align*}
where the equality follows after the change $v' \mapsto v + 2^{-j} \ext{\tau}_v u$.  If we expand the integrand in powers of $2^{-j}$ by means of Taylor's theorem, we have  an asymptotic expansion of this integral, with the coefficient of $2^{-kj}$ equaling
\[ 
\ang{v}^{-1} \frac{1}{k!}\int_{\threed} du \left(  \left. \frac{d^k}{d \epsilon^k} \right|_{\epsilon=0} \psi( \ext{\tau}_v u + \frac{\epsilon}{2} |u|^2 e_ \last) \omega(v + \epsilon \tau_v u) \right). 
\]
This can subsequently be expanded as a sum of terms, each of which is an integral of some derivative of $\psi$ times a polynomial in $u$.  The order of differentiation is at most $k$, and the degree of the polynomial is at most $2k$.  Consequently, if $2k \leq M$, the $k$-th term in the asymptotic series will vanish identically.  Using the integral form of the remainder in Taylor's theorem, it follows that $I_j(v)$ is exactly equal to 
 \[ \ang{v}^{-1} \frac{1}{k!} \int_{\threed} du \int_0^1 d \theta ~ (1- \theta)^{k} \frac{d^{k+1}}{d \theta^{k+1}} \left[ \psi( \ext{\tau}_v u + 2^{-j-1} \theta |u|^2 e_ \last) \omega(v + 2^{-j} \theta \tau_v u) \right], \]
for any $k \leq \frac{M}{2}$.  
From here, it is elementary to see that
\[ 
\left| I_j(v) \right| \lesssim 2^{-(k+1)j} \ang{v}^{-1} \sup_{d(v,v')  \leq 2^{-j} \leq 1} \sum_{|\beta| \leq k+1}  | (\partial_{\beta} \omega )(v')|, 
\]
uniformly for all $v$ and all $j \geq 0$.  Setting $\omega(v') = \ang{v'} f(v')$ establishes the result for $\alpha = 0$.  When $\alpha \neq 0$, the effect of including an additional $2^{-|\alpha| j} {\tilde \nabla}^\alpha$ is to replace $\psi$ with ${\tilde \nabla}^\alpha \psi$ in the definition of $I_j$.  The proof follows exactly as before, where now one only has \eqref{lpcancel} for derivatives up through order $M - |\alpha|$.
\end{proof}

This lemma 
may now be directly applied to obtain
 the uniform bounds \eqref{uniformQineq} on $|Q_j^k (\omega_l)(v)|$ as  required for Proposition \ref{compareprop} and \eqref{lpsobolev0} to be true.

\section{Upper bounds for the trilinear form}
\label{sec:upTRI}

 In this section, we finish off the proofs of Lemma \ref{sharpLINEAR}, Lemma \ref{estNORM3}, 
  Lemma \ref{NonLinEst}, and   Lemma \ref{DerCoerIneq}
  from Section \ref{mainESTsec}.  The main tools we use in each of these proofs are the 
  estimates from 
  Section \ref{physicalDECrel} with the decomposition of the singularity therein, and also a decomposition of the functions themselves via the non-isotropic Littlewood-Paley theory developed in 
  Section \ref{sec:aniLP}.
  We begin with a variant of Lemma \ref{NonLinEst}. 

\subsection{The main non-linear estimates}
We will now prove  non-linear estimates in the velocity norms in Lemma \ref{NonLinEstA}. 
They are somewhat technical to state because we would like to distribute negative index decaying velocity weights among the post-collisional velocities \eqref{sigma}.  This is explained above Proposition \ref{referLATERprop}.  We have

\begin{lemma}
\label{NonLinEstA}
(Non-linear estimate)
Consider the non-linear term \eqref{gamma0} and \eqref{DerivEst}.

For any multi-index $\beta$, any $\ell^+, \ell^-, \ell' \ge 0$ with $\ell = \ell^+ - \ell^-$ and $\ell' \le \ell^-$ we have
\begin{multline}
 |\ang{w^{2\ell}\Gamma_{\beta}( g,  h), f} | 
 \lesssim 
  \nsm w^{\ell^+ - \ell'} g\nsm_{L^2} 
  \nsm h\nsm_{N^{s,\gamma}_{\ell + \ell', \ksob} }  \nsm f\nsm_{N^{s,\gamma}_\ell}     
  \\
 + 
 \nsm w^{\ell^+ - \ell'} g\nsm_{L^2_{\gamma + 2s}} \nsm w^{\ell + \ell'} h\nsm_{L^2} 
\nsm f\nsm_{N^{s,\gamma}_\ell}.
 \label{nlineq1}
\end{multline}
Here $\ell + \ell' = \ell^+ - (\ell^- - \ell' )$.
We alternatively use the Sobolev embedding on $g$:
\begin{multline}
 |\ang{w^{2\ell}\Gamma_{\beta}( g,  h), f} | 
 \lesssim 
\nsm  g\nsm_{H^{\ksob}_{{\ell^+} - \ell'}}  
  \nsm h\nsm_{N^{s,\gamma}_{\ell + \ell'} }  \nsm f\nsm_{N^{s,\gamma}_\ell}
  \\
 + 
 \nsm w^{\ell^+ - \ell'} g\nsm_{L^2_{\gamma + 2s}} \nsm w^{\ell + \ell'} h\nsm_{L^2} 
\nsm f\nsm_{N^{s,\gamma}_\ell}.
\label{nlineq}
\end{multline}
\end{lemma}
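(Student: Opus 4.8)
\textbf{Proof proposal for Lemma \ref{NonLinEstA}.}

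The plan is to use the dual formulation
$\ang{w^{2\ell}\Gamma_\beta(g,h),f} = \sum_{k} \{ T^{k,\ell}_+(g,h,f) - T^{k,\ell}_-(g,h,f)\} = \sum_k \{ T^{k,\ell}_+(g,h,f) - T^{k,\ell}_*(g,h,f)\}$ recorded at the end of Section~\ref{physicalDECrel}, and then to split the sum over $k$ at $k=0$, using the size/support estimates for the low-frequency part $k\le 0$ (small $\theta$) and the cancellation estimates for the high-frequency part $k\ge 1$ (where the singularity is concentrated).  For the sum over $k\ge 1$ I would insert the non-isotropic Littlewood--Paley decomposition $h = \sum_j Q_j h$ (or $f = \sum_j Q_j f$, as appropriate) from Section~\ref{sec:aniLP}: the point is that the cancellation estimates \eqref{cancelf2}, \eqref{cancelh2g2} require two $\tilde\nabla$-derivatives of the differentiated function, and \eqref{lpsmooth} with $\alpha=0$, $k=1$ says $|\tilde\nabla|^2 Q_j f(v) \lesssim 2^{2j}\, g(v)$ for a suitable majorant $g$, so that each term $T^{k,\ell}_+ - T^{k,\ell}_-$ applied to $Q_j h$ carries a gain $2^{(2s-2)k}$ against a loss $2^{2j}$, and the double sum over the region $k \gtrsim j$ converges geometrically with total contribution controlled by $\sum_j 2^{2sj}\, \||\tilde\nabla|^2 Q_j h\|^2 \lesssim \|h\|_{N^{s,\gamma}_{\ell+\ell',\ksob}}^2$ via \eqref{lpsobolev0}.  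The complementary region $k \lesssim j$ is handled by the size estimates \eqref{tplussmall}, \eqref{tplussmall2} (or their $k\ge0$ versions \eqref{tplussmallK}, \eqref{tplussmall2K} together with the $k\le 0$ case), again summing the resulting $2^{2sk}$ against the Littlewood--Paley orthogonality.  The two displayed inequalities \eqref{nlineq1} and \eqref{nlineq} then correspond exactly to the two ways of running Cauchy--Schwartz in the propositions of Section~\ref{sec:SSE}: either take the $L^\infty$ (Sobolev) bound on $g$ — giving the $H^{\ksob}_{\ell^+-\ell'}$ norm and \eqref{nlineq} — or keep $g$ in $L^2$ and put the $L^\infty$ bound on $h$ — giving the $L^2$ norm of $g$ paired with $\|h\|_{N^{s,\gamma}_{\ell+\ell',\ksob}}$ and \eqref{nlineq1}.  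The second line of terms in both displays arises precisely from the extra term $\langle v\rangle^{\gamma+2s}\lesssim\langle v'\rangle^{\gamma+2s}+\langle v'_*\rangle^{\gamma+2s}$ that was split off in the proof of Proposition~\ref{referLATERprop} in the case $\gamma+2s\ge0$; when $\gamma+2s\le 0$ that line is not needed.

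More concretely, I would proceed as follows.  First, fix the weight bookkeeping: using the splitting $w^{2\ell}(v) \lesssim w^{2(\ell^+-\ell')}(v'_*)\,w^{2(\ell+\ell')}(v')$ valid on the relevant regions (as in Proposition~\ref{referLATERprop}), reduce to estimating $T^{k,\ell}_\pm$ and $T^{k,\ell}_*$ with the target norms $\nsm w^{\ell^+-\ell'}g\nsm$, $\nsm w^{\ell+\ell'}h\nsm$, $\nsm w^\ell f\nsm$.  Second, for $k \le 0$ apply Proposition~\ref{referLATERprop} directly (no differentiability needed) and sum $\sum_{k\le 0} 2^{2sk} < \infty$.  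Third, for $k \ge 1$ write $h = \sum_{j\ge 0} Q_j h$ in the non-isotropic Littlewood--Paley decomposition adapted to the paraboloid, and split the double sum over $(k,j)$ into $j \le k$ and $j > k$.  On $j > k$, estimate $T^{k,\ell}_+ - T^{k,\ell}_*$ on $Q_j h$ using the cancellation estimate \eqref{cancelh2g2} (resp.\ \eqref{cancelh2g1}), which costs $2^{(2s-2)k}$ and a factor $\||\tilde\nabla|^2 Q_j h\|$; bound $\||\tilde\nabla|^2 Q_j h\| \lesssim 2^{2j}\|\cdot\|$ using \eqref{lpsmooth}, so the $(k,j)$ term is $\lesssim 2^{(2s-2)k}2^{2j}(\cdots)$ and summing first over $k$ from $j$ to $\infty$ (geometric, ratio $2^{2s-2}<1$) gives $\lesssim 2^{2sj}(\cdots)$, then sum over $j$ using Cauchy--Schwartz in $j$ against $\sum_j 2^{2sj}\||\tilde\nabla|^2 Q_j h\|^2 \lesssim \|h\|_{N^{s,\gamma}_{\ell+\ell',\ksob}}^2$ from \eqref{lpsobolev0}.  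On $j \le k$, instead use the size estimates \eqref{tplussmallK}--\eqref{tplussmall2K} (and the analogous bound for $T^{k,\ell}_*$ from Proposition~\ref{starPROP}) with $h$ replaced by $Q_j h$, costing $2^{2sk}$ and $\|Q_j h\|$; sum over $k$ from... wait, here one needs decay in $k$, which is supplied by first summing $j$ from $0$ to $k$ against the $L^2$-orthogonality of the $Q_j$ and then exploiting that the low-$k$ contributions are already controlled — more carefully, one reorganizes so that for fixed $j$ one sums over $k \ge j$ picking up $2^{2sk}$, but now with an additional $2^{-(k-j)}$-type gain obtained by applying the cancellation estimate to the high-frequency \emph{tail} $\sum_{j' > k} Q_{j'} h$ rather than to $Q_j h$ itself; this is the standard paraproduct-type rearrangement and it again reduces to \eqref{lpsobolev0}.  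Fourth, assemble the two lines of terms and choose between \eqref{nlineq1} and \eqref{nlineq} according to where the Sobolev embedding \eqref{sobolevE} is applied.

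\emph{Main obstacle.}  I expect the principal difficulty to be the careful summation of the double series over the singularity index $k$ and the Littlewood--Paley index $j$ while keeping the weight distribution $w^{\ell^+-\ell'}$, $w^{\ell+\ell'}$, $w^\ell$ consistent across the pre- and post-collisional variables — in particular making sure that applying $Q_j$ (which is adapted to the paraboloid and to the weight $\ang{v}$) commutes acceptably with the decaying velocity weights $w^{-|\beta|}$, which is exactly what the commutator analysis and the weighted projections $Q_j^k(\omega_k \,\cdot\,)$ of Section~\ref{sec:aniLP} were built to handle, but which still has to be threaded through every term.  A secondary technical point is that the cancellation estimates are stated for $k\ge 0$ only, so the genuinely singular contributions ($k$ large) must be matched against the genuinely regular ones ($k\le 0$) with no overlap losing a power; once the bookkeeping is set up, each individual estimate is already provided by Section~\ref{physicalDECrel} and Section~\ref{sec:aniLP}, so no new analytic input is required beyond organizing these sums.
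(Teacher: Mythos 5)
Your high-level plan does match the paper's (dual and Carleman formulations, dyadic decomposition of the angular singularity at scale $2^{-k}$, geometric Littlewood--Paley decomposition, size estimates for small $k$, cancellation estimates for large $k$, and the Sobolev-embedding dichotomy distinguishing \eqref{nlineq1} from \eqref{nlineq}), but there is a genuine gap in the summation step. You decompose only $h = \sum_j Q_j h$, noting ``(or $f = \sum_j Q_j f$, as appropriate)'' as an alternative, but not both simultaneously. That is not enough. After summing the cancellation estimate over $k > j$ you obtain, for each $j$, roughly $2^{2sj}\nsm Q_j h\nsm_{L^2_{\gamma+2s}} \cdot \nsm w^\ell f\nsm_{L^2_{\gamma+2s}}$, and the sum over $j$ is then $\sum_j 2^{sj}\bigl(2^{sj}\nsm Q_j h\nsm_{L^2_{\gamma+2s}}\bigr)\cdot \nsm w^\ell f\nsm_{L^2_{\gamma+2s}}$.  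The square-function bound $\sum_j 2^{2sj}\nsm Q_j h\nsm^2_{L^2_{\gamma+2s}} < \infty$ furnished by \eqref{lpsobolev0} does \emph{not} imply convergence of $\sum_j 2^{2sj}\nsm Q_j h\nsm_{L^2_{\gamma+2s}}$, so the outer sum does not close.  The paper avoids this by expanding \emph{both} $h=\sum_{j'} h_{j'}$ and $f=\sum_j f_j$, splitting according to $j' \le j$ versus $j' > j$, and applying the cancellation estimate always to the lower-frequency factor; after the $k$-sum the exponent is distributed across two $j$-dependent factors (as in the paper's $2^{(2s-2)j} = 2^{(s-1)j}\cdot 2^{s(j+l)}\cdot 2^{-sl}\cdot 2^{-j}$, with the first going to $\nsm|\tilde{\nabla}|^2 f_j\nsm$ and the second to $\nsm h_{j+l}\nsm$), so Cauchy--Schwartz in $j$ closes against \eqref{lpsobolev0} on \emph{both} sides and the leftover $2^{-sl}$ sums over $l$.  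Without the decomposition of $f$ there is no second $\ell^2_j$ sequence and the Cauchy--Schwartz cannot be performed.

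There are two secondary problems.  First, your detailed paragraph inverts the regions of the $(j,k)$-sum: you assign cancellation to ``$j > k$'' but then sum $k$ from $j$ to $\infty$ (the region $k\ge j$), and you assign size estimates to ``$j\le k$'', where $\sum_{k\ge j}2^{2sk}$ diverges.  You notice the divergence and propose applying the cancellation estimate to the high-frequency tail $\sum_{j'>k}Q_{j'}h$, but this is not available: \eqref{cancelf2} and \eqref{cancelh2g2} require two $\tilde{\nabla}$-derivatives of the argument, and \eqref{lpsmooth} only controls $|\tilde{\nabla}|^2 Q_j$ at the cost $2^{2j}$, which for the high-frequency tail grows without bound.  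The correct assignment (which your opening sketch actually has) is cancellation on $k > j$ and size on $1\le k\le j$, each producing $\approx 2^{2sj}$.  Second, your claim ``$\sum_j 2^{2sj}\nsm|\tilde{\nabla}|^2 Q_j h\nsm^2 \lesssim \nsm h\nsm^2_{N^{s,\gamma}_{\ell+\ell',\ksob}}$'' has the wrong exponent: \eqref{lpsobolev0} with $|\alpha|=2$ controls $\sum_j 2^{2(s-2)j}\nsm|\tilde{\nabla}|^2 Q_j h\nsm^2$, not $2^{2sj}$, since each $\tilde{\nabla}$-derivative of a $Q_j$-piece costs a factor $2^j$.  With the two-sided decomposition in place, your weight bookkeeping and the two ways of placing the Sobolev embedding do track the paper.
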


Of course, these estimates immediately imply 
Lemma \ref{NonLinEst}, as we explain just now.
Consider derivatives of the non-linear term as in \eqref{DerivEst}; a typical term is
\[ \left|\ang{w^{2\ell - 2 | \beta|} \Gamma_{\beta_2} (\partial^{\alpha - \alpha_1}_{\beta - \beta_1}g,\partial^{\alpha_1}_{\beta_1}h), \partial^{\alpha}_{\beta} f} \right|. \]
Note that 
since 
$|\alpha| + |\beta|  \leq K$
we also have
$|\alpha_1| + |\beta_1| + |\alpha - \alpha_1| + |\beta - \beta_1| \leq K$.  To estimate this term, we will apply either \eqref{nlineq1} or \eqref{nlineq}; in the former case, the right-hand side applies an additional $\ksob =  \lfloor \frac{n}{2} +1 \rfloor$ velocity derivatives to $\partial^{\alpha_1}_{\beta_1} h$ (a consequence of Sobolev embedding), and in the latter, the same $\ksob$ velocity derivatives are applied instead to $ \partial^{\alpha-\alpha_1}_{\beta - \beta_1} g$.  We will make the choice of \eqref{nlineq1} versus \eqref{nlineq} so that the total number of derivatives on either $h$ or $g$ does not exceed $K$.

To that end, consider the situation when $|\alpha_1| + |\beta_1| \leq K - 2 \ksob$.
  Applying \eqref{nlineq1} 
with $\ell^+ = \ell \ge 0$, $\ell^- = |\beta|$  and $\ell' = |\beta - \beta_1|$, so $\ell^--\ell' = |\beta_1|$, gives the estimate
\begin{multline}
 \left|\ang{w^{2\ell - 2 | \beta|} \Gamma_{\beta_2} (\partial^{\alpha - \alpha_1}_{\beta - \beta_1}g,\partial^{\alpha_1}_{\beta_1}h), \partial^{\alpha}_{\beta} f} \right| 
 \\
 \lesssim 
  \nsm w^{\ell - |\beta - \beta_1|} \partial^{\alpha - \alpha_1}_{\beta - \beta_1}g\nsm_{L^2} 
  \nsm \partial^{\alpha_1}_{\beta_1}h\nsm_{N^{s,\gamma}_{\ell -|\beta_1|, \ksob} }  
  \nsm \partial^{\alpha}_{\beta} f\nsm_{N^{s,\gamma}_{\ell - |\beta|}}     
  \\
 + 
 \nsm w^{\ell - |\beta - \beta_1|} \partial^{\alpha - \alpha_1}_{\beta - \beta_1}g\nsm_{L^2_{\gamma + 2s}} 
 \nsm w^{\ell -|\beta_1|} \partial^{\alpha_1}_{\beta_1}h\nsm_{L^2} 
  \nsm \partial^{\alpha}_{\beta} f\nsm_{N^{s,\gamma}_{\ell - |\beta|}}.
\notag
\end{multline} 
Note that the $\ell$ right here is not the same $\ell$ as the one in Lemma \ref{NonLinEstA}; instead we use Lemma \ref{NonLinEstA} with $\ell$ replaced by $\ell - |\beta|$. 
Now after also integrating over $\mathbb{T}^n$ we can use the Sobolev embedding $H^{\ksob}(\mathbb{T}^n) \subset L^\infty (\mathbb{T}^n)$ applied to the norms of the $h$ terms and a simple Cauchy-Schwartz to establish Lemma \ref{NonLinEst}.  Here we have used this Sobolev embedding twice on the $h$ terms, so the maximum order of differentiation of $h$  on the upper bound will be $|\alpha_1| + |\beta_1| + 2 \ksob \leq K$.

By symmetry, if $|\alpha - \alpha_1| + |\beta - \beta_1| \leq K - 2 \ksob$, we may instead apply \eqref{nlineq} and estimate the $g$ terms on the right-hand side via Sobolev embedding applied to the $x$-variables as in the previous case.
In the remaining situation, we have
\begin{align*}
|\alpha_1| + |\beta_1| & \geq K - 2 \ksob + 1, \\
|\alpha - \alpha_1| + |\beta - \beta_1| & \geq K - 2 \ksob + 1. 
\end{align*}
However, since $|\alpha_1| + |\beta_1| + |\alpha - \alpha_1| + |\beta-\beta_1| \leq K$, it must also be the case that $|\alpha_1| + |\beta_1| \leq 2 \ksob - 1$ and $|\alpha - \alpha_1| + |\beta-\beta_1| \leq 2 \ksob - 1$.  In this situation we will use \eqref{nlineq1} (the Sobolev embedding for the velocity variables is applied to $h$) and estimate the $g$ terms on the right-hand side via Sobolev embedding in the $x$-variables.  For both $g$ and $h$, the total number of derivatives appearing on either term will then be at most $(2 \ksob - 1) + \ksob$.  Therefore, Lemma \ref{NonLinEst} will hold whenever $K \geq 3 \ksob -1$. 

Now we  set about to prove
the two non-linear estimates in
 Lemma \ref{NonLinEstA}.

\begin{proof}[Proof of  Lemma \ref{NonLinEstA}]
We'll write 
$$
f = P_0 f + \sum_{j=1}^\infty Q_j f \eqdef \sum_{j=0}^\infty f_j,
$$ 
and likewise for $h$.  Then we expand the non-linear term as in \eqref{DerivEst} as follows: 
\begin{align}
 & \ang{\Gamma_\beta (g,h),f}  = \sum_{j,j' = 0}^\infty \ang{\Gamma_{\beta}(g,h_{j'}),f_j} \nonumber \\
& \hspace{30pt} = \sum_{j=0}^\infty \ang{\Gamma_{\beta}(g,h_j),f_j} + \sum_{l=1}^\infty \sum_{j=0}^\infty  \left\{ \ang{\Gamma_{\beta}(g,h_{j+l}),f_j} + \ang{\Gamma_{\beta}(g,h_j),f_{j+l}} \right\}. 
\label{mainexpand}
\end{align}
Consider the sum over $l$ of the terms $\ang{\Gamma_{\beta}(g,h_{j+l}),f_j}$ for fixed $j$.  We expand $\Gamma_{\beta}$  by introducing the cutoff around the singularity of $b$ in terms of $T^{k,\ell}_{+}$ and $T^{k,\ell}_{-}$:
\begin{align}
 \sum_{l=1}^\infty \ang{\Gamma_{\beta}(g,h_{j+l}),f_j} 
 & = 
 \sum_{k=-\infty}^\infty  \sum_{l=1}^\infty 
\left\{ T^{k,\ell}_{+}(g,h_{j+l},f_j) - T^{k,\ell}_{-}(g,h_{j+l},f_j) \right\} \nonumber \\
& = 
\sum_{k=-\infty}^0 \left\{ T^{k,\ell}_{+}(g, h - P_j h, f_j) - T^{k,\ell}_{-}(g, h - P_j h, f_j) \right\}  \label{farsing} \\
& \hspace{30pt} 
+ 
\sum_{l=1}^\infty \sum_{k=1}^\infty \left\{ T^{k,\ell}_{+}(g,h_{j+l},f_j) - T^{k,\ell}_{-}(g,h_{j+l},f_j) \right\}. \label{nearsing}
\end{align}
Here we have used the basic telescoping property $h - P_j h =  \sum_{l=1}^\infty h_{j+l}$.
It is worth remarking that throughout the manipulation, the order of summation may be rearranged with impunity since the estimates we will employ below imply that the sum is absolutely convergent when $g,h,f$ are all Schwartz functions.

We will prove first \eqref{nlineq}, and later explain how to similarly obtain \eqref{nlineq1}.
Regarding the terms \eqref{farsing}, the inequalities \eqref{tminusg} and \eqref{tplussmall} dictate that
\begin{multline*}
\sum_{k=-\infty}^0 |T^{k,\ell}_{+}(g, h - P_j h, f_j) - T^{k,\ell}_{-}(g, h - P_j h, f_j)| 
\\
\lesssim 
\left(
 \nsm  g\nsm_{H^{\ksob}_{{\ell^+} - \ell'}} 
\nsm w^{\ell + \ell'} h\nsm_{L^2_{\gamma + 2s}} 
  +
\nsm w^{\ell^+ - \ell'} g\nsm_{L^2_{\gamma + 2s}} \nsm w^{\ell + \ell'} h\nsm_{L^2} 
\right)
  \nsm w^{\ell} f_j\nsm_{L^2_{\gamma + 2s}}.
\end{multline*}
Notice that the upper bound in  \eqref{tplussmall} is bigger than 
the upper bound from \eqref{tminusg}.
We have used $\nsm w^\rho P_j h\nsm_{L^2} \lesssim \nsm w^\rho h\nsm_{L^2}$ $\forall \rho\in\mathbb{R}$, a consequence of \eqref{boundedlp}.  Thus
\begin{multline*}
 \sum_{j=0}^{\infty} \sum_{k=-\infty}^0 |T^{k,\ell}_{+}(g, h - P_j h, f_j) - T^{k,\ell}_{-}(g, h - P_j h, f_j)| 
\\
\lesssim 
\left(
 \nsm  g\nsm_{H^{\ksob}_{{\ell^+} - \ell'}} 
\nsm w^{\ell + \ell'} h\nsm_{L^2_{\gamma + 2s}} 
  +
\nsm w^{\ell^+ - \ell'} g\nsm_{L^2_{\gamma + 2s}} \nsm w^{\ell + \ell'} h\nsm_{L^2} 
\right)
 \left| \sum_{j=0}^\infty 2^{2sj}   \nsm w^{\ell} f_j\nsm_{L^2_{\gamma + 2s}}^2 \right|^\frac{1}{2}
 \\
\lesssim 
\left(
 \nsm  g\nsm_{H^{\ksob}_{{\ell^+} - \ell'}} 
\nsm w^{\ell + \ell'} h\nsm_{L^2_{\gamma + 2s}} 
  +
\nsm w^{\ell^+ - \ell'} g\nsm_{L^2_{\gamma + 2s}} \nsm w^{\ell + \ell'} h\nsm_{L^2} 
\right)
\nsm f\nsm_{N^{s,\gamma}_\ell}.
\end{multline*}
This is just Cauchy-Schwartz.
The favorable comparison of the square-function norm of $f$ to the norm $\nsm  f \nsm_{N^{s,\gamma}_\ell}$ is provided by \eqref{lpsobolev0}.

As for the terms \eqref{nearsing}, when $k \leq j$ a similar approach holds; namely, \eqref{tminusg}  and \eqref{tplussmallK} guarantee that
\begin{multline*}
\sum_{k=1}^{j} \left| T^{k,\ell}_{+}(g,h_{j+l},f_j) - T^{k,\ell}_{-}(g,h_{j+l},f_j) \right| 
\\
\lesssim 2^{2sj} 
\nsm  g\nsm_{H^{\ksob}_{{\ell^+} - \ell'}}  
\nsm w^{\ell + \ell'} h_{j+l}\nsm_{L^2_{\gamma+2s}} 
\nsm w^{\ell}  f_j\nsm_{L^2_{\gamma+2s}}, 
\end{multline*}
(we have used the trivial facts that $2^{2sk} = 2^{2sj} 2^{2s(k-j)}$ and 
$\sum_{k=1}^{j} 2^{2s(k-j)}\lesssim 1$). 
In particular, this inequality may be summed over $j$; another application of Cauchy-Schwartz gives that
\begin{multline*}
 \sum_{j=0}^\infty \sum_{k=1}^{j}  \left| T^{k,\ell}_{+}(g,h_{j+l},f_j) - T^{k,\ell}_{-}(g,h_{j+l},f_j) \right|
 \\ 
  \lesssim 2^{-sl} 
\nsm  g\nsm_{H^{\ksob}_{{\ell^+} - \ell'}}  
 \left| \sum_{j=0}^\infty 2^{2s(j+l)} \nsm w^{\ell + \ell'} h_{j+l}\nsm_{L^2_{\gamma+2s}}^2  \right|^{\frac{1}{2}} 
 \left| \sum_{j=0}^\infty 2^{2sj} \nsm w^{\ell}  f_j\nsm_{L^2_{\gamma+2s}}^2 \right|^{\frac{1}{2}}
 \\
 \lesssim 2^{-2sl} \nsm  g\nsm_{H^{\ksob}_{{\ell^+} - \ell'}}  
  \nsm h\nsm_{N^{s,\gamma}_{\ell + \ell'} } \nsm f\nsm_{N^{s,\gamma}_\ell}.
\end{multline*}
This estimate may clearly also be summed over $l \geq 0$.

A completely analogous argument may be used to expand $\Gamma_{\beta}$ for the terms in \eqref{mainexpand} of the form $\ang{\Gamma_{\beta}(g,h_j),f_{j+l}}$ in terms of $T^{k,\ell}_{+} - T^{k,\ell}_{*}$; 
\begin{align}
\sum_{l=1}^\infty \ang{\Gamma_{\beta}(g,h_j),f_{j+l}} & = \sum_{k=-\infty}^\infty \sum_{l=1}^\infty (T^{k,\ell}_{+} - T^{k,\ell}_{*})(g,h_j,f_{j+l}) \nonumber \\
& = \sum_{k=-\infty}^0 (T^{k,\ell}_{+} - T^{k,\ell}_{*})(g,h_j,f - P_j f) \label{sum1} \\
& \hspace{30pt} + \sum_{l=1}^\infty \sum_{k=1}^\infty (T^{k,\ell}_{+} - T_*^k)(g,h_j,f_{j+l}) \label{sum2}.
\end{align}
In this case the estimates \eqref{tstarg}  and \eqref{tplussmall} are used to handle the terms \eqref{sum1} just as the corresponding terms \eqref{farsing} were handled.  Regarding the sum \eqref{sum2}, now \eqref{tstarg} and \eqref{tplussmallK} are used to estimate the sum analogously to the estimates of \eqref{nearsing} for $k\le j$.  The only difference is that the roles of $h$ and $f$ are now reversed.

Recalling the original expansion of $\ang{\Gamma_{\beta}(g,h),f}$ it is clear that the only terms that remain to be considered are the following:
\begin{align} 
\tilde{\Gamma}_{\beta}(g,h,f)  \eqdef 
&  \sum_{l=0}^\infty \sum_{j=0}^\infty \sum_{k=j+1}^\infty \left\{T^{k,\ell}_{+}(g,h_{j+l},f_j) - T^{k,\ell}_{-}(g,h_{j+l},f_j)\right\} 
\label{maincancelf} \\
 & + \sum_{l=1}^\infty \sum_{j=0}^\infty \sum_{k=j+1}^\infty \left\{ T^{k,\ell}_{+}(g,h_{j},f_{j+l}) - T^{k,\ell}_{*}(g,h_{j},f_{j+l}) \right\} . \label{maincancelh}
\end{align}
In other words, we have already established the inequality
\begin{align*}
 \left| \ang{\Gamma_{\beta}(g,h),f} - \tilde{\Gamma}_{\beta}(g,h,f) \right| \lesssim & \ 
\nsm  g\nsm_{H^{\ksob}_{{\ell^+} - \ell'}}  
  \nsm h\nsm_{N^{s,\gamma}_{\ell + \ell'} } \nsm f\nsm_{N^{s,\gamma}_\ell}
 \\
 & + 
 \nsm w^{\ell^+ - \ell'} g\nsm_{L^2_{\gamma + 2s}} \nsm w^{\ell + \ell'} h\nsm_{L^2} 
\nsm f\nsm_{N^{s,\gamma}_\ell}.
\end{align*}
Now the terms on the right-hand side of \eqref{maincancelf} and the terms \eqref{maincancelh} are both treated by the cancellation inequalities.  
The terms \eqref{maincancelf}, for example, are handled by  \eqref{cancelf2}.  For any fixed $l,j,k$, we have 
\begin{align*}
 \left| T^{k,\ell}_{+}(g,h_{j+l},f_j) \right. & \left. - T^{k,\ell}_{-}(g,h_{j+l},f_j) \right|
\\ &
\lesssim 2^{(2s-2)k}  
\nsm g\nsm_{H^{\ksob}_{-m}} 
  \nsm w^\ell h_{j+l}\nsm_{L^2_{\gamma+2s}}
 \nl w^\ell |\tilde{\nabla}|^2 f_{j} \nr_{L^2_{\gamma+2s}}. 
\end{align*}
There is decay of the norm as $k \rightarrow \infty$ since $2s - 2 < 0$.  This yields
\begin{align*}
 \sum_{k=j+1}^\infty  \left| T^{k,\ell}_{+}(g,h_{j+l},f_j) \right. & \left. - T^{k,\ell}_{-}(g,h_{j+l},f_j) \right| 
\\ & \lesssim 
2^{(2s-2)j}  
\nsm g\nsm_{H^{\ksob}_{-m}} 
  \nsm w^\ell h_{j+l}\nsm_{L^2_{\gamma+2s}}
 \nl w^\ell |\tilde{\nabla}|^2 f_{j} \nr_{L^2_{\gamma+2s}}.
\end{align*}
Again Cauchy-Schwartz is applied to the sum over $j$.  
In this case $2^{(2s-2)j}$ is written as 
$2^{(s-1)j} 2^{s(j+l)} 2^{-sl} 2^{-j} $; the first factor goes with $f_j$, the second with $h_{j+l}$, the third and fourth remain for the sums over $l$ and $j$ respectively.  Once again \eqref{lpsobolev0} is employed.  The factors of $2^{-sl}$ and $2^{-j}$ allow us to finish the sums over $l$ and $j$.

The desired bound for the non-linear term is completed by performing summation of the terms \eqref{maincancelh}.  The pattern of inequalities is exactly the same as the one just described, this time using \eqref{cancelh2g2}.  In particular, one has that
\begin{align*}
 \left| T^{k,\ell}_{+}(g,h_{j},f_{j+l}) \right. & \left. - T^{k,\ell}_{*}(g,h_{j},f_{j+l}) \right|
\\ &  \lesssim 2^{(2s - 2)k} \nsm g\nsm_{H^{\ksob}_{-m}}  \nsm w^\ell f_{j+l}\nsm_{L^2_{\gamma+2s}} 
\nl w^\ell |\tilde{\nabla}|^2 h_j \nr_{L^2_{\gamma+2s}}.
\end{align*}
Again, since $2s -2 < 0$,  this leads to the following inequality for the sum over $k$:
\begin{align*}
 \sum_{k=j+1}^\infty  \left| T^{k,\ell}_{+}(g,h_{j+l},f_j) \right. & \left. - T^{k,\ell}_{-}(g,h_{j+l},f_j) \right| 
\\ & 
\lesssim 2^{(2s - 2)j} \nsm g\nsm_{H^{\ksob}_{-m}}  \nsm w^\ell f_{j+l}\nsm_{L^2_{\gamma+2s}} 
\nl w^\ell |\tilde{\nabla}|^2 h_j \nr_{L^2_{\gamma+2s}}.
\end{align*}
The same Cauchy-Schwartz estimate is used for the sum over $j$; there  is exponential decay allowing the sum over $l$ to be controlled.  
In the estimates above $m\ge 0$ can be taken arbitrarily large, yet $-m=\ell$ is sufficient.
The end result is precisely \eqref{nlineq}.

We can obtain \eqref{nlineq1}  using the same techniques.   We expand all of the sums from \eqref{mainexpand} in the same way.  
Then we estimate 
\eqref{farsing} using the inequalities \eqref{tminush} and \eqref{tplussmall2}.  We control
\eqref{nearsing}, when $k \leq j$, with \eqref{tminush}  and \eqref{tplussmall2K}.  Then 
\eqref{tstarh}  and \eqref{tplussmall2} are used to handle the terms \eqref{sum1}.  
 Now \eqref{tstarh} and \eqref{tplussmall2K} are used to estimate the sum 
\eqref{sum2} for $k\le j$.  For the cancellations,
\eqref{maincancelf} are handled by  \eqref{cancelf21}.
Lastly,
\eqref{maincancelh} is controlled using \eqref{cancelh2g1}. 
Otherwise, the pattern of summing these inequalities is exactly the same as the one just described above.
\end{proof}

This concludes our main non-linear estimates. 

\subsection{The Compact Estimates}
Here we collect some estimates for the linearized collision operator.
The first one is the key to the estimate for operator $K$ in \eqref{compactupper}.

\begin{lemma}
\label{CompactEst}
(Compact Estimate)
For any $\ell \in \mathbb{R}$, we have the uniform estimate
\begin{equation}
\notag
\left| \langle  w^{2\ell}  Kg, h \rangle  \right|
 \lesssim 
 \nsm w^{\ell} g\nsm_{L^2_{\gamma+2s - \delta }} \nsm w^{\ell} h\nsm_{L^2_{\gamma+2s - \delta}},
 \quad
\delta = \min\{2s, (n-1)\}.
 \end{equation}
\end{lemma}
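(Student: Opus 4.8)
The plan is to separate the two pieces in the definition \eqref{compactpiece}, namely $Kg = \nu_K(v)\,g - \Gamma(g,M)$, and bound each of them against the right-hand side. For the multiplier term I would use the pointwise asymptotics $|\nu_K(v)| \lesssim \ang{v}^{\gamma}$ recorded after Pao's splitting, distribute the weight $w^{2\ell}(v)\ang{v}^{\gamma}$ symmetrically onto $g$ and $h$, and apply Cauchy--Schwarz to get $|\ang{w^{2\ell}\nu_K g,h}| \lesssim \nsm w^{\ell} g\nsm_{L^2_{\gamma}}\nsm w^{\ell} h\nsm_{L^2_{\gamma}}$. Since $\delta \le 2s$ we have $\gamma \le \gamma+2s-\delta$, so $\nsm\cdot\nsm_{L^2_{\gamma}} \le \nsm\cdot\nsm_{L^2_{\gamma+2s-\delta}}$ and this term is controlled.

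For the bilinear term, observe that $\Gamma(g,M) = \Gamma_{\beta}(g,M)$ with $\beta=0$, so the dual formulation of Section \ref{physicalDECrel} gives $\ang{w^{2\ell}\Gamma(g,M),h} = \sum_{k\in\mathbb{Z}} \{T^{k,\ell}_{+}(g,M,h) - T^{k,\ell}_{*}(g,M,h)\}$, where the multi-index implicit in $T^{k,\ell}_{\pm}$ is $\beta=0$, so that $M_{\beta}=M$. In particular the middle slot carries the Gaussian $M$, which satisfies both \eqref{rapidDECAYfcn} and \eqref{derivESTa}, so all the ``compact'' estimates apply verbatim with $\phi = M$. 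I would split the sum at $k=0$. For $k \le 0$, bound $T^{k,\ell}_{+}(g,M,h)$ by \eqref{tminushRAPplus} and $T^{k,\ell}_{*}(g,M,h)$ by \eqref{tstarhRAP}; since the gain is $2^{2sk}$, the series over $k \le 0$ converges and produces $\nsm w^{\ell} g\nsm_{L^2_{\gamma+2s-(n-1)}}\nsm w^{\ell} h\nsm_{L^2_{\gamma+2s-(n-1)}}$ together with a term involving only $L^2_{-m}$ norms. For $k \ge 1$, use the cancellation estimate of Proposition \ref{compactCANCELe} in the form $|(T^{k,\ell}_{+} - T^{k,\ell}_{*})(g,M,h)| \lesssim C_M\, 2^{(2s-2)k}\nsm g\nsm_{L^2_{-m}}\nsm h\nsm_{L^2_{-m}}$; as $2s-2<0$, the geometric series converges.

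It remains to reconcile the weights. The term carrying $L^2_{\gamma+2s-(n-1)}$ is the sharp contribution: because $\delta = \min\{2s,n-1\} \le n-1$, one has $\gamma+2s-(n-1) \le \gamma+2s-\delta$, hence $\nsm\cdot\nsm_{L^2_{\gamma+2s-(n-1)}} \le \nsm\cdot\nsm_{L^2_{\gamma+2s-\delta}}$. The remaining $L^2_{-m}$ factors are absorbed by taking $m$ large: since $w^{\pm\ell}$ is bounded by a fixed power of $\ang{v}$, we have $\nsm g\nsm_{L^2_{-m}} \lesssim \nsm w^{\ell} g\nsm_{L^2_{\gamma+2s-\delta}}$ once $m$ exceeds a constant depending only on $\ell$, $\gamma+2s$, and $n$. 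Adding the two pieces gives the claimed inequality for Schwartz functions, and the general case follows by the density remarks of Section \ref{mainESTsec}.

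I expect this to be essentially bookkeeping rather than a genuine obstacle: the only place where a loss beyond the harmless $L^2_{-m}$ scale is incurred is the $-(n-1)$ shift coming from the Grad-type gain of Lemma \ref{prop:Grad} through \eqref{tminushRAPplus}, together with the $-2s$ shift from $\nu_K$, and these two are exactly what produce $\delta = \min\{2s,n-1\}$. No analytic work is needed beyond the estimates already established in Section \ref{physicalDECrel}; the care required is only in matching the weight powers across the various summands and verifying that everything else is of strictly lower order.
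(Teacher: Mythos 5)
Your proof is correct, and it hits the same three structural pieces as the paper: split $K = \nu_K\cdot - \Gamma(\cdot,M)$, bound the multiplier by Pao's estimate, and bound $\Gamma(g,M)$ through the dyadic decomposition of the singularity together with the Grad-type compact bound of Lemma~\ref{prop:Grad}. The genuine difference is how the bilinear piece is handled underneath. The paper obtains Lemma~\ref{CompactEst} as a corollary of \eqref{coerc1ineqPREP}, and the paper's proof of \eqref{coerc1ineqPREP} first Littlewood--Paley-decomposes the Gaussian slot, $\phi = \sum_j \phi_j$, and only then applies \eqref{tminushRAP}, \eqref{tminushRAPplus}, and Proposition~\ref{compactCANCELe} to each $\phi_j$, using \eqref{lpsmooth} to supply $C_{\phi_j} \lesssim 2^{-2j}$ so that the $j$-sum converges. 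You bypass that extra layer: since $M$ itself satisfies both \eqref{rapidDECAYfcn} and \eqref{derivESTa} with a fixed constant, the size estimates for $k \le 0$ (gain $2^{2sk}$, summable since $s>0$) and the cancellation estimate of Proposition~\ref{compactCANCELe} for $k \ge 1$ (gain $2^{(2s-2)k}$, summable since $s<1$) already close after a single sum over $k$, no Littlewood--Paley index needed. This is a mild but real simplification for this specific lemma, where the center function is an explicit Gaussian rather than the general $\phi$ of Proposition~\ref{upperBds}. One bookkeeping remark in your favor: you write the telescoping sum as $T^{k,\ell}_{+} - T^{k,\ell}_{*}$, which matches the form $(T^{k,\ell}_{+}-T^{k,\ell}_{*})(g,\phi,h)$ in which Proposition~\ref{compactCANCELe} is actually stated for $\phi$ in the second slot; the paper's displayed equation uses $T_{+}-T_{-}$ (the two telescopes are equal, of course), but the cancellation bound it then invokes is the same one you cite. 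Your reconciliation of the weights, using $\delta \le n-1$ for the $L^2_{\gamma+2s-(n-1)}$ term and absorbing the harmless $L^2_{-m}$ factors for $m$ large depending on $\ell$, $\gamma+2s$, and $n$, is exactly right, and the reduction to Schwartz functions via density is handled as the paper prescribes.
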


Since $\delta>0$ above Lemma \ref{CompactEst} easily implies \eqref{compactupper}  when $g = h$.  To see this, first apply Cauchy's inequality with $\frac{\eta}{2}$ to the upper bound in Lemma \ref{CompactEst}:
$$
\left| \langle  w^{2\ell}  Kg, g \rangle  \right|
 \le
 \frac{\eta}{2} \nsm w^{\ell} g\nsm_{L^2_{\gamma+2s - \delta }}^2 + C_\eta \nsm w^{\ell} g\nsm_{L^2_{\gamma+2s - \delta}}^2.
$$  
For the term  $C_\eta \nsm w^{\ell} g\nsm_{L^2_{\gamma+2s - \delta}}^2$ above, we split into $|v| \ge R$ and $|v|\le R$.  Choosing $R>0$ sufficiently large so that $C_\eta R^{-\delta} \le \frac{\eta}{2}$ proves \eqref{compactupper} subject only to Lemma \ref{CompactEst}.  Now Lemma \ref{CompactEst}  and other estimates will follow from \eqref{coerc1ineqPREP}  below.

\begin{proposition}
\label{upperBds}
For any function $\phi$ satisfying \eqref{rapidDECAYfcn}, we have the estimate
\begin{equation}
\label{coerc1ineqNORM}
\left| \ang{w^{2\ell}\Gamma_\beta (\phi,h),f} \right|
\lesssim
 \nsm  h \nsm_{N^{s,\gamma}_\ell}  \nsm  f \nsm_{N^{s,\gamma}_\ell}.
\end{equation}
For the next two estimates, we suppose that $\phi$ further satisfies \eqref{derivESTa}.  Then
\begin{equation}
\label{coerc1ineqPREP}
\left| \ang{w^{2\ell}\Gamma_\beta(g,\phi),f} \right|
 \lesssim 
 \nsm w^\ell g \nsm_{L^2_{\gamma + 2s-(n-1)}}  \nsm w^\ell f \nsm_{L^2_{\gamma + 2s-(n-1)}}.
\end{equation}
For any $m\ge 0$ we also have
\begin{equation}
\label{coerc1ineqPREP2}
\left| \ang{w^{2\ell}\Gamma_\beta (g,f),\phi} \right| \lesssim 
 \nsm  g \nsm_{L^2_{-m}}  \nsm  f \nsm_{L^2_{-m}}.
\end{equation}
Each of these estimates hold for any $\beta$, and any $\ell \in \mathbb{R}$.
\end{proposition}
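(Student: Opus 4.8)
The plan is to prove each of the three estimates by the same mechanism used in the proof of Lemma~\ref{NonLinEstA}: expand $\Gamma_\beta$ via the dyadic decomposition of the angular singularity into the trilinear operators $T^{k,\ell}_{\pm}$ and $T^{k,\ell}_{*}$, control the pieces away from the singularity by the size and support estimates, control the pieces near the singularity by the cancellation estimates, and sum. The decisive simplification over Lemma~\ref{NonLinEstA} is that one of the three arguments is now the fixed rapidly decaying function $\phi$, so every norm of $\phi$ that appears collapses to a constant multiple of $C_\phi$, and the ``$\phi$-adapted'' size estimates (those exploiting the decay of $\phi$) may be used in place of the general ones.

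For \eqref{coerc1ineqNORM}, with $\phi$ in the first slot, I would decompose $h=\sum_j Q_j h$ and $f=\sum_j Q_j f$ by the non-isotropic Littlewood--Paley projections of Section~\ref{sec:aniLP}, organize $\ang{w^{2\ell}\Gamma_\beta(\phi,h),f}$ into the diagonal and off-diagonal sums exactly as in \eqref{mainexpand}, and split each dyadic sum into a part far from the singularity and a part near it. On the far part I would use \eqref{tminusg} and \eqref{tstarg} on $T^{k,\ell}_{-},T^{k,\ell}_{*}$ and \eqref{posUPPer3norm} on $T^{k,\ell}_{+}$; with $g=\phi$ all three yield the clean bound $C_\phi 2^{2sk}\nsm w^\ell h_{\cdot}\nsm_{L^2_{\gamma+2s}}\nsm w^\ell f_{\cdot}\nsm_{L^2_{\gamma+2s}}$, with no un-weighted $L^2$ factor of the kind that occurs in \eqref{nlineq}. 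On the near-singularity part I would use the cancellation estimates \eqref{cancelf2} and \eqref{cancelh2g2} with $g=\phi$, which furnish $C_\phi 2^{(2s-2)k}$ together with $\nl w^\ell |\tilde{\nabla}|^2 f_j\nr_{L^2_{\gamma+2s}}$ or $\nl w^\ell |\tilde{\nabla}|^2 h_j\nr_{L^2_{\gamma+2s}}$. Summing the geometric series in $k$, then applying Cauchy--Schwartz in $j$ (and in $l$) together with the square-function bound \eqref{lpsobolev0}, converts the Littlewood--Paley sums into $\nsm h\nsm_{N^{s,\gamma}_\ell}\nsm f\nsm_{N^{s,\gamma}_\ell}$; the $k,j,l$ bookkeeping is identical to that at the end of the proof of Lemma~\ref{NonLinEstA}.

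The estimates \eqref{coerc1ineqPREP} and \eqref{coerc1ineqPREP2} need no Littlewood--Paley decomposition. For \eqref{coerc1ineqPREP} I would expand $\ang{w^{2\ell}\Gamma_\beta(g,\phi,f)}=\sum_k\{T^{k,\ell}_{+}(g,\phi,f)-T^{k,\ell}_{*}(g,\phi,f)\}$ and split at $k=0$: for $k\ge1$ the cancellation estimate in Proposition~\ref{compactCANCELe} (applicable precisely because $\phi$ now also obeys \eqref{derivESTa}) gives $C_\phi 2^{(2s-2)k}\nsm g\nsm_{L^2_{-m}}\nsm f\nsm_{L^2_{-m}}$, summable; for $k\le0$ the $T^{k,\ell}_{+}$ term is bounded by \eqref{tminushRAPplus} — this is exactly where the sharp Grad-type gain of the weight $\gamma+2s-(n-1)$ from Lemma~\ref{prop:Grad} enters — and the $T^{k,\ell}_{*}$ term by \eqref{tstarhRAP}, each contributing $C_\phi 2^{2sk}$, again summable. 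For \eqref{coerc1ineqPREP2} I would similarly expand $\ang{w^{2\ell}\Gamma_\beta(g,f,\phi)}=\sum_k\{T^{k,\ell}_{+}(g,f,\phi)-T^{k,\ell}_{-}(g,f,\phi)\}$, using Proposition~\ref{compactCANCELe} for $k\ge1$, and \eqref{tminushRAPplus2} together with \eqref{tminushRAP} for $k\le0$. In both cases one takes $m$ large enough that $L^2_{-m}$ embeds into the target weighted spaces, which finishes the argument.

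The main obstacle is essentially subsumed in the lemmas invoked: the only genuinely delicate ingredient is the sharp loss of the weight $-(n-1)$ in \eqref{coerc1ineqPREP}, supplied by Proposition~\ref{prop:GradRap} and ultimately by the kernel estimate in Lemma~\ref{prop:Grad}; once it is quoted correctly the proof is a routine assembly of the size, cancellation, and square-function estimates already established. A minor but essential point of care is to use the $\phi$-adapted size estimates rather than the general ones throughout \eqref{coerc1ineqNORM}, so that no stray un-weighted $L^2$ norm of $h$ or $f$ appears which could fail to be controlled by $N^{s,\gamma}_\ell$ when $\gamma+2s<0$.
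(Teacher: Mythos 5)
Your proposal is correct, and for \eqref{coerc1ineqNORM} it is essentially identical to the paper's proof — you decompose $h$ and $f$ via the geometric Littlewood--Paley projections, mirror the $\eqref{mainexpand}$--style splitting, use \eqref{tminusg}, \eqref{tstarg}, \eqref{posUPPer3norm} away from the singularity and \eqref{cancelf2}, \eqref{cancelh2g2} near it, and sum. For \eqref{coerc1ineqPREP} and \eqref{coerc1ineqPREP2} your route is mildly simpler than the paper's: the paper additionally decomposes $\phi=\sum_j\phi_j$ by the geometric Littlewood--Paley projections, appeals to \eqref{lpsmooth} to deduce that each $\phi_j$ satisfies \eqref{derivESTa} with constant $\lesssim 2^{-2j}$, and then sums a genuinely two-dimensional series $\sum_{j,k}\min\{2^{(2s-2)k},2^{2sk}\}\,2^{-2j}$, whereas you sum only in $k$, splitting at $k=0$ and applying the $\phi$-adapted size bounds \eqref{tminushRAPplus}, \eqref{tminushRAPplus2}, \eqref{tstarhRAP}, \eqref{tminushRAP} together with Proposition~\ref{compactCANCELe} directly to $\phi$. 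Both arguments close, since the $k$-sum already converges; the extra $2^{-2j}$ gain in the paper's version is not needed, while your version avoids invoking \eqref{lpsmooth} altogether and writes the cancellation part in the $T^{k,\ell}_{*}$ form, which is the form Proposition~\ref{compactCANCELe} actually controls when $\phi$ sits in the second slot (the paper's displayed formula uses $T^{k,\ell}_{-}$ there, a small mismatch with the proposition it invokes). The one step you leave implicit but must not omit is the absorption $\nsm g\nsm_{L^2_{-m}}\nsm f\nsm_{L^2_{-m}}\lesssim \nsm w^\ell g\nsm_{L^2_{\gamma+2s-(n-1)}}\nsm w^\ell f\nsm_{L^2_{\gamma+2s-(n-1)}}$, valid for $m$ large depending on $\ell,\gamma,s,n$; this is exactly the role of the free parameter $m$ in the cited lemmas and you state it correctly.
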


Notice that these imply several other previously-stated estimates.  In particular, 
Lemma \ref{CompactEst} is a special case of \eqref{coerc1ineqPREP}  and
Pao's estimate of $\nu_K(v)$  in \eqref{compactpiece}.
Thus Proposition 
 \ref{upperBds} implies Lemma \ref{sharpLINEAR}, since also 
 \eqref{normupper} follows directly from \eqref{coerc1ineqNORM}.
 Other uses of Proposition 
 \ref{upperBds} will be seen below.

\begin{proof}[Proof of Proposition \ref{upperBds}]
To prove \eqref{coerc1ineqNORM}, we expand $\ang{w^{2\ell}\Gamma_\beta(\phi,h),f}$
as in \eqref{mainexpand}, and the proof follows the same lines as the proof of \eqref{nlineq}.
Following that proof,
 we estimate 
\eqref{farsing}
and 
\eqref{nearsing}, when $k \leq j$, using the inequalities \eqref{tminusg} and \eqref{posUPPer3norm}.
Then 
\eqref{tstarg} and \eqref{posUPPer3norm}
are 
 used to handle the terms \eqref{sum1} and \eqref{sum2} for $k\le j$.  
For the cancellations,
\eqref{maincancelf} are handled by  \eqref{cancelf2}  and
\eqref{maincancelh} is controlled  using \eqref{cancelh2g2}. 
The pattern of summing these inequalities is exactly the same as the proof of \eqref{nlineq}.

To prove the estimate in \eqref{coerc1ineqPREP} we will use the inequality 
\eqref{lpsmooth}.  In particular 
\begin{multline*}
\left| \ang{w^{2\ell}\Gamma_\beta(g,\phi),f} \right| 
=   
\left|
\sum_{j=0}^\infty \sum_{k=-\infty}^\infty  
\left\{ T^{k,\ell}_{+}(g,\phi_j,f) - T^{k,\ell}_{-}(g,\phi_j,f) \right\}
\right| 
\\
\lesssim
 \nsm w^\ell g \nsm_{L^2_{\gamma + 2s-(n-1)}}  \nsm w^\ell f \nsm_{L^2_{\gamma + 2s-(n-1)}}
  \sum_{j=0}^\infty \sum_{k=-\infty}^\infty  \min\{2^{(2s-2)k} , 2^{2sk}  \} 2^{-2j}.
\end{multline*}
We have used
\eqref{tminushRAP} and 
\eqref{tminushRAPplus}
with Proposition \ref{compactCANCELe}. Specifically, in each of those estimates 
$\phi_j$ satisfies 
\eqref{derivESTa} with $C_\phi \lesssim 2^{-2j}$, as in \eqref{lpsmooth}.

The estimate for 
$
\ang{w^{2\ell}\Gamma_\beta(g,f),\phi}
$
in \eqref{coerc1ineqPREP2}
is proved
in 
exactly the same way
using instead
\eqref{tminushRAP},
\eqref{tminushRAPplus2},
and Proposition \ref{compactCANCELe}.
 In particular, 
Proposition \ref{upperBds} follows.  
\end{proof}

This concludes our compact estimates.

\subsection{The coercive estimates}   In this sub-section we will prove the coercive interpolation inequalities in
 \eqref{coerc1ineq}
and
 \eqref{coerc2ineq} from Lemma \ref{DerCoerIneq}, and Lemma \ref{estNORM3}.  Actually,  \eqref{coerc2ineq} is a trivial consequence of Lemma \ref{estNORM3} and \eqref{compactupper}
 because 
$
\langle w^{2\ell} Lf, f \rangle = \langle w^{2\ell} Nf, f \rangle + \langle w^{2\ell} Kf, f \rangle.
$  
Thus we will restrict attention to \eqref{coerc1ineq}  and
 Lemma \ref{estNORM3}.

\begin{proof}[Proof of Lemma \ref{estNORM3}]
Firstly,  Lemma \ref{estNORM3} for $\ell = 0$ and $C=0$ is proven in 
\cite[Lemma 4]{gsNonCut1}; note that \cite[Lemma 4]{gsNonCut1} is only stated for $n=3$ dimensions and a more restrictive class of collision kernels than \eqref{kernelQ} and \eqref{kernelP}, however the proof  directly extends  to the case of $n\ge 2$ dimensions and all of the collision kernels $\gamma + 2s > -(n-2)$.
We focus here on estimating the norm piece when $\ell \ne 0$.  We expand
\begin{gather*}
\langle w^{2\ell} Ng, g \rangle 
=  
| g |_{B_{\ell}}^2 
+
\int_{\mathbb{R}^n} dv ~ w^{2\ell}(v)  \nu(v) ~ |g(v)|^2
+ J,
\end{gather*}
where $| g |_{B_{\ell}}$ is defined in \eqref{normexpr}.
 Furthermore, we have 
  the following equivalence 
  $$ 
  | g |_{B_{\ell}}^2+ | w^\ell g |_{L^2_{\gamma + 2s}}^2 \approx | g |_{N^{s,\gamma}_{\ell}}^2.
  $$
  The lower bound $\gtrsim$ of this equivalence follows directly from the proof in 
   \cite[Section 6]{gsNonCut1}.  The upper bound, $\lesssim$, follows from the estimates for $\ang{w^{2\ell}\Gamma (M,g),g}$ in \eqref{coerc1ineqNORM} when combined with the interpolation inequality that we prove just now.

The error term, $J$, above takes the form
\begin{gather*}
  J \eqdef
  \frac{1}{2} \int_{\mathbb{R}^n} dv  \int_{\mathbb{R}^n} dv_* \int_{\sph} d \sigma B (g'-g)g\left( w^{2\ell}(v') - w^{2\ell}(v) \right) M_*' M_* .
\end{gather*}
These expressions are derived exactly as in the computations preceding \eqref{normpiece}.

We will show that this error term $J$ is lower order via an expansion of the kernel.  
In particular we {\it claim} that there exists an $\epsilon>0$ such that
$$
\left| J \right| \lesssim | g |_{B_{\ell}}
|w^\ell g|_{L^2_{\gamma + 2s - \epsilon}}.
$$
This estimate implies that $J$ satisfies the same upper bound as \eqref{compactupper}, which easily implies Lemma \ref{estNORM3} by following the procedure which is explained below Lemma \ref{CompactEst}.

To prove this {\it claim}, notice that Cauchy-Schwartz gives us
$$
\left| J \right| \lesssim 
| g |_{B_{\ell}}
\left(
\int_{\mathbb{R}^n} dv  \int_{\mathbb{R}^n} dv_* \int_{\sph} d \sigma B |g|^2
\frac{\left( w^{2\ell}(v') - w^{2\ell}(v) \right)^2}{w^{2\ell}(v)} M_*' M_*
\right)^{1/2}.
$$
We will show in particular that there exists an $\epsilon>0$ such that
\begin{equation}
 \int_{\mathbb{R}^n} dv_* \int_{\sph} d \sigma ~ B(v - v_*, \sigma) 
\frac{\left( w^{2\ell}(v') - w^{2\ell}(v) \right)^2}{w^{2\ell}(v)} ~  M_*' M_*
\lesssim 
w^{2\ell}(v)
\ang{v}^{\gamma + 2s - \epsilon},
\label{claimEST1}
\end{equation}
and this will establish the {\it claim}.

To obtain \eqref{claimEST1}, first a simple 
Taylor expansion yields
$$
w^{2\ell}(v') - w^{2\ell}(v) 
=
(v' - v)\cdot (\nabla w^{2\ell})(\gamma(\tau)), 
\quad
\exists \tau \in [0,1],
$$
where $\gamma(\tau) = v + \tau(v' - v)$.  Since $|v' - v| = |v'_* - v_*|$
we have
$$
\ang{\gamma(\tau)} \lesssim \ang{v} \ang{v' - v} \lesssim \ang{v} \ang{v'_* - v_*},
$$
and similarly,
$
\ang{\gamma(\tau)}^{-1} 
 \lesssim \ang{v}^{-1}   \ang{v'_* - v_*}. 
$
Thus generally
$$
\frac{\left( w^{2\ell}(v') - w^{2\ell}(v) \right)^2}{w^{2\ell}(v)} ~  M_*' M_*
\lesssim
|v' - v|^2
w^{2\ell}(v)\ang{v}^{-2} \sqrt{M_*' M_*}.
$$
Now we split $|v' - v|^2 = |v' - v|^{2s + \delta} |v'_* - v_*|^{2 -2s - \delta}$ for any $\delta \in (0, 2-2s)$.  We can expand
$
|v' - v|^{2s + \delta} = |v - v_*|^{2s + \delta}\left( \sin \frac{\theta}{2} \right)^{2s + \delta},
$
and then we clearly have
$$
\int_{\sph} d \sigma ~ B(v - v_*, \sigma) ~ |v' - v|^{2s + \delta}\lesssim |v - v_*|^{\gamma+2s + \delta}.
$$
This follows directly from \eqref{kernelQ} and \eqref{kernelP}. Furthermore, 
$
|v'_* - v_*|^{2 -2s - \delta}
(M_*' M_*)^{1/4}
\lesssim 1.
$
Putting all of this together, we see that \eqref{claimEST1} holds with $\epsilon = 2- \delta >0$.
 \end{proof}
 
With the help of our coercive estimate \eqref{coerc2ineq} with no derivatives, in the following we will prove the main coercive estimate with high derivatives.

\begin{proof}[Proof of \eqref{coerc1ineq}]
We use the formula for $Lg$ from \eqref{LinGam}.  As in \eqref{DerivEst}, we expand
\begin{multline*}
\partial^{\alpha}_{\beta} Lg =
 L\left( \partial^{\alpha}_{\beta} g \right)
 - 
  \sum_{\beta_1 + \beta_2  = \beta, ~|\beta_1|< |\beta|}  C^{\beta}_{\beta_1, \beta_2} ~ 
\Gamma_{\beta_2} (\partial_{\beta-\beta_1} M,\partial^{\alpha}_{\beta_1} g) 
\\
 - 
  \sum_{\beta_1 + \beta_2  = \beta, ~|\beta_1|< |\beta|}  C^{\beta}_{\beta_1, \beta_2} ~ 
\Gamma_{\beta_2}(\partial^{\alpha}_{\beta_1} g, \partial_{\beta-\beta_1}M).
\end{multline*}
After multiplying by $w^{2\ell - 2|\beta|}\partial^{\alpha}_{\beta} g$, and integrating over $\mathbb{R}^n$ we can estimate the term
$
\langle w^{2\ell - 2|\beta|} L \left( \partial^{\alpha}_{\beta} g \right), \partial^{\alpha}_{\beta} g \rangle
$
 as in \eqref{coerc2ineq}.  For the error term that arises, i.e., $| \partial_\beta^\alpha  g |_{L^2(B_R)}$ for some $R>0$ we use the compact interpolation for any small $\delta >0$:
$$
| \partial_\beta^\alpha  g |_{L^2(B_R)} 
\le 
\eta  | \partial_\beta^\alpha  g |_{H^\delta(B_R)}
+
C   | \partial^\alpha g |_{L^2(B_{C})}.
$$
Here $C>0$ is some large constant, and $\eta>0$ is any small number.  Further  $| \partial_\beta^\alpha  g |_{H^\delta(B_R)} \lesssim \nsm \partial^{\alpha}_{\beta}g\nsm_{N^{s,\gamma}_{\ell - |\beta|}}$ when $\delta < s$; this holds because the non-isotropy of the norm $\nspace$ only comes into play near infinity. More precisely, if $v$ and $v'$ are confined to the Euclidean ball of radius $R$ at the origin, then $d(v,v') \approx |v-v'|$ (with constants depending on $R$), and so on this region the expression for \eqref{normdef} is comparable to the Gagliardo-type semi-norm for the space $H^s(B_R)$.
This gives the estimate for $L\left( \partial^{\alpha}_{\beta} g \right)$.

We estimate the rest of the terms using \eqref{coerc1ineqNORM} and \eqref{coerc1ineqPREP}.  We use the extra velocity decay which is left over from these estimates to split into a large unbounded region times a small constant and a bounded region with a large constant.  On the bounded region, we use the compact interpolation as just used in the last case to put all of the velocity derivatives into slightly larger Sobolev norm multiplied by an arbitrarily small constant.  This is all that is needed to finish the estimate.
 \end{proof}

\section{De-coupled space-time estimates, global existence and rapid decay} 
\label{sec:deBEest}

We will now proceed to show that the  estimates proven in all of the previous sections can be used to establish global existence as stated in Theorem \ref{mainGLOBAL}. The methodology that we employ, which goes back to Guo \cite{MR2000470},  essentially de-couples the  required space-time estimates that are needed from the new fractional and non-isotropic derivative estimates which are shown in the previous sections.   This works precisely because of the specific structure of the interactions between the velocity variables and the space-time variables.  Our key new contribution in this direction is to identify the unique weighted geometric fractional Sobolev norm \eqref{normdef}, and prove sharp 
non-isotropic geometric fractional derivative estimates such as those  stated in 
Lemma \ref{sharpLINEAR},
Lemma \ref{estNORM3},
Lemma \ref{NonLinEst},
and
Lemma \ref{DerCoerIneq}. 
These can then be included into the general de-coupled energy method in order to prove global existence and decay.
  We point the reader's attention also to the general abstract framework of  \cite{villani-2006}.

We furthermore remark that the whole space case of our main Theorem \ref{mainGLOBAL}, when $x\in \mathbb{R}^n$ replaces $x\in \mathbb{T}^n$, will follow  from our new velocity estimates in the previous sections when combined with other known whole space cut-off methods.  
This would only require re-writing this last section, using instead the cut-off energy methods in the whole space such as for instance \cite{MR2095473,MR2259206,MR1057534}. 

We will initially discuss the coercivity of the linearized collision operator, $L$.  
With the null space \eqref{null}, and the projection \eqref{hydro}, we 
 decompose  $f(t,x,v)$ as
\begin{equation}
f={\bf P}f+\{{\bf I-P}\}f.
\label{hyrdoSPLIT}
\end{equation}
We prove a sharp constructive lower bound for the linearized collision operator.  

\begin{theorem}
\label{lowerN}  
There is a constructive constant 
$\delta_0>0$ 
 such that
\begin{equation*}
\langle L h, h \rangle
\ge
\delta_0 
| \{ {\bf I - P } \} h |_{N^{s,\gamma}}^2.
\end{equation*}
\end{theorem}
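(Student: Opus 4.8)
\textbf{Proof proposal for Theorem \ref{lowerN}.}
The plan is to combine the decomposition $L = N + K$ with the coercivity of $N$ from Lemma \ref{estNORM3} and the smallness-plus-compactness structure of $K$ from Lemma \ref{sharpLINEAR}, and then to upgrade the resulting local $L^2(B_C)$ error term into the full coercive control over $\{\mathbf{I-P}\}h$ by a standard compactness/contradiction argument exploiting the fact that $\mathcal{N}$ is exactly the null space of $L$. Since the statement is only about $h$ with no velocity derivatives and no weights, I would work throughout with $\ell = 0$, where both Lemma \ref{estNORM3} and the interpolation estimates hold with vanishing constant in front of the local error (the $C=0$ case).

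First I would reduce to $h = \{\mathbf{I-P}\}h$: because $L\,\mathbf{P}h = 0$ and $\mathbf{P}h \perp \mathrm{Range}(L)$ in the relevant pairing, $\langle Lh,h\rangle = \langle L\{\mathbf{I-P}\}h, \{\mathbf{I-P}\}h\rangle$, so it suffices to prove $\langle Lg, g\rangle \gtrsim |g|_{N^{s,\gamma}}^2$ for $g \in \{\mathbf{I-P}\}$. Next, write $\langle Lg,g\rangle = \langle Ng,g\rangle + \langle Kg,g\rangle$. By Lemma \ref{estNORM3} with $\ell=0$, $\langle Ng,g\rangle \gtrsim \nsm g\nsm_{N^{s,\gamma}}^2$ with no error term at all. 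By \eqref{compactupper} with $\ell = 0$, $|\langle Kg,g\rangle| \le \eta |g|_{L^2_{\gamma+2s}}^2 + C_\eta |g|_{L^2(B_{C_\eta})}^2$. Since $|g|_{L^2_{\gamma+2s}}^2 \lesssim \nsm g\nsm_{N^{s,\gamma}}^2$ by definition of the norm \eqref{normdef}, choosing $\eta$ small enough absorbs the first term into the coercive lower bound, leaving
\begin{equation*}
\langle Lg, g\rangle \ge c_0 \nsm g\nsm_{N^{s,\gamma}}^2 - C_0 |g|_{L^2(B_{C_0})}^2
\end{equation*}
for constructive constants $c_0, C_0 > 0$. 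This is exactly the form \eqref{coerc2ineq} of Lemma \ref{DerCoerIneq}, so at this stage the theorem is reduced to removing the local $L^2(B_{C_0})$ term on the subspace $\{\mathbf{I-P}\}$.

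The final step is the passage from the ``local'' coercive estimate to the ``global'' one on $\{\mathbf{I-P}\}$. Here I would invoke the constructive but non-sharp lower bound from \cite{MR2254617} (the ``coercive lower bound for the non-derivative part of the norm'' referenced in the Mouhot--Strain discussion): it gives $\langle Lg,g\rangle \gtrsim |g|_{L^2_{\gamma+2s}}^2$ for $g \in \{\mathbf{I-P}\}$, with a constructive constant. Interpolating: for $g \in \{\mathbf{I-P}\}$ one has $|g|_{L^2(B_{C_0})}^2 \le |g|_{L^2_{\gamma+2s}}^2$ (the weight $\ang{v}^{\gamma+2s}$ is bounded below on $B_{C_0}$ when $\gamma+2s\ge 0$, and for $\gamma+2s<0$ it is bounded below on the ball by a constant depending on $C_0$), hence $|g|_{L^2(B_{C_0})}^2 \lesssim \langle Lg,g\rangle$; combining with the displayed inequality above, $\langle Lg,g\rangle \ge c_0\nsm g\nsm_{N^{s,\gamma}}^2 - C_0' \langle Lg,g\rangle$, which rearranges to $\langle Lg,g\rangle \ge \delta_0 \nsm g\nsm_{N^{s,\gamma}}^2$ with $\delta_0 = c_0/(1+C_0')$. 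Finally, one integrates over $x \in \mathbb{T}^n$ to pass from the velocity seminorm $\nsm\cdot\nsm_{N^{s,\gamma}}$ to $|\cdot|_{N^{s,\gamma}}$.

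The main obstacle, and the place where care is genuinely required, is ensuring that the lower bound from \cite{MR2254617} (or an equivalent non-sharp spectral-gap-type estimate) is available \emph{constructively} and in the correct generality $\gamma+2s \ge -(n-2)$ and all $n \ge 2$ — in particular in the soft-potential range $\gamma+2s<0$ where there is no spectral gap and the estimate controls only the weak norm $|g|_{L^2_{\gamma+2s}}$ rather than $|g|_{L^2}$. One must check that this weak control still dominates $|g|_{L^2(B_{C_0})}$ (which it does, since on a fixed ball the degenerate weight is bounded below) and that the whole chain of constants can be tracked explicitly, so the word ``constructive'' in the statement is justified. The alternative — a soft compactness argument showing $K$ is a compact perturbation and that $L$ has trivial kernel on $\{\mathbf{I-P}\}$ — would be shorter but non-constructive, so I would favor the quantitative route above.
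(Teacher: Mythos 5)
Your proof follows essentially the same route as the paper: reduce to $h = \{\mathbf{I-P}\}h$, derive the localized coercive bound \eqref{coerc2ineq} from the $L = N + K$ splitting with Lemmas \ref{sharpLINEAR} and \ref{estNORM3}, and absorb the residual $|h|_{L^2(B_C)}^2$ error by invoking the non-sharp constructive bound of Mouhot \cite{MR2254617} and a convex-combination (equivalently, rearrangement) argument. Two small inaccuracies that do not affect correctness: Mouhot's bound controls $|h|_{L^2_\gamma}^2$ (weight $\gamma$, not $\gamma+2s$), which still dominates $|h|_{L^2(B_C)}^2$; and the closing ``integrate over $x \in \mathbb{T}^n$'' step is unnecessary, since $\nsm \cdot \nsm$ and $| \cdot |$ denote the same velocity-only norm in the paper's notation.
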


This coercive lower bound is proven with our new constructive compact estimates from Lemma \ref{sharpLINEAR} and Lemma \ref{estNORM3} 
when used in conjunction with
the non-sharp but constructive bound from Mouhot \cite{MR2254617} for the non-derivative part of the norm.

\begin{proof}[Proof of Theorem \ref{lowerN}] 
Suppose $h = \{ {\bf I - P } \} h$.     From \eqref{coerc2ineq}, for some small $\eta >0$
$$
\langle L h, h \rangle
\ge 
\eta |  h |_{N^{s,\gamma}}^2
- C |  h |_{L^{2}(B_C)}^2, 
\quad \exists C \ge 0.
$$ 
Here $B_C$ is the ball of radius $C$ centered at the origin.  The positive constant $C$  is explicitly computable.  From \cite{MR2254617}, it is known that under our assumptions 
$$
\langle L h, h \rangle
\ge 
\delta_1  |  h |_{L^{2}_\gamma}^2.
$$ 
Here $\delta_1>0$ is an explicitly computable constant, and $\gamma$ is from \eqref{kernelQ}.  

Lastly, for any $\delta \in (0,1)$ we employ the splitting
$$
\langle L h, h \rangle
=
\delta \langle L h, h \rangle
+
(1-\delta)\langle L h, h \rangle
\ge 
\delta\eta |  h |_{N^{s,\gamma}}^2
- \delta C |  h |_{L^{2}(B_C)}^2
+
(1-\delta)\delta_1  |  h |_{L^{2}_\gamma}^2.
$$ 
Since $C$ is finite notice that  $|  h |_{L^{2}(B_C)}^2 \lesssim |  h |_{L^{2}_\gamma}^2$ for any $\gamma \in\mathbb{R}$.
Thus the lemma follows by choosing $\delta>0$ sufficiently small so that the last two terms are $\ge 0$.
\end{proof}

\subsection{Local Existence} 
In this section, we explain the approach from our first paper \cite{gsNonCut1} to prove the {\it a priori} energy estimates  which are the key step in the local existence theorem.   Our local existence proof for \eqref{Boltz} with small data is based on a uniform energy estimate for an iterated sequence of approximate solutions.

Our iteration starts at $f^0(t,x,v)\eqdef f_0(x,v)$.  We   solve for $f^{k+1}(t,x,v)$ such that 
\begin{equation}
\left( \partial _t+v\cdot \nabla _x+N \right) f^{k+1}+K f^k
=
\Gamma (f^k, f^{k+1}),
\quad
 f^{k+1}(0,x,v)=f_0(x,v).  \label{approximate}
\end{equation}
It is standard 
 to show the linear equation \eqref{approximate}
 admits smooth solutions with the same regularity  as a given  $H^K_\ell$ smooth small initial data, and also has a gain of 
 $L^2((0,T); \nspace )$.  This does not create difficulties and can be proven with our estimates.  We explain herein how to establish the {\it a priori} estimates necessary to find a local classical solution in the limit as $k\to\infty$.
The key point will be to obtain a uniform estimate for the iteration on 
a small time interval. The crucial energy estimate is as follows:

\begin{lemma}
\label{uniform}  The sequence \{$f^k(t,x,v)\}_{k\ge 0}$ is well-defined. There exists a short time
$
T^{*} >0,
$
such that for $\|f_0\|_{H}$
sufficiently small, there is a uniform constant $C_0>0$ for which 
\begin{equation}
\sup_{k\ge 0} ~
\sup_{0\le t \le T^{*}} ~ 
\left(
\| f^{k}(t)\|^2_{H}
+
\int_0^t 
d\tau ~ 
\| f^{k}(\tau)\|^2_{N}
\right)
\le
2C_0
\|f_0\|^2_{H}.  
\notag
\end{equation}
\end{lemma}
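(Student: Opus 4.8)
The plan is to run a standard energy estimate for the linear iteration equation \eqref{approximate}, using the sharp estimates from Section~\ref{mainESTsec} in the velocity variables, and then closing the nonlinear feedback on a short time interval via a continuity argument. First I would apply $\partial^\alpha_\beta$ for $|\alpha|+|\beta| \le K$ to \eqref{approximate}, multiply by $w^{2\ell - 2|\beta|} \partial^\alpha_\beta f^{k+1}$, and integrate over $\mathbb{T}^n \times \mathbb{R}^n$. The transport term $v \cdot \nabla_x$ integrates to zero after summing the commutator terms against lower-order derivatives (the usual observation that $v \cdot \nabla_x$ produces only mixed $x$-$v$ derivative terms which are controlled by the energy). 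The term $\langle w^{2\ell-2|\beta|} \partial^\alpha_\beta N f^{k+1}, \partial^\alpha_\beta f^{k+1}\rangle$ is handled by the coercive inequality \eqref{coerc1ineq} from Lemma~\ref{DerCoerIneq}: it gives $\gtrsim \nsm \partial^\alpha_\beta f^{k+1}\nsm^2_{N^{s,\gamma}_{\ell-|\beta|}}$ minus a small multiple of lower-order $N$-norms minus a compact $L^2(B_{C_\eta})$ term; summing over $\alpha,\beta$ and absorbing the small terms yields the clean lower bound $\gtrsim \|f^{k+1}\|_N^2$ up to the bounded-region error. The term involving $K f^k$ is moved to the right and estimated via Lemma~\ref{sharpLINEAR} \eqref{compactupper} (after the derivative expansion \eqref{DerivEst}, noting $K$ has the structure $\nu_K g - \Gamma(g,M)$), giving $\le \eta \|f^k\|_N^2 + C_\eta \|f^k\|^2_{L^2(B_{C_\eta})}$, where the $\eta\|f^k\|_N^2$ piece will be absorbed by a Gronwall/bootstrap once integrated in time, and the $L^2$ pieces are bounded by $\|f^k\|_H^2$.

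Next I would treat the nonlinear right-hand side $\Gamma(f^k, f^{k+1})$. Using the derivative formula \eqref{DerivEst} and Lemma~\ref{NonLinEst}, the term $\left|(w^{2\ell-2|\beta|}\partial^\alpha_\beta \Gamma(f^k,f^{k+1}), \partial^\alpha_\beta f^{k+1})\right|$ is bounded by $(\|f^k\|_N \|f^{k+1}\|_H + \|f^k\|_H \|f^{k+1}\|_N) \|f^{k+1}\|_N$. Here I would use Cauchy--Schwarz (with a small parameter) to split off a factor $\|f^{k+1}\|_N$, so the contribution is controlled by, say, $\|f^k\|_H \|f^{k+1}\|_N^2 + \|f^k\|_N \|f^{k+1}\|_H \|f^{k+1}\|_N$; the first can be absorbed into the coercive term provided $\|f^k\|_H$ stays small, and the second is handled by Young's inequality. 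Assembling all of this gives a differential inequality of the schematic form
\begin{equation*}
\frac{d}{dt}\|f^{k+1}(t)\|_H^2 + c_0\|f^{k+1}(t)\|_N^2 \lesssim \|f^k(t)\|_H\,\|f^{k+1}(t)\|_N^2 + \|f^k(t)\|_N^2\,\|f^{k+1}(t)\|_H + \|f^k(t)\|_N^2 + \|f^{k+1}(t)\|_H^2,
\end{equation*}
possibly with extra lower-order $L^2$ terms absorbed into $\|\cdot\|_H^2$.

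Then I would close the induction. Set $\mathcal{E}^k(t) \eqdef \|f^k(t)\|_H^2 + \int_0^t \|f^k(\tau)\|_N^2\,d\tau$ and assume inductively $\sup_{[0,T^*]}\mathcal{E}^k \le 2C_0\|f_0\|_H^2$ (with $C_0$ absorbing the implied constants; the base case $f^0 = f_0$ is immediate since $\|f_0\|_N^2$ enters only after integration). Choosing $\|f_0\|_H$ small enough that $\sqrt{2C_0}\|f_0\|_H$ is below the threshold where $\|f^k\|_H\|f^{k+1}\|_N^2$ is absorbed by $c_0\|f^{k+1}\|_N^2$, integrating the differential inequality in time, and using $\int_0^t \|f^k(\tau)\|_N^2\,d\tau \le 2C_0\|f_0\|_H^2$ together with Gronwall on the $\|f^{k+1}\|_H^2$ term, I would obtain $\mathcal{E}^{k+1}(t) \le C_0\|f_0\|_H^2 \cdot e^{CT^*} + (\text{terms }\lesssim T^* \text{ or } \|f_0\|_H \cdot 2C_0\|f_0\|_H^2)$; picking $T^*$ small and $\|f_0\|_H$ small makes the right side $\le 2C_0\|f_0\|_H^2$, closing the induction. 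Well-definedness of the sequence follows from the remark in the text that \eqref{approximate} is a linear equation admitting $H^K_\ell$ solutions with an $L^2_t N^{s,\gamma}$ gain, which our estimates justify.

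The main obstacle I anticipate is bookkeeping the weights and the commutators between $v\cdot\nabla_x$, the velocity weights $w^{\ell-|\beta|}$, and the velocity derivatives $\partial_\beta$: each $\partial_\beta$ hitting the transport term $v\cdot\nabla_x f$ produces a term $\partial_{x_i}\partial^{\alpha}_{\beta'} f$ with $|\beta'| = |\beta|-1$, which must be matched against the weight budget so that it is genuinely lower order in the $H$-norm; this is where the decaying weight convention $w^{\ell-|\beta|}$ (with larger decay for more $v$-derivatives) is essential for the soft potentials. Verifying that all the error terms are truly absorbable — in particular that the $\eta$-small and $T^*$-small and $\|f_0\|_H$-small mechanisms can be arranged simultaneously without circularity — is the delicate point, but it is structurally identical to the cut-off case in Guo \cite{MR2000470} and Strain-Guo \cite{MR2209761} once the new estimates of Section~\ref{mainESTsec} are in hand.
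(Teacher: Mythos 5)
Your proposal is correct and takes the same approach as the paper, which in fact simply defers the detailed argument to \cite{gsNonCut1} and cites the same estimates from Section~\ref{mainESTsec} that you invoke. Two small technical points worth noting: the coercive inequality \eqref{coerc1ineq} is stated for $L$, so to lower-bound the $N f^{k+1}$ term one should either write $N = L - K$ and absorb the extra $K$-piece as an error, or derive the $N$-version by the same commutator expansion; and the $Kf^k$ term appears paired off-diagonally against $f^{k+1}$, for which Lemma~\ref{CompactEst} (rather than the diagonal estimate \eqref{compactupper}) is the right tool before Cauchy--Schwarz.
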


The proof of this estimate is exactly the same as the proof in \cite{gsNonCut1} using the estimates in Section \ref{mainESTsec}, see \cite{gsNonCut1} for the details.
With our uniform control over the iteration \eqref{approximate} granted by Lemma \ref{uniform}, we have the following local existence theorem.  

\begin{theorem}(Local Existence)
\label{local}
For any sufficiently small $M_0>0,$ there exists a time  $T^{*} = T^{*}(M_0)>0$ and an $M_1>0,$ such that if 
\[
\| f_0 \|_{H}^2\le M_1, 
\]
then there is a unique  solution $f(t,x,v)$ to \eqref{Boltz} on  
$[0,T^{*})\times \mathbb{T}^n\times \mathbb{ R}^n$ such that 
\[
\sup_{0\le t\le T^{*}}
\mathcal{ G}(f(t))
=
\sup_{0\le t\le T^{*}}
\left(
\| f(t)\|^2_{H}
+
\int_0^t 
d\tau ~ 
\| f(\tau)\|^2_{N}
\right)
\le M_0. 
\]
Furthermore $\mathcal{ G}(f(t))$ is continuous over $[0,T^{*}).$ Lastly, we have positivity in the sense that
 if
 $F_0(x,v)=\mu +\mu
^{1/2}f_0\ge 0,$ then 
$
F(t,x,v)=\mu +\mu ^{1/2}f(t,x,v)\ge 0. 
$
\end{theorem}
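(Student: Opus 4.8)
The plan is to realize $f$ as the limit of the iterates $f^k$ produced by the scheme \eqref{approximate}. First I would invoke Lemma \ref{uniform}: choosing $\|f_0\|_H$ small and $T^*$ short, the entire sequence stays in the ball where $\mathcal{G}(f^k(t))\le 2C_0\|f_0\|_H^2=:M_0$, which in particular bounds the right-hand side of \eqref{approximate} in the relevant dual-of-$\nspace$-type space and makes each iteration step well posed; this furnishes the quantitative $M_0$ and $M_1$ of the statement. (The fact that \eqref{approximate} is solvable with the asserted regularity and gain in $L^2_t\nspace$ is standard and uses the estimates of Section \ref{mainESTsec} exactly as in \cite{gsNonCut1}.)

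Second, I would show $\{f^k\}$ is Cauchy in a weaker norm, namely in $C([0,T^*];L^2(\mathbb{T}^n\times\mathbb{R}^n))$ (allowing, if needed, for a loss of one velocity derivative, i.e. working in $L^\infty_t H^{K-1}_{\ell}$). Writing $g^k\eqdef f^{k+1}-f^k$ and subtracting successive copies of \eqref{approximate},
\[
(\partial_t + v\cdot\nabla_x + N)g^k + K g^{k-1} = \Gamma(g^{k-1},f^{k+1}) + \Gamma(f^{k-1},g^k),
\]
then pairing with $g^k$, using the coercivity of $N$ from Lemma \ref{DerCoerIneq} and the compact bound of Lemma \ref{sharpLINEAR}, and estimating the bilinear terms by Lemma \ref{NonLinEst} (high norm on the iterates, bounded by $M_0$; low norm on the differences), one obtains a Gronwall inequality of the schematic form $\sup_{[0,T^*]}\|g^k\|_{L^2}^2 \le (C\sqrt{M_0}+CT^*)\,\sup_{[0,T^*]}\|g^{k-1}\|_{L^2}^2$. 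For $M_0$ and $T^*$ small the prefactor is $<1$, so $f^k\to f$ strongly in $C_t L^2$. Interpolating this against the uniform bound in $L^\infty_t\spaceh\cap L^2_t\nspace$ gives $f\in L^\infty_t\spaceh\cap L^2_t\nspace$ with $\mathcal{G}(f(t))\le M_0$, and one passes to the limit in \eqref{approximate} to see that $f$ solves \eqref{Boltz}. Applying the same difference estimate to two solutions sharing the data $f_0$ yields uniqueness.

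For continuity of $t\mapsto\mathcal{G}(f(t))$ on $[0,T^*)$: the equation \eqref{Boltz} together with the bounds already obtained shows $f$ is weakly continuous into $\spaceh$, hence $\mathcal{G}(f(t_0))\le\liminf_{t\to t_0}\mathcal{G}(f(t))$; conversely the differential energy inequality obtained by pairing \eqref{Boltz} with $w^{2\ell-2|\beta|}\partial^\alpha_\beta f$ and invoking Lemma \ref{DerCoerIneq} and Lemma \ref{NonLinEst} gives $\limsup_{t\to t_0}\|f(t)\|_H^2\le\|f(t_0)\|_H^2$, so $\|f(t)\|_H^2$ is continuous; the dissipation term $\int_0^t d\tau\,\|f(\tau)\|_N^2$ is continuous in $t$ since $f\in L^2_t\nspace$. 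This is carried out exactly as in \cite{gsNonCut1}.

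Finally, positivity. Since the iteration \eqref{approximate} need not preserve the sign of $F=\mu+\sqrt{\mu}f$, I would instead run a monotone scheme adapted to the full equation \eqref{BoltzFULL}: formally split $\mathcal{Q}(F,F)=\mathcal{Q}^+(F,F)-F\,\mathcal{L}^\nu(F)$ with $\mathcal{L}^\nu\ge 0$ the loss frequency, iterate $\partial_t F^{k+1}+v\cdot\nabla_x F^{k+1}+F^{k+1}\mathcal{L}^\nu(F^k)=\mathcal{Q}^+(F^k,F^k)$ with $F^{k+1}(0)=F_0$, and note that the Duhamel formula along characteristics keeps $F^{k+1}\ge 0$ once $F_0\ge 0$; the uniform estimates then identify the limit with the $f$ constructed above by uniqueness. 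The delicate point — and in my view the main obstacle of the theorem, the fixed-point argument above being by now routine — is that for the non-cutoff kernel \eqref{kernelQ} the pieces $\mathcal{Q}^+$ and $\mathcal{L}^\nu$ are individually divergent, so this split and the characteristic representation must be justified with the non-integrable angular part always kept inside the cancelling difference; I would do this precisely as in \cite{gsNonCut1} (alternatively, one may regularize $b$ by an angular cutoff, obtain $F_\varepsilon\ge 0$ from the classical monotone scheme, and pass $\varepsilon\to0$ using the uniform bounds and uniqueness).
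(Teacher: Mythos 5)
Your proposal follows essentially the same path the paper takes, which is simply to refer to the argument of \cite{gsNonCut1}: iterate \eqref{approximate}, invoke Lemma~\ref{uniform} for uniform bounds, run a contraction argument on differences of iterates, pass to the limit, establish continuity of $\mathcal{G}$ from the energy identity, and treat positivity separately (via angular regularization / by keeping the non-integrable part inside a cancelling difference). So the skeleton is correct and matches the source.

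One place where your sketch is too coarse and would not close as written is the contraction inequality. You propose
$\sup_{[0,T^*]}\|g^k\|_{L^2}^2 \le (C\sqrt{M_0}+CT^*)\,\sup_{[0,T^*]}\|g^{k-1}\|_{L^2}^2$,
i.e.\ a contraction in $C_tL^2$ alone. But Lemma~\ref{NonLinEst} bounds $|\langle\Gamma(g^{k-1},f^{k+1}),g^k\rangle|$ by $(\|g^{k-1}\|_N\|f^{k+1}\|_H+\|g^{k-1}\|_H\|f^{k+1}\|_N)\|g^k\|_N$. The factor $\|g^k\|_N$ on the right can be absorbed by the dissipation produced when you pair $Ng^k$ with $g^k$, but the $\|g^{k-1}\|_N$ factor (and similarly $\int\|g^{k-1}\|_N^2\,d\tau$ after Cauchy--Schwartz in time, using $\int\|f^{k+1}\|_N^2\lesssim M_0$) cannot be bounded by $\sup_t\|g^{k-1}\|_{L^2}^2$ or even $\sup_t\|g^{k-1}\|_{H^{K-1}_\ell}^2$ --- the $N$-norm contains fractional regularity and a velocity weight gain that $L^\infty_tL^2$ does not see. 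The contraction therefore has to be closed in the full local energy functional $\mathcal{G}(g^k)=\sup_t\|g^k\|_H^2+\int_0^t\|g^k\|_N^2\,d\tau$ (or its lower-order analogue, but still including the $L^2_tN$ component), giving an estimate of the form $\mathcal{G}(g^k)\le (C\sqrt{M_0}+CT^*)\,\mathcal{G}(g^{k-1})$. That is the inequality actually used; once you replace your $C_tL^2$ statement by it, the rest of the argument goes through as you describe, and the uniqueness claim follows from the same inequality applied to the difference of two solutions.

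Your discussion of positivity correctly identifies the obstacle (the split $\mathcal{Q}^+ - F\,\mathcal{L}^\nu$ is meaningless kernel by kernel when $b$ is non-integrable) and proposes the two standard remedies; this is consistent with what the paper cites.
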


For the proof we refer to the argument in \cite{gsNonCut1}, the case here is  the same.

\subsection{Coercivity estimates for solutions to the non-linear equation}
The next step is to prove a general statement of the linearized H-Theorem, which manifests itself as a 
coercive inequality.  These types of coercive estimates for the linearized collision operator were originally proved by Guo \cite{MR2000470,MR2013332} in the hard-sphere and cut-off regime.  In \cite{gsNonCut1}, we proved this estimate for the Boltzmann equation without 
any angular cut-off, as in \eqref{kernelQ}, but with $p>3$ or $\gamma + 2s >0$.   The next theorem extends this estimate to the full range of $p>2$ and more generally $\gamma + 2s \ge -(n-2)$ in $n$ dimensions.

\begin{theorem}
\label{positive}  
Given the initial data $f_0 \in  \spaceh\left(\mathbb{T}^n \times \mathbb{R}^n \right)$ with derivatives of order $\NgE$ and weights of order $\ell \ge 0$.  Suppose that $f_0$ satisfies \eqref{conservation} initially and the assumptions of Theorem \ref{local}.  For the corresponding local solution, $f(t,x,v)$, to \eqref{Boltz} which  satisfies \eqref{conservation},
there exists a small constant $M_0 >0$ such that if
\begin{gather}
\| f(t) \|^2_{H} \le M_0,
\label{smallL}
\end{gather}
then, further, there are constructive constants $\delta>0$ and $C_2>0$ such that
$$
\sum_{|\alpha| \le K} \| \{ {\bf I - P } \} \partial^\alpha f (t)  \|_{_{N^{s,\gamma}}}^2
\ge 
\delta
\sum_{|\alpha| \le K} \| { \bf  P  } \partial^\alpha f (t)  \|_{_{N^{s,\gamma}}}^2- C_2\frac{d\mathcal{I}(t)}{dt},
$$
where $\mathcal{I}(t)$ is the ``interaction functional'' defined precisely in \eqref{mainINTERACTION} below.
\end{theorem}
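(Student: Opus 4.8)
The plan is to establish Theorem \ref{positive} through the classical macroscopic estimates originating in Guo \cite{MR2000470,MR2013332} and adapted in \cite{gsNonCut1}, reducing everything to the structure of the hydrodynamic part \eqref{hydro}. First I would apply the spatial derivative $\partial^\alpha$ with $|\alpha|\le K$ to the perturbed Boltzmann equation \eqref{Boltz}, then project onto the null space $\mathcal{N}$ and onto its orthogonal complement. The projection onto $\mathcal{N}$, combined with the conservation laws \eqref{conservation}, produces a closed system of ``macroscopic equations'' (local conservation-type identities) for the coefficient functions $a^f(t,x),\ b^f_j(t,x),\ c^f(t,x)$ defined in \eqref{hydro}, of the schematic form $\partial_t (\text{moments of }\mathbf{P}f) + \nabla_x(\text{moments involving }\{\mathbf{I-P}\}f) = \nabla_x(\text{moments of }\{\mathbf{I-P}\}f) + (\text{quadratic in }f)$. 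The key point is that these equations show $\nabla_x a^f,\ \nabla_x b^f,\ \nabla_x c^f$ and appropriate time derivatives are all controlled, modulo the microscopic part $\{\mathbf{I-P}\}f$ and a quadratic term, by differentiating again and using ellipticity of the resulting constant-coefficient system in $x$; on the torus $\mathbb{T}^n$ the Poincar\'e inequality then upgrades control of the gradients to control of the full $\|\mathbf{P}\partial^\alpha f\|$.

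Second, I would organize the argument around an \emph{interaction functional} $\mathcal{I}(t)$ built from inner products of low-order moments of $\{\mathbf{I-P}\}f$ against derivatives of $a^f, b^f, c^f$ (this is the standard device: one writes $\|\mathbf{P}\partial^\alpha f\|^2_{N^{s,\gamma}} \lesssim \|\{\mathbf{I-P}\}\partial^\alpha f\|^2_{N^{s,\gamma}} + (\text{quadratic remainder}) - C_2\, d\mathcal{I}(t)/dt$, where the time-derivative term absorbs the otherwise-uncontrolled $\partial_t$ of the macroscopic quantities). Here $\mathcal{I}(t)$ is the object referenced as \eqref{mainINTERACTION} in the statement. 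The bilinear remainder terms $\langle \Gamma(f,f),\cdot\rangle$ arising in the macroscopic equations are handled by Lemma \ref{NonLinEst} together with the smallness hypothesis \eqref{smallL}: since $\|f(t)\|_H^2 \le M_0$, these terms are bounded by $M_0$ times the dissipation-type norm and can be absorbed (or placed on the right-hand side with a small constant). The factor $\|\cdot\|_{N^{s,\gamma}}$ rather than $\|\cdot\|_{L^2}$ on the left is legitimate because on $\mathcal{N}$ all velocity integrals are against Gaussians, so for $h\in\mathcal{N}$ one has $\|h\|_{N^{s,\gamma}} \approx \|h\|_{L^2}$ up to universal constants (the non-isotropic semi-norm of a Schwartz function on the null space is finite and equivalent to the $L^2$ norm of its coefficients); I would record this equivalence as a preliminary remark.

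Third, I would assemble the pieces: summing the macroscopic estimates over $|\alpha|\le K$, using the ellipticity/Poincar\'e step to bound $\sum_{|\alpha|\le K}\|\mathbf{P}\partial^\alpha f\|^2_{N^{s,\gamma}}$ by a constant times $\sum_{|\alpha|\le K}\|\{\mathbf{I-P}\}\partial^\alpha f\|^2_{N^{s,\gamma}}$ plus $C_2\, d\mathcal{I}(t)/dt$ plus a term of size $O(M_0)$ times the full norm, and finally absorbing the last term by choosing $M_0$ small and rearranging. This yields exactly the stated inequality with constructive $\delta>0$ and $C_2>0$.

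The main obstacle I anticipate is not the abstract structure—which is by now routine—but the bookkeeping of the remainder terms in the non-isotropic setting: one must verify that every stray quadratic term $\langle w^{2\ell-\cdots}\Gamma(\cdot,\cdot),\cdot\rangle$ and every commutator between $\partial^\alpha$ and the transport operator $v\cdot\nabla_x$ can be controlled \emph{without any velocity weight that grows at infinity} and with the correct power of $N^{s,\gamma}$, so that Lemma \ref{NonLinEst} applies cleanly. A secondary subtlety is confirming that the elliptic estimate for the macroscopic system, which in the cut-off case is stated in $L^2_x$, transfers verbatim here since it is purely an estimate in the $x$-variable on $\mathbb{T}^n$ and does not interact with the velocity norm at all; I would remark that this is why the proof is ``exactly the same'' as in \cite{gsNonCut1} once the velocity estimates of Section \ref{mainESTsec} are in hand, and refer the reader there for the detailed computation of $\mathcal{I}(t)$ and the macroscopic equations.
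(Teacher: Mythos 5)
Your proposal follows the same macroscopic-equation / interaction-functional strategy that the paper uses, and your preliminary remark that $\|h\|_{N^{s,\gamma}}\approx|h|_{L^2}$ on the finite-dimensional Gaussian null space $\mathcal{N}$ is exactly the observation that legitimizes the $N^{s,\gamma}$ norm on the right side. One minor imprecision: you cite Lemma~\ref{NonLinEst} to control the quadratic remainder $\langle\Gamma(f,f),e_k\rangle$, but that lemma is already integrated over $\mathbb{T}^n\times\mathbb{R}^n$ and bounds the third slot by its $N^{s,\gamma}$ norm, which is not the form needed; the paper instead isolates the key technical input as Lemma~\ref{linear} and proves it via the velocity-only estimate~\eqref{coerc1ineqPREP2} from Proposition~\ref{upperBds}, which exploits the Gaussian decay of the basis $e_k$ to yield a clean pointwise-in-$x$, $L^2_{-m}$-type bound that is then promoted to $H^K_x$ by Sobolev embedding and the smallness $\eqref{smallL}$. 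This is a choice of tool rather than of route, so the proposal is essentially the paper's argument.
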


This Theorem \ref{positive} is proven in exactly the same way as in \cite{gsNonCut1}, using the usual decomposition of
 ${\bf P}f$'s coefficients $a^f(t,x)$, $b^f_i(t,x)$ and 
$c^f(t,x)$, for a solution to \eqref{Boltz} using \eqref{hydro}.  We explain briefly how this works and refer to  \cite{gsNonCut1} for the full details.
We can decompose a solution  $f$ as \eqref{hyrdoSPLIT}   to
obtain the 
macroscopic equations 
\begin{eqnarray*}
\nabla _xc&=& -\partial_t r_c+ l_c+\Gamma_c  \label{c} \\
\partial_t c+\partial_i b_i &=& -\partial_t r_i+ l_i+\Gamma_i  \label{bi} \\
\partial_i b_j+\partial_j b_i &=& -\partial_t r_{ij}+ l_{ij}+\Gamma_{ij} \quad (i\neq j)  \label{bij} \\
\partial_t b_i+\partial _i a &=& -\partial_t r_{bi}+ l_{bi}+\Gamma_{bi}
\label{ai} \\
\partial_t a &=& -\partial_t r_a+ l_a+\Gamma_a.  \label{adot}
\end{eqnarray*}
Above  $\partial_i=\partial _{x_i}$. 
Consider the 
 basis, $\{ e_k \}_{k=1}^{3n+1+n(n-1)/2}$, which consists of
\begin{gather}
\left( v_i|v|^2\sqrt{\mu } \right)_{1\le i\le n}, ~ 
\left(v_i^2\sqrt{\mu } \right)_{1\le i\le n},  ~
\left(v_iv_j\sqrt{\mu } \right)_{1\le i<j\le n}, ~
\left(v_i\sqrt{\mu } \right)_{1\le i\le n}, ~
\sqrt{\mu }.  
\label{base}
\end{gather}
Further, for notational convenience we define the index set to be
$$
\mathcal{M} \eqdef \left\{c, ~i,~ \left( ij \right)_{i \ne j}, ~bi,~ a\left| ~ i, j = 1,\ldots,n\right. \right\}.
$$
This set $\mathcal{M}$ is just the collection of all indices in the macroscopic equations.  Then for $\mu \in  \mathcal{M}$ we have that 
the $l_\mu (t,x)$
are the coefficients of 
$l(\{{\bf I-P}\}f)$, given by
\begin{equation}
  l(\{{\bf I-P}\}f)
\eqdef
-v\cdot \nabla _x (\{{\bf I-P}\} f)
- L (\{{\bf I-P}\} f),
\label{l}
\end{equation}
with respect to the elements of \eqref{base}; similarly, the 
$\Gamma_\mu(t,x)$ 
and
$r_\mu(t,x)$
are the coefficients of $\Gamma(f,f)$ and $\{{\bf I-P}\}f$ respectively.  Each  $r_\mu$, for example, can be expressed as 
$$
r_\mu = \sum_{k=1}^{3n+1+n(n-1)/2} C_k^\mu \langle \{{\bf I-P}\}f, e_k \rangle.
$$
All of the constants $C_k^\mu$  can be computed explicitly.  Furthermore, each of the terms $l_\mu$ and $\Gamma_\mu$ can also be computed similarly.

Now,  we define the total interaction functional as
\begin{gather}
\mathcal{I}(t) 
\eqdef 
\sum_{|\alpha|\le K-1}
\left\{
\mathcal{I}_a^\alpha(t)
+
\mathcal{I}_b^\alpha(t)
+
\mathcal{I}_c^\alpha(t)\right\},
\label{mainINTERACTION}
\end{gather}
which is composed of the following elements:
\begin{gather*}
\mathcal{I}_a^\alpha(t) \eqdef 
\int_{\mathbb{T}^n}  ~ dx ~ \left( \nabla \cdot \partial ^\alpha b \right)\partial^\alpha a(t,x)
+
\sum_{i=1}^n
\int_{\mathbb{T}^n}  ~ dx ~ \partial_i\partial ^\alpha r_{bi} ~
\partial^\alpha a(t,x),
\\
\mathcal{I}_b^\alpha(t) \eqdef 
- 
 \sum_{j\neq i} \int_{\mathbb{T}^n} ~ dx ~ \partial_{j} \partial^\alpha r_{ij}\partial^\alpha b_i,
 \\
\mathcal{I}_c^\alpha(t) \eqdef 
- \int_{\mathbb{T}^n} ~ dx ~ \partial ^\alpha r_c(t,x) ~ \cdot \nabla_x \partial ^\alpha c(t,x).
\end{gather*}
This interaction functional will be comparable to the energy defined below.

With these macroscopic equations, the key to proving Theorem \ref{positive} will be the following lemma.  In fact, once we have this next Lemma \ref{linear}, the proof of Theorem \ref{positive} will follow exactly as in \cite{gsNonCut1} and we refer to that paper for the rest of the details.

\begin{lemma}
\label{linear}
For all of the microscopic terms, $l_\mu$, from the macroscopic equations
\[
\sum_{\mu \in \mathcal{M}} \|l_\mu\|_{H^{K-1}_x}
\lesssim
\sum_{|\alpha| \le K}\|\{{\bf I-P}\} \partial^\alpha f\|_{L^2_{\gamma+2s}(\mathbb{T}^n\times\mathbb{R}^n)}.
\]
Furthermore,
let (\ref{smallL}) be valid for some $M_0>0.$ Then  for $K \ge 2\ksob$ we have
\[
\sum_{\mu \in \mathcal{M}}
\|\Gamma_\mu\|_{H^{K}_x}
\lesssim
\sqrt{M_0}\sum_{|\alpha |\le K} \| \partial ^\alpha f\|_{L^2_{\gamma+2s}(\mathbb{T}^n\times\mathbb{R}^n)}.
\]
\end{lemma}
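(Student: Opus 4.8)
\textbf{Proof proposal for Lemma \ref{linear}.}

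The plan is to exploit the explicit formulas for the coefficients $l_\mu$ and $\Gamma_\mu$ as inner products against the fixed basis elements $e_k$ from \eqref{base}, and then to reduce each term to the velocity-norm estimates already available in Section \ref{mainESTsec}. For the microscopic terms $l_\mu$, recall from \eqref{l} that $l(\{{\bf I-P}\}f) = -v\cdot\nabla_x(\{{\bf I-P}\}f) - L(\{{\bf I-P}\}f)$, so each $l_\mu(t,x)$ is a finite linear combination of quantities of the form $\langle v\cdot\nabla_x \{{\bf I-P}\}f, e_k\rangle$ and $\langle L\{{\bf I-P}\}f, e_k\rangle$. Since each $e_k$ is a polynomial times $\sqrt{\mu}$, it is a Schwartz function and in particular belongs to $N^{s,\gamma}_{\ell}$ for every $\ell$; the first type of term is bounded by $\|\{{\bf I-P}\}\partial^\alpha f\|_{L^2_{\gamma+2s}}$ simply by Cauchy--Schwarz after absorbing the polynomial growth of $v\cdot e_k$ into the Gaussian decay, while the second type of term is handled by the linear estimate \eqref{normupper} together with the compact estimate \eqref{compactupper}, or more directly by \eqref{coerc1ineqNORM} applied with $g = e_k$ (the roles of the two arguments being symmetric up to the pre-post collisional change of variables) once one writes $L = -\Gamma(M,\cdot) - \Gamma(\cdot,M)$ as in \eqref{LinGam}. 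Summing over $k$ and over $|\alpha|\le K-1$ (so that one derivative is available to hit either $f$ or nothing, always keeping $|\alpha|\le K$ on $f$), and using the inclusion $N^{s,\gamma} \supset L^2_{\gamma+2s}$, yields the first displayed inequality. The key observation making this clean is that the non-isotropic weights in $N^{s,\gamma}_{\ell}$ are harmless when one of the arguments is Gaussian, so no weight loss occurs.

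For the nonlinear terms $\Gamma_\mu$, each is a finite linear combination of $\langle \Gamma(f,f), e_k\rangle$, and after taking $\partial^\alpha$ for $|\alpha|\le K$ and applying the Leibniz expansion \eqref{DerivEst} in the $x$-variables only, a typical term is $\langle \Gamma(\partial^{\alpha_1}f, \partial^{\alpha-\alpha_1}f), e_k\rangle$ with $|\alpha_1| + |\alpha-\alpha_1|\le K$. Here I would apply Lemma \ref{NonLinEst} (or directly \eqref{nlineq}/\eqref{nlineq1} from Lemma \ref{NonLinEstA}) with $f$ there replaced by $e_k$, which contributes the bounded factor $\|e_k\|_{N^{s,\gamma}_{\ell}}\lesssim 1$; the bound produced is $\lesssim \|\partial^{\alpha_1}f\|_{N}\|\partial^{\alpha-\alpha_1}f\|_{H} + \|\partial^{\alpha_1}f\|_{H}\|\partial^{\alpha-\alpha_1}f\|_{N}$ integrated over $\mathbb{T}^n$. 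One of the two factors then carries at most $\lfloor K/2\rfloor \le K - \ksob$ derivatives (using $K\ge 2\ksob$), so Sobolev embedding $H^{\ksob}(\mathbb{T}^n)\subset L^\infty(\mathbb{T}^n)$ in the $x$-variable converts that factor into a pointwise-in-$x$ bound controlled by $\|f\|_{H}\le \sqrt{M_0}$ via \eqref{smallL}, while the other factor is estimated in $L^2_x$ by $\sum_{|\alpha|\le K}\|\partial^\alpha f\|_{L^2_{\gamma+2s}}$ after noting $N^{s,\gamma}_{\ell-|\beta|}\hookrightarrow$ well-behaved velocity norms and that $\|\cdot\|_N$ contributes exactly the $L^2_{\gamma+2s}$-type control when integrated. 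Collecting terms and summing over $k$ and $\mu$ gives the second inequality with the factor $\sqrt{M_0}$.

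The main obstacle I anticipate is bookkeeping rather than anything deep: one must be careful that in each nonlinear term the factor on which Sobolev embedding in $x$ is used carries at most $K-\ksob$ (or at most $2\ksob-1$, in the borderline split) total derivatives, which is precisely the constraint forcing $K\ge 2\ksob$, and one must verify that the velocity norms appearing on the right-hand sides of Lemma \ref{NonLinEst} and Lemma \ref{NonLinEstA} — which involve $\|\cdot\|_N$ and $\|\cdot\|_H$ with their various velocity weights — are all dominated by $\sum_{|\alpha|\le K}\|\partial^\alpha f\|_{L^2_{\gamma+2s}(\mathbb{T}^n\times\mathbb{R}^n)}$ when the other argument is the fixed Gaussian $e_k$, so that no net weight or derivative is lost. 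This last point uses crucially that $e_k$ absorbs arbitrarily many velocity derivatives and arbitrary velocity weights for free, so the weighted $H^K_\ell$ and $N^{s,\gamma}_{\ell,K}$ norms of $e_k$ are all finite and the $f$-factor only ever needs to be measured in the plain $L^2_{\gamma+2s}$ scale. Once this is checked, the proof is a direct assembly of the cited estimates.
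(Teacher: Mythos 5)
There is a genuine gap in your proof, and it is precisely the point you flagged as "bookkeeping" at the end: the velocity-norm bounds you invoke do \emph{not} reduce to $L^2_{\gamma+2s}$, and the reduction cannot be rescued by the fact that $e_k$ is Gaussian.

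Concretely, the estimates you cite — Lemma \ref{NonLinEst}, the inequalities \eqref{nlineq}/\eqref{nlineq1} from Lemma \ref{NonLinEstA}, and \eqref{coerc1ineqNORM} — all place an $N^{s,\gamma}_{\ell}$ (or $\|\cdot\|_N$, $\|\cdot\|_H$) norm on at least one of the two \emph{unknown} arguments of $\Gamma$, regardless of what the third slot is. Putting $e_k$ in the test-function slot removes the fractional-derivative content from $e_k$ only, not from $g$ or $h$. Since $N^{s,\gamma}$ strictly \emph{contains} $L^2_{\gamma+2s}$ (the embedding goes $\|\cdot\|_{L^2_{\gamma+2s}} \lesssim \|\cdot\|_{N^{s,\gamma}}$, not the reverse), the upper bound you would obtain is in a \emph{stronger} norm than what Lemma \ref{linear} asserts, and the final claim that "$\|\cdot\|_N$ contributes exactly the $L^2_{\gamma+2s}$-type control when integrated" is false. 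Separately, \eqref{normupper} and \eqref{compactupper} are quadratic-form bounds $\ang{Ng,g}$, $\ang{Kg,g}$ with the \emph{same} function on both sides, so they do not apply to $\ang{Lh, e_k}$ at all; and the appeal to "symmetry up to the pre-post change of variables" to handle $\Gamma(h,M)$ via \eqref{coerc1ineqNORM} is not available, since $\Gamma$ is not symmetric in its two arguments and \eqref{coerc1ineqNORM} requires the Gaussian in the \emph{first} slot.

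The missing ingredient, and the tool the paper actually uses, is the compact estimate \eqref{coerc1ineqPREP2} from Proposition \ref{upperBds}:
\[
\left| \ang{w^{2\ell}\Gamma_\beta (g,f),\phi} \right| \lesssim
 \nsm  g \nsm_{L^2_{-m}}  \nsm  f \nsm_{L^2_{-m}},
\]
valid for any rapidly decaying $\phi$ in the third slot. It is exactly this estimate (not Lemma \ref{NonLinEst}) that exploits the Gaussian decay of $e_k$ so that \emph{neither} $g$ nor $f$ incurs any fractional derivative, only $L^2_{-m}$, which for $m$ large is dominated by $L^2_{\gamma+2s}$. Applying \eqref{coerc1ineqPREP2} pointwise in $x$ to $\ang{L\{{\bf I-P}\}\partial^\alpha f, e_k}$ (via \eqref{LinGam}) and to $\ang{\Gamma(\partial^{\alpha-\alpha_1}f,\partial^{\alpha_1}f), e_k}$, and then running your Sobolev embedding argument in $x$ with $K\ge 2\ksob$, yields the lemma as stated. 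Your strategy and the Sobolev-in-$x$ step are otherwise correct; the defect is solely in which velocity estimate is invoked.
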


We are using the abbreviated notation
$L^2_v = L^2(\mathbb{R}^n)$
and
$H^K_x = H^K(\mathbb{T}^n)$.

\begin{proof} 
Recall 
$\{ e_k \}$, the basis in (\ref
{base}).
To estimate the $l_\mu$ terms,  it suffices to estimate  the $H^{K-1}_x$ norm of
$
\langle l(\{{\bf I-P}\}f), e_k \rangle.
$
We  use (\ref{l}) to expand out
$$
\langle \partial^\alpha l(\{{\bf I-P}\}f), e_k \rangle
=
-\langle v\cdot \nabla _x (\{{\bf I-P}\}\partial^\alpha f), e_k \rangle
-\langle L (\{{\bf I-P}\}\partial^\alpha f), e_k \rangle.
$$
Now for any $|\alpha |\le K-1$ by Cauchy-Schwartz we have
\begin{gather*}
\| \langle v\cdot \nabla _x (\{{\bf I-P}\}\partial^\alpha f), e_k \rangle\|_{L^2_x}^2 
\lesssim 
\int_{\mathbb{T}^n\times \mathbb{R}^n} ~ dx dv ~
|e_k(v)| ~ 
|v|^2 ~ |\{{\bf I-P}\}\nabla_x\partial ^\alpha f|^2
\\
\lesssim 
\|\{{\bf I-P}\}\nabla _x\partial^\alpha f\|^2_{L^2(\mathbb{T}^n\times \mathbb{R}^n)}.
\end{gather*}
Here we have used the exponential decay of $e_k(v)$.

It remains to estimate the linear operator $L$.  
Notice that the basis vectors $e_k$ satisfy \eqref{derivESTa}.
With the expression from \eqref{LinGam}
and the estimate \eqref{coerc1ineqPREP2}, we have
\begin{gather*}
\| \langle L (\{{\bf I-P}\}\partial^\alpha f), e_k \rangle\|_{L^2_x}^2 
\lesssim 
\left\| ~\nsm \{{\bf I-P}\}\partial^\alpha f \nsm_{L^2_{\gamma + 2s}}~  \nsm M \nsm_{L^2_{\gamma + 2s}} \right\|^2_{L^2_{x}}
\\
\lesssim 
\left\| \{{\bf I-P}\}\partial^\alpha f   \right\|^2_{L^2_{\gamma+2s}(\mathbb{T}^n\times\mathbb{R}^n)}.
\end{gather*} 
This completes the proof of our estimates for the $l_\mu$ terms.

As in the above, to prove the estimate for $\Gamma_\mu$,
it will suffice to estimate the $H^{K}_x$ norm of $\langle \Gamma(f,f), e_k\rangle$.  We apply 
\eqref{coerc1ineqPREP2}
from
Proposition \ref{upperBds}
 to see that for any $m\ge 0$
\begin{gather*}
\left\| \langle \Gamma(f,f), e_k\rangle\right\|_{H^{K}_x}
\lesssim
\sum_{|\alpha |\le K}\sum_{\alpha_1 \le \alpha}
\left\|
 \nsm \partial^{\alpha - \alpha_1} f \nsm_{L^2_{-m}} \nsm \partial^{\alpha_1} f \nsm_{L^2_{-m}} 
\right\|_{L^2_x}.
\end{gather*}
We  take the supremum 
over the term with fewer derivatives to obtain for $K \ge 2\ksob$
\begin{gather*}
\lesssim
 \| f\|_{L_v^2H^{K}_x}
\sum_{|\alpha |\le K} \| \partial ^\alpha f\|_{L^2_{\gamma+2s}}
\\
\lesssim
\sqrt{M_0}\sum_{|\alpha |\le K} \| \partial ^\alpha f\|_{L^2_{\gamma+2s}}.
\end{gather*}
The last inequalities follow from
 the Sobolev embedding $L^\infty(\mathbb{T}^n) \supset H^{\ksob}(\mathbb{T}^n)$.
\end{proof}

We are now ready to prove global in time solutions to  \eqref{Boltz} exist. 

\subsection{Global Existence and Rapid Decay}  With the coercivity estimate for non-linear local solutions from Theorem \ref{positive}, we show that these solutions must be global with the standard continuity argument which is used in \cite{gsNonCut1}.   Then we will prove rapid decay for the soft potentials, using the method from the cut-off case \cite{MR2209761}. 

A crucial step in this analysis is to prove the following energy inequalities:
\begin{equation}
\frac{d}{dt}\mathcal{E}_{\ell,m}(t)+\mathcal{D}_{\ell,m}(t) \leq C_{\ell, m} \sqrt{\mathcal{E}_{\ell}(t)}\mathcal{D}_{\ell}(t).
\label{MAINeINEQ}
\end{equation}
These  hold
for any $\ell\ge 0$ and $m = 0, 1, \ldots,K$.   We define the ``dissipation rate''   as
\begin{gather}
\notag
\mathcal{D}_{\ell,m} (t)\eqdef 
\sum_{|\beta|\le m}
\sum_{|\alpha | \le K-|\beta|}
\|\partial_\beta^\alpha f(t)\|_{N_{\ell - |\beta|}^{s,\gamma}}^2.
\end{gather}
We also write $\mathcal{D}_{\ell,K} = \mathcal{D}_{\ell} =   \| f(t) \|^2_{N}$.   Furthermore the ``instant energy functional'' 
$
\mathcal{E}_{\ell,m}(t)
$
for a solution
is a high-order norm which satisfies
$$
\mathcal{E}_{\ell,m}(t)
\approx
\sum_{|\beta|\le m}
\sum_{|\alpha |   \le K - |\beta|}
\|w^{\ell - |\beta|}\partial^{\alpha}_{\beta} f(t)\|^2_{L^2 (\mathbb{T}^n \times \mathbb{R}^n)},
\quad
\mathcal{E}_{\ell,K}(t) = \mathcal{E}_{\ell}(t).
$$
We prove this energy inequality for a local solution via a simultaneous induction on both the order of the weights $\ell$ and on the number of velocity derivatives $m$.

The first inductive step is to prove \eqref{MAINeINEQ} for arbitrary spatial derivatives with $\ell=0$ and $|\beta| =0$. 
 We first fix $M_0\le 1$ such that both
Theorems \ref{local}  and \ref{positive} are valid.    
We now  take the spatial derivatives of $\partial^{\alpha}$ 
of \eqref{Boltz} to obtain
\begin{gather}
\label{initial}
\frac{1}{2}\frac{d}{dt}\|f(t)\|^2_{L^2_vH^K_x} 
+
\sum_{|\alpha |\le K}\left( L\partial^{\alpha}f,\partial^{\alpha}f\right) 
=
\sum_{|\alpha |\le K}
\left( \partial^{\alpha} \Gamma (f,f), \partial^{\alpha} f \right) . 
\end{gather}
By Lemma \ref{NonLinEst} we have
$$
\sum_{|\alpha |\le K}
\left( \partial^{\alpha} \Gamma (f,f), \partial^{\alpha} f \right) 
\lesssim
\sqrt{\mathcal{E}_{0}(t)} ~ 
\mathcal{D}_{0}(t).
$$
Now with Theorem \ref{lowerN} and then Theorem \ref{positive} we have
\begin{multline*}
\sum_{|\alpha |\le K}\left( L\partial^{\alpha}f,\partial^{\alpha}f\right) 
\ge \delta_0 \sum_{|\alpha |\le K} \| \{ {\bf I - P } \}\partial^{\alpha} h \|_{N^{s,\gamma}}^2
\\
\ge
\frac{\delta_0}{2} \sum_{|\alpha |\le K} \| \{ {\bf I - P } \}\partial^{\alpha} h \|_{N^{s,\gamma}}^2
+
\frac{\delta_0 \delta}{2} \sum_{|\alpha |\le K} \| {\bf P } \partial^{\alpha} h \|_{N^{s,\gamma}}^2
-\frac{\delta_0C_2 }{2} \frac{d\mathcal{I}(t)}{dt}. 
\end{multline*}
With $\tilde{\delta} \eqdef \min\left\{ \frac{\delta_0}{2} , \frac{\delta_0 \delta}{2} \right\}>0$
and
$C' \eqdef \delta_0C_2 >0$,
we  conclude that
\begin{gather}
\frac{1}{2}\frac{d}{dt}\left\{\|f(t)\|^2_{L^2_vH^K_x} - C'\mathcal{I}(t)\right\} +\tilde{\delta}  \mathcal{D}_{0,0}(t)  
\lesssim
\sqrt{\mathcal{E}_0(t)} \mathcal{D}_0(t).
\nonumber
\end{gather}
Now, by \eqref{mainINTERACTION}, 
for any $C'>0$ we can choose a large constant $C_1>0$  such that
$$
\|f(t)\|^2_{L^2_vH^K_x} 
\le
\left( C_1 + 1 \right)\|f(t)\|^2_{L^2_vH^K_x} - C'\mathcal{I}(t)
\lesssim
 \|f(t)\|^2_{L^2_vH^K_x}. 
$$
Notice $C_1$  only depends upon the structure of the interaction functional and $C'$, but not on $f(t,x,v)$.
We then define the equivalent instant energy functional by
$$
\mathcal{E}_{0,0}(t)  \eqdef \left( C_1 + 1 \right)\|f(t)\|^2_{L^2_vH^K_x} - C' \mathcal{I}(t).
$$
We multiply 
\eqref{initial} 
by $C_1$ and add it to this differential inequality to conclude
\begin{gather}
\frac{d\mathcal{E}_{0,0}(t)}{dt} +\tilde{\delta}   \mathcal{D}_{0,0}(t)  
\le C_*
\sqrt{\mathcal{E}_0(t)} \mathcal{D}_0(t),
\quad 
C_*>0.
\nonumber
\end{gather}
In the last step we have used the positivity of $L \ge 0$.   We have thus established 
\eqref{MAINeINEQ} 
when $\ell = |\beta| =0$.

We turn to the case when $\ell >0$, but still $|\beta| =0$.  We only have pure spatial derivatives. 
With \eqref{coerc2ineq} in  Lemma \ref{DerCoerIneq}, we deduce 
that for a
$C>0$ and $\ell\geq0$
\begin{equation}
\left( w^{2\ell} L\partial^{\alpha}f, \partial^{\alpha}f\right)  
\gtrsim
\frac{1}{2}
\| \partial^{\alpha} f \|_{N^{s,\gamma}_{\ell}}^{2}
-
C\| \partial^{\alpha}f\|_{L^2(B_C)}^{2}.
\label{newlower}
\end{equation}
Take the $\partial^\alpha$ derivative of
\eqref{Boltz}, then take the inner product of both sides with $w^{2\ell}   \partial^{\alpha}f$ and integrate to obtain the following: 
\[
\sum_{|\alpha|\leq K}\left(  \frac{1}{2}\frac{d}{dt}
\|w^\ell \partial^{\alpha} f(t)\|_{L^2}^{2}+\left(  w^{2\ell}L\partial^{\alpha}f,\partial^{\alpha
}f\right)  \right)  
\lesssim
\sqrt{{\mathcal{E}}_{\ell}(t)} {\mathcal{D}}_{\ell}(t).
\]
We have used Lemma \ref{NonLinEst} to estimate the non-linear term.   We apply the coercive lower bound \eqref{newlower}.  Then we add \eqref{MAINeINEQ} for the case $\ell = |\beta| =0$ multiplied by a suitably large constant $C_2$ to the result.  This yields
\[
\frac{d}{dt}{\mathcal{E}}_{\ell,0}(t) 
+
{\mathcal{D}}_{\ell,0}(t)
\lesssim
\sqrt{{\mathcal{E}}_{\ell}(t)} {\mathcal{D}}_{\ell}(t),
\]
where
$
{\mathcal{E}}_{\ell,0}(t) 
\eqdef
\frac{1}{2}
\sum_{|\alpha|\leq K}  
\|w^\ell \partial^{\alpha} f(t)\|_{L^2}^{2}
+
C_2{\mathcal{E}}_{0,0}(t).
$
Since this is indeed an instant energy functional, we have
\eqref{MAINeINEQ} 
when $\ell >0$ and  $|\beta| =0$.

The final step is the case when $\ell \ge 0$, but also $|\beta| =m+1>0$. We suppose that 
\eqref{MAINeINEQ} 
holds for any
 $\ell \ge 0$ and any  $|\beta| \le m$.
We take $\partial_{\beta}^{\alpha}$ of \eqref{Boltz} to obtain
\begin{equation}
\left(\partial_{t}+v\cdot\nabla_{x}\right)\partial_{\beta}^{\alpha}f+\partial
_{\beta}L\partial^{\alpha}f=-
\sum_{|\beta_{1}|=1} C^{\beta}_{\beta_{1}} ~ 
\partial_{\beta_{1}}v\cdot\nabla_{x}\partial_{\beta-\beta_{1}}^{\alpha
}f+\partial_{\beta}^{\alpha}\Gamma(f,f). 
\label{bgeBIG}
\end{equation}
We use Cauchy's inequality for $\eta>0$, since $|\beta_{1}|=1$ we have
\begin{multline*}
\left\vert \left(  w^{2\ell - 2|\beta|} \partial_{\beta_{1}}v\cdot\nabla
_{x}\partial_{\beta-\beta_{1}}^{\alpha}f,\partial_{\beta}^{\alpha}f\right)
\right\vert 
\\
\leq
\| w^{\ell - |\beta| - 1/2}  \partial_{\beta}^{\alpha}f(t)\|_{L^2}
\|w^{\ell - |\beta| + 1/2} \nabla_{x}\partial_{\beta-\beta_{1}}^{\alpha}f(t)\|_{L^2}
\\
\le
\eta
||\partial_{\beta}^{\alpha}f(t)||_{N^{s,\gamma}_{\ell - |\beta|}}^{2}
+
C_{\eta}||\nabla_{x}\partial_{\beta-\beta_{1}}^{\alpha}f||_{N^{s,\gamma}_{\ell - |\beta-\beta_1|}}^{2}.
\end{multline*}
Estimates using this particular trick were already seen in \cite{MR2013332}.

Now we multiply \eqref{bgeBIG}
 with $w^{2\ell -2|\beta|}\partial^{\alpha}f$ and integrate.  We estimate the non-linear term of the result with Lemma 
 \ref{NonLinEst}.  
 With \eqref{coerc1ineq} we estimate from below the linear term $\left(  w^{2\ell - 2|\beta|} \partial_{\beta}\{L\partial^{\alpha}f\}, \partial_{\beta}^{\alpha}f\right)$.
With these inequalities above, we
conclude \eqref{MAINeINEQ} for $|\beta| = m+1$, but only after adding  to the inequality a suitably large constant times  \eqref{MAINeINEQ} for $|\beta| = m$ similar to the previous cases.  This establishes \eqref{MAINeINEQ} in general by induction.
From here we can conclude global existence  using the standard continuity argument that was described in Section 7.3 of \cite{gsNonCut1}.  
It remains to establish the time decay rates for the soft potentials,
using the  argument from \cite{MR2209761}.
Exponential time decay for the soft potentials, as in \cite{MR2366140}, may also be feasible.

If $\|f_0\|_{H}^{2}$ is sufficiently small, from 
\eqref{MAINeINEQ} for $m = K$ and $\ell \ge 0$,  we have 
\begin{equation}
\frac{d}{dt}\mathcal{E}_{\ell}(t)+\delta\mathcal{D}_{\ell}(t) \leq 0,
\quad
\exists \delta >0.
\notag
\end{equation}
For the hard potentials, $\gamma + 2s \ge 0$, exponential time decay follows directly from 
$\mathcal{D}_{\ell}(t)\gtrsim \mathcal{E}_{\ell}(t)$. But for the soft potentials $\gamma + 2s <0$,
the problem is that for fixed $\ell,$ the non-derivative part of the dissipation rate $\mathcal{D}_{\ell}(t)$ is clearly
weaker than the instant energy $\mathcal{E}_{\ell}(t)$.  In particular, we only have $\mathcal{D}_{\ell}(t)\gtrsim \mathcal{E}_{\ell-1}(t)$.

We interpolate with stronger
norms  to overcome this difficulty. Fix $\ell\ge 0$ and $m>0$.  Interpolation  between the weight functions $w^{2\ell-2}(v)$ and $w^{2\ell+2m}(v)$ yields
\[
\mathcal{E}_{\ell}(t)
\lesssim
 \mathcal{E}_{\ell-1}^{m/(m+1)}(t)\mathcal{E}_{\ell+m}^{1/(m+1)}(t)
\lesssim
\mathcal{D}_{\ell}^{m/(m+1)}(t)\mathcal{E}_{\ell+m}^{1/(m+1)}(0).
\]
The last inequality follows from 
$
\mathcal{E}_{\ell+m}(t)
\lesssim
\mathcal{E}_{\ell+m}(0).
$
Then for some $C_{\ell,m}>0,$
\[
\frac{d}{dt}\mathcal{E}_{\ell}(t)+C_{\ell,m}{\mathcal{E}}_{\ell+m}^{-1/m}(0)~ \mathcal{E}_{\ell}^{(m+1)/m}(t)\leq0.
\]
It follows that 
$
-m~ d(\mathcal{E}_{\ell}(t))^{-1/m}/dt
\leq
-C_{\ell,m}\left(  {\mathcal{E}}_{\ell+m}(0)\right)  ^{-1/m}.
$ 
Integrate over $[0,t]$:
\[
m\left( \mathcal{E}_{\ell}(0)\right)^{-1/m}-m\left( \mathcal{E}_{\ell}(t)\right)^{-1/m}\leq-\left(  {\mathcal{E}}%
_{\ell+m}(0)\right)  ^{-1/m} ~ C_{\ell,m} ~ t.
\]
Hence
\[
\left( \mathcal{E}_{\ell}(t)\right)^{-1/m}\geq t\frac{C_{\ell,m}}{m}\left(  {\mathcal{E}}_{\ell+m}(0)\right)  ^{-1/m}+\{\mathcal{E}_{\ell}(0)\}^{-1/m}.
\]
Since we can assume $\mathcal{E}_{\ell}(0)\lesssim {\mathcal{E}}_{\ell+m}(0)$, the rapid decay
thus follows. \hfill {\bf Q.E.D.}

\begin{bibdiv}
\begin{biblist}

\bib{MR1851391}{article}{
   author={Alexandre, Radjesvarane},
   title={Some solutions of the Boltzmann equation without angular cutoff},
   journal={J. Statist. Phys.},
   volume={104},
   date={2001},
   number={1-2},
   pages={327--358},
   issn={0022-4715},
}

\bib{MR2149928}{article}{
   author={Alexandre, Radjesvarane},
   author={El Safadi, Mouhamad},
   title={Littlewood-Paley theory and regularity issues in Boltzmann
   homogeneous equations. I. Non-cutoff case and Maxwellian molecules},
   journal={Math. Models Methods Appl. Sci.},
   volume={15},
   date={2005},
   number={6},
   pages={907--920},
   issn={0218-2025},
}

\bib{krmReview2009}{article}{
   author={Alexandre, Radjesvarane},
     TITLE = {A Review of Boltzmann Equation with Singular Kernels},
   JOURNAL = {Kinet. Relat. Models},
  FJOURNAL = {Kinetic and Related Models},
    VOLUME = {2},
      YEAR = {2009},
    NUMBER = {4},
     PAGES = {551--646},
}
\bib{MR1765272}{article}{
   author={Alexandre, R.},
   author={Desvillettes, L.},
   author={Villani, C.},
   author={Wennberg, B.},
   title={Entropy dissipation and long-range interactions},
   journal={Arch. Ration. Mech. Anal.},
   volume={152},
   date={2000},
   number={4},
   pages={327--355},
   issn={0003-9527},
}

\bib{MR1857879}{article}{
   author={Alexandre, R.},
   author={Villani, C.},
   title={On the Boltzmann equation for long-range interactions},
   journal={Comm. Pure Appl. Math.},
   volume={55},
   date={2002},
   number={1},
   pages={30--70},
   issn={0010-3640},
}

\bib{MR2462585}{article}{
   author={Alexandre, R.},
   author={Morimoto, Y.},
   author={Ukai, S.},
   author={Xu, C.-J.},
   author={Yang, T.},
   title={Uncertainty principle and kinetic equations},
   journal={J. Funct. Anal.},
   volume={255},
   date={2008},
   number={8},
   pages={2013--2066},
   issn={0022-1236},
}

\bib{arXiv:0909.1229v1}{article}{
    author = {Alexandre, Radjesvarane},
        author = {Morimoto, Y.},
            author = {Ukai, Seiji},
        author = {Xu, Chao-Jiang},
        author = {Yang, Tong},
	title = {Regularizing effect and local existence for non-cutoff Boltzmann equation},
    eprint = {arXiv:0909.1229v1},
   date={2009},
    journal={preprint},
}

\bib{newNonCutAMUXY}{article}{
    author = {Alexandre, Radjesvarane},
        author = {Morimoto, Y.},
            author = {Ukai, Seiji},
        author = {Xu, Chao-Jiang},
        author = {Yang, Tong},
	title = {Global existence and full regularity of the Boltzmann equation without angular cutoff},
    eprint = {arXiv:0912.1426v2},
   date={Dec. 8, 2009},
    journal={preprint},
}

\bib{MR630119}{article}{
   author={Arkeryd, Leif},
   title={Intermolecular forces of infinite range and the Boltzmann
   equation},
   journal={Arch. Rational Mech. Anal.},
   volume={77},
   date={1981},
   number={1},
   pages={11--21},
   issn={0003-9527},
}

\bib{MR679196}{article}{
   author={Arkeryd, Leif},
   title={Asymptotic behaviour of the Boltzmann equation with infinite range
   forces},
   journal={Comm. Math. Phys.},
   volume={86},
   date={1982},
   number={4},
   pages={475--484},
   issn={0010-3616},
}

\bib{MR1128328}{article}{
   author={Bobyl{\"e}v, A. V.},
   title={The theory of the nonlinear spatially uniform Boltzmann equation
   for Maxwell molecules},
   conference={
      title={Mathematical physics reviews, Vol.\ 7},
   },
   book={
      series={Soviet Sci. Rev. Sect. C Math. Phys. Rev.},
      volume={7},
      publisher={Harwood Academic Publ.},
      place={Chur},
   },
   date={1988},
   pages={111--233},
}

\bib{MR2476678}{article}{
   author={Bernis, Laurent},
   author={Desvillettes, Laurent},
   title={Propagation of singularities for classical solutions of the
   Vlasov-Poisson-Boltzmann equation},
   journal={Discrete Contin. Dyn. Syst.},
   volume={24},
   date={2009},
   number={1},
   pages={13--33},
   issn={1078-0947},
}

\bib{MR0158708}{book}{
   author={Boltzmann, Ludwig},
   title={Lectures on gas theory},
   series={Translated by Stephen G. Brush},
   publisher={University of California Press},
   place={Berkeley},
   date={1964},
   pages={ix+490},
   note={Reprint of the 1896-1898 Edition},
}

\bib{MR1798557}{article}{
   author={Boudin, Laurent},
   author={Desvillettes, Laurent},
   title={On the singularities of the global small solutions of the full
   Boltzmann equation},
   journal={Monatsh. Math.},
   volume={131},
   date={2000},
   number={2},
   pages={91--108},
   issn={0026-9255},
}

\bib{MR0255199}{book}{
   author={Cercignani, Carlo},
   title={Mathematical methods in kinetic theory},
   publisher={Plenum Press},
   place={New York},
   date={1969},
   pages={ix+227},
   review={\MR{0255199 (40 \#8404)}},
}

\bib{MR1313028}{book}{
   author={Cercignani, Carlo},
   title={The Boltzmann equation and its applications},
   series={Applied Mathematical Sciences},
   volume={67},
   publisher={Springer-Verlag},
   place={New York},
   date={1988},
   pages={xii+455},
   isbn={0-387-96637-4},
}

\bib{MR1307620}{book}{
   author={Cercignani, Carlo},
   author={Illner, Reinhard},
   author={Pulvirenti, Mario},
   title={The mathematical theory of dilute gases},
   series={Applied Mathematical Sciences},
   volume={106},
   publisher={Springer-Verlag},
   place={New York},
   date={1994},
   pages={viii+347},
   isbn={0-387-94294-7},
}

\bib{MR2506070}{article}{
   author={Chen, Yemin},
   author={Desvillettes, Laurent},
   author={He, Lingbing},
   title={Smoothing effects for classical solutions of the full Landau
   equation},
   journal={Arch. Ration. Mech. Anal.},
   volume={193},
   date={2009},
   number={1},
   pages={21--55},
   issn={0003-9527},
}

\bib{MR1324404}{article}{
   author={Desvillettes, Laurent},
   title={About the regularizing properties of the non-cut-off Kac equation},
   journal={Comm. Math. Phys.},
   volume={168},
   date={1995},
   number={2},
   pages={417--440},
   issn={0010-3616},
}

\bib{MR1407542}{article}{
   author={Desvillettes, Laurent},
   title={Regularization for the non-cutoff $2$D radially symmetric
   Boltzmann equation with a velocity dependent cross section},
   booktitle={Proceedings of the Second International Workshop on Nonlinear
   Kinetic Theories and Mathematical Aspects of Hyperbolic Systems (Sanremo,
   1994)},
   journal={Transport Theory Statist. Phys.},
   volume={25},
   date={1996},
   number={3-5},
   pages={383--394},
   issn={0041-1450},
}

\bib{MR1475459}{article}{
   author={Desvillettes, Laurent},
   title={Regularization properties of the $2$-dimensional non-radially
   symmetric non-cutoff spatially homogeneous Boltzmann equation for
   Maxwellian molecules},
   journal={Transport Theory Statist. Phys.},
   volume={26},
   date={1997},
   number={3},
   pages={341--357},
   issn={0041-1450},
}

\bib{MR1750040}{article}{
   author={Desvillettes, L.},
   author={Golse, F.},
   title={On a model Boltzmann equation without angular cutoff},
   journal={Differential Integral Equations},
   volume={13},
   date={2000},
   number={4-6},
   pages={567--594},
   issn={0893-4983},
}

\bib{MR2525118}{article}{
   author={Desvillettes, Laurent},
   author={Mouhot, Cl{\'e}ment},
   title={Stability and uniqueness for the spatially homogeneous Boltzmann
   equation with long-range interactions},
   journal={Arch. Ration. Mech. Anal.},
   volume={193},
   date={2009},
   number={2},
   pages={227--253},
   issn={0003-9527},
}

\bib{MR2038147}{article}{
   author={Desvillettes, Laurent},
   author={Wennberg, Bernt},
   title={Smoothness of the solution of the spatially homogeneous Boltzmann
   equation without cutoff},
   journal={Comm. Partial Differential Equations},
   volume={29},
   date={2004},
   number={1-2},
   pages={133--155},
   issn={0360-5302},
}

\bib{MR1737547}{article}{
   author={Desvillettes, Laurent},
   author={Villani, C{\'e}dric},
   title={On the spatially homogeneous Landau equation for hard potentials.
   I. Existence, uniqueness and smoothness},
   journal={Comm. Partial Differential Equations},
   volume={25},
   date={2000},
   number={1-2},
   pages={179--259},
   issn={0360-5302},
}

\bib{MR2116276}{article}{
   author={Desvillettes, L.},
   author={Villani, C.},
   title={On the trend to global equilibrium for spatially inhomogeneous
   kinetic systems: the Boltzmann equation},
   journal={Invent. Math.},
   volume={159},
   date={2005},
   number={2},
   pages={245--316},
   issn={0020-9910},
}

\bib{MR1014927}{article}{
   author={DiPerna, R. J.},
   author={Lions, P.-L.},
   title={On the Cauchy problem for Boltzmann equations: global existence
   and weak stability},
   journal={Ann. of Math. (2)},
   volume={130},
   date={1989},
   number={2},
   pages={321--366},
   issn={0003-486X},
}

\bib{MR2435186}{article}{
   author={Duan, Renjun},
   author={Li, Meng-Rong},
   author={Yang, Tong},
   title={Propagation of singularities in the solutions to the Boltzmann
   equation near equilibrium},
   journal={Math. Models Methods Appl. Sci.},
   volume={18},
   date={2008},
   number={7},
   pages={1093--1114},
   issn={0218-2025},
}

\bib{MR2398952}{article}{
   author={Fournier, Nicolas},
   author={Gu{\'e}rin, H{\'e}l{\`e}ne},
   title={On the uniqueness for the spatially homogeneous Boltzmann equation
   with a strong angular singularity},
   journal={J. Stat. Phys.},
   volume={131},
   date={2008},
   number={4},
   pages={749--781},
   issn={0022-4715},
}

\bib{MR1379589}{book}{
   author={Glassey, Robert T.},
   title={The Cauchy problem in kinetic theory},
   publisher={Society for Industrial and Applied Mathematics (SIAM)},
   place={Philadelphia, PA},
   date={1996},
   pages={xii+241},
   isbn={0-89871-367-6},
}

\bib{MR1484062}{article}{
   author={Goudon, T.},
   title={On Boltzmann equations and Fokker-Planck asymptotics: influence of
   grazing collisions},
   journal={J. Statist. Phys.},
   volume={89},
   date={1997},
   number={3-4},
   pages={751--776},
   issn={0022-4715},
}

\bib{MR0156656}{article}{
   author={Grad, Harold},
   title={Asymptotic theory of the Boltzmann equation. II},
   conference={
      title={Rarefied Gas Dynamics (Proc. 3rd Internat. Sympos., Palais de
      l'UNESCO, Paris, 1962), Vol. I},
   },
   book={
      publisher={Academic Press},
      place={New York},
   },
   date={1963},
   pages={26--59},
}

\bib{gsNonCut1}{article}{
   author={Gressman, Philip T.},
      author={Strain, Robert M.},
   title={Global Strong Solutions of the Boltzmann Equation without  Angular Cut-off},
    eprint = {arXiv:0912.0888v1},
   date={Dec. 4, 2009},
    journal={submitted},
   pages={55pp.},
}

\bib{gsNonCutA}{article}{
   author={Gressman, Philip T.},
      author={Strain, Robert M.},
   title={Global Classical solutions of the Boltzmann equation with Long-Range interactions},
   date={in press, Jan 29, 2010},
    journal={Proc. Nat. Acad. Sci. U. S. A.},
   pages={6pp.},
}

\bib{MR1946444}{article}{
   author={Guo, Yan},
   title={The Landau equation in a periodic box},
   journal={Comm. Math. Phys.},
   volume={231},
   date={2002},
   number={3},
   pages={391--434},
   issn={0010-3616},
}
		
\bib{MR2013332}{article}{
   author={Guo, Yan},
   title={Classical solutions to the Boltzmann equation for molecules with
   an angular cutoff},
   journal={Arch. Ration. Mech. Anal.},
   volume={169},
   date={2003},
   number={4},
   pages={305--353},
   issn={0003-9527},
}

\bib{MR2000470}{article}{
   author={Guo, Yan},
   title={The Vlasov-Maxwell-Boltzmann system near Maxwellians},
   journal={Invent. Math.},
   volume={153},
   date={2003},
   number={3},
   pages={593--630},
   issn={0020-9910},
}

\bib{MR2095473}{article}{
   author={Guo, Yan},
   title={The Boltzmann equation in the whole space},
   journal={Indiana Univ. Math. J.},
   volume={53},
   date={2004},
   number={4},
   pages={1081--1094},
   issn={0022-2518},
}

\bib{MR760333}{article}{
   author={Illner, Reinhard},
   author={Shinbrot, Marvin},
   title={The Boltzmann equation: global existence for a rare gas in an
   infinite vacuum},
   journal={Comm. Math. Phys.},
   volume={95},
   date={1984},
   number={2},
   pages={217--226},
   issn={0010-3616},
}

\bib{MR0475532}{article}{
   author={Kaniel, Shmuel},
   author={Shinbrot, Marvin},
   title={The Boltzmann equation. I. Uniqueness and local existence},
   journal={Comm. Math. Phys.},
   volume={58},
   date={1978},
   number={1},
   pages={65--84},
}

\bib{MR1057534}{article}{
   author={Kawashima, Shuichi},
   title={The Boltzmann equation and thirteen moments},
   journal={Japan J. Appl. Math.},
   volume={7},
   date={1990},
   number={2},
   pages={301--320},
   issn={0910-2043},
}
		
\bib{MR2221254}{article}{
   author={Klainerman, S.},
   author={Rodnianski, I.},
   title={A geometric approach to the Littlewood-Paley theory},
   journal={Geom. Funct. Anal.},
   volume={16},
   date={2006},
   number={1},
   pages={126--163},
   issn={1016-443X},
}

\bib{MR1284432}{article}{
   author={Lions, P.-L.},
   title={Compactness in Boltzmann's equation via Fourier integral operators
   and applications. I, II, III},
   journal={J. Math. Kyoto Univ.},
   volume={34},
   date={1994},
   number={2 \& 3},
   pages={391--427, 429--461, 539--584},
}

\bib{MR1278244}{article}{
   author={Lions, P.-L.},
   title={On Boltzmann and Landau equations},
   journal={Philos. Trans. Roy. Soc. London Ser. A},
   volume={346},
   date={1994},
   number={1679},
   pages={191--204},
   issn={0962-8428},
}

\bib{MR1649477}{article}{
   author={Lions, Pierre-Louis},
   title={R\'egularit\'e et compacit\'e pour des noyaux de collision de
   Boltzmann sans troncature angulaire},
   journal={C. R. Acad. Sci. Paris S\'er. I Math.},
   volume={326},
   date={1998},
   number={1},
   pages={37--41},
   issn={0764-4442},
}

\bib{MR2043729}{article}{
   author={Liu, Tai-Ping},
   author={Yang, Tong},
   author={Yu, Shih-Hsien},
   title={Energy method for Boltzmann equation},
   journal={Phys. D},
   volume={188},
   date={2004},
   number={3-4},
   pages={178--192},
   issn={0167-2789},
}

\bib{Maxwell1867}{article}{
     title = {On the Dynamical Theory of Gases},
     author = {Maxwell, J. Clerk},
     journal = {Philosophical Transactions of the Royal Society of London},
     volume = {157},
     date = {1867},
     pages = {49--88},
     url = {http://www.jstor.org/stable/108968},
     ISSN = {02610523},
     year = {1867},
     publisher = {The Royal Society},    
     copyright = {Copyright å© 1867 The Royal Society},
    }

\bib{MR2254617}{article}{
   author={Mouhot, Cl{\'e}ment},
   title={Explicit coercivity estimates for the linearized Boltzmann and
   Landau operators},
   journal={Comm. Partial Differential Equations},
   volume={31},
   date={2006},
   number={7-9},
   pages={1321--1348},
   issn={0360-5302},
}

\bib{MR2322149}{article}{
   author={Mouhot, Cl{\'e}ment},
   author={Strain, Robert M.},
   title={Spectral gap and coercivity estimates for linearized Boltzmann
   collision operators without angular cutoff},
   journal={J. Math. Pures Appl. (9)},
   volume={87},
   date={2007},
   number={5},
   pages={515--535},
   issn={0021-7824},
       eprint = {arXiv:math.AP/0607495},
}

\bib{MR0636407}{article}{
   author={Pao, Young Ping},
   title={Boltzmann collision operator with inverse-power intermolecular
   potentials. I, II},
   journal={Comm. Pure Appl. Math.},
   volume={27},
   date={1974},
   pages={407--428; ibid. 27 (1974), 559--581},
   issn={0010-3640},
}

\bib{MR0252961}{book}{
    author = {Stein, Elias M.},
     title = {Topics in harmonic analysis related to the
              {L}ittlewood-{P}aley theory. },
    series = {Annals of Mathematics Studies, No. 63},
  publisher = {Princeton University Press},
   address = {Princeton, N.J.},
      year = {1970},
     pages = {viii+146},
}

\bib{MR2259206}{article}{
   author={Strain, Robert M.},
   title={The Vlasov-Maxwell-Boltzmann system in the whole space},
   journal={Comm. Math. Phys.},
   volume={268},
   date={2006},
   number={2},
   pages={543--567},
   issn={0010-3616},
}
		
\bib{MR2209761}{article}{
   author={Strain, Robert M.},
   author={Guo, Yan},
   title={Almost exponential decay near Maxwellian},
   journal={Comm. Partial Differential Equations},
   volume={31},
   date={2006},
   number={1-3},
   pages={417--429},
   issn={0360-5302},
}

\bib{MR2366140}{article}{
   author={Strain, Robert M.},
   author={Guo, Yan},
   title={Exponential decay for soft potentials near Maxwellian},
   journal={Arch. Ration. Mech. Anal.},
   volume={187},
   date={2008},
   number={2},
   pages={287--339},
   issn={0003-9527},
}

\bib{MR0363332}{article}{
   author={Ukai, Seiji},
   title={On the existence of global solutions of mixed problem for
   non-linear Boltzmann equation},
   journal={Proc. Japan Acad.},
   volume={50},
   date={1974},
   pages={179--184},
   issn={0021-4280},
}

\bib{MR882376}{article}{
   author={Ukai, Seiji},
   title={Solutions of the Boltzmann equation},
   conference={
      title={Patterns and waves},
   },
   book={
      series={Stud. Math. Appl.},
      volume={18},
      publisher={North-Holland},
      place={Amsterdam},
   },
   date={1986},
   pages={37--96},
   review={\MR{882376 (88g:35187)}},
}

\bib{MR839310}{article}{
   author={Ukai, Seiji},
   title={Local solutions in Gevrey classes to the nonlinear Boltzmann
   equation without cutoff},
   journal={Japan J. Appl. Math.},
   volume={1},
   date={1984},
   number={1},
   pages={141--156},
   issn={0910-2043},
}

\bib{MR1650006}{article}{
   author={Villani, C{\'e}dric},
   title={On a new class of weak solutions to the spatially homogeneous
   Boltzmann and Landau equations},
   journal={Arch. Rational Mech. Anal.},
   volume={143},
   date={1998},
   number={3},
   pages={273--307},
   issn={0003-9527},
}

\bib{MR1715411}{article}{
   author={Villani, C{\'e}dric},
   title={Regularity estimates via the entropy dissipation for the spatially
   homogeneous Boltzmann equation without cut-off},
   journal={Rev. Mat. Iberoamericana},
   volume={15},
   date={1999},
   number={2},
   pages={335--352},
   issn={0213-2230},
}

\bib{MR1942465}{article}{
    author={Villani, C{\'e}dric},
     title={A review of mathematical topics in collisional kinetic theory},
 booktitle={Handbook of mathematical fluid dynamics, Vol. I},
     pages={71\ndash 305},
          book={
 publisher={North-Holland},
     place={Amsterdam},
        },
      date={2002},
}

\bib{villani-2006}{article}{
    author={Villani, C{\'e}dric},
  title = {Hypocoercivity},
       pages={iv+141},
          journal={Mem. Amer. Math. Soc.},
                  number={202},
      date={2009},
         eprint = {arXiv:math/0609050v1},
}

\bib{WCUh52}{article}{
   author={Wang Chang, C. S.},
   author={Uhlenbeck, G. E.},
   author={de Boer, J.},
   title={On the Propagation of Sound in Monatomic Gases},
   book={
      publisher={Univ. of Michigan Press, },
      place={Ann Arbor, Michigan},
   },
   date={1952},
   pages={1--56},
   eprint = {http://deepblue.lib.umich.edu/},
}

\end{biblist}
\end{bibdiv}

\end{document}